\newtheorem{thm}{Theorem}[section]
\newtheorem{lemma}[thm]{Lemma}
\newtheorem{cor}[thm]{Corollary}
\newtheorem{prop}[thm]{Proposition}
\newtheorem{defn}[thm]{Definition}
\newtheorem{example}[thm]{Example}
\newtheorem{remark}[thm]{Remark}
\newtheorem{assumption}[thm]{Assumption}
\numberwithin{equation}{section}
\newcommand{\conn}{\nabla}
\newcommand{\CY}{\mathcal{Y}}
\newcommand{\Z}{\mathbb{Z}}
\newcommand{\R}{\mathbb{R}}
\newcommand{\C}{\mathbb{C}}
\newcommand{\bP}{\mathbb{P}}
\newcommand{\be}{\mathbf{1}}
\newcommand{\bS}{\mathbb{S}}
\newcommand{\BL}{\mathbf{L}}
\newcommand{\AI}{A_\infty}
\newcommand{\WT}[1]{\widetilde{#1}}
\newcommand{\pt}{\mathrm{pt}}
\newcommand{\CA}{\mathcal{A}}
\newcommand{\cF}{\mathcal{F}}
\newcommand{\cL}{\mathcal{L}}
\newcommand{\Hom}{\mathrm{Hom}}
\newcommand{\one}{\mathbf{1}}
\newcommand{\CF}{\mathrm{CF}}
\newcommand{\Fuk}{\mathrm{Fuk}}
\newcommand{\MF}{\mathrm{MF}}
\newcommand{\bL}{\mathbb{L}}
\newcommand{\cO}{\mathcal{O}}
\newcommand{\val}{\mathrm{val}}
\newcommand{\ex}{\mathrm{ex}}
\newcommand{\ee}{\mathfrak{e}}
\begin{document}

\title{Gluing localized mirror functors}
\author[Cho]{Cheol-Hyun Cho}
\address{Department of Mathematical Sciences, Research Institute in Mathematics\\ Seoul National University\\ Gwanak-gu\\Seoul \\ South Korea}
\email{chocheol@snu.ac.kr}
\author[Hong]{Hansol Hong}
\address{Center of Mathematical Sciences and Applications\\Harvard University \\ Cambridge \\ MA \\ USA}
\email{hhong@cmsa.fas.harvard.edu, hansol84@gmail.com}
\author[Lau]{Siu-Cheong Lau}
\address{Department of Mathematics and Statistics\\ Boston University\\ Boston\\ MA\\ USA}
\email{lau@math.bu.edu}

\begin{abstract}
	We develop a method of gluing the local mirrors and functors constructed from immersed Lagrangians in the same deformation class.  As a result, we obtain a global mirror geometry and a canonical mirror functor.  We apply the method to construct the mirrors of punctured Riemann surfaces and show that our functor derives homological mirror symmetry.
\end{abstract}

\maketitle
\tableofcontents

\section{Introduction}
In \cite{CHL, CHL2, CHLtoric}, we introduced a localized mirror formalism to construct Landau-Ginzburg (LG) mirrors $W:Y \to \Lambda$ of a symplectic manifold $X$, and to understand homological mirror symmetry (HMS in short) between them. ($\Lambda$ is the Novikov field.)
We used a single Lagrangian $L$ as a reference to construct a LG model $W$, as well as an $\AI$-functor 
from the Fukaya category $\Fuk(X)$ to the matrix factorization (MF in short) category of $W$.

In this case the mirror space $Y$ is affine, which is defined by the Maurer-Cartan equation for formal deformations of $L$.
We referred to this approach as a localized mirror formalism since the constructed `mirror' as well as the functor reflects the symplecto-geometric information probed by $L$.  In good cases, for instance when $L$ is the Seidel Lagrangian in an elliptic or a hyperbolic orbifold $X=\bP^1_{a,b,c}$ studied in \cite{CHL}, we can show that $L$ generates $\Fuk(X)$ and its image under the functor generates $\MF(W)$.  Hence the constructed LG model is the true mirror in the sense of HMS.

In general, a single Lagrangian is not enough to probe all the symplecto-geometric information of $X$.  In this paper, we consider a collection of Lagrangians lying in the same deformation class, and develop a method to glue the localized mirrors and functors constructed from these Lagrangians to obtain a global mirror LG model together with a global $\AI$-functor for the study of HMS.  We apply this to punctured Riemann surfaces equipped with pair-of-pants decompositions and show that our functor is an equivalence on the derived level.

There have been several works for a local-to-global approach to HMS: Fukaya-Oh \cite{fukaya-oh}, Kontsevich \cite{Kont-SG},  Seidel \cite{Sei-spec}, Nadler-Zaslow \cite{NZ09}, Abouzaid-Seidel \cite{AS10}, Dyckerhoff \cite{Dyc17}, Lee \cite{HLee}, Pascaleff-Sibilla \cite{PS16}, Gammage-Shende \cite{GS17} and so on.  The local-to-global approach for MF categories has been studied by Orlov \cite{Orlov12} and Lin-Pomerleano \cite{LP}.  The idea in these studies is to decompose $X$ and its mirror into local pieces, prove HMS for each local piece, glue the local derived categories on the two sides in the same way, and finally show that they coincide with the original global categories.  This is a grand program which has been only partially understood and requires much further effort.

The approach we take in this paper is different from the above in the following aspects.  First, we always deal with the global Fukaya category and never decompose it into local categories.  Our initial data is a collection of (immersed) Lagrangians $\{L_i:i=1,\ldots,N\}$ in the same deformation class.  From the localized mirror formalism, each $L_i$ gives an LG model $W_i : U_i \to \Lambda$ as
well as an $\AI$-functor for the \emph{global} Fukaya category of $X$.  $L_i$ is always treated as an object in $\Fuk(X)$ rather than the Fukaya category of some local pieces.  

The main difficulty of decomposing the Fukaya category into local pieces is that, there are quantum corrections to the glued category from global pseudo-holomorphic curves not contained in any of the local pieces.  Our formulation automatically incorporates global pseudo-holomorphic curves and hence bypasses this difficulty.  Note that even when $L_i$ is contained in a local part of $X$, as an object in $\Fuk(X)$, its endomorphism space (and also the $A_\infty$ operations with other Lagrangians) already receives contributions from global curves.  


Second, the mirror geometry $(Y,W)$ and the mirror functor is geometrically constructed in a systematic way rather than predicted from physics or some other reasonings.  This is analogous to the Gross-Siebert reconstruction program \cite{GS07}, while we use symplecto-geometric and categorical techniques rather than algebraic techniques in toric degenerations.
We find a way to glue the local mirror spaces $U_i$ to obtain a generally non-affine mirror space $Y$, with
a well-defined potential function $W:Y \to \Lambda$.
More importantly, we can glue localized mirror functors $\mathcal{F}^{L_i}$ as well to form a global $\AI$-functor 
$$\mathcal{F}^{\textrm{global}} : \Fuk(X) \to \MF(W).$$
Here, $\MF(W)$ is defined as a \emph{homotopy fiber product} of each $\MF(W_i)$.

In fact, a single criterion for gluing mirror charts can be used to glue the potential functions as well as localized mirror functors, which is our main theorem \ref{thm:maingluealgintro} explained below.
The construction is purely algebraic and works in full generality.

It is illustrative to compare our approach with the SYZ program \cite{SYZ} and the family Floer theory \cite{Fukaya-famFl,Tu-FM,Tu-reconstruction,Ab-famFl1,Ab-famFl2}.  The collection of immersed Lagrangians $\{L_i:i=1,\ldots,N\}$ in the same deformation class plays the role of a Lagrangian fibration in the SYZ program.  One crucial difference is that, the immersed Lagrangians that we take intersect in a proper way so that there are isomorphisms between their formal deformations over $\Lambda_+$.  On the other hand, fibers of the SYZ fibration are disjoint from each other and so they can never be isomorphic.  In other words, in our formulation the charts of formal deformations of $L_i$ overlap and glue to a global space, while in the SYZ scenario the charts of formal deformations of the torus fibers (which are formed by flat $\Lambda_0^\times$-connections) are disjoint from each other.  Our approach involves only finitely many Lagrangians, while the family Floer theory in the SYZ setup needs all the SYZ fibers and involves taking limit.  Thus our functor is more directly constructed and can be computed more explicitly.

We carry out this construction for punctured Riemann surfaces and prove that our functor derives an equivalence.  Mikhalkin \cite{Mik04} constructed pair-of-pants decompositions for Calabi-Yau hypersurfaces.  For a single one-dimensional pair-of-pants, Seidel \cite{Seidel-g2} constructed an immersed Lagrangian to study HMS.  Based on this, we find a suitable collection $\{L_i:i=1,\ldots,N\}$ of immersed Lagrangians with the help of tropical geometry (see Figure \ref{fig:decomp}).
We make certain consistent choices and glue up the localized mirrors to obtain a global mirror space, which is an open subset of a $\Lambda$-valued toric CY.  Using the exactness of $L_i$, we find that the image of $W\Fuk(X)$ under our functor lies in the $\C$-valued part $\MF(W_\C)$.  Then we show that the functor sends generators of $W\Fuk(X)$ to generators of $\MF(W_\C)$, and it induces isomorphisms of morphism spaces in the cohomology level. 

\begin{figure}[htb!]
	\includegraphics[scale=0.45]{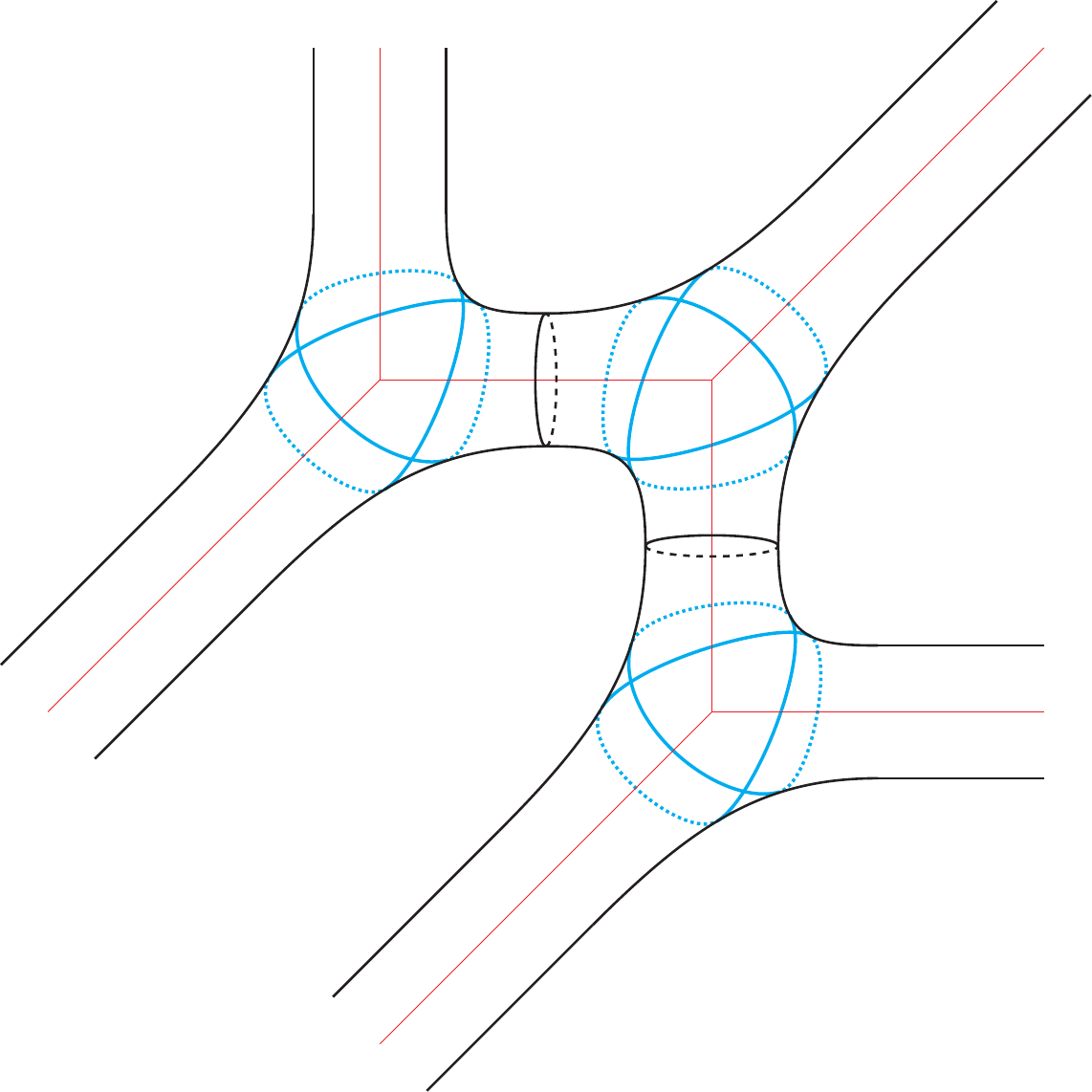}
	\caption{A pair-of-pants decomposition gives a collection of immersed Lagrangians.  The collection we use is constructed by deforming these Lagrangians such that they intersect in a suitable way.}
	\label{fig:decomp}
\end{figure}

In the beautiful work \cite{Lee},  Lee started with a punctured Riemann surface with a pair-of-pants decomposition, and a corresponding ($\C$-valued) toric CY with a superpotential serving as a mirror candidate.  She proved HMS for such a mirror pair by using certain neck-stretching Hamiltonians to reduce computations for the global category to that for the wrapped Fukaya category of each pair-of-pants.  

We have a different goal in this paper.  As explained above, an important goal of us is to construct the mirror toric CY geometrically from pair-of-pants decompositions, which is achieved by using our gluing technique for a collection of immersed Lagrangians.  Moreover, while the Novikov field $\Lambda$ plays a crucial role in our gluing construction, it does not appear in \cite{Lee} since the objects she considered stays within the wrapped Fukaya category.  Here we use formal deformations of compact immersed Lagrangians, and these formal deformations go out of the wrapped Fukaya category.  Furthermore, in this paper HMS is derived by the mirror functor canonically constructed from the immersed Lagrangians, rather than assigning maps between objects and morphisms by hand.

Below is a more detailed summary of our construction.
Let us start with an immersed Lagrangian $\bL_0$ in a symplectic manifold $X$.  Take odd-degree immersed generators
$X_1,\cdots, X_n$ and set $b_0=\sum x_j X_j$ where $x_j \in \Lambda_+$.  Consider the $\AI$-algebra of $\bL_0$ \cite{AJ}, \cite{FOOO}.
We solve the weak Maurer-Cartan equation $m(e^{b_0}) = W_0(b_0) \cdot \be_{\bL_0}$ to get a formal deformation space $U_0$.  For simplicity let's suppose weak Maurer-Cartan equation holds for all $b_0$, in other words, $U_0 \cong \Lambda_+^n$.  We get a localized mirror functor $\mathcal{F}^{\bL_0}: \Fuk(X) \to MF(W_0)$. The functor is canonically obtained as a curved Yoneda embedding (\cite{CHL}).

To illustrate the gluing intuitively, let's consider the scenario that there is another immersed Lagrangian $\bL_1$ with odd-degree immersed generators $X_1',\cdots, X_{n}'$, such that a certain smoothing of $\bL_0$ at $X_1,\cdots, X_j$  and that of $\bL_1$ at $X_1',\cdots, X_{j}'$ are related by Lagrangian isotopies.  For the moment we assume that $X_i$ and $X_j$ are located at different immersed points for every $i\not=j$, and similar for $X_i'$.\footnote{When $X_j$ is the complement of $X_i$ at the same immersed point, wall-crossing phenomenon for the smoothings will occur which is understood in \cite{HKL,ERT}.}  

We need to glue the formal deformation space of $\bL_0$ with that of its smoothing at $X_1,\cdots, X_j$.  We have vanishing cycles $T_i$ in the smoothing corresponding to $X_i$ for $i=1,\ldots,j$.  Let $\nabla^{(t_1,\ldots,t_j)}$ be the flat connection on $\bL_0$ that has holonomy $t_i$ across $T_i$ for all $i=1,\ldots,j$ and trivial elsewhere.  Intuitively discs bounded by $\bL_0$ with corners $X_i$ correspond to discs bounded by its smoothing that passes through the gauge cycle $T_i$.  Hence the leading order term of the gluing should be 
$$x_i=t_i$$
for $i=1,\ldots,j$.  In good situations this is precisely the formula.

Our key observation is that there is another process involved in the gluing of $\bL_0$ and $\bL_1$, namely gauge change between the smoothings of $\bL_0$ and $\bL_1$.  The vanishing cycles $T_1,\ldots,T_j$ obtained from smoothing of $\bL_0$ are not necessarily identified with those $T_1',\ldots,T_{j}'$ obtained from smoothing of $\bL_1$ under the Lagrangian isotopy.  We need to take a cobordism between the gauges $\{T_1,\ldots,T_j\}$ and $\{T_1',\ldots,T_{j}'\}$ (assuming such an cobordism exists).  This leads to non-trivial gluing between $(x_{j+1},\ldots,x_n)$ and $(x'_{j+1},\ldots,x'_{n})$ which takes the form
$$ x_l = \left(\prod_{i=1}^j t_i^{a_i}\right) x'_l$$
for some $a_i \in \Z$.  It also shows an interesting feature that isotopic choices of gauge cycles produce equivalent Floer theories, which explains why there can be infinitely many Landau-Ginzburg mirrors of $X$ (whose derived categories are all equivalent).
For example, when $X$ is a 4-punctured sphere, we show that $\cO_{\bP^1}(-k) \oplus \cO_{\bP^1}(k-2)$ for any $k \in \Z$ can be obtained as a mirror space in Section \ref{sec:4sp}.

The above ideas give the gluing formula of mirror spaces in simple situations.  In order to derive the gluing formula in general situations and to construct the global mirror functor, we need to take a Floer theoretical and algebraic approach.

The algebraic setting is that $\bL_0$ and $\bL_1$ belong to the same deformation class, which means that they can be connected by isomorphisms of formal deformations of a chain of Lagrangians (see Definition \ref{def:class}).  The formal deformation spaces of $\bL_0$ and $\bL_1$ are related from each other by gluing of formal deformation spaces of the chain of Lagrangians according to the quasi-isomorphisms between their deformed Floer theories.  Thus the disc potentials of $\bL_0$ and $\bL_1$ are related by analytic continuation.

Recall that the notion of an isomorphism in an ordinary category can be extended to an $\AI$-category $\mathcal{C}$:
Two objects $\bL_0,\bL_1$ are isomorphic if  there are $\alpha \in \Hom_{\mathcal{C}} (\bL_0,\bL_1)$ and $\beta \in \Hom_{\mathcal{C}} (\bL_1,\bL_0)$ such that 
$$m_1(\alpha) = m_1 (\beta)=0,  \quad m_2(\alpha,\beta)=\one_{L_0} + m_1(\gamma_1), \quad m_2(\beta, \alpha)=\one_{L_1}+ m_1(\gamma_2)$$ 
for some $\gamma_1, \gamma_2$. The isomorphisms $\alpha,\beta$ can be used to show that
their Yoneda functors $\mathcal{Y}^{0}$ and $\mathcal{Y}^{1}$ are quasi-isomorphic, which is
proved in Theorem \ref{thm:ye}. In particular, the two objects $\bL_0$ and $\bL_1$ are quasi-isomorphic in $\mathcal{C}$ 
since Yoneda embedding is fully faithful (\cite{fukaya02}).

Given two reference Lagrangians $\bL_0, \bL_1$ with formal deformation spaces $U_0,U_1$,
suppose that we have open subsets   $V_i \subset U_i$ for $i=0,1$ and a homeomorphism $\phi:V_0 \to V_1$ (under the Novikov topology) such that
for each $b_0 \in V_0$, and $b_1 = \phi(b_0) \in V_1$, we have isomorphisms 
$$\alpha \in \Hom_{\mathcal{C}} \big((\bL_0,b_0),(\bL_1,b_1)\big), \beta \in \Hom_{\mathcal{C}} \big((\bL_1,b_1),(\bL_0,b_0) \big).$$
We identify $V_0$ and $V_1$ via $\phi$ and this is $U_0 \cap U_1$. 
In practice, we find the homeomorphism $\phi$ by solving the cocycle condition of $\alpha$, $m_1^{b_0,b_1}(\alpha) = 0$.  This gives the coordinate change between $(x_1,\cdots,x_n)$ and $(x_1',\cdots, x_n')$.

Given the intersection $U_0 \cap U_1$ of  deformation spaces as above, we have the following main theorem which
glues the mirror data $(U_1,W_1,\mathcal{F}^{\bL_1})$ and $(U_2,W_2,\mathcal{F}^{\bL_2})$.

\begin{thm}[Theorem \ref{thm:maingluealg}]\label{thm:maingluealgintro}
Suppose $(\bL_i,b_i)$ for $i=0,1$ are isomorphic on $U_0 \cap U_1$.
Then  the following holds.
\begin{enumerate}
\item Their potential functions agree (and defines the potential $W_{01}$ on $U_0 \cap U_1$). i.e. we have $$W_0(b_0) = W_1(b_1), \;\; \textrm{for} \;\; b_0 \in V_0, b_1 = \phi(b_0) \in V_1.$$
\item  The $\AI$-functors $\mathcal{F}^{\bL_i}: \Fuk(X) \to MF(W_i)$ composed with restrictions $r_i: MF(W_i) \to MF(W_i|_{V_i})$ for $i=0,1$ are quasi-isomorphic to each other. i.e. we have
$$ r_0 \circ \mathcal{F}^{\bL_0} \cong r_1 \circ \mathcal{F}^{\bL_1} $$
Moreover, the required natural transformations as well as homotopies for this quasi-isomorphisms are explicitly given using $\alpha,\beta$.
\item There exists 
$$\mathcal{F}^{\mathrm{global}}: \Fuk(X) \to MF(W_0) \times^h_{MF(W_{01})} MF(W_1) $$
an $\AI$-functor from $\Fuk(X)$ to the homotopy fiber product of the two
dg-categories $MF(W_0)$ and $MF(W_1)$.

\end{enumerate}
\end{thm}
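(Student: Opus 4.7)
The plan is to show that the $\AI$-isomorphisms $\alpha, \beta$ between $(\bL_0, b_0)$ and $(\bL_1, b_1)$ upgrade into an $\AI$-natural transformation between $\mathcal{F}^{\bL_0}$ and $\mathcal{F}^{\bL_1}$ on the overlap, and then to use this natural transformation as the gluing datum for the homotopy fiber product. Throughout, the unifying principle is the chain-level $\AI$-Yoneda embedding established in Theorem \ref{thm:ye}, applied to the deformed category over each $b_0 \in V_0$ simultaneously.

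For (1), I would invoke the curvature identity for the deformed Floer theory: on $CF\bigl((\bL_0, b_0), (\bL_1, b_1)\bigr)$ the deformed differential $m_1^{b_0, b_1}$ satisfies $(m_1^{b_0, b_1})^2 = \bigl(W_1(b_1) - W_0(b_0)\bigr) \cdot \mathrm{id}$, which is a direct consequence of the weak Maurer--Cartan equations $m(e^{b_i}) = W_i(b_i)\cdot \be_{\bL_i}$. Applying $m_1^{b_0, b_1}$ to the identity $m_1^{b_0, b_1}(\alpha) = 0$ yields $\bigl(W_1(b_1) - W_0(b_0)\bigr)\,\alpha = 0$, and since $\alpha$ is part of an $\AI$-isomorphism it is in particular nonzero, forcing $W_0(b_0) = W_1(b_1)$.

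For (2), I would construct an explicit $\AI$-natural transformation $T : r_0 \circ \mathcal{F}^{\bL_0} \Rightarrow r_1 \circ \mathcal{F}^{\bL_1}$ whose zeroth-order component at an object $A \in \Fuk(X)$ is post-composition with $\alpha$, namely $T_A(x) = m_2^{b_0, b_1}(x, \alpha)$ for $x \in CF(A, (\bL_0, b_0))$, and whose higher components insert $\alpha$ into the last slot of the deformed higher $\AI$-operations. The $\AI$-naturality identities for $T$ reduce to the $\AI$-relations for $\Fuk(X)$ combined with $m_1^{b_0, b_1}(\alpha) = 0$; this is exactly the chain-level instance of the Yoneda argument in Theorem \ref{thm:ye}. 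Swapping the roles of $\alpha$ and $\beta$ yields a reverse natural transformation $S$, and the relations $m_2(\alpha, \beta) = \one_{\bL_0} + m_1(\gamma_1)$ and $m_2(\beta, \alpha) = \one_{\bL_1} + m_1(\gamma_2)$ provide explicit homotopies, built from $\gamma_1, \gamma_2$, showing that $S \circ T$ and $T \circ S$ are homotopic to the identity natural transformations.

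For (3), I would package the data of (1) and (2) into the homotopy fiber product. Objects of $MF(W_0) \times^h_{MF(W_{01})} MF(W_1)$ are triples $(M_0, M_1, \psi)$ with $M_i \in MF(W_i)$ and $\psi$ a closed invertible morphism in $MF(W_{01})$ between the restrictions $r_i(M_i)$, and morphisms come equipped with homotopy data compatible with $\psi$. Setting $\mathcal{F}^{\mathrm{global}}(A) = \bigl(\mathcal{F}^{\bL_0}(A),\, \mathcal{F}^{\bL_1}(A),\, T_A\bigr)$ is well posed by (1) and (2), and the higher $\AI$-components of $\mathcal{F}^{\mathrm{global}}$ are assembled from those of $\mathcal{F}^{\bL_0}$, $\mathcal{F}^{\bL_1}$ and the components of $T$, with the coherence morphisms packaged by $S, \gamma_1, \gamma_2$. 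The $\AI$-relations for $\mathcal{F}^{\mathrm{global}}$ split into the known relations for each localized functor plus the $\AI$-naturality of $T$ and its higher coherences, all established previously. The main obstacle will be the bookkeeping in (2) and (3): writing the higher components of $T$ and $\mathcal{F}^{\mathrm{global}}$ with correct signs, and verifying that the higher homotopies coming from $\gamma_1, \gamma_2$ assemble coherently into the structural data of the homotopy fiber product. Conceptually the argument is an $\AI$-Yoneda embedding applied fibrewise over $U_0 \cap U_1$, but translating it into a clean statement about the homotopy fiber product requires some care.
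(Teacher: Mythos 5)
Your proposal follows essentially the same route as the paper's proof: part (1) via the $\AI$-relation applied to $\alpha$ together with the weak Maurer--Cartan equations, part (2) via natural transformations obtained by inserting $\alpha$ (resp.\ $\beta$) into the last slot of the deformed operations with the quasi-inverse relations supplying the homotopies, and part (3) by packaging the pair of local functors and this natural transformation into Tabuada's explicit model of the homotopy fiber product, exactly as carried out in Sections \ref{sec:pfgluethmy} and \ref{sec:pfgluethm}. The one imprecision is your claim that the homotopies between $S\circ T$, $T\circ S$ and the identity are ``built from $\gamma_1,\gamma_2$'': in the paper the homotopy is $H(\bullet)= m(\bullet, e^{b_1},\beta, e^{b_0}, \alpha, e^{b_1}) + m(\bullet, e^{b_1},\gamma, e^{b_1})$, whose leading double-insertion term is nonzero even for strict isomorphisms ($\gamma_i=0$), so the $\gamma_i$ only contribute a correction rather than the whole homotopy.
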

The above theorem easily generalizes to the case of many charts with no non-trivial triple intersections.

This gluing method has the following interesting features.  First, in case $\bL_0$ and $\bL_1$ are disjoint from each other, $\Hom_{\mathcal{C}} \big((\bL_0,b_0),(\bL_1,b_1)\big)$ is zero and hence there are no isomorphisms between $U_0$ and $U_1$. Therefore, their formal neighborhoods $U_0$ and $U_1$ are disjoint.  Second, since immersed Lagrangians have formal deformations being $\Lambda_+$-valued (which simultaneously cover all positive energy levels), even if $\bL_1$ is not Hamiltonian isotopic to $\bL_0$,
there is a chance that $(\bL_0,b_0)$ and $(\bL_1,b_1)$ are isomorphic to each other.

In Section \ref{sec:localisot}, we give such an example in which $\bL_1$ is a Lagrangian isotopy (but not a Hamiltonian isotopy) of $\bL_0$, and their formal deformations are isomorphic under the coordinate change $$ x' = T^{2 \delta} x,  y' = T^{- \delta} y,  z' =  T^{-\delta} z.  $$
There is a large overlap  $U_0 \cap U_1$ of formal deformation spaces, namely $\val (x) > 0, \val(y) > \delta, \val(z) > \delta$.  In this case Lagrangian isotopy can be understood as a translation of (the valuation of) the formal deformation space. 

Even when $\bL_0, \bL_2$ are disjoint (and hence $U_0 \cap U_2 = \emptyset$)
we can try to isotope $\bL_0$ to $\bL_1$ so that the formal deformation spaces of $\bL_1$ and $\bL_2$ has a non-trivial intersection: \begin{equation}\label{eq:012}
U_0 \cap U_1 \neq \emptyset, U_1 \cap U_2 \neq \emptyset.
\end{equation}
We give an example of such a construction in Section \ref{sec:glue2s}. In this case, we consider 4-punctured sphere
which is a union of two pair of pants. We take a Seidel Lagrangian $\bL_0,\bL_2$ in each pair of pants which are
disjoint, and isotope $\bL_0$ to the other pair of pants to obtain $\bL_1$ where \eqref{eq:012} holds. 
Working with Novikov coefficients is essential in this regard. 

In the exact setting where $X$ is an exact symplectic manifold and $\bL_i$ are exact Lagrangians, we can absorb all the area terms (namely the Novikov elements) into the variables $x_i$ for the gluing and also for the disc potential $W$.  Hence $W$ reduces to a $\C$-valued function $W_\C$ over a complex manifold.\footnote{More precisely, we assume that the domain of $W$ can be extended to the $\Lambda_0$-part of the mirror space in the sense of Novikov convergence, so that when restricted to $\C$-valued charts (which has valuation either $0$ or $+\infty$) $W_\C$ is just a polynomial.}
$W_\C$ can be trivially extended as $W_{\Lambda_0}$ over the manifold over $\Lambda_0$ (defined by the same transition maps).

We have the categories $\MF(W_\C)$, $\MF(W_{\Lambda_0})$ and $\MF(W)$.  Note that the inclusion $\C \to \Lambda$ is NOT continuous (with respect to the usual topology of $\C$ and the Novikov valuation of $\Lambda$).  Thus \emph{the restriction from $\MF(W)$ to $\MF(W_\C)$ is ill defined} in general.  On the other hand,
both $W_{\Lambda_0}$ and $W$ are in the Novikov topology and do not have this issue.
Assuming that in our construction $W_{\Lambda_0}^{-1}\{0\} \subset W^{-1}\{0\}$.  Then we have the restriction from $\MF(W)$ to $\MF(W_{\Lambda_0})$.  $\MF(W_\C)$ can be treated as a subcategory of $\MF(W_{\Lambda_0})$.  In this situation the image of the wrapped Fukaya category under our functor lies in $\MF(W_\C)$.



Another approach to construct the mirror space using several reference Lagrangians was given in \cite{CHL2}.  Note that the setting in \cite{CHL2} is different from this paper.  The collection of reference Lagrangians in this paper are in the same deformation class.  On the other hand, in \cite{CHL2}, the union of the reference Lagrangians is treated as a single immersed Lagrangian, whose formal deformations are used to construct a noncommutative LG model. 

It is an interesting problem to find a relation between these two constructions.  We illustrate this by a 4-punctured sphere in Section \ref{sec:flop}.  We show how to use the non-commutative mirror of \cite{CHL2} to construct commutative mirror charts. 
 There are two different choices which lead to two commutative mirror spaces, and they are related to each other by an Atiyah flop. Indeed, these choices are related to two different ways to take pair-of-pants decompositions of the 4-punctured sphere (Figure \ref{fig:flop}). Conjecturally, different pair-of-pants decompositions for a general punctured Riemann surface lead to (underlying spaces of) the mirror LG models related by (a sequence of) operations of a similar nature.


 
\subsection*{Notations for the Novikov field}

$$\Lambda = \left\{\sum_{i=0}^\infty a_i T^{A_i} \mid \lim_{i \to \infty} A_i = \infty, a_i \in \C \right\}$$
is the universal Novikov field where $T$ is a formal parameter.
We have the valuation function
$$\mathrm{val}:\, \sum_{i=0}^\infty a_i T^{A_i} \mapsto \textrm{Min}_{i} \{A_i\} \textrm{ and set } \mathrm{val}(0) = +\infty.$$
This also defines a filtration on $\Lambda$  (and $\Lambda$-modules in the same way) by 
$$F^\lambda (\Lambda) = \{ \alpha \in \Lambda \mid \mathrm{val}(\alpha) \geq \lambda \}$$
We denote by $\Lambda_0 = F^0(\Lambda)$ and by $\Lambda_+$ its maximal ideal.
%

\subsection*{Acknowledgment}
We express our gratitude to Kaoru Ono, Conan Leung and Dongwook Choa for interesting discussions. The work of C.H. Cho is supported by Samsung Science and Technology Foundation under Project Number SSTF-BA1402-05.
 The work of S.C. Lau in this paper is partially supported by the Simons collaboration grant. The second named author would like to thank Bong Lian and Shing-Tung Yau for constant support and encouragement. His work is substantially supported by Simons Collaboration Grant on Homological Mirror Symmetry. 

\section{Localized mirror functor formalism}\label{sec:lmf}
In this section, we recall how to construct localized mirrors and mirror functors with respect to choices of  Lagrangians $\bL$
from \cite{CHL} and \cite{CHLtoric}. In this formalism, different choices of $\bL$ in a given symplectic manifold $M$ provide different
local mirrors, and our strategy later will be to take several Lagrangians $\bL$ in the same deformation class, each of which provides a local chart of a global mirror of $X$. We refer readers to the appendix for a brief explanation on algebraic preliminaries.

Given a filtered $\AI$-category  $\mathcal{A}$ over $\Lambda$, we
assign to an unobstructed object $
\bL$ a potential function $W_\bL$ on its Maurer-Cartan space, together with an $\AI$-functor $\cF:\CA \to \MF_{\AI}(W_\bL)$ 
from $\CA$ to the dg-category of matrix factorizations of $W_\bL$ (\cite{CHL}). Let us explain in more detail.

\subsection{Local mirror space}
We define a local mirror chart from $\bL$ by considering the formal deformation space of a given object $\bL$. 
More precisely, we will solve a Maurer-Cartan equation which governs the deformation of an object $\bL$.
\begin{defn}
Given an object $\bL$ of a filtered $\AI$-category  $\mathcal{A}$,  
an element $b \in F^+Hom^{\mathrm{odd}}(\bL,\bL)$ is called a  weak Maurer-Cartan element if
\begin{equation}\label{eqn:wMC}
m_0^b= m_0 (e^b) =m_0(1) + \sum_{k=1}^{\infty} m_k (b, \cdots,b ) = c \cdot \be_\bL
\end{equation}
Here $\be_\bL$ is a unit of $\textrm{Hom}(\bL,\bL)$.
In this case $(\bL,b)$ is said to be weakly unobstructed. If $c=0$, then it is called Maurer-Cartan element.
\end{defn}
If the convergence of the above infinite sum is guaranteed, we can extend $b$ to $F^0Hom^{\mathrm{odd}}(\bL,\bL)$.

Such Maurer-Cartan elements can be used to deform the original $A_\infty$-structure to a new one.
\begin{equation}\label{eqn:fdefb}
m_k^{b} (x_1, \cdots, x_k):= \sum_{\substack{i_1,\cdots, i_{k+1} \\ i_1 + \cdots + i_{k+1} = l}} m_{k+l} (\overbrace{b, \cdots, b}^{i_1} ,x_1, b, \cdots, b , x_k , \overbrace{b, \cdots, b}^{i_{k+1}}).
\end{equation}
It is easy to see that $\{m_k^b\}$ defines an $A_\infty$-algebra \cite{FOOO}, and in the case that $b$ is
weakly unobstructed,  $m_0^b = c \cdot \be_\bL$, and in particular we have $(m_1^b)^2 = 0$ from the definition of the unit $\be_\bL$.

We denote the space of such $b$'s by $\mathcal{MC}(L)$ for Maurer-Cartan elements, and by $\mathcal{MC}_{weak} (L)$ for weak Maurer-Cartan elements. In general, one should consider also gauge equivalence relations between Maurer-Cartan solutions, but we omit them as they will be trivial in our examples.

For a symplectic manifold $(X,\omega)$ and  a compact Lagrangian $L$ in $X$, a gapped filtered $A_\infty$-algebra $\left(CF(L,L),\{m_k\}_{k \geq 0} \right)$ possibly with a nontrivial curvature $m_0$ was constructed, and the deformation theory above was introduced to define deformed Lagrangian Floer homology, which is the homology of $m_1^b$ (\cite{FOOO} for smooth Lagrangians and \cite{AJ} for immersed Lagrangians).

%
%
%
%

We can divide the applications into two types, one for an immersed Lagrangian, and the other for a Lagrangian torus
(one can also consider a mixed type which we omit).
For the immersed case, we pick odd immersed generators $X_1,\cdots,X_n \in CF^1(\bL,\bL)$ and form a formal linear combination
$$b = x_1 X_1 + \cdots x_n X_n, x_i \in \Lambda_+ ( \textrm{or} \;\; \Lambda_0 \;\textrm{with convergence assumptions}).$$
We regard $x_1,\cdots,x_n$ as formal smoothing parameters at the corresponding immersed points.

For the case of a Lagrangian torus,  one can vary the holonomy of a flat line bundle over a Lagrangian $L$ to deform $L$ (as an object of Fukaya category). More precisely, if we equip $L$ with a  line bundle $E$ with a $\C^\times$-flat connection $\nabla$, the $A_\infty$-operations on $CF(L,L)$ are deformed as follows:
$$m_k^{(L,\nabla)} (x_1, \cdots, x_k) =  \sum_{\beta \in \pi_2(M,L)} \left( hol_{\partial \beta} \nabla \right) m_{k,\beta} (x_1, \cdots, x_k) T^{\omega(\beta)}$$
where $m_{k,\beta}$ is the contribution from holomorphic disks in class $\beta$ to the original $m_k$-operator on $CF(L,L)$. By considering all possible holonomies, we obtain a (formal) moduli of objects in $\Fuk(M)$ isomorphic to $\left(\C^\times\right)^{\dim_\R H_1 (L;\R)}$, which is nothing but the space of all $\C^\times$ flat line bundles on $L$ modulo equivalence.

Now, for $\theta_i \in H^1(L,\C)/H^1(L,\Z)$, we set $ b = \sum_{i=1}^n x_i \theta_i$,
and consider a flat $\C$-line bundle with holonomy given by $\rho^b : \pi_1(L) \to \C^*$ given by
$$\rho^b(\gamma) = exp (2\pi \sqrt{-1}(b,\gamma))$$
Furthermore, we will always choose a special representative of $\C^\times$-line bundles whose connection behaves like a delta function. More precisely, for $L \cong \R^n / \Z^n$, we fix (oriented) hyper-tori $H_i = \epsilon_i + \R \langle e_i \rangle$ for $\epsilon_i \in \R/\Z$ so that the parallel transport over a path $\gamma$ is given by multiplying $z_i^{\pm}$ whenever $\gamma$ runs across $H_i$ where the sign in the exponent is determined by the parity of the intersection $\gamma \cap H_i$. 
(The it is essential that we choose such a flat connection to construct mirror functor. See \cite{CHLtoric} for more details.)
One can slightly enlarge this space by considering $(\Lambda_0)^\times$-line bundle (or combining $\C^\times$-bundle and boundary deformation by elements in $H^1(L,\Lambda_+)$), where the holonomy is still required to have valuation zero.

\subsection{Localized mirror functor}
Denote by $\MF_{A_\infty} (W)$ the $A_\infty$-category obtained from the dg-category $\MF(W)$
as explained in Appendix \ref{sec:dgsign}.
We recall the construction of localized mirror functor from \cite{CHL}. The convention in this paper is slightly different as  we will put the reference Lagrangian in the second slot of $\Hom( \, \cdot \, , \,\cdot)$. This makes the mirror functor covariant
and the signs are much simpler (We thank Sangwook Lee for this observation).
\begin{defn}\label{def:lmf}
There exists an $\AI$-functor $\mathcal{F}^{\bL}$ which assigns to a Lagrangian $L$ of $\Fuk(X)$ a matrix factorization $M_L$ :
$$\mathcal{F}^{\bL}:\Fuk(X) \to \MF_{A_{\infty}} \left(W_{\bL} \right) \qquad L \mapsto \left(\Hom (L, (\bL,b) ), { -} m_1^{0,b} \right).$$
Higher components of $\mathcal{F}^{\bL}$ are defined using $m_k$-operations
$$ \mathcal{F}^{\bL}_k : \Hom (L_1, L_2) \otimes \cdots \otimes \Hom (L_{k-1}, L_k) \to \Hom_{\MF_{A_\infty}} (M_{L_1}, M_{L_k})$$
given by 
$$(x_1, \cdots, x_k) \mapsto  \sum_{i \geq 1} m_{k+i} (x_1, \cdots, x_k, \bullet, b, \cdots,b)  $$
where $m_{k+i} (x_1, \cdots, x_k, \bullet, b,\cdots,b) $ is a map sending $y \in M_{L_k}$ to $m_{k+i} (x_1, \cdots, x_k, y,b \cdots, b)$.  
\end{defn}
Recall that we are using the convention
$$ \Hom_{\MF_{A_\infty}} (M_{L_1}, M_{L_k}) = \Hom_{\MF_{dg}} (M_{L_k},M_{L_1}).$$


\begin{thm}\cite{CHL}\label{lem:familyyoneda}
This defines a covariant $A_\infty$-functor (with no further sign correction).
\end{thm}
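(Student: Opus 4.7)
My plan is to recognize $\mathcal{F}^{\bL}$ as essentially the curved Yoneda embedding associated to the weakly unobstructed object $(\bL,b)$, and to verify the $A_\infty$-functor equations by applying the $A_\infty$-relations of $\mathcal{A}$ to input sequences that place $y$ and the $b$-insertions in the last slots. Writing the one-sided deformed operation
\begin{equation*}
m^{0,b}_{k+1}(x_1,\ldots,x_k,y) := \sum_{l\geq 0} m_{k+1+l}(x_1,\ldots,x_k,y,\underbrace{b,\ldots,b}_{l}),
\end{equation*}
the higher components of Definition \ref{def:lmf} become $\mathcal{F}^{\bL}_k(x_1,\ldots,x_k)(y) = m^{0,b}_{k+1}(x_1,\ldots,x_k,y)$, and the module differential of $M_L$ is $-m^{0,b}_1$. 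I first verify that $M_L$ is an object of $\MF_{A_\infty}(W_{\bL})$ by applying the $A_\infty$-relation of $\mathcal{A}$ to the input $(y,b,\ldots,b)$ and summing over lengths: inner-$m_s$ terms acting entirely on substrings of $b$'s combine, via the weak Maurer-Cartan condition $\sum_s m_s(b,\ldots,b) = W_{\bL}(b)\be_\bL$ and the $m_2$-unit axiom, to yield $W_{\bL}(b)\cdot y$, while inner-$m_s$ terms containing $y$ yield $(m^{0,b}_1)^2(y)$. Collecting gives $(-m^{0,b}_1)^2 = W_{\bL}(b)\cdot\mathrm{id}$.

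For the functor equation at arity $k\geq 1$, the same strategy with inputs $(x_1,\ldots,x_k,y,b,\ldots,b)$ summed over the number of trailing $b$'s produces terms of three types, organized by where the inner $m_s$ sits. Type (I), inner entirely among the $x_i$'s, reconstructs the right-hand-side terms $\mathcal{F}^{\bL}_{k-s+1}(\ldots,m_s(x_{j+1},\ldots,x_{j+s}),\ldots)$. Type (II), inner containing $y$ with input $(x_i,\ldots,x_k,y,b^r)$ for some $1\leq i\leq k+1$ and $r\geq 0$, reassembles after summing over $r$ and the outer trailing $b$'s into $\mathcal{F}^{\bL}_{i-1}(x_1,\ldots,x_{i-1})\circ \mathcal{F}^{\bL}_{k-i+1}(x_i,\ldots,x_k)$: the endpoints $i=1$ and $i=k+1$ give the $m_1^{\MF_{A_\infty}}$-differential of $\mathcal{F}^{\bL}_k$, while the middle values $2\leq i\leq k$ give the $m_2^{\MF_{A_\infty}}$-compositions on the functor side. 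Type (III), inner entirely among the trailing $b$'s, Maurer-Cartan-collapses to $W_{\bL}(b)\be_\bL$ inserted into an outer operation of arity $\geq k+2\geq 3$ and vanishes by the $m_2$-unit axiom. Assembling (I), (II), (III) yields precisely the $A_\infty$-functor equation at arity $k$.

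Covariance and the absence of an extra sign correction rest on two specific conventions in Definition \ref{def:lmf}: placing $(\bL,b)$ in the second slot of $\Hom$, so that every $b$-insertion sits after the module variable $y$ and no Koszul permutations are generated, and the source-target swap $\Hom_{\MF_{A_\infty}}(M_{L_1},M_{L_k}) = \Hom_{\MF_{dg}}(M_{L_k},M_{L_1})$ of Appendix \ref{sec:dgsign}, which converts the classically contravariant Yoneda functor into a covariant one and absorbs the $\MF_{A_\infty}$-composition sign into the natural composition order in $\MF_{dg}$. The main obstacle is the sign bookkeeping in the previous paragraph: one must check that the Koszul signs emitted by the $A_\infty$-relations of $\mathcal{A}$ in types (I) and (II) match those produced on the $\MF_{A_\infty}$ side via Appendix \ref{sec:dgsign}, and that the minus sign in $-m^{0,b}_1$ is exactly what makes the endpoint $i=1,k+1$ terms in type (II) reassemble into $m_1^{\MF_{A_\infty}}(\mathcal{F}^{\bL}_k)$ with the correct sign rather than its negative. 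Once these conventions are fixed, the matching reduces to a direct arity-by-arity comparison.
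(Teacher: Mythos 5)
Your proposal is correct and follows essentially the same route as the paper's proof: your one-sided operations $m^{0,b}_{k+1}(x_1,\ldots,x_k,y)$ are exactly the paper's $m^{\mathcal{C}}(\mathbf{x},y,e^b)$, and your type (I)/(II)/(III) decomposition of the $A_\infty$-relation applied to $(\mathbf{x},y,b,\ldots,b)$ reproduces term-by-term the paper's verification of $m_1^{\mathcal{D}}(\mathcal{F}^{\bL}_k(\mathbf{x})) + \sum m_2^{\mathcal{D}}(\mathcal{F}^{\bL}(\mathbf{x}^{(1)}),\mathcal{F}^{\bL}(\mathbf{x}^{(2)})) = \sum (-1)^{|\mathbf{x}^{(1)}|'}\mathcal{F}^{\bL}(\mathbf{x}^{(1)},m^{\mathcal{C}}(\mathbf{x}^{(2)}),\mathbf{x}^{(3)})$, with the same conventions doing the sign work (reference object in the second slot, $\delta=-m_1^{0,b}$, and the hom-reversal of Appendix \ref{sec:dgsign}). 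The only difference is cosmetic: you spell out the weak Maurer-Cartan/unit cancellation of the trailing-$b$ (type III) terms and the object-level check $(-m_1^{0,b})^2=W_{\bL}(b)\cdot\mathrm{id}$, which the paper leaves implicit.
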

For reader's convenience and to check signs, we give a proof.
\begin{proof}
Set $\mathcal{C} := \Fuk(X)$ and $\mathcal{D} := \MF_{A_\infty} (W)$.
Let us first consider the immersed case.
For a tuple $(x_1, \cdots, x_k)$ of composable morphisms in the Fukaya category, we have to check
$$ m_1^{\mathcal{D}} ( \mathcal{F}^{\bL}_k ( \mathbf{x}) ) + \sum m_2^{\mathcal{D}} (\mathcal{F}^{\bL} (\mathbf{x}^{(1)}), \mathcal{F}^{\bL} (\mathbf{x}^{(2)}) )= \sum (-1)^{|\mathbf{x}^{(1)}|'} \mathcal{F}^{\bL} (\mathbf{x}^{(1)}, m^{\mathcal{C}} (\mathbf{x}^{(2)}), \mathbf{x}^{(3)}).$$
where $\mathbf{x}^{(1)}$, $\mathbf{x}^{(2)}$ on the left hand side have positive lengths, and so does $\mathbf{x}^{(2)}$ on the right hand side.
We plug $y$ into both sides of the equation. Terms in the left hand side gives
\begin{eqnarray*}
 m_1^{\mathcal{D}} ( \mathcal{F}^{\bL}_k ( \mathbf{x}) ) (y) &=& \delta (m^{\mathcal{C}} ( \mathbf{x}, y, e^b) ) - (-1)^{|\mathcal{F}^{\bL}_k (\mathbf{x})|} m^{\mathcal{C}} (\mathbf{x}, \delta(y), e^b ) \\
 &=& {-} m^{\mathcal{C}} ( m^{\mathcal{C}} ( \mathbf{x},y, e^b), e^b) { -} (-1)^{|\mathbf{x}|'} m^{\mathcal{C}} (\mathbf{x}, m^{\mathcal{C}} (y, e^b), e^b)
\end{eqnarray*}
where $\delta = { - } m_1^{0,b}$, and
\begin{eqnarray*}
 m_2^{\mathcal{D}} (\mathcal{F}^{\bL} (\mathbf{x}^{(1)}), \Psi (\mathbf{x}^{(2)}) ) (y) &=& (-1)^{| \mathcal{F}^{\bL} (\mathbf{x}^{(1)})|} \mathcal{F}^{\bL} (\mathbf{x}^{(1)}) \left( \mathcal{F}^{\bL} (\mathbf{x}^{(2)}) (y) \right) \\
 &=& (-1)^{| \mathcal{F}^{\bL} (\mathbf{x}^{(1)})|} m^{\mathcal{C}} (\mathbf{x}^{(1)}, m^{\mathcal{C}} (\mathbf{x}^{(2)},y,e^b ), e^b) \\
 &=& { - } (-1)^{|\mathbf{x}^{(1)}|'} m^{\mathcal{C}} (\mathbf{x}^{(1)}, m^{\mathcal{C}} (\mathbf{x}^{(2)},y,e^b ), e^b) 
 \end{eqnarray*}
whereas the right hand side will read
\begin{eqnarray*}
(-1)^{|\mathbf{x}^{(1)}|'} \mathcal{F}^{\bL} (\mathbf{x}^{(1)}, m^{\mathcal{C}} (\mathbf{x}^{(2)}), \mathbf{x}^{(3)}) (y) &=& (-1)^{|\mathbf{x}^{(1)}|'} m^{\mathcal{C}} (\mathbf{x}^{(1)},m^{\mathcal{C}} (\mathbf{x}^{(2)}), \mathbf{x}^{(3)},y,e^b ).
\end{eqnarray*}
In the toric case, we need to assume that all the relevant Lagrangian intersections are away from the fixed hyper-tori.
In that case, $m_k^{b,0,\cdots,0}$ can be interpreted as recording the holonomy along the arc in $\bL$, which
can be topologically described by the intersection number with hyper-tori. The rest of the proof is similar and omitted.
\end{proof}

\section{Geometric approach for gluing local mirrors}\label{sec:geoM}
In this section, we provide an intuitive way to glue local mirror spaces.  It aims to give the readers an intuitive understanding of the gluing.  In order to justify the gluing formula and to glue the mirror functors, we shall use Floer theory and algebraic methods in the next section.

Recall that our local mirror chart was defined by Maurer-Cartan solution space. The idea of geometric approach is to consider a family of Lagrangians $\bL_t$ such that the Maurer-Cartan spaces of $\bL_0$ and $\bL_1$ are connected via the Maurer-Cartan spaces of $\bL_t$.   In a sense this is how Strominger-Yau-Zaslow constructs their mirror given a Lagrangian torus fibration, and this works well for toric manifolds for example as  any two different torus fibers can be connected via a family of torus fibers.  The Floer theory remains almost the same since the moduli spaces of holomorphic discs are isomorphic and only the symplectic area of holomorphic discs changes (\cite{Fukaya-famFl}). In this case their Maurer-Cartan spaces $H^1(L_t,\mathbb{\C})$ can be easily identified.

But in our approach, we have two new features.   Firstly,  in our case  the Lagrangians in the family may not be diffeomorphic
to each other as we allow immersed Lagrangians and  Lagrangian surgery at immersed points in the process. Secondly, we incorporate the notion of gauge hyper-surface of \cite{CHLtoric}, which provides additional freedom for coordinate change.

In the case of punctured Riemann surfaces, we will see that Lagrangian surgery is necessary to move the immersed Lagrangian in one pair of pants to another.
\subsection{Lagrangian surgery}
Consider an immersed Lagrangian $\bL$, with weak bounding cochain $b = \sum x_i X_i$.
Suppose $X_1$ is an immersed generator, and we will consider a Lagrangian surgery of $\bL$ at $X_1$ 
to obtain $\WT{\bL}$. 
The corresponding variable $x_1$ may be regarded as a formal surgery parameter  at $X_1$.
After surgery, we would like to introduce a holonomy parameter $\WT{x}_1$ for $\WT{\bL}$, which we want to identify with a non-zero $x_1$.

Let us recall the setup of Lagrangian surgery from Seidel \cite{Sei_graded} (see also Polterovich \cite{Po1})
Let $L_1,L_2$ be Lagrangian submanifolds in $M$, intersecting at a point $p \in L_1 \cap L_2$.
We can find an embedding $j:B^{2n} \subset \C^n \to M$ with $j(0) = p$ and $j^{-1} (L_1) = \R^n \cap B^n,
j^{-1} (L_2) = \sqrt{-1} \R^n\cap B^n$, $j^*\omega = \epsilon \cdot \omega_{std}$.
In $B^{2n}$, Lagrangian handle can be defined by 
$$H  = \cup_{t \in \R} \gamma(t) S^{n-1} \subset \C^n $$
where $\gamma:\R \to \C$ is an embedding with $\gamma(t)=t$ for $t \leq -1/2, \gamma(t)=\sqrt{-1}t$ for $t\geq 1/2$
and $\gamma(\R) \cap - \gamma(\R) = \emptyset$.
Then, Lagrangian surgery,  $L_1 \sharp L_2$,  is defined by taking out neighborhoods $L_i \cap j(B^{2n})$ of $p$ from $L_i$ for $i=1,2$ and attaching the Lagrangian handle $j(H \cap B^{2n})$. 
In $L_1 \sharp L_2$,  we have a codimension one submanifold $ \{0\} \times S^{n-1}$, which
is the {\em vanishing cycle $\bS$} of the Lagrangian surgery.

In our applications, we would like to perform Lagrangian surgery for Lagrangian intersections of index one.
In fact, Seidel have shown that Lagrangian surgery can be carried out for graded Lagrangian intersections
of index one:  on $\C^n$, quadratic complex $n$-form $ \Omega ^{\otimes 2} = (dz_1 \wedge \cdots \wedge dz_n)^{\otimes 2}$ can be used
to defined graded Lagrangian submanifold $L$, which means an additional data of grading $gr_L: L \to \R$ such that
$e^{2\pi i gr_L(x)} =\Omega ^{\otimes 2} (T_xL)$ for all $x$. For $gr_{L_1} \equiv 0$ and
$gr_{L_2} = 1-n/2$, the index at the origin equals one. 
\begin{lemma}[Lemma 2.13 \cite{Sei_graded}]
There exist a grading $gr_H$ for the Lagrangian handle $H$, which agree with that of  $L_1$ and $L_2$.
\end{lemma}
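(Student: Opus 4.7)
The plan is to construct $gr_H$ by explicit computation: at each point of $H$, evaluate the quadratic form $\Omega^{\otimes 2}$ on the tangent plane, obtain a nonvanishing $\C^*$-valued function along $H$, and then produce $gr_H$ as a continuous lift of $\frac{1}{2\pi}\arg$ of that function. The grading property is automatic by construction; the content of the lemma is that the boundary values match $gr_{L_1}=0$ and $gr_{L_2}=1-n/2$.

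First I would parametrize $H$ by pairs $(t,v)$ with $v\in S^{n-1}$ and identify the tangent space $T_{\gamma(t)v}H$ as spanned by the radial vector $\gamma'(t)\,v$ together with $(n-1)$ vectors of the form $\gamma(t)\,w$ with $w\in T_vS^{n-1}$. Choosing a positively oriented basis of $T_vS^{n-1}$, one computes
\[
\Omega(T_{\gamma(t)v}H)=\gamma'(t)\,\gamma(t)^{n-1}\cdot c(v),
\]
where $c(v)\in\R^*$ depends only on the chosen basis of $T_vS^{n-1}$. Squaring gives
\[
\Omega^{\otimes 2}(T_{\gamma(t)v}H)=\bigl(\gamma'(t)\bigr)^{2}\,\gamma(t)^{2(n-1)}\cdot c(v)^2,
\]
so the phase is independent of $v$ and varies smoothly with $t$. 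This lets me \emph{define} $gr_H(\gamma(t)v)$ to be a continuous function of $t$ alone, namely any continuous lift of $\tfrac{1}{2\pi}\arg\bigl((\gamma'(t))^2\gamma(t)^{2(n-1)}\bigr)$.

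Next I would pin down the lift by requiring that it equal $0$ on the region $t\leq -1/2$. There $\gamma(t)=t<0$ and $\gamma'(t)=1$, so $(\gamma'(t))^2\gamma(t)^{2(n-1)}=t^{2(n-1)}>0$ and the lift is identically $0$, matching $gr_{L_1}\equiv 0$. The only thing left is to check the value at $t\geq 1/2$. Since $\gamma\colon\R\to\C$ is an embedding avoiding $-\gamma(\R)$ and interpolating from the negative real axis to the positive imaginary axis, its image lies (up to choosing a side) in the upper half plane, and hence $\arg\gamma(t)$ varies continuously from $\pi$ down to $\pi/2$, while $\arg\gamma'(t)$ varies continuously from $0$ up to $\pi/2$. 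Accumulating,
\[
\arg\bigl((\gamma'(t))^2\gamma(t)^{2(n-1)}\bigr)\ \text{changes from}\ 0\ \text{to}\ 2\!\cdot\!\tfrac{\pi}{2}+2(n-1)\bigl(\tfrac{\pi}{2}-\pi\bigr)=\pi(2-n),
\]
so the continuous lift at $t\geq 1/2$ equals $(2-n)/2=1-n/2$, which agrees with $gr_{L_2}$.

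The main subtlety I expect is the sign/winding bookkeeping: one must fix the side of the half plane on which $\gamma$ lies, an orientation of $H$, and the branch of $\arg\gamma(t)$, and verify that these choices together produce the precise rational number $1-n/2$ rather than a competitor differing by an integer. Once that is pinned down, the construction of $gr_H$ is forced, and smoothness in $(t,v)$ follows from smoothness of $\gamma$ and of the sphere parametrization, so $gr_H$ is a genuine grading that glues to $gr_{L_1}$ and $gr_{L_2}$ on the overlap regions.
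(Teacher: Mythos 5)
The paper does not actually prove this lemma---it is imported verbatim from Seidel's \emph{Graded Lagrangian submanifolds} (Lemma~2.13)---so there is no in-paper argument to compare against. Your direct computation is essentially the standard proof, and it is correct in outline: the tangent space of $H$ at $\gamma(t)v$ is indeed spanned by $\gamma'(t)v$ and $\gamma(t)w$ with $w\in T_vS^{n-1}$, so $\Omega^{\otimes 2}(T_{\gamma(t)v}H)=(\gamma'(t))^2\gamma(t)^{2(n-1)}c(v)^2$ with $c(v)^2>0$, the phase depends on $t$ alone, and the whole lemma reduces to tracking the net change of $2\arg\gamma'+2(n-1)\arg\gamma$ from $t\le -1/2$ to $t\ge 1/2$. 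Your arithmetic $\pi(2-n)$, hence $gr_{L_2}=1-n/2$, is right, and it is consistent with the mod-$\Z$ check $\Omega^{\otimes2}(i\R^n)=(-1)^n=e^{2\pi i(1-n/2)}$.

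The one place where you assert more than the stated hypotheses give you is the winding of $\gamma'$. For $\arg\gamma$ the net change $-\pi/2$ really is forced: the image of $t\mapsto\gamma(t)/|\gamma(t)|$ is a connected subset of $S^1$, hence an arc, and $\gamma(\R)\cap(-\gamma(\R))=\emptyset$ forces that arc to avoid its antipode and so to have length $<\pi$; since it contains both $e^{i\pi}$ and $e^{i\pi/2}$ it must be the short arc through the second quadrant. No such argument controls $\arg\gamma'$: an embedded arc confined to a convex sector can still have tangent winding $\tfrac{\pi}{2}+2\pi k$ for $k\ne 0$ (a hook or spiral shape), which would shift the grading at $t\ge 1/2$ by the even integer $2k$ and break agreement with $gr_{L_2}=1-n/2$. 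So ``$\arg\gamma'$ increases from $0$ to $\pi/2$ with net change $\pi/2$'' is an additional normalization of the handle curve, satisfied by the standard $\gamma$ used in the surgery construction but not a consequence of ``embedding with $\gamma(\R)\cap-\gamma(\R)=\emptyset$'' alone. If you state that normalization explicitly (or phrase the lemma as existence of \emph{some} admissible $\gamma$, which is how it is used), your proof is complete.
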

We take this Lagrangian surgery as a local model in our applications.
In the setting of Fukaya-Oh-Ohta-Ono in Chapter 10 \cite{FOOO}, it is constructed as a graph of $df$ where $f(x) = \epsilon log |x|$,
and the above construction corresponds to the case $\epsilon <0$ thereof.  Moreover, they have shown the following theorem
(for a more precise formulation, we refer readers to the reference).
\begin{thm}[Theorem 55.5. \cite{FOOO}]\label{thm:sur_disc}
Let $u$ be an isolated Fredholm-regular $J$-holomorphic triangle with boundary on $L_0,L_1,L_2$ with one of its corner at $p \in L_1\cap L_2$ with multiplicity one.
Then for $\epsilon <0$, there exists a unique Fredholm-regular holomorphic strip $\WT{u}$ with boundaries on  $L_0, L_1 \sharp L_2$ and close to $u$ for sufficiently small $\epsilon$.
\end{thm}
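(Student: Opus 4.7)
The approach is a standard pre-gluing plus Newton--Picard iteration, together with a neck-stretching argument for uniqueness. The theorem is essentially saying that the $L_1 \cap L_2$ corner of $u$ can be smoothed out to produce a nearby strip on the surgered Lagrangian $L_1 \sharp L_2$, and all the analytic work happens in a shrinking neighborhood of $p$.

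In the Darboux chart $j$ around $p$, write $L_1 = \R^n$ and $L_2 = \sqrt{-1}\,\R^n$, and recall that under the $SO(n)$-symmetry of the handle $H = \bigcup_t \gamma(t)\,\sphere{n-1}$, the handle reduces to the embedded curve $\gamma(\R) \subset \C$. The grading condition (index one at $p$) guarantees that the corner of $u$ at $p$ has a canonical asymptotic expansion in strip-like coordinates $(s,t) \in [0,\infty) \times [0,\pi/2]$. The first step is to build a pre-glued approximate solution $u_\epsilon^{\mathrm{app}}$ by cutting off $u$ outside a ball of radius $\sim \sqrt{|\epsilon|}$ around $p$ and splicing in an explicit local model map with boundary on $H_\epsilon$, obtained from a holomorphic strip in $\C$ with boundary on $\R \cup \gamma(\R)$. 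One then estimates $\dbar u_\epsilon^{\mathrm{app}}$ in a weighted $L^p$-norm tailored to the neck and checks that the error decays like a positive power of $|\epsilon|$.

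Next, since $u$ is Fredholm-regular, the linearized Cauchy--Riemann operator $D_u$ admits a bounded right inverse $Q_u$. Transporting $Q_u$ through the cutoff/splicing produces an approximate right inverse $Q_\epsilon$ for the linearization $D_{u_\epsilon^{\mathrm{app}}}$; the key analytic estimate, which is the heart of the argument, is that $\|Q_\epsilon\|$ is uniformly bounded in $\epsilon$ in the chosen weighted Sobolev spaces. Given this, the standard contraction mapping principle produces a unique small perturbation $\xi_\epsilon$ with $\dbar \exp_{u_\epsilon^{\mathrm{app}}}(\xi_\epsilon) = 0$, and $\WT{u}_\epsilon := \exp_{u_\epsilon^{\mathrm{app}}}(\xi_\epsilon)$ is automatically Fredholm-regular for $\epsilon$ small. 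For uniqueness \emph{outside} the gluing family, I would argue by contradiction: any $J$-holomorphic strip $v_\epsilon$ sufficiently close to $u$ can, after rescaling near the vanishing cycle $\bS \subset L_1 \sharp L_2$, be shown via Gromov compactness to converge to the union of $u$ with the local handle model, and then the contraction-mapping uniqueness forces $v_\epsilon = \WT{u}_\epsilon$.

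The main obstacle is the uniform-in-$\epsilon$ bound on $Q_\epsilon$: one must choose Sobolev weights that simultaneously kill the growth contributed by the handle neck, are compatible with the original weighted norms used to establish Fredholm-regularity of $u$, and degenerate in a controlled way as $\epsilon \to 0$. This triple compatibility is precisely what the FOOO Chapter 10 formalism is designed to handle, and it is the step I would expect to occupy the bulk of the technical work; the rest of the argument is a routine application of implicit-function-theorem-style gluing, once the analytic setup is in place.
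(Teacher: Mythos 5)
This statement is not proved in the paper at all: it is quoted verbatim (as Theorem 55.5 of \cite{FOOO}) and the authors explicitly defer to that reference, so there is no in-paper argument to compare against. Your outline is a faithful scaffold of the standard gluing scheme that FOOO's Chapter 10 does carry out --- pre-gluing near the corner, uniform right inverse in $\epsilon$-dependent weighted norms, contraction mapping, and uniqueness via rescaling and Gromov compactness --- so at that level of resolution you are reconstructing the right proof.

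There is, however, one genuine gap: the hypothesis $\epsilon<0$ never does any work in your argument, yet it is the entire content of the theorem. As the paper remarks immediately after the statement, for $\epsilon>0$ one gets an $S^{n-2}$-\emph{family} of Fredholm-regular strips rather than a unique one, and your scheme as written would ``prove'' uniqueness for both signs. The sign enters through the classification of local models: the holomorphic maps in the Darboux chart with boundary on $\R^n$ and on the handle $H$, asymptotic to the corner of $u$. You assume this local model is the single $SO(n)$-invariant strip obtained by reducing to $\C$ with boundary on $\R\cup\gamma(\R)$, but a local solution need not be $SO(n)$-invariant, and for $\epsilon>0$ the non-invariant solutions form exactly the $S^{n-2}$ that produces the family. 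Before the pre-gluing and contraction-mapping machinery can be run, one must show that for $\epsilon<0$ the moduli of local models (modulo domain reparametrization) is a single regular point; this is the step where the geometry of $f(x)=\epsilon\log|x|$ and the orientation of the handle relative to the incoming triangle actually matter, and it is missing from your proposal.
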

They also mentioned that this theorem should extend to more general polygons in a straightforward way.
We remark that in the case of $\epsilon >0$ they show that there are $S^{n-2}$ dimensional family of corresponding Fredholm-regular holomorphic strips.

Now, we perform Lagrangian surgery at $X_1 \in \bL$. Suppose there exist a $J$-holomorphic polygon $u$ which contributes
to the Maurer-Cartan equation or the potential function for $\bL$, and they are of multiplicity one at $X_1$.
Then in view of the above theorem, it is natural to expect to that there exist a corresponding $\WT{u}$ which continues to contribute
to the Maurer-Cartan equation of the potential function for the Lagrangian $\WT{\bL}$ after surgery too.
In $\bL$, the fact the $u$ has corner at $X_1$ is recorded by the variable $x_1$. Let us introduce a corresponding holonomy variable $\WT{x}_1$ for $\WT{\bL}$ as follows.
\begin{defn}\label{def:vc}
Consider the vanishing cycle $\bS \subset \WT{\bL}$ of the surgery at $X_1$.
Consider a flat $\C^*$-connection whose holonomy is concentrated near this codimension
one submanifold $\bS$.
Namely, consider a flat $\C$-bundle (or in general $\Lambda_0^*$-bundle) on $\WT{\bL}$ whose 
holonomy is trivial away from the neighborhood of $\bS$, and is $\WT{x}_1$ along a curve transverse to
$\bS$ from $L_0$ to $L_1$. 
\end{defn}

Note that $\WT{x}_1$ is non-zero since it is holonomy, whereas $x_1$ could be zero.  In view of Theorem \ref{thm:sur_disc}, the gluing between the formal deformations of $\bL$ and that of its smoothing at $X_1$ is naively $\WT{x}_1=x_1$.  However, in general there are corrections since there could be polygons bounded by $\bL$ passing through $p$ without turning (changing branches), and there could also be polygons bounded by $\WT{\bL}$ passing through the vanishing cycle but whose corresponding polygons pass through the complement of $X_1$ instead of $X_1$.  All these are  automatically taken care of by the algebraic method introduced in the next section.

Let us illustrate the construction in the case of Riemann surfaces.
In this case $\bL$ is an oriented immersed curve, and depending on the choices of branches at the immersed
point there are two generators for Floer theory of $\bL$.
One is of odd degree, while the other is even.  As in a typical deformation theory, the odd-degree part governs deformations while the even-degree part governs obstructions. 

\begin{figure}[htb!]
    \includegraphics[scale=0.45]{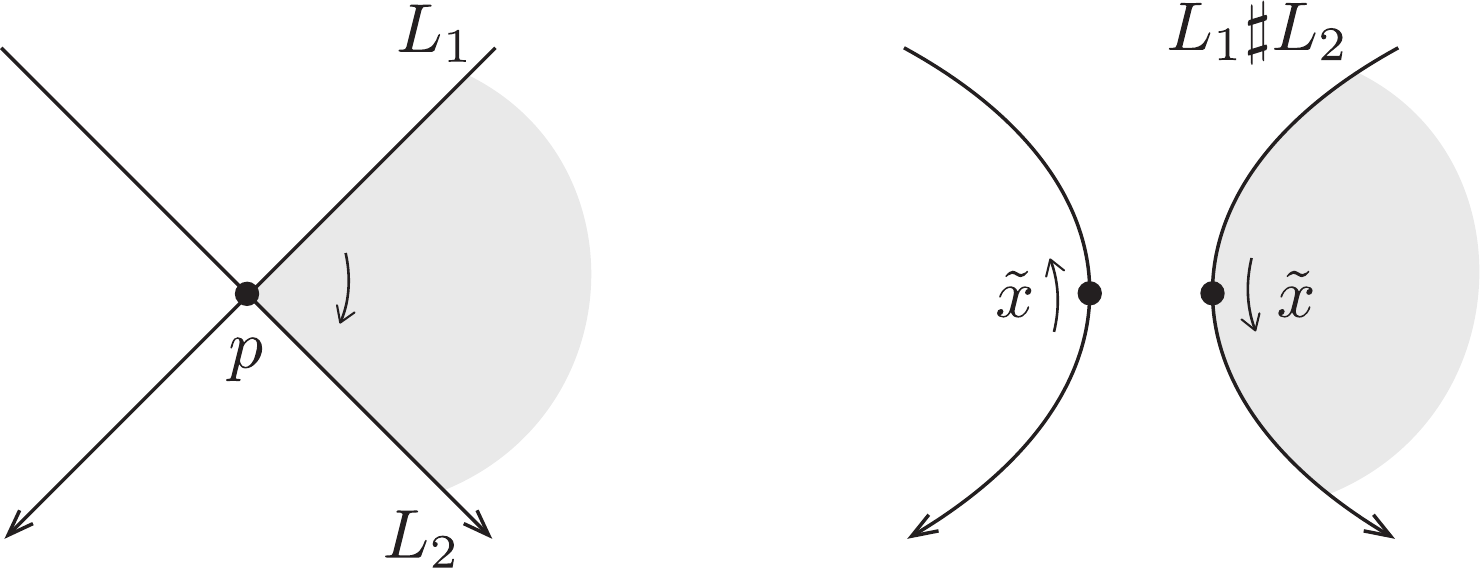}
    \caption{Smoothing at odd immersed point and the corresponding holonomy variable $\WT{x}$}
		\label{fig:smooth_out}
\end{figure}

We take a surgery at the odd immersed generator $X$ of $\bL$. The vanishing sphere $\bS$ is given by two points $S^0$ in $\WT{\bL}$. Thus we consider a $\C$ or $\Lambda_0$ line bundle on $\WT{\bL}$ whose holonomy across the points $S^0$
are illustrated in Figure \ref{fig:smooth_out}. Note that for any polygon $u$ in the surface which intersect $X$ only once with
acute angle as illustrated in the figure, there is an obvious corresponding polygon $\WT{u}$ which boundary on $\WT{\bL}$ (This corresponds to the theorem \ref{thm:sur_disc} in $\dim \bL =1$). The formal variable contribution near $p$ for $u$ and $\WT{u}$ are given by $x$ and $\WT{x}$ respectively.

\subsection{Gauge hypersurface and Floer isomorphisms}\label{sec:isotg}
The previous construction introduces a gauged hyper-surface on $\bL$.
In this subsection, we analyze the phenomenon when we move the  hypersurface $\bS$ in $\bL$ by a smooth isotopy.
The resulting flat connections have the same holonomy, and related by gauge equivalences as shown in \cite{CHLtoric}.
In \cite{CHLtoric} Lemma 5.3, it is shown that the resulting mirror matrix factorizations are isomorphic for these type of gauge equivalences.
Similarly, we have the following lemma for  $\AI$-algebra of Morse-model $CF(\bL,\bL)$ as in \cite{Seidel-g2} and \cite{Sheridan11}, where
generators of Floer theory is given by either self-intersection points or Morse critical points.
\begin{lemma}\label{lem:gc}
Let $\bL$ be an immersed Lagrangian and let $\bS_0,\bS_1$ are two choices of gauge hypersurfaces that are smoothly isotopic and  avoids immersed and Morse critical points. Let $\WT{x}$ be the holonomy across $\bS_i$. Then there exist an $\AI$-isomorphism between their respective $\AI$-algebras of $\bL$.
\end{lemma}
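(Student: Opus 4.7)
My plan is to construct an explicit strict $\AI$-isomorphism of the form $f=(f_1,0,0,\ldots)$ given by a diagonal rescaling of generators, following the template of Lemma 5.3 of \cite{CHLtoric} but adapted to the Morse-Bott $\AI$-algebra of an immersed Lagrangian. The isomorphism should be dictated by a gauge transformation on $\bL$ relating the two flat connections.

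First, I would use the smooth isotopy to produce a $1$-chain $C \subset \bL$ (the trace of the isotopy) with $\partial C = \bS_1 - \bS_0$ and with $C$ disjoint from all self-intersection points of $\bL$ and from all chosen Morse critical points. From $C$ I extract a locally constant $\Lambda_0^\times$-valued function $g$ on $\bL \setminus C$ that jumps by a factor of $\WT{x}$ across each component of $C$, which realizes the gauge equivalence between the two flat $\Lambda_0^\times$-connections associated to $\bS_0$ and $\bS_1$. For each Floer generator $X$, I then extract a scalar $\mu_X \in \Lambda_0^\times$ from $g$: for an immersed generator sitting at a self-intersection point $p$, with incoming branch $p_{\mathrm{in}}$ and outgoing branch $p_{\mathrm{out}}$ (as dictated by the grading convention), set $\mu_X = g(p_{\mathrm{in}})\,g(p_{\mathrm{out}})^{-1}$; for a Morse generator we can arrange (after adjusting the isotopy in a contractible neighborhood) that $\mu_X = 1$.

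The bulk of the argument is then the verification that $f_1(X) := \mu_X X$ intertwines the two sets of $A_\infty$-operations. For each rigid holomorphic polygon $u$ with corners $X_0, X_1, \ldots, X_k$ contributing to $m_k$, its weight under the gauge $\bS_j$ is
$$W^{\bS_j}(u) \;=\; \WT{x}^{\sum_i \alpha_i(u)\cdot \bS_j},$$
where $\alpha_i(u)$ denotes the $i$-th boundary arc of $u$. The key identity is
$$\alpha_i(u) \cdot (\bS_1 - \bS_0) \;=\; \alpha_i(u) \cdot \partial C \;=\; c(X_{i+1}^{\mathrm{in}}) - c(X_i^{\mathrm{out}}),$$
where $c(\cdot)$ records the branch-value of $g$ (so $g = \WT{x}^{c}$). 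Summing over $i$ and regrouping the boundary contributions at each corner produces exactly the telescoping product $\prod_j \mu_{X_j}^{\pm 1}$ needed to match $f_1\circ m_k^{\bS_1}$ with $m_k^{\bS_0} \circ f_1^{\otimes k}$; the asymmetric sign on the output corner $X_0$ is absorbed by the in/out branch convention used to define $\mu_X$. Invertibility of $f$ is automatic since each $\mu_X$ lies in $\Lambda_0^\times$, so $f$ is an $\AI$-isomorphism.

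The main technical obstacle is the careful bookkeeping of in-branches versus out-branches at the corners, together with consistency across all degrees of immersed generators (recall each self-intersection point contributes a pair of generators of complementary parities). A mismatch in these conventions would break the telescoping identity. Once this is set up compatibly, the $A_\infty$-equation reduces to the topological identity $\partial C = \bS_1 - \bS_0$ evaluated against the boundary of each polygon, in direct parallel with the matrix-factorization argument in \cite{CHLtoric}. A small additional check is needed for polygons whose boundary contains Morse flow line segments rather than pure $\bL$-arcs, but since the isotopy avoids the Morse critical points the same intersection-chasing argument applies verbatim.
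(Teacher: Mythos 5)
Your mechanism is the right one, and it is essentially the paper's argument packaged globally: the paper decomposes the isotopy into elementary moves, each crossing a single immersed or Morse point and rescaling the corresponding generator by $\WT{x}^{\pm 1}$, whereas you handle all crossings at once via the chain $C$ with $\partial C = \bS_1 - \bS_0$ and the duality $\alpha\cdot\partial C = \partial\alpha\cdot C$, which telescopes over the corners of each polygon. That repackaging is legitimate and arguably cleaner than the one-crossing-at-a-time decomposition.

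However, as written your construction contains a self-defeating hypothesis. You require the trace $C$ to be \emph{disjoint from all self-intersection points and Morse critical points}. Since your gauge function $g$ is (up to the garbled phrasing) $\WT{x}^{\mathbf{1}_C}$, disjointness of $C$ from the branch points forces $g(p_{\mathrm{in}}) = g(p_{\mathrm{out}}) = 1$ for every generator, hence $\mu_X \equiv 1$ and $f_1 = \mathrm{id}$; your argument then only shows that the two $\AI$-structures literally coincide, which is true in that degenerate situation but is precisely the case with no content. The substance of the lemma --- and of the worked example in Section 3.2, where moving the gauge point across $P$ changes $\delta(P) = \WT{x}\,Q$ into $\delta(P) = Q$ --- is the case where the swept region $C$ \emph{does} contain branch points of immersed generators, so that some $\mu_X = \WT{x}^{\pm 1}$. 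What must avoid the critical points is only $\partial C = \bS_1 - \bS_0$ (so that the endpoint evaluation $\partial\alpha\cdot C$ is well defined), not the interior of $C$. Relatedly, in the one-dimensional case $C$ is top-dimensional in $\bL$, so ``$g$ jumps across each component of $C$'' is not meaningful; the correct statement is that $g$ is locally constant on $\bL\setminus(\bS_0\cup\bS_1)$, equal to $\WT{x}$ on the swept region and to $1$ elsewhere. With these corrections your telescoping identity and the intertwining check go through as described, and Morse generators are automatically fixed ($\mu = 1$) because they have a single branch --- no adjustment of the isotopy is needed for them.
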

\begin{proof}
We may assume that $\bS_0,\bS_1$ are smoothly isotopic and the isotopy intersects only one of immersed or Morse critical points,
say $p$. For Morse flows, there is no holonomy contribution as we are computing self Hom's.
Thus holonomy contribution for $\bS_i$ is given by the intersection multiplicities of arcs between immersed generators of
$J$-holomorphic polygons. Therefore, it is easy to see that the differences of holonomy for $\bS_0$ and $\bS_1$ appear for arcs starting or ending at $p$. If $p$ is a corner of a $J$-holomorphic polygon, there are two boundary arcs connected to $p$ which is
affected by the smooth isotopy.  It is not difficult to check that  the correspondence $ p \mapsto \WT{x}^{\pm 1} \cdot p$
provides the desired isomorphism (precise sign can be chosen from the orientations of smooth isotopy).
\end{proof}

%
%

Let us illustrate this in the case of Riemann surfaces
Consider two curves intersecting at $P$ and $Q$ with $\bS_0$, $\bS_1$ illustrated as in Figure \ref{fig:movehol},
where the smooth isotopy moves $\bS_0$ to $\bS_1$ along the minimal path crossing $P$.
Then the Floer differential  $\delta$, which is
$$\delta_{\bS_0}(P) = \WT{x} Q, \quad \delta_{\bS_1}(P) = Q$$
Therefore, isomorphism of chain complexes are obtained by setting $P' = \WT{x}P$, so that $\delta_{\bS_1}(P') = \WT{x}Q$.
For the bounding cochains, the chain isomorphism sends $pP$ to $pP' = p\WT{x}P$. Thus we may set
$p' = p\WT{x}$. If the strip contributes to the potential, the corresponding terms are $\WT{x}pq$ for $\bS_0$, and $p'q$ for $\bS_1$. Note that the potential remains the same with the relation $p'= p \WT{x}$.

\begin{figure}[htb!]
    \includegraphics[scale=0.45]{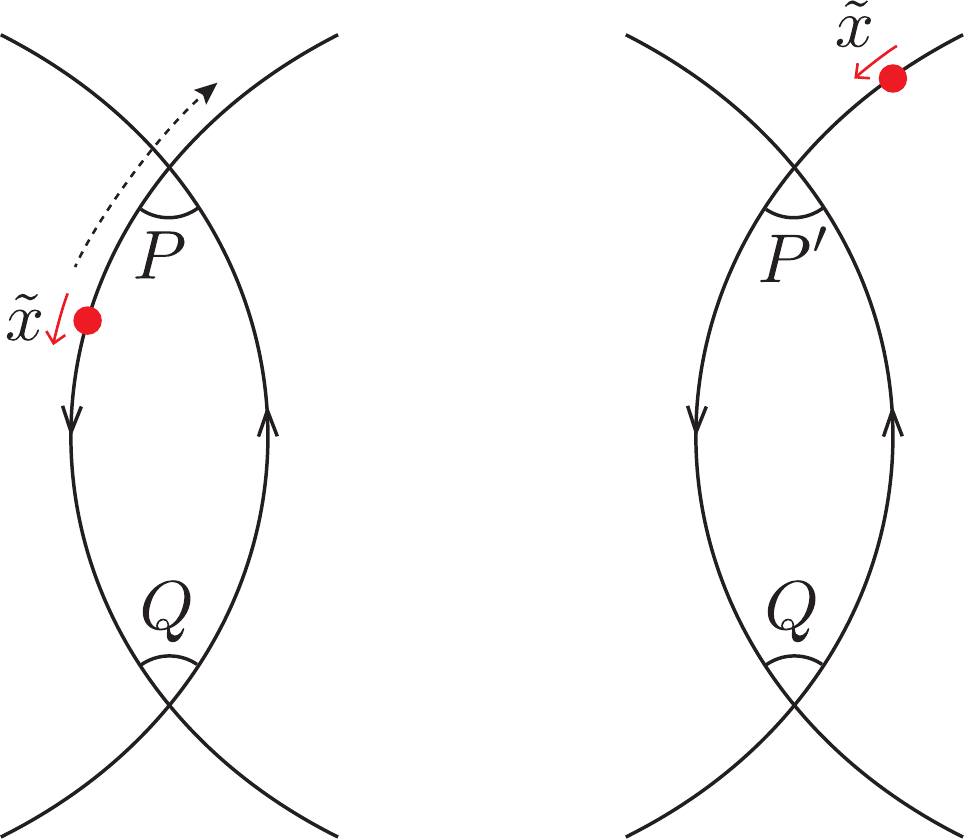}
    \caption{}
		\label{fig:movehol}
\end{figure}

For more complicated smooth isotopy, a decomposition of isotopy provides a composition of the above chain isomorphisms.
We can observe that  the choice of a smooth isotopy between $\bS_0$, $\bS_1$ can give rise to different chain isomorphisms,
and we will see that this is related to a choice of mirror Landau-Ginzburg mirror model (the choice of $k \in \Z$ in the next subsection).

\subsection{Illustration for the case of 4 punctured sphere}\label{sec:4sp}
Let us illustrate the geometric approach for the case of 4-punctured sphere (denote it by $X$), which is given by the gluing of
two pairs of pants. Each pair of pants will give rise to mirror $\C^3$-charts, and for any given $k \in \Z$, we will show
how to obtain a coordinate change between these charts. As a result, we obtain  a  mirror  Landau-Ginzburg model 
on a toric Calabi-Yau 3-fold  $$\check{X} = \cO_{\bP^1}(-k) \oplus \cO_{\bP^1}(k-2),$$
which is given by $U_i = \C^3 = \mathrm{Spec} \, \C[x_i,y_i,z_i]$ for $i=1,2$, whose intersection $U_1\cap U_2$ is $\C^* \times \C^2$ \begin{align}\label{eq:kk2}
&\mathrm{Spec} \, \C[x_1,x_1^{-1},y_1,z_1] \to \mathrm{Spec} \, \C[x_2,x_2^{-1},y_2,z_2], \nonumber \\
&x_1 = x_2^{-1}, y_1 = z_2 x_2^{k}, z_1 = y_2 x_2^{2-k}.
\end{align}
The Landau-Ginzburg superpotential $W:\check{X} \to \C$ is given by 
$$W = x_1y_1z_1 = x_2y_2z_2.$$

\begin{figure}[htb!]
    \includegraphics[scale=0.45]{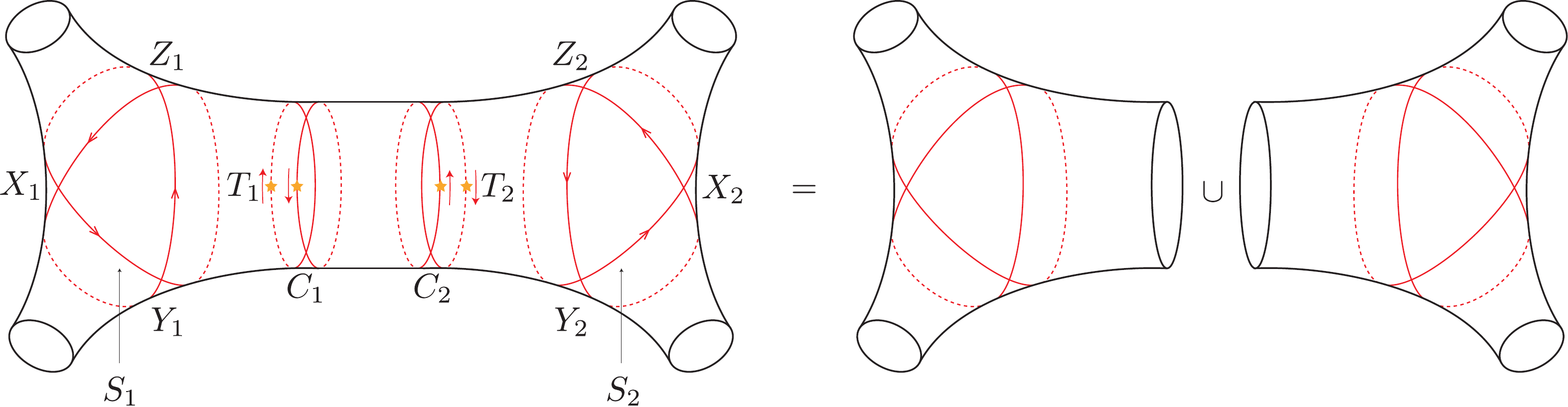}
    \caption{A pair-of-pants decomposition of the four-punctured sphere and immersed Lagrangians.}
		\label{fig:pp-decomp}
\end{figure}

Consider a pair-of-pants decomposition on a 4-punctured sphere as in Figure \ref{fig:pp-decomp}, consider
immersed $S^1$, denoted as  $\bL_1, \bL_2$ in each pair-of-pants.
Recall the such an immersed Lagrangian was first considered by Seidel \cite{Seidel-g2} to prove mirror symmetry for a genus two surface (or its orbifold quotient $\mathbb{P}^1_{5,5,5}$).
There are three transverse immersed points, giving the immersed generators $X,Y,Z$ in odd degree and $\bar{X},\bar{Y},\bar{Z}$ in even degree.  The Floer complex is $\CF(L,L) = \mathrm{Span}\{\mathbf{1},X,Y,Z,\bar{X},\bar{Y},\bar{Z},\mathrm{pt}\}$ as a vector space.

In \cite{CHL}, we have shown that the boundary deformation $b=x_iX_i+y_iY_i+z_iZ_i$ (for $i=1$ or $2$) satisfies weak Maurer-Cartan equationin the sense of Fukaya-Oh-Ohta-Ono, and defined a localized mirror functor, which extends
to hold in 4-punctured sphere $X$.
\begin{lemma}[c.f. \cite{CHL}] \label{lem:pp}
We equip $L_i$ a flat $\C$-bundle of holonomy $(-1)$ or equivalently non-standard spin structure. Then $b=x_iX_i + y_iY_i + z_iZ_i$ satisfies weak Maurer-Cartan equation for $x_i,y_i,z_i \in \Lambda_0$.
\end{lemma}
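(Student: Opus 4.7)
The plan is to reduce the claim to the pair-of-pants calculation already carried out in \cite{CHL}. The immersed Seidel curve $\bL_i$ is compactly supported in one pair of pants $P_i \subset X$ of the decomposition in Figure \ref{fig:pp-decomp}. If every non-constant $J$-holomorphic polygon bounded by $\bL_i$ in $X$ is already contained in $P_i$, then the part of the $\AI$-algebra of $\bL_i$ that is generated by its immersed points---which is all that $b$ probes---coincides with the one computed inside $P_i$ alone, and the desired conclusion follows verbatim from the pair-of-pants case of \cite{CHL}.

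To verify the confinement property, I would choose the ambient almost-complex structure $J$ on $X$ so that the seam circles of the pair-of-pants decomposition are $J$-holomorphic; equivalently, a collar neighbourhood of each seam is identified with a standard holomorphic cylinder. Since $\bL_i$ is disjoint from the seam, any $J$-holomorphic polygon $u$ with $\partial u \subset \bL_i$ that reached the seam would have to touch it tangentially at an interior point, contradicting the open mapping theorem; packaged via the maximum principle applied to the radial coordinate of the cylindrical neighbourhood, this is a standard punctured-Riemann-surface argument, essentially the one used by Lee in \cite{Lee}.

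Once confinement is established, \cite{CHL} enumerates the polygons contributing to $m_0(e^b)$ for $b = x_iX_i + y_iY_i + z_iZ_i$: inside $P_i$ the only such polygons are the two families of Seidel triangles, one with vertices $X_i,Y_i,Z_i$ and one with vertices $\bar X_i,\bar Y_i,\bar Z_i$. Because $b$ lies in the span of the odd generators, only the odd family contributes (through its various cyclic reparametrizations), giving a multiple of $x_i y_i z_i \cdot \be_{\bL_i}$. The role of the non-trivial spin structure (equivalently the $\C^\times$-bundle of holonomy $-1$) is precisely to prevent the three cyclic permutations from cancelling, so $m_0(e^b) = W_i(b) \cdot \be_{\bL_i}$ with $W_i = \pm x_i y_i z_i$, which is the weak Maurer-Cartan equation. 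Convergence for $x_i,y_i,z_i \in \Lambda_0$ is automatic because only finitely many polygons contribute.

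The main obstacle is the confinement step: it requires a choice of $J$ compatible with the cylindrical structure near each seam, and one must check that such a $J$ can be taken to make the relevant moduli problems regular. Should the simultaneous compatibility with cylindrical structure and regularity prove delicate, a fallback is to invoke a neck-stretching or SFT-type degeneration along the seams, as in \cite{Lee}, to extract the pair-of-pants contribution in the limit. Once confinement is in place, the remainder is a straightforward sign reading from \cite{CHL}.
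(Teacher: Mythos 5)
There is a genuine gap at the final step, and it concerns the actual content of the lemma. The weak Maurer--Cartan equation requires that every even output of $m_0(e^b)$ other than the unit vanish, i.e.\ the coefficients of $\bar X_i,\bar Y_i,\bar Z_i$ must be zero. These coefficients are not produced by polygons with all three corners as inputs: the two $X_iY_iZ_i$-triangles $\Delta_1,\Delta_2$ (upper and lower hemisphere, exchanged by the reflection symmetry) also contribute through $m_2$, e.g.\ $m_2(x_iX_i,y_iY_i)=x_iy_i\,T^{\omega(\Delta_1)}\bar Z_i$ and $m_2(y_iY_i,x_iX_i)=x_iy_i\,T^{\omega(\Delta_2)}\bar Z_i$, and these are exactly the terms that threaten weak unobstructedness. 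The paper's proof shows that with the trivial spin structure these two terms \emph{add up} (so $b$ is obstructed), whereas the holonomy $(-1)$ (equivalently the non-standard spin structure) introduces a relative sign so that they \emph{cancel}, using the equal-area condition $\omega(\Delta_1)=\omega(\Delta_2)$ coming from the $\Z/2$-symmetric choice of $\bL_i$. Your proposal asserts the opposite: that the spin structure ``prevents cancellation'' and that the only output is a multiple of $x_iy_iz_i\cdot\be_{\bL_i}$. This both omits the $\bar X_i,\bar Y_i,\bar Z_i$ outputs entirely and inverts the role of the spin structure, so as written the argument does not establish $m_0(e^b)=c\cdot\be_{\bL_i}$.

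The confinement step is a reasonable (if over-engineered) substitute for the paper's bare assertion that the only polygons with corners among $X_i,Y_i,Z_i$ are the two minimal triangles: in a punctured surface one can argue directly that an immersed polygon with boundary on $\bL_i$ and convex corners at the immersed points must be one of the two triangles, since anything larger would have to engulf a puncture. (As phrased, ``seam circles are $J$-holomorphic'' is not meaningful in real dimension two; the maximum-principle version applied to the log-radial coordinate of a standard annular collar is what you want.) You should also record why constant polygons are excluded: any additional corner of such a polygon would have to be one of the even generators $\bar X_i,\bar Y_i,\bar Z_i$, which do not appear in $b$.
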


\begin{figure}[htb!]
    \includegraphics[scale=0.5]{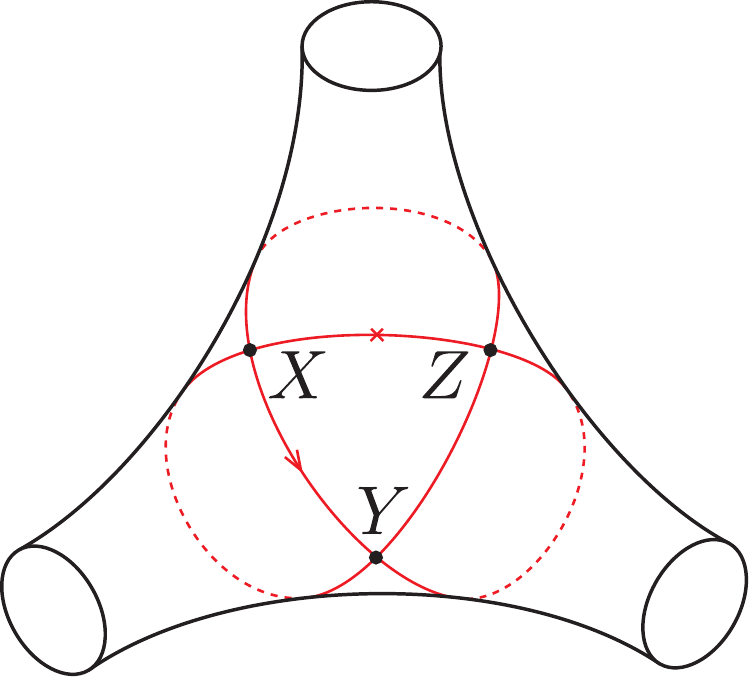}
    \caption{Seidel Lagrangian in a pair-of-pants}\label{fig:Seidel_Lag}
\end{figure}

\begin{proof}
Consider $m_0^b=\sum_{k>0} m_k(b,\ldots,b)$ which has even degree.  We need to argue that the coefficients of $\bar{X}_i,\bar{Y}_i,\bar{Z}_i$ are zero.  It is easy to see that constant polygons do not contribute to $m(e^b)$ because only allowed inputs are $X_i,Y_i,Z_i$.  Also, it is not difficult to see that only polygon with boundary on $\bL_i$ with corners $X_i,Y_i,Z_i$are two $X_iY_iZ_i$-triangles, say $\Delta_1,\Delta_2$ where $\Delta_1,\Delta_2$ are in the upper and lower hemisphere respectively. With trivial spin structure on $L$ or flat bundle of trivial holonomy, we have 
$$m_2(X_i,Y_i) = \bar{Z}_i T^{\omega(\Delta_1)}, m_2(X_i,Y_i) = \bar{Z}_i T^{\omega(\Delta_2)}.$$ Thus they do not cancel out in $m(e^b)$.
But if we impose non-trivial spin structure on $L_i$, or a flat $\C$-bundle of holonomy $(-1)$, $m_2(X_i,Y_i)$ and $m_2(Y_i,X_i)$ cancel out if 
$\omega(\Delta_1) = \omega(\Delta_2)$. The same goes for $\bar{X}_i,\bar{Y}_i$ output. As the remaining even output is $\be_i$, we obtain the claim.
\end{proof}
%
%
%
Let us only consider $\C^3$-part, i.e. $x_i,y_i,z_i \in \C$ to obtain $\C$-valued mirror space.
For convenience in sign computation, we take  a non-trivial spin structure on $L_i$, which is represented by a generic point {\color{red}$\times$} on the segment between $X,Z$ in Figure \ref{fig:Seidel_Lag}.

We will perform surgery at $X_1$ of $\bL_1$ in Figure \ref{fig:pp-decomp}).
The immersed Lagrangian becomes a union of two circles after the smoothing, which we will call as a pair-of-circles. 
By translating a pair-of-circles toward the second pair-of pants and we want to identify it with the surgery of $\bL_2$ in $X_2$. For a precise identification, we will need an isotopy of gauge-hypersurfaces as we explain below.
We remark that in this way,  a pair-of-pants decomposition induces a family of immersed Lagrangians over a trivalent graph
in this case.

For a pair-of-circles $\tilde{L}$(  $C_1$  shown in Figure \ref{fig:decomp}), we denote by $t$ the holonomy coordinate corresponding to the flat $\C^\times$-connection on $\tilde{L}$ with holonomy concentrated at the vanishing cycle $\bS$. Here $\bS$ is just a union of two points $S^0$ (see Definition \ref{def:vc}, and Figure).

We can also consider its weak Maurer-Cartan equation as in the case of Seidel Lagrangian.
The following lemma can be proved as in Lemma \ref{lem:pp}
\begin{lemma}\label{lem:dc}
For a pair-of-circles $\tilde{L}$, $b = yY +zZ$ with holonomy $\nabla^t$ (for $t \in \C^\times$)  satisfies the
weak Maurer-Cartan equation if it is $\Z/2$-symmetric.
\end{lemma}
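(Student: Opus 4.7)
The plan is to follow the same strategy as in the proof of Lemma \ref{lem:pp}, adapted to account for the smoothing at $X_1$ and the presence of the holonomy $\nabla^t$. After Lagrangian surgery at $X_1$, the odd immersed generators of $\tilde{L}$ reduce to $\{Y,Z\}$, and the even generators we must control in $m_0^b$ are $\bar{Y},\bar{Z}$ (the unit $\be_{\tilde{L}}$ contribution will collect into a scalar, as desired). Constant polygons do not contribute since only $Y,Z$ are allowed as inputs to $m(e^b)$. So it suffices to show that the $\bar{Y}$- and $\bar{Z}$-components of $m(e^b)$ coming from nonconstant $J$-holomorphic polygons vanish.

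First, I would classify all $J$-holomorphic polygons $u$ bounded by $\tilde{L}$ whose corners lie in $\{Y,Z\}$ and whose output is $\bar{Y}$ or $\bar{Z}$. By Theorem \ref{thm:sur_disc} applied to the smoothing at $X_1$, each such polygon is either the image of an analogous polygon on $\bL_1$ whose $X_1$-corner is broken open into a strip crossing the vanishing cycle $\bS$ once, or a polygon on the pair-of-circles whose boundary crosses $\bS$ some integer number of times. In either case the contribution of $u$ to $m(e^b)$ carries a weight of the form $T^{\omega(u)} t^{n(u)}$, where $n(u)\in\Z$ records the signed intersection of $\partial u$ with $\bS$.

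Next, the $\mathbb{Z}/2$-symmetry of $\tilde{L}$ (exactly as in Lemma \ref{lem:pp}, where the reflection symmetry of the Seidel Lagrangian guaranteed $\omega(\Delta_1)=\omega(\Delta_2)$) pairs each such polygon $u$ with a mirror polygon $u'$ having the same symplectic area, the same $\bS$-crossing number, and the same $\bar{Y}$ or $\bar{Z}$ output. Equipping $\tilde{L}$ with the non-trivial spin structure (equivalently, a flat $\C^\times$-bundle of holonomy $(-1)$) as in Lemma \ref{lem:pp} forces the pair $(u,u')$ to contribute with opposite signs, so their contributions cancel in $m(e^b)$. Consequently $m(e^b) = W_{\tilde{L}}(y,z,t)\cdot \be_{\tilde{L}}$ for some scalar $W_{\tilde{L}}\in \Lambda$, which is the weak Maurer-Cartan equation.

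The main obstacle will be verifying that the $\mathbb{Z}/2$-symmetry genuinely pairs up \emph{all} polygons with output $\bar{Y}$ or $\bar{Z}$, not only those descended from the two $XYZ$-triangles $\Delta_1,\Delta_2$ of the original pair-of-pants. After smoothing and isotoping the pair-of-circles, new polygons (for example ones wrapping partially around one component of the circle) can a priori contribute, and one must check that the $\mathbb{Z}/2$-action preserves both their area and their $\bS$-crossing number. This is automatic provided $\tilde{L}$ and the gauge hypersurface $\bS$ are chosen symmetrically, which can be arranged by performing the surgery and the subsequent isotopy in a $\mathbb{Z}/2$-equivariant fashion. Any residual polygons not paired by the symmetry would have to be ruled out by dimension, energy, or by checking that their outputs live in the $\be_{\tilde{L}}$-direction.
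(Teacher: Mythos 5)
Your overall architecture is the one the paper intends: the paper's own proof is literally ``can be proved as in Lemma \ref{lem:pp}'', i.e.\ rule out constant polygons, observe that the two $XYZ$-triangles of the Seidel Lagrangian become two $\Z/2$-related bigons through the vanishing cycle $\bS$, and cancel their $\bar{Y}$- and $\bar{Z}$-contributions using equal areas, equal holonomy weights $t^{n(u)}$, and opposite signs. Your identification of the polygons via Theorem \ref{thm:sur_disc}, your bookkeeping of the $\bS$-crossing number, and your insistence that the surgery and gauge hypersurface be chosen $\Z/2$-equivariantly are all consistent with what the paper does.

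There is, however, one concrete misstep at exactly the delicate point of the argument. You prescribe equipping $\tilde{L}$ with ``the non-trivial spin structure (equivalently, a flat $\C^\times$-bundle of holonomy $(-1)$) as in Lemma \ref{lem:pp}'' and assert that this forces the paired polygons to contribute with opposite signs. The paper states the opposite convention immediately after the lemma: one must take the \emph{standard} spin structure on each circle component of $\tilde{L}$, precisely because the sign bookkeeping has changed after surgery --- the role formerly played by the marked point $\times$ on the Seidel Lagrangian is now partly absorbed into the holonomy crossing of $\bS$. Since the entire cancellation hinges on the two bigons acquiring opposite signs while carrying the \emph{same} power of $t$ and the same area, getting the spin structure wrong would flip the relative sign and make the $\bar{Y}$, $\bar{Z}$ components add rather than cancel, so the lemma would fail as you have set it up. To repair this you should either verify directly which spin structure on the two circles produces the sign $-1$ between the upper and lower bigons (taking into account that each bigon crosses one of the two points of $\bS$ and hence already carries a factor of $t$), or note that on a circle the discrepancy between the two spin structures at a point of $\bS$ amounts to the substitution $t \mapsto -t$, and then fix the convention accordingly. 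The rest of your argument, including the worry about additional wrapping polygons, is sound and is handled exactly as you suggest by the $\Z/2$-equivariance of the construction.
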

We remark that we need to choose $\tilde{L}$ with standard spin structures on each circles, 
which is related to sign convention with holonomy contribution. Hence, the resulting deformation space of $\tilde{L}$ is 
$$\mathrm{Spec} \, \C[t,t^{-1},y,z] = \C^\times \times \C^2.$$

For the potential function of $\tilde{L}$, the $XYZ$- triangle bounded by $\bL$ corresponds to a bi-gon bounded 
by $\tilde{L}$, with corners $Y,Z$ and passing through the vanishing $\bS^0$.  From this observation, we assert the coordinate change
$$ x_1 = t. $$
Thus the deformation space $\C^3$ of the Seidel Lagrangian $\bL_1$ is glued with the deformation space $\C^\times \times \C^2$ of its smoothing $\tilde{L}$ at the immersed point $x$ via $x = t$, with the other two variables $y$ and $z$ unchanged.
We can perform the same construction for the Seidel Lagrangian $\bL_2$ and obtain the pair-of-circles $C_2$.

Now let us compare the two pair-of-circles $C_1,C_2$. 
Each of them have a   flat $\C^\times$-connections on $\tilde{L}$ with holonomy concentrated at the vanishing $\bS^0$.  A key observation is that the positions of the vanishing $\bS^0$ on the two smoothings are different.  
Let's denote them by $T_1$ and $T_2$ respectively
We need to move one to the other by smooth isotopy of gauge-hypersurfaces in the previous subsection.

As explained in Lemma \ref{lem:gc}, when a gauge-hypersurface moves across an immersed point (say $Y$) of the Lagrangian, there is a non-trivial change of coordinates in the Floer theory, and observe that collection of holomorphic polygons which pass through the gauge-hypersurface changes too.  The coordinate change is given by
$$ y \mapsto ty $$
(while $x$ and $t$ remains unchanged) in this case. 
Similarly, one can move another point in a gauge-hypersurface of $C_1$ through $Z$ to obtain the change
of coordinate $z \mapsto tz$. 
More precisely, we can move $T_1$ to $T_2$ in a way such that the immersed points marked by $Y_1$ and $Z_1$ are passed through exactly once. Now, after matching the position of $T_1,T_2$, we find that the holonomies are opposite at the point. Hence we obtain the relation $t_1 = t_2^{-1}$.
 Therefore, the gluing of the deformation spaces is 
$$\mathrm{Spec} \, \C[t_1,t_1^{-1},y_1,z_1] \to \mathrm{Spec} \, \C[t_2,t_2^{-1},y_2,z_2], t_1 = t_2^{-1}, y_1 = z_2 t_2, z_1 = y_2 t_2. $$
Therefore, we obtain the resolved conifold $\check{X} = \cO_{\bP^1}(-1) \oplus \cO_{\bP^1}(-1)$ after gluing the four deformation spaces together, see Figure \ref{fig:mirror_glue}.

\begin{figure}[htb!]
    \includegraphics[scale=0.55]{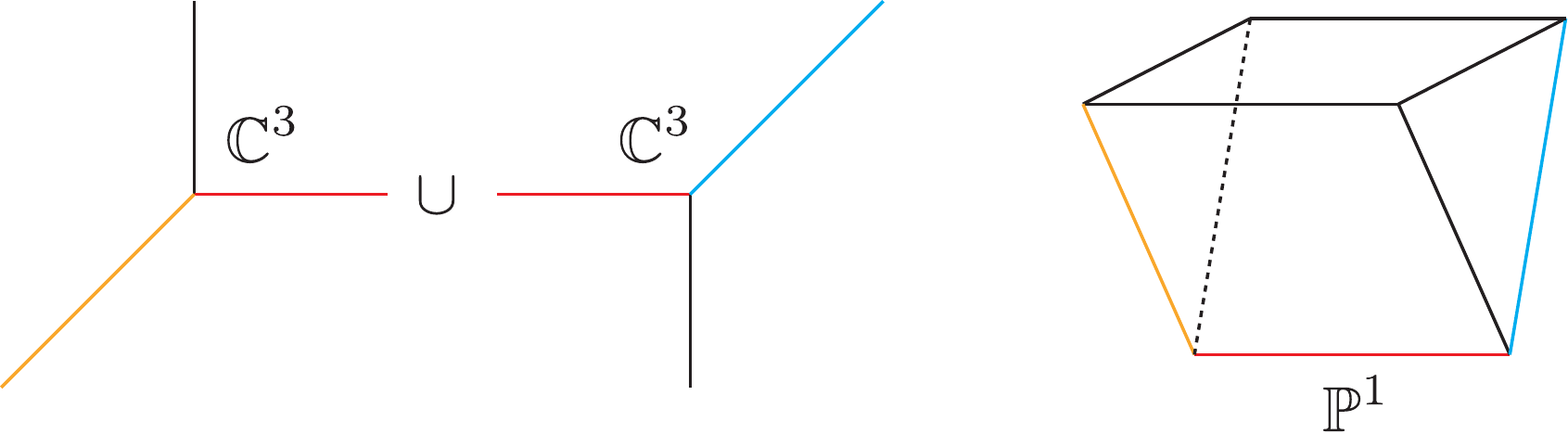}
    \caption{The mirror space for $k=1$ by gluing the deformation spaces of the immersed Lagrangians, is $\cO_{\bP^1}(-1) \oplus \cO_{\bP^1}(-1)$.  The right hand side is its toric diagram.}
		\label{fig:mirror_glue}
\end{figure}

Note that there are many choices of isotopies to match gauge-hypersurfaces of a pair-of-circles $C_1$ to $C_2$.
For example, one can consider a smooth isotopy of moving $T_1$ to $T_2$ in a way such that the immersed point $Y_1$ is passed through $k$ times, while $Z_1$ is passed through $2-k$ times.  Then the gluing then becomes
\eqref{eq:kk2} and we obtain $\cO_{\bP^1}(-k) \oplus \cO_{\bP^1}(k-2)$.

In this way, these difference choices of smooth isotopy of gauge-hypersurfaces \emph{result in different mirror models}.  
Throughout the process, the superpotential $W$ remains the same. 
$$W = z_1 y_1 x_1 = z_1 y_1 t_1 = y_2  t_2 z_2 = y_2 x_2 z_2.$$
Thus the mirror is a Landau-Ginzburg model $(\check{X},W)$.  The resulting Landau-Ginzburg model are equivalent
to each other for different $k \in \Z$. 

In conclusion, we have illustrated that the mirror can be obtained by gluing deformation spaces of
Seidel Lagrangian $L_1, L_2$ in each pair of pants and that of a pair-of-circles obtained by surgery of $L_1,L_2$,
where the coordinate change formula can be geometrically explained.
$$ (\C^3,W^{S_1}) \overset{\small\textrm{surgery}}\longrightarrow (\C^\times \times \C^2,W^{(C,T)}) \overset{\small\textrm{gauge change}}\longleftrightarrow (\C^\times \times \C^2,W^{(C,T')}) \overset{\small\textrm{surgery}}\longleftarrow (\C^3,W^{S_2}) $$
where the first map is $x_1=t,y_1=y_0,z_1=z_0$, the second map is $t'=t^{-1}$, $y_0' = t^ay_0$ and $z_0'=t^bz_0$ with $a+b=2$, the third map is $x_2=t',y_2=y_0',z_2=z_0'$. 

But for more precise formulation, we will turn to algebraic formalism of gluing, which can be used to make the above observations much more precise, and also gives rise to related $\AI$-functors and homotopies between them.


\section{Algebraic approach for gluing local mirrors and functors}\label{sec:gluingthmalg}
In this section, we provide a precise algebraic method for the gluing. We will introduce a main criterion of
gluing two charts. This simple criterion turns out to give all the gluing data that we need. In particular,
we will obtain the coordinate change rule  on the intersection of two charts.  
On the intersection, the functors are shown to be quasi-isomorphic, with explicit homotopy data. Moreover, we construct a global $\AI$-functor from Fukaya category to the homotopy fiber product of two matrix factorization categories of mirror charts.

Recall that our mirror charts are given by deformation spaces of  immersed Lagrangians $\bL_i$'s.
In this paper, let us suppose that  each mirror chart $U_i$ is given by $\Lambda_0^n$. i.e. each immersed Lagrangian is
weakly unobstructed for all values of $\Lambda_0^n$.
The use of Novikov ring $\Lambda_0$ is rather essential in this story as we will see.

Even though $L_0, L_1$ are immersed Lagrangians which are not equivalent to each other, 
there might exist weak bounding cochains $b_0, b_1$ such that the deformed object $(L_0,b_0)$ and $(L_1,b_1)$
are equivalent in Fukaya category.  
In such a case, there should be open subsets $U_{01} \subset U_0, U_{10} \subset U_1$ which 
gives rise to such equivalences for $b_0 \in U_{01}, b_1 \in U_{10}$. 
As the coordinates of $U_0$, $U_1$ are mirror variables, the correspondence of $b_0$ and $b_1$ provides coordinate change formulas.  In this way, $U_0$ and $U_1$ can be identified in their common intersection $U_{01}=U_{10}$.

This is different from the approach using Lagrangian torus fibration. Recall that each Lagrangian torus
corresponds to a point in the valuation of the deformation space, and thus, one needs to consider
family of Lagrangian tori to construct a (non-Archimedean) chart. But  using immersed Lagrangians has a merit 
that a deformation space of a single immersed Lagrangian can be used to define a mirror chart.
This enables us to choose only finitely many immersed Lagrangians to construct the mirror, which
provides quite an advantage  over the case of torus fibers.

Let us remark that if $L_0$ and $L_1$ are disjoint from each other, $(L_0,b_0)$ and $(L_1,b_1)$ cannot be equivalent to each other, i.e. 
$$L_0 \cap L_1 = \emptyset \Longrightarrow U_0 \cap U_1 = \emptyset.$$
To relate $U_0$ and $U_1$, we will have to find a  sequence of Lagrangians $L_2, \ldots,L_k$ for $k \geq 2$ 
such that $$ U_0\cap U_2 \neq 0, U_2 \cap U_3 \neq 0, \cdots,  U_k \cap U_1 \neq 0.$$

For example, in the case of 4-punctured sphere,  Seidel Lagrangians $L_i$'s contained in each pair of pants
have deformation spaces $U_0, U_1$ with $U_0 \cap U_1 = \emptyset$, and we will make Lagrangian isotopy of $L_0$ to the other pair of pants to obtain Seidel Lagrangian $L_2$ so that $U_2$ has non-trivial intersection with both $U_0,U_1$.

We also remark that even when $L_0$ and $L_1$ are disjoint from each other, we can find a Hamiltonian-equivalent Lagrangian $L_0' \cong L_0$ which intersect with $L_1$, by moving a very small neighborhood of a point toward $L_1$.
But this ad hoc intersection will not make the deformation spaces intersect, since Floer theory is invariant under Hamiltonian diffeomorphisms. 

We will use  the notion of isomorphisms of Fukaya category to find the required coordinate change.

\subsection{Algebraic preliminaries}
Recall the following notion of isomorphisms in $\AI$-category.
\begin{defn}\label{def:iso}
Morphisms $\alpha \in \Hom_{\mathcal{C}} (L_0,L_1)$ and $\beta \in \Hom_{\mathcal{C}} (L_1,L_0)$ are called isomorphisms if 
$$m_1(\alpha) = m_1 (\beta)=0,  \quad m_2(\alpha,\beta)=\one_{L_0} + m_1(\gamma_1), \quad m_2(\beta, \alpha)=\one_{L_1}+ m_1(\gamma_2)$$ 
for some $\gamma_1 \in \Hom_{\mathcal{C}} (L_0,L_0)$ and $\gamma_2 \in \Hom_{\mathcal{C}} (L_1,L_1)$.  
If $\gamma_1=\gamma_2=0$,then $\alpha$ and $\beta$ are called strict isomorphisms.
\end{defn}
This should be well-known to experts, but it is difficult to give a precise reference. 
We give a proof that these provide a correct notion of isomorphism by using Yoneda embedding
in Section \ref{sec:pfgluethmy}.

\begin{thm}\label{thm:ye}
The Yoneda functors $\mathcal{Y}^{0}$ and $\mathcal{Y}^{1}$ are quasi-isomorphic 
for $\alpha,\beta$ given in Definition \ref{def:iso}.
In particular, the two objects $L_0$ and $L_1$ are quasi-isomorphic in the original category $\mathcal{C}$ 
since Yoneda embedding is fully faithful.
\end{thm}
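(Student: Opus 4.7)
The plan is to make explicit the $\AI$-quasi-isomorphism between the Yoneda modules $\CY^0$ and $\CY^1$, using $\alpha,\beta$ as the leading data and $\gamma_1,\gamma_2$ as the leading pieces of the homotopies, and then to deduce the assertion about $L_0$ and $L_1$ from Fukaya's Yoneda lemma.

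More specifically, I would construct $\AI$-module maps $T \colon \CY^0 \to \CY^1$ and $S \colon \CY^1 \to \CY^0$ whose components are given by inserting $\alpha$, respectively $\beta$, at the tail of the relevant $m$-operation:
$$T_k(x, y_1,\ldots,y_k) = m_{k+2}(x, y_1, \ldots, y_k, \alpha), \quad S_k(x, y_1,\ldots,y_k) = m_{k+2}(x, y_1, \ldots, y_k, \beta),$$
where $x \in \Hom_{\mathcal{C}}(L, L_0)$, resp.\ $\Hom_{\mathcal{C}}(L, L_1)$, and $y_1,\ldots,y_k$ are composable morphisms in $\mathcal{C}$. Checking that $T$ is a closed $\AI$-module map amounts to expanding the $\AI$-relation on the string $(x, y_1, \ldots, y_k, \alpha)$: the anomalous term in which the inner $m$-operation is applied only to $\alpha$ vanishes because $m_1(\alpha) = 0$, and the remaining terms are exactly the module-map coherences for $T$. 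The same argument handles $S$.

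Next, I would compute $S \circ T$. At the zeroth order,
$$(S \circ T)_0(x) = m_2\bigl(m_2(x, \alpha), \beta\bigr),$$
and the $\AI$-relation on the triple $(x,\alpha,\beta)$ together with $m_1(\alpha) = m_1(\beta) = 0$ rewrites this as $m_2(x, m_2(\alpha,\beta))$ modulo an $m_1$-boundary involving $m_3(x,\alpha,\beta)$. Substituting $m_2(\alpha,\beta) = \one_{L_0} + m_1(\gamma_1)$ and applying the unit axiom turns the main term into $x + m_2(x, m_1(\gamma_1))$, and a further $\AI$-relation on $(x,\gamma_1)$ converts the latter summand into another $m_1$-boundary. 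This exhibits $(S \circ T)_0 - \mathrm{id}$ as $m_1 H_1 + H_1 m_1$ with leading homotopy
$$H_1(x) = \pm m_3(x,\alpha,\beta) \pm m_2(x,\gamma_1),$$
signs determined by the $\AI$-convention. The higher components $H_k$ are produced by the same recipe applied to the extended strings $(x, y_1,\ldots,y_k, \alpha, \beta)$ and $(x, y_1,\ldots,y_k,\gamma_1)$, with compatibilities again forced by $\AI$-relations. The symmetric construction with $\gamma_2$ yields a homotopy $T \circ S \sim \mathrm{id}_{\CY^1}$, completing the quasi-isomorphism.

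The main obstacle is the sign bookkeeping and the inductive construction of the full infinite family of higher homotopy components; this is purely combinatorial but nontrivial. A cleaner conceptual reformulation would package $(\alpha,\beta,\gamma_1,\gamma_2)$ as a homotopy equivalence in the $\AI$-category of twisted complexes on $\{L_0,L_1\}$ and then invoke the fact that any $\AI$-functor, in particular Yoneda, preserves homotopy equivalences, which shortens the verification considerably. Once $\CY^0 \simeq \CY^1$ is established, the second assertion that $L_0$ and $L_1$ are quasi-isomorphic in $\mathcal{C}$ follows immediately from fullness of the Yoneda embedding (\cite{fukaya02}): an $\AI$-module inverse of $T$ transports back to a morphism in $\Hom_{\mathcal{C}}(L_1, L_0)$ serving as a two-sided cohomological inverse of $\alpha$.
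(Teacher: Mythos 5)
Your proposal is correct and follows essentially the same route as the paper: the paper also defines the comparison maps by inserting $\alpha$ (resp.\ $\beta$) at the tail of the $m$-operations (packaged there as pre-natural transformations $N_{10}$, $N_{01}$ between the Yoneda functors, which is the same data as your $\AI$-module maps up to its arrow-reversal convention), verifies closedness from the $\AI$-relation using $m_1(\alpha)=m_1(\beta)=0$, and exhibits the composition minus the identity as $M_1(H)$ with exactly your homotopy $H(\mathbf{a})=m(\mathbf{a},\bullet,\alpha,\beta)+m(\mathbf{a},\bullet,\gamma_1)$. The deduction of the quasi-isomorphism of $L_0$ and $L_1$ from fullness of the Yoneda embedding is likewise identical.
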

 The same scheme of proof will be applied to boundary deformed objects and this will provide the natural setup to glue localized mirror functors.
 \begin{defn}\label{def:fuctqisom1}
Two $\AI$-functors $\mathcal{F}_1,\mathcal{F}_2: \mathcal{C} \to \mathcal{D}$ are said to be quasi-isomorphic to each other if there exists a natural transformation
$N_{12}: \mathcal{F}_1 \to \mathcal{F}_2, N_{21}: \mathcal{F}_2 \to \mathcal{F}_1$ such that $N_{12}\circ N_{21}, N_{21}\circ N_{12}$ is cohomologous to the identity natural transformations on $\mathcal{C},\mathcal{D}$ respectively.
i.e. $$N_{12} \circ N_{21} - id = M_1(H_1), N_{21}\circ N_{12} - id = M_1(H_2).$$
\end{defn}

\begin{remark}
Any $\AI$-natural transformation induces a natural transformation of corresponding homology functors.
\end{remark}

Now, let us recall the notion of homotopy fiber products of two dg-categories. We will use the following explicit model from Tabuada \cite{Tabu}, but our sign convention is different from that of \cite{Tabu}.
\begin{defn}
Let $B,C,D$ be dg-categories with dg-functors $G:B \to D, L : C \to D$.

The homotopy fiber product  $B \times^h_D C$ is a dg-category which is defined as follows.
\begin{itemize}
\item The objects of  $B \times_D^h C$ are given by 
\begin{equation*}
 \Big\{ M \in B, N \in C, \phi\in D^0 (G(M),L(N)) \,\,\mbox{with invertible} \, \, [\phi] \,\, \mbox{in} \,\, H^0 (D) \Big\},
\end{equation*}
\item  
$\Hom^i \big((M_1,N_1, \phi_1), (M_2, N_2, \phi_2)\big)$ for two objects $(M_1,N_1, \phi_1), (M_2, N_2, \phi_2)$ is
\begin{equation*}
 B^i (M_1,M_2) \oplus C^i (N_1,N_2) \oplus D^{i-1} (G(M_1),L(N_2))
\end{equation*}
\item  The differential $d$ is defined as 
$$d(\mu,\nu, \gamma)=(d\mu, d\nu, - d\gamma - \phi_2 G(\mu) + L(\nu) \phi_1).$$
\item  The composition of morphisms is defined by
$$(\mu',\nu',\gamma') (\mu,\nu,\gamma) = (\mu' \mu, \nu' \nu, \gamma' G(\mu) +  (-1)^{i'} L(\nu') \gamma).$$
\end{itemize}
\end{defn}
Pictorially, the objects may be considered as a diagram
$$M\to\,\,G(M) \stackrel{\phi}{\to} L(N) \,\, \leftarrow N. $$
and a morphism $(\mu,\nu,\gamma)$ fits in to the following diagram.
\begin{equation*}
\xymatrix{ G(M_1) \ar[dr]^{\gamma}\ar[r]^{\phi_1} \ar[d]_{G(\mu)}& L(N_1) \ar[d]^{L(\nu)}\\
G(M_2) \ar[r]^{\phi_1} & L(N_2)
}
\end{equation*}
Note that $d$-closedness imposes the commutativity of the square diagram up to homotopy $\gamma$.
One can check by elementary computation that this gives a well-defined dg-category. See \ref{sec:dghptyfp}.

%


\subsection{Main gluing theorem}
Let us explain the main theorem of gluing mirror charts and respective $\AI$-functors.

Consider two Lagrangian $\bL_0$, $\bL_1$ in a symplectic manifold $X$. By localized mirror formalism, 
we have a mirror $W_0:U_0 \to \Lambda_0$, $W_1:U_1 \to \Lambda_0$, together with $\AI$-functors $\mathcal{F}^{\bL_i}: \Fuk(X) \to \MF(W_i)$ for $i=0,1$.  Let us assume that $U_i \cong \Lambda_0^n$ for simplicity.

\begin{defn}\label{defn:iss}
We say that $(\bL_i,W_i:U_i \to W_i)$ for $i=0,1$ are isomorphic on $U_0 \cap U_1$ if we have subsets   $V_i \subset U_i$ for $i=0,1$ and a bijection $\phi:V_0 \to V_1$ such that
for each $b_0 \in V_0$, and $b_1 = \phi(b_0) \in V_1$, we have isomorphisms $\alpha,\beta$
$$\alpha \in \Hom_{\mathcal{C}} \big((\bL_0,b_0),(\bL_1,b_1)\big), \beta \in \Hom_{\mathcal{C}} \big((\bL_1,b_1),(\bL_0,b_0) \big).$$
i.e. we have 
$$m_1^{b_0,b_1}(\alpha) = m_1^{b_1,b_0} (\beta)=0,  \quad m_2^{b_0,b_1,b_0}(\alpha,\beta)=\one_{\bL_0} + m_1(\gamma_0), \quad m_2^{b_1,b_0,b_1}(\beta, \alpha)=\one_{\bL_1}+ m_1(\gamma_1)$$ 
for some $\gamma_i \in \Hom_{\mathcal{C}} \big( (\bL_i,b_i), (\bL_i,b_i)\big)$ with $i=0,1$.
We will identify $V_0$ and $V_1$ via $\phi$ and write it as $U_0 \cap U_1$.
\end{defn}

Here is the main gluing theorem.
\begin{thm}\label{thm:maingluealg}
Suppose $(\bL_i,b_i)$ for $i=0,1$ are isomorphic on $U_0 \cap U_1$.
Then  the following holds.
\begin{enumerate}
\item Their potential functions agree. i.e. we have $$W_0(b_0) = W_1(b_1), \;\; \textrm{for} \;\; b_0 \in V_0, b_1 = \phi(b_0) \in V_1.$$
\item  The $\AI$-functors $\mathcal{F}^{\bL_i}: \Fuk(X) \to MF(W_i)$ composed with restrictions $r_i: MF(W_i) \to MF(W_i|_{V_i})$ for $i=0,1$ are quasi-isomorphic to each other. i.e. we have
$$ r_0 \circ \mathcal{F}^{\bL_0} \cong r_1 \circ \mathcal{F}^{\bL_1} $$
Moreover, the required natural transformations as well as homotopies for the quasi-isomorphism are explicitly given using $\alpha,\beta$.
\item There exists a global $\AI$-functor 
$$\mathcal{F}: \Fuk(X) \to MF(W_0) \times^h_{MF(W_{01})} MF(W_1)$$
from the Fukaya category of $X$ to the homotopy fiber product of the
dg-categories $MF(W_0)$ and $MF(W_1)$, where we denote by $W_{01}$ the potential function on $V_0$ (or equivalently $V_1$).
\end{enumerate}
\end{thm}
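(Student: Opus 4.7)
The three parts will be established in order, each building on the previous.

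For (1), the central algebraic observation is that for any two weakly unobstructed objects the deformed differential on $\Hom\bigl((\bL_0,b_0),(\bL_1,b_1)\bigr)$ satisfies
\begin{equation*}
\left(m_1^{b_0,b_1}\right)^{2} \;=\; \bigl(W_1(b_1) - W_0(b_0)\bigr)\cdot \mathrm{id},
\end{equation*}
which follows from the deformed $\AI$-relations once the curvatures $m_0^{b_i}=W_i(b_i)\cdot\one_{\bL_i}$ are substituted and the unit axioms applied (this is the same identity that lets $\MF$ receive the Yoneda functor in Definition~\ref{def:lmf}). Since $\alpha$ lies in the kernel of $m_1^{b_0,b_1}$ and is cohomologically nonzero (as $m_2(\alpha,\beta)=\one_{\bL_0}+m_1(\gamma_0)$ is a unit in cohomology), evaluating the identity on $\alpha$ forces $\bigl(W_1(b_1)-W_0(b_0)\bigr)\alpha=0$, and since $\Lambda$ has no zero divisors we conclude $W_0(b_0)=W_1(b_1)$.

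For (2), I construct the natural transformations $N_{01}\colon r_0\circ\mathcal{F}^{\bL_0}\Rightarrow r_1\circ\mathcal{F}^{\bL_1}$ and $N_{10}$ in the reverse direction by inserting $\alpha$ (respectively $\beta$) into the $\AI$-operations in the spirit of Definition~\ref{def:lmf}. Concretely, at an object $L$ the leading component is a sum, over all ways of distributing $b_0$'s and $b_1$'s, of $\AI$-products of $y$, $\alpha$, and these deformations; the $k$-th component also incorporates $k$ Fukaya morphism inputs in all admissible positions. The $\AI$-natural-transformation equation for $N_{01}$ then reduces, via an $\AI$-relation manipulation parallel to the proof of Theorem~\ref{lem:familyyoneda}, to the single new identity $m_1^{b_0,b_1}(\alpha)=0$ together with weak unobstructedness of $b_0,b_1$ and the equality $W_0=W_1$ from (1). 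The composite $N_{10}\circ N_{01}$ is then cohomologous to the identity natural transformation, the required homotopy being furnished by $\gamma_0$ via $m_2(\alpha,\beta)=\one_{\bL_0}+m_1(\gamma_0)$; symmetrically $N_{01}\circ N_{10}$ is homotopic to the identity via $\gamma_1$.

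For (3), send each $L\in\Fuk(X)$ to the triple
\begin{equation*}
\mathcal{F}^{\mathrm{global}}(L) \;=\; \bigl(\mathcal{F}^{\bL_0}(L),\ \mathcal{F}^{\bL_1}(L),\ N_{01}^{1}(L)\bigr),
\end{equation*}
which is a valid object of the homotopy fiber product because $N_{01}^1(L)$ is closed of degree $0$ and cohomology-invertible by~(2). For a composable $k$-tuple of Fukaya morphisms, the $k$-th operation is the triple whose first two slots are $\mathcal{F}^{\bL_0}_k$ and $\mathcal{F}^{\bL_1}_k$ and whose third slot is the $k$-th component of $N_{01}$. The global $\AI$-functor axioms then decouple, slot by slot, into the $\AI$-functor equations of Theorem~\ref{lem:familyyoneda} for each $\bL_i$ and the $\AI$-natural-transformation equations for $N_{01}$ obtained in~(2), matched against the explicit composition and differential of the homotopy fiber product.

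\emph{Main obstacle.} The principal difficulty is in~(3): the differential of $\MF(W_0)\times^h_{\MF(W_{01})}\MF(W_1)$ couples its three slots with sign-twisted $\phi$-terms, and one must check that the third-slot data $N_{01}^k$ interacts with $\mathcal{F}^{\bL_0}_k$ and $\mathcal{F}^{\bL_1}_k$ under precisely these signs. Careful sign bookkeeping along the conventions fixed in Theorem~\ref{lem:familyyoneda}, together with a correct enumeration of how an input tuple is partitioned among the three functorial pieces, is the main technical step.
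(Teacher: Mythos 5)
Your route coincides with the paper's in all three parts: (1) is the same evaluation of the curved $\AI$-relation $(m_1^{b_0,b_1})^2 = (W_1(b_1)-W_0(b_0))\cdot\mathrm{id}$ on the closed element $\alpha$; (2) builds the natural transformations by inserting the isomorphism morphisms into the deformed $\AI$-products; (3) is exactly the triple $(\mathcal{F}^{\bL_0},\mathcal{F}^{\bL_1},N_{01})$ with the functor equations decoupling slot by slot, the third slot reducing to the natural-transformation equation against the fiber-product differential. (One bookkeeping remark: with the reversed convention $\Hom_{\MF_{A_\infty}}(M,N)=\Hom_{\MF_{dg}}(N,M)$, it is $\beta$ that gets inserted to produce $N_{01}\colon \mathcal{F}^{\bL_0}\to\mathcal{F}^{\bL_1}$, not $\alpha$.)

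The one step that would fail as written is your identification of the homotopy in (2). You assert that $N_{10}\circ N_{01}-\mathrm{id}$ is $M_1$ of a homotopy ``furnished by $\gamma_0$.'' In the strict case $\gamma_0=\gamma_1=0$ this would force the composite to equal the identity transformation on the nose, which is false: the zeroth component of the composite at an object $L$ is (up to sign) $\bullet\mapsto m(m(\bullet,e^{b_1},\beta,e^{b_0}),e^{b_0},\alpha,e^{b_1})$, and comparing this with $m(\bullet,e^{b_1},m_2^{b_1,b_0,b_1}(\beta,\alpha),e^{b_1})$ requires applying the $\AI$-relation to the inputs $(\bullet,e^{b_1},\beta,e^{b_0},\alpha,e^{b_1})$; that relation itself produces an $m_1$-exact discrepancy whose primitive is the double-insertion term $m(\bullet,e^{b_1},\beta,e^{b_0},\alpha,e^{b_1})$. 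The correct homotopy is $H(\bullet)=m(\bullet,e^{b_1},\beta,e^{b_0},\alpha,e^{b_1})+m(\bullet,e^{b_1},\gamma,e^{b_1})$ where $m_2(\beta,\alpha)=\one+m_1(\gamma)$ — the $\gamma$-term is only a correction to the leading double-insertion term, which survives even for strict isomorphisms — together with the analogous higher components $H(\mathbf{a})(\bullet)=m(\mathbf{a},\bullet,e^{b_1},\beta,e^{b_0},\alpha,e^{b_1})+m(\mathbf{a},\bullet,e^{b_1},\gamma,e^{b_1})$. This is exactly the content of the paper's Lemma \ref{lem:n0hptyeq} and its uncurved prototype Lemma \ref{lem:Ncohomtoid1}; with this correction the rest of your plan goes through.
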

Note that once we identify the isomorphism between two deformed Lagrangians, the rest of the constructions such as quasi-isomorphisms between functors and the explicit construction of the global $\AI$-functor are given canonically. 
\begin{proof}
\begin{enumerate}
\item This follows from the $\AI$-identity.
Namely, from the $\AI$-identity that for any $x \in CF(\bL_0,\bL_1)$, we have
 $$(m_1^{b_0,b_1})^2 (x)  + m_2^{b_0,b_0,b_1}(m_0^{b_0},x) + (-1)^{|x|'} m_2^{b_0,b_1,b_1}(x,m_0^{b_1}) =0.$$
 From the weak MC equation and the definition of a unit, we have
 $$ (m_1^{b_0,b_1})^2 (x) + W_0(b_0) x - W_1(b_1)x =0$$
 Now, for $x = \alpha$, the first term vanishes, and we thus obtain $W_0(b_0) = W_1(b_1)$. 
 
\item 
The proof of (2) will be given in Section \ref{sec:pfgluethm} by explicitly constructing natural transformations and homotopies from $\alpha$ and $\beta$.

\item 
The proof of (3) will be given in Section \ref{sec:pfgluethm} (see Proposition \ref{prop:3rditemmain}). We only give a definition of $\mathcal{F}$ here (recall that we use the $\AI(MF)$ convention in  \ref{sec:dgsign}). We identify $V_0$ and $V_1$ via $\phi$.

For an object $L \in \Fuk (X)$, the image $\mathcal{F} (L)$ is defined as
\begin{equation*}\label{eqn:hptyfpobjfunc}
\begin{array}{c}
\left(\mathcal{F}^{\bL_0}(L), \quad \mathcal{F}^{\bL_1}(L),\quad  \mathcal{F}^{\bL_0 }(L)|_{V_0} \stackrel{N_{01}}{\longrightarrow}   \mathcal{F}^{\bL_1}(L)|_{V_1} \right)  := \\
 \left(CF(L, (\bL_0 , b_0 )), \quad   CF(L,(\bL_1,b_1)) ,\quad \xymatrix{ CF(L, (\bL_0 , b_0 ))|_{V_0} & CF(L, (\bL_1, \phi(b_0) ))|_{V_1} \ar[l]_{ N_{01}(L)} }
 \right)
 \end{array}
\end{equation*}
where 
$$N_{01}(L)=  (-1)^{|\bullet|}  m (\bullet,e^{\phi(b_0)},\beta, e^{b_0}) \in \Hom_{MF_{A_\infty}} ( \mathcal{F}^{\bL_0 }(L)|_{V_0 } ,   \mathcal{F}^{\bL_1}(L)|_{V_1} ).$$
This map induces an isomorphism on between $m_1^{\phi(b_0)}$ and $m_1^{b_0}$ cohomologies since a similarly defined map using $\alpha$ induces its inverse on the cohomology level.

For a tuple of composable morphisms 
$$(a_1, \cdots, a_k) \in \Hom_{\Fuk(X)} (L_1,L_2) \otimes \cdots \otimes \Hom_{\Fuk(X)} (L_k,L_{k+1}),$$
we define $\mathcal{F}_k (a_1, \cdots, a_k)$ for $k \geq 1$ to be 
\begin{equation*}\label{eqn:fcttohptypd}
\begin{array}{c}
(\mathcal{F}_k^{\bL_0} (\mathbf{a}),\mathcal{F}_k^{\bL_1} (\mathbf{a}), (N_{01})_k  ( \mathbf{a}) ) := \\
\left( m (\mathbf{a}, \bullet, e^{b_0}), \quad m (\mathbf{a}, \bullet, e^{b_1}), \quad  (-1)^{|\mathbf{a}|'} (-1)^{|\bullet|} m (\mathbf{a},\bullet, e^{\phi(b_0)} , \beta, e^{b_0})|_{b_0 \in V_0} \right)
\end{array}
\end{equation*}
We will see later that $\{(N_{01})_k\}_{k \geq 1}$ comes from a natural transformation between two functors $\mathcal{F}^{\bL_0}$ and $\mathcal{F}^{\bL_1}$ over $V_0$ and $V_1$.

\end{enumerate}
\end{proof}

\begin{defn} \label{def:class}
	Let $L$ and $L'$ be two objects in the Fukaya category.  They are said to be in the same deformation class if there exist objects $L_1,\ldots,L_k$, weakly unobstructed formal deformations $(\nabla,b)$ for $L$, $(\nabla',b')$ for $L'$, $(\nabla_{i,1},b_{i,1})$ and $(\nabla_{i,2},b_{i,2})$ for $L_i$, and the following isomorphisms in the Fukaya category: $(L,\nabla,b) \cong (L_1,\nabla_{1,1},b_{1,1})$, $(L_i,\nabla_{i,2},b_{i,2}) \cong (L_{i+1},\nabla_{i+1,1},b_{i+1,1})$ for $i=1,\ldots,k-1$, $(L_k,\nabla_{k,2},b_{k,2}) \cong (L',\nabla',b')$.
\end{defn}

We can easily generalize the above construction to the case of many charts with no non-trivial triple intersections.
Namely, let $\Gamma$ be a (directed) finite graph, with the set of vertices $\Gamma_0$ and the set of edges $\Gamma_1 \subset \Gamma_0 \times \Gamma_0$.
Suppose with have $\bL_i$ for each $i \in \Gamma_0$, which defines localized mirror functors $\mathcal{F}^{\bL_i}:U_i \to W_i$,
with $U_i \cong (\Lambda_+)^n$. 

\begin{assumption}\label{as:manychart}
 Let us suppose that $(\bL_i,W_i:U_i \to W_i)$ and $(\bL_j,W_j:U_j \to W_j)$
are isomorphic on non-trivial subset $U_i \cap U_j$ (in the sense of Definition \ref{defn:iss}) if and only if $(i,j)$ or $(j,i)$ is
in $\Gamma_1$. And there are no non-trivial triple intersections for any distinct $i,j,k \in \Gamma_0$.
Furthermore, for any loop in $\Gamma$, we assume that the composition of coordinate changes along the loop is
identity. i.e. we assume that there is no monodromy of coordinate changes.
\end{assumption}
In the case of punctured Riemann surfaces, this assumption can be met by Proposition \ref{prop:Lag-coll}.
\begin{cor}\label{cor:manychart}
We can define homotopy fiber products of dg categories $\prod_{i \in \Gamma_0}^h MF(W_i)$.
There exist a global $\AI$-functor from Fukaya category of $X$ to the homotopy fiber product  $\prod_{i \in \Gamma_0}^h MF(W_i)$.
\end{cor}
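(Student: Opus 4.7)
The plan is to reduce this to the pairwise case treated in Theorem \ref{thm:maingluealg} by exploiting the two crucial hypotheses: the absence of nontrivial triple intersections (which removes the need for higher coherence data) and the absence of monodromy (which ensures a globally consistent identification of overlap charts).

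First I would define the homotopy fiber product $\prod_{i \in \Gamma_0}^h MF(W_i)$ by directly generalizing the two-vertex model of Tabuada recalled in Section \ref{sec:gluingthmalg}. An object is a tuple $\left( \{M_i\}_{i \in \Gamma_0}, \{\phi_{ij}\}_{(i,j) \in \Gamma_1} \right)$ where $M_i \in MF(W_i)$ and $\phi_{ij} \in MF(W_{ij})^0(M_i|_{U_i \cap U_j}, M_j|_{U_i \cap U_j})$ is closed and induces an invertible class in $H^0$. A morphism of degree $k$ is a tuple $(\{\mu_i\}, \{\gamma_{ij}\})$ with $\mu_i$ of degree $k$ and $\gamma_{ij}$ of degree $k-1$ on the overlap. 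The differential and composition are defined edgewise by the same formulas as in the pairwise case, namely $d(\mu_i, \gamma_{ij}) = (d\mu_i, -d\gamma_{ij} - \phi_{ij}' G(\mu_i) + L(\mu_j)\phi_{ij})$. Associativity and the Leibniz rule were already verified pairwise, and since Assumption \ref{as:manychart} states that there are no nontrivial triple intersections, no additional coherence data on $U_i \cap U_j \cap U_k$ is required.

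Next I would construct the global functor $\mathcal{F}$ following the explicit formulas from part (3) of Theorem \ref{thm:maingluealg}. For each object $L \in \Fuk(X)$, set
\[
\mathcal{F}(L) := \left( \{ CF(L, (\bL_i, b_i)) \}_{i \in \Gamma_0}, \{ N_{ij}(L) \}_{(i,j) \in \Gamma_1} \right),
\]
where, for each edge $(i,j) \in \Gamma_1$, the map $N_{ij}(L) = (-1)^{|\bullet|} m(\bullet, e^{b_j}, \beta_{ij}, e^{b_i})$ is built from the isomorphism data provided on $U_i \cap U_j$ by Definition \ref{defn:iss}. The no-monodromy condition guarantees that the $N_{ij}$ assemble consistently: any ambiguity in identifying $b_i$'s around a loop would obstruct the existence of a well-defined object of $\prod^h MF(W_i)$. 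For higher components $\mathcal{F}_k(a_1,\dots,a_k)$ one takes the tuple whose $i$-th component is $\mathcal{F}^{\bL_i}_k(a_1,\dots,a_k)$ and whose $(i,j)$-edge component is $(N_{ij})_k(a_1,\dots,a_k) = (-1)^{|\mathbf{a}|'}(-1)^{|\bullet|} m(\mathbf{a}, \bullet, e^{b_j}, \beta_{ij}, e^{b_i})$, exactly as in the two-chart proof.

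Finally I would verify the $\AI$-relations for $\mathcal{F}$. The vertex components satisfy them by Lemma \ref{lem:familyyoneda} applied to each $\mathcal{F}^{\bL_i}$. The mixed relations involving $N_{ij}$ and $(N_{ij})_k$ follow exactly from the pairwise argument in the proof of Theorem \ref{thm:maingluealg}(2)--(3), applied independently to each edge; no cross-edge terms appear because the differential and composition in the multi-fold homotopy fiber product are edge-diagonal. The main obstacle, and the reason the hypotheses in Assumption \ref{as:manychart} are needed, is precisely the absence of higher coherence: in a general situation one would have to produce higher $\AI$-structures on triple overlaps and verify a tower of cocycle conditions, but ruling out nontrivial triples collapses this tower, and the monodromy-free condition makes the zeroth level (consistency of the identifications themselves) automatic. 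With these two hypotheses in force, the global functor is simply the assembly of the pairwise data.
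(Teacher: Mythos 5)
Your proposal is correct and takes essentially the same route as the paper, which simply remarks that the two-vertex Tabuada model and the functor of Theorem \ref{thm:maingluealg}(3) extend edge-by-edge over a general graph $\Gamma$; your write-up just fills in the details the paper leaves implicit. One point of disagreement worth flagging: you claim the no-monodromy condition is needed for the $N_{ij}$ to assemble into a well-defined object of $\prod^h_{i}\MF(W_i)$, but since the edge data $\phi_{ij}$ in the multi-fold fiber product are independent for each edge (no cocycle condition around loops is imposed), the object and the functor are well-defined even without that assumption; the paper explicitly remarks that the monodromy hypothesis is only needed for the well-definedness of the glued mirror \emph{space}, not for the fiber product category or the $\AI$-functor.
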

\begin{proof}
The fiber product in the case of $U_0, U_1$ corresponds to the graph with two vertices $v_0,v_1$ and an arrow from $v_0$ to $v_1$.
The fiber product as well as $\AI$-functor can be easily extended to the case of general graph $\Gamma$.
We remark that the monodromy assumption is for the well-definedness of a global mirror space but we remark that
homotopy fiber product of dg-categories as well as the $\AI$-functor are still well-defined without this assumption
\end{proof}
In the general case with more than double intersections, we need higher homotopy data and we will discuss it elsewhere.


\section{Lagrangian isotopy and local coordinate change}\label{sec:localisot}
To illustrate the gluing construction, we give a simple example for a pair of pants $M$.
We pick two immersed Lagrangians $\bL_0$ and $\bL_1$ in $M$  as in Figure \ref{fig_protosei}.
Note that they are not hamiltonian isotopic to each other. In fact,  we push two immersed points $Y$ and $Z$ of $\bL_0$ toward a puncture by a Lagrangian isotopy and slightly perturb it to obtain $\bL_1$( so that $\bL_0$ and $\bL_1$ intersect transversely).
The areas of the regions are labeled as $a_1,\cdots, a_8$.
We set $b=xX + y Y+zZ$ for $\bL_0$ and $b'=x'X' + y' Y' + z' Z'$ for $\bL_1$.
We require that $\bL_0$ and $\bL_1$ satisfy the reflection symmetry so that $b$ and $b'$ satisfy weak Maurer-Cartan equations
for $(x,y,z) \in \Lambda_+^3$ and $(x',y',z') \in \Lambda_+^3$. Thus, we set $U_0 =\{ (x,y,z) \in \Lambda_+^3\}$ and
$U_1 = \{ (x',y',z') \in \Lambda_+^3 \}$. 
\begin{figure}[htb!]
    \includegraphics[scale=0.45]{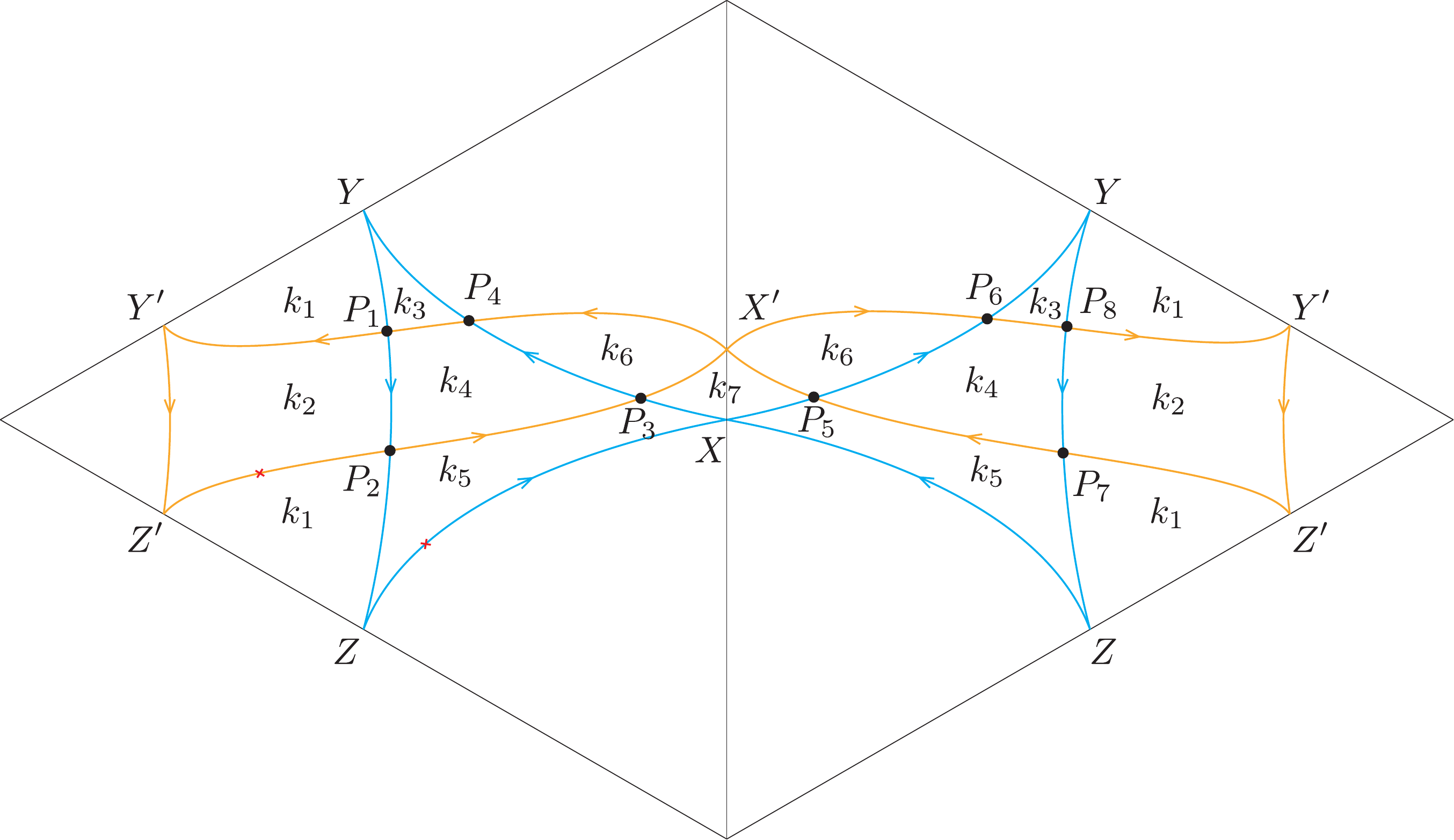}
    \caption{}\label{fig_protosei}
\end{figure}
\begin{prop}\label{prop:lc}
The formal deformation spaces $U_0$ and $U_1$ can be partially identified by the relations
\begin{equation}\label{eq:co}
\begin{cases} x' = T^{2 \delta} x, \\  y' = T^{- \delta} y,  \\ z' =  T^{-\delta} z,
\end{cases}
\end{equation}
for $\delta = 2k_1+k_2-k_5-k_6-k_7$.
Here we assume that $k_2= 5 k_5+3 k_6+4 k_7$ and $k_3 = 2(k_5+k_6+k_7)$.
\end{prop}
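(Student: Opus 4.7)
The plan is to apply the main gluing criterion of Theorem \ref{thm:maingluealg}: to identify part of $U_0$ with part of $U_1$, it suffices to produce degree-zero cocycles $\alpha \in CF\big((\bL_0,b),(\bL_1,b')\big)$ and $\beta \in CF\big((\bL_1,b'),(\bL_0,b)\big)$ that are mutual inverses up to $m_1$-exact terms, and then to read off the coordinate change \eqref{eq:co} as the locus $\phi:V_0 \to V_1$ on which such cocycles exist. Since $\bL_0$ and $\bL_1$ are transverse after the perturbation, $CF(\bL_0,\bL_1)$ and $CF(\bL_1,\bL_0)$ are spanned by their intersection points, and I would single out two degree-zero intersection points as candidates for $\alpha$ and $\beta$ (one on each side of the configuration).

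The heart of the argument is the computation of $m_1^{b,b'}(\alpha)$. This is a sum over rigid $J$-holomorphic polygons with boundary alternating on $\bL_0$ and $\bL_1$, one distinguished corner at $\alpha$, and further corners at the immersed generators $X,Y,Z$ of $\bL_0$ and $X',Y',Z'$ of $\bL_1$, each weighted by the Novikov monomial $T^{\omega}$ and by the corresponding product of formal variables. Setting this sum equal to zero, and grouping outputs by target intersection point, should produce three binomial relations of the form $T^{A_1} x' + T^{A_2} x = 0$, $T^{B_1} y' + T^{B_2} y = 0$, $T^{C_1} z' + T^{C_2} z = 0$ at leading order, where the exponents are the signed symplectic areas of the lowest-area polygons sweeping through the regions labeled $k_1,\ldots,k_7$ in Figure \ref{fig_protosei}. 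Solving gives the asserted transformation $x' = T^{2\delta}x$, $y' = T^{-\delta}y$, $z' = T^{-\delta}z$ with $\delta = 2k_1 + k_2 - k_5 - k_6 - k_7$; the coefficients $(2,-1,-1)$ reflect the multiplicities with which the two pushed-off fingers appear as boundary arcs of the polygons at each immersed generator, and the reflection symmetry of the configuration produces $\beta$ by the same count with the primed and unprimed roles exchanged.

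The hypotheses $k_2 = 5k_5 + 3k_6 + 4k_7$ and $k_3 = 2(k_5+k_6+k_7)$ are then exactly the consistency conditions that force the higher-order corrections in $m_1^{b,b'}(\alpha)$ to cancel, so that the leading-order solution extends to an honest cocycle over all of $V_0 \subset \Lambda_+^3$; I would verify this by enumerating the next generation of polygons (those that pass through the thin strips of areas $k_i$ with higher multiplicity) and checking pairwise cancellation under the stated area identities. Once $\alpha$ and $\beta$ are constructed, the composition $m_2^{b,b',b}(\alpha,\beta)$ is computed by counting the small triangle capping the transverse intersection; it produces $\mathbf{1}_{\bL_0}$ up to $m_1$-exact correction, and similarly on the other side, so the hypotheses of Definition \ref{defn:iss} are met. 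The main obstacle will be controlling the combinatorial tail of polygons that wind around the fingers with arbitrarily many corners at $Y,Z,Y',Z'$; the Novikov filtration combined with the area relations should do this, but making the cancellation precise is where the real work lies. Given all of the above, Theorem \ref{thm:maingluealg} then automatically yields the overlap identification \eqref{eq:co}, the agreement of potentials $W_0(b) = W_1(b')$, and the compatibility of the two localized mirror functors.
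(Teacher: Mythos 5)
Your overall strategy coincides with the paper's: single out a degree-zero intersection point $\alpha = P_6 \in CF((\bL_0,b),(\bL_1,b'))$ with candidate inverse $\beta = P_4$, impose the cocycle condition $m_1^{b,b'}(\alpha)=0$, read off the coordinate change from the resulting binomial relations (one per output generator), and then check the $m_2$-products. That architecture is exactly right.

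However, two of your central claims misread how the argument actually closes, and they would send you looking for work that is not there. First, the hypotheses $k_2=5k_5+3k_6+4k_7$ and $k_3=2(k_5+k_6+k_7)$ are \emph{not} consistency conditions forcing higher-order corrections to cancel. The polygon count for $m_1^{b,b'}(P_6)$ is finite and exact: eight polygons, of which one pair cancels by sign, leaving three binomial pairs (Figure \ref{fig:lagiso_x}), and the cocycle condition gives
$x' = x T^{4k_1+k_2+k_3+k_5-k_6}$, $y' = y T^{-(2k_1+k_2+k_5+k_6+k_7-k_3)}$, $z' = z T^{-(2k_1+k_2-k_5-k_6-k_7)}$
with no area hypothesis whatsoever. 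The two area identities merely normalize the three exponents to $(2\delta,-\delta,-\delta)$ for a single $\delta$; the remark following the proposition states explicitly that this condition is inessential. Correspondingly, the ``combinatorial tail'' of polygons with arbitrarily many corners that you flag as the main obstacle does not arise: the punctures preclude large polygons, so no infinite cancellation needs to be controlled. Second, the products $m_2^{b,b',b}(\alpha,\beta)$ and $m_2^{b',b,b'}(\beta,\alpha)$ are not computed from a small triangle capping the transverse intersection; they receive contributions from the same two large polygons of Figure \ref{fig:lagiso_m2} (which now add rather than cancel), and they equal $T^k\cdot\be$ with $k = 4k_1+k_2+k_3+k_5+k_6+k_7$ --- a unit multiple of the identity, which one then rescales away. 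Finally, a minor logical point: the identification of $U_0$ with $U_1$ in \eqref{eq:co} is precisely the content of solving the cocycle condition; Theorem \ref{thm:maingluealg} is the downstream consequence (equality of potentials, gluing of functors), not the mechanism that produces the coordinate change.
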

\begin{remark}
The last area condition is not essential and  is only to make the coordinate change formula look nice as above.
\end{remark}
To see that  $U_0 \cap U_1$ are proper subsets of $U_0$ and $U_1$ (after identification),
we look at the valuations of \eqref{eq:co}. 
\begin{equation}
\begin{cases} \val (x')  =  \val (x) +2\delta  , \\  \val(y') = \val (y) - \delta,  \\ \val(z') = \val(z) - \delta .
\end{cases}
\end{equation}
Suppose that $a_1$ is sufficiently bigger than $a_2,\cdots,a_8$, and therefore we have $\delta >0$.
Since the valuation of deformation parameters should be positive, we see that two formal deformation spaces $ \{ (x,y,z) \in \Lambda_+^3\}$ and $ \{ (x',y',z') \in \Lambda_+^3\}$ overlap on a large subset (non-compact region)
$$  \{\val (x) > 0, \val (y) > \delta, \val (z) > \delta \} \subset U_0, \quad   \{ \val(x') > 2 \delta, \val(y') >0, \val(z') > 0 \}\subset U_1.$$
%


\begin{proof}
Note that $\bL_0 \cap \bL_1$ has 8 intersection points, which we labeled as $P_1,\cdots,P_8$.  We claim that $P_6 \in CF((\bL_0,b),(\bL_1,b')$ is an quasi-isomorphism with $P_4 \in CF((\bL_1,b'),(\bL_0,b)$ being its inverse.  Let us first compute $m_1^{b,b'}$ of $P_6$ and $P_4$ whose vanishing will deduce the coordinate change relation. As drawn in Figure \ref{fig:lagiso_x} and Figure \ref{fig:lagiso_m2}, there are eight different polygons contributing to $m_1^{b,b'} (P_4)$. Firstly, the two depicted in \ref{fig:lagiso_m2} have the same output $P_4$ and the same area, but one can check that they admit opposite signs, and hence cancel each other (the one above has a positive sign, and the one below is negative). These polygons also contribute to $m_2$ between $P_6$ and $P_4$, and we will see that the two contributions add up  in this case.
\begin{figure}[htb!]
    \includegraphics[scale=0.28]{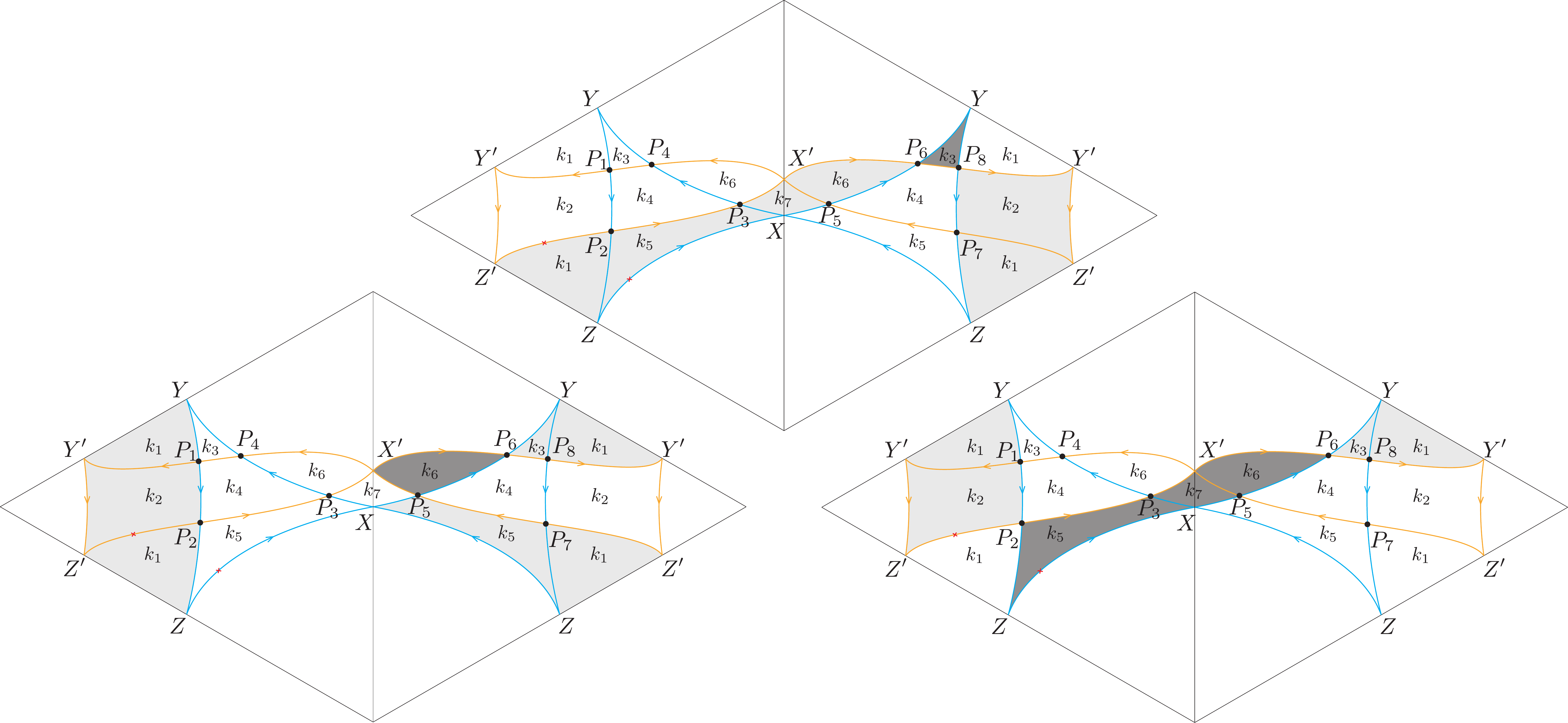}
    \caption{}\label{fig:lagiso_x}
\end{figure}

The pairs of shaded polygons in Figure \ref{fig:lagiso_x} give rise to the terms
$$ (-xT^{4k_1+ k_2 + k_3 + k_5} + x' T^{k_6} )P_5 + (-y T^{k_3} + y' T^{2 k_1 + k_2 + k_5 + k_6 + k_7} )P_8  + (z T^{k_5 + k_6+k_7} -z' T^{2 k_1 + k_2}) P_2$$
respectively, and hence $m_1^{b,b'} (P_6)$ is given by the sum of these. It follows that $m_1^{b,b'} (P_6) = 0$ if and only if
\begin{equation*}
\left\{
\begin{array}{l}
 x' = x T^{4k_1 + k_2 + k_3 + k_5 - k_6} \\
 y' = y T^{- (2 k_1 + k_2 + k_5 + k_6 + k_7 - k_3 }\\
 z' = z T^{-(2 k_1 + k_2 -k_5 -k_6 -k_7 )} 
 \end{array} \right.
 \end{equation*}
Therefore the area conditions in the statement of Proposition implies the desired coordinate change relations \eqref{eq:co}.
$m_1^{b',b} (P_4)$ can be computed in a similar way, and vanishes under the same condition.




%
%
%

\begin{lemma}
Let $\alpha = P_6 \in CF((\bL_0,b),(\bL_1,b'))$, $\beta  = P_4 \in CF((\bL_1,b'),(\bL_0,b))$.
Then $\alpha$ and $\beta$ define quasi-isomorphisms between $\bL_0$ and $\bL_1$.
Namely, 
$$\begin{cases}
m_2(\alpha,\beta) =  T^k \cdot \be_{\bL_0}, \\
m_2(\beta,\alpha)= T^k \cdot \be_{\bL_1}.
\end{cases}$$
where we have $k = 4k_1 + k_2 + k_3 + k_5 + k_6 + k_7$. 
\end{lemma}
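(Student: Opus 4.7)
The plan is to identify all $J$-holomorphic polygons (possibly with $b,b'$ insertions from the weak bounding cochains) having inputs $P_6$ and $P_4$, and show that the only surviving contribution is a multiple of the unit, with the claimed area $T^k$. Throughout, I would use the reflection symmetry assumed on $\bL_0,\bL_1$ in Proposition~\ref{prop:lc} as well as the sign cancellations already worked out in the preceding computation of $m_1^{b,b'}(P_6)$ and $m_1^{b',b}(P_4)$.

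First, I would enumerate the bigons from $P_6$ to $P_4$, i.e.\ discs with two convex corners at $P_6$ and $P_4$ and boundary alternating on $\bL_0$ and $\bL_1$. Since $\bL_1$ is a small perturbation of a Lagrangian isotopy of $\bL_0$, two such bigons exist, interchanged by the reflection symmetry; each sweeps out exactly one of the two sides of the strip between $\bL_0$ and $\bL_1$ bounded by $P_6$ and $P_4$. A direct area bookkeeping shows that the sum of the shaded regions making up either bigon equals
\begin{equation*}
k = 4k_1 + k_2 + k_3 + k_5 + k_6 + k_7,
\end{equation*}
independent of which of the two bigons one chooses. These bigons output the fundamental class $\be_{\bL_0}$ of $\bL_0$ (respectively $\be_{\bL_1}$ when we swap the inputs).

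Second, I would argue that no additional outputs of $m_2^{b,b',b}(P_6,P_4)$ survive. The a~priori possible extra outputs lie at the even generators $\bar X,\bar Y,\bar Z$, the point class $[\mathrm{pt}]$, or come from triangles with one extra $X,Y,Z$ insertion from $b$ or $b'$. Using the same type of pairwise cancellations observed in Figure~\ref{fig:lagiso_m2} (and again exploited by the reflection symmetry), along with the coordinate change \eqref{eq:co} which was tailored to kill exactly the analogous contributions in $m_1^{b,b'}(P_6)$, these extra terms pair up and cancel. Hence $m_2^{b,b',b}(P_6,P_4)$ is a pure multiple of $\be_{\bL_0}$, producing a strict isomorphism (so $\gamma_0=0$ in Definition~\ref{def:iso}).

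Third, I would check the signs so that the two bigons of Step~1 add rather than cancel. The sign attached to each bigon depends on the orientation of $\bL_0,\bL_1$, on the chosen spin structures (which are fixed as in Lemma~\ref{lem:pp} to make $b,b'$ weak Maurer--Cartan), and on the ordering of boundary corners. The text indicates that under these conventions the two contributions add, in contrast to the analogous pair in Figure~\ref{fig:lagiso_m2} which cancel; I would verify this by comparing the two situations directly with the sign rule for $m_2$ versus $m_1$. Finally, the computation of $m_2^{b',b,b'}(P_4,P_6)\in CF((\bL_1,b'),(\bL_1,b'))$ is identical after interchanging the roles of $\bL_0$ and $\bL_1$: the same two bigons, viewed with reversed ordering of corners, produce $T^k\cdot\be_{\bL_1}$.

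The main obstacle is the bookkeeping in Step~2, namely ruling out all extra polygon contributions (in particular those involving $b$- and $b'$-insertions) at the even immersed generators and at the point class; this requires a careful case analysis combined with the symmetry and the area relations already imposed in Proposition~\ref{prop:lc}. The sign verification in Step~3 is a secondary but nontrivial point, since the entire statement (a strict isomorphism with no correction term) depends on the two bigons contributing with the same sign.
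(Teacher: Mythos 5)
Your proposal is correct and follows essentially the same route as the paper: the paper's proof simply observes that the two polygons of Figure \ref{fig:lagiso_m2}, which cancel as contributions to $m_1^{b,b'}(P_6)$, contribute with the same sign when reinterpreted as $m_2$-contributions with inputs $P_6,P_4$, their boundaries jointly sweeping out $\bL_0$ (resp.\ $\bL_1$) once to produce $T^k$ times the unit. Your Step~2 (ruling out outputs at $\bar X,\bar Y,\bar Z$) is a check the paper leaves implicit, but it does not change the argument.
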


\begin{figure}[htb!]
    \includegraphics[scale=0.29]{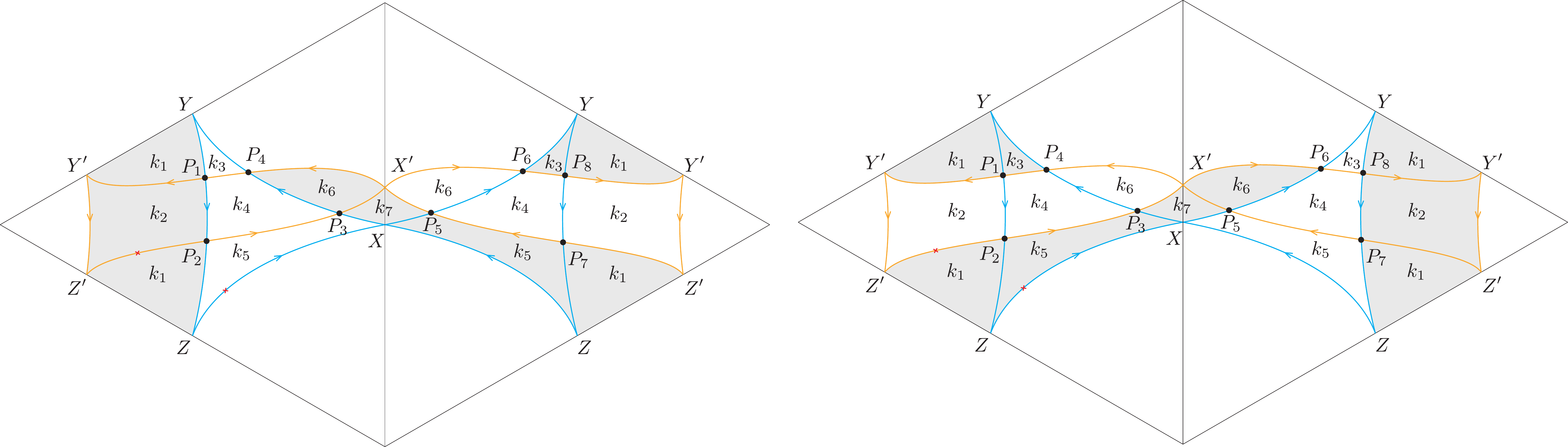}
    \caption{}\label{fig:lagiso_m2}
\end{figure}

\begin{proof}
In Figure \ref{fig:lagiso_m2}, we illustrate the polygons contributing the above identity. As we now think of these polygons as contributing to $m_2$, one can check that these two polygons have the same sign, contrary to the computation of $m_1^{b,b'} (P_6)$,
\end{proof}

Thus we have proved Proposition \ref{prop:lc}.
\end{proof}

\section{Gluing two pairs of pants}\label{sec:glue2s}
We now consider the 4-punctured sphere which is  the union of two pairs-of-pants. Consider two Seidel Lagrangians $\bL_0$, $\bL_1$
sitting in each pair-of-pants component.  They are disjoint from each other ($\bL_0 \cap \bL_1 = \emptyset$).
Since Floer homology between $\bL_0$ and $\bL_1$ is trivial, one can say that there is no relation between them.
In particular, their formal deformation spaces $U_0, U_1$ do not overlap ($U_0 \cap U_1 = \emptyset$).

On the other hand, we can take a deformation $\WT{\bL_0}$ of $\bL_0$ toward the other pair of pants. We have seen that
the their formal deformation spaces  overlap non-trivially ($\WT{U}_0 \cap U_0 \neq \emptyset$) in the previous section \ref{sec:localisot}.

We find that if we push $\WT{\bL_0}$ enough to the other pair of pants so that it intersects $\bL_1$ as in Figure \ref{fig:twosei},
then the formal deformation spaces of $\WT{\bL_0}$ and $\bL_1$ also overlap non-trivially ($\WT{U}_0 \cap U_1 \neq \emptyset$). Therefore, this
provides a way to go from $U_0$ to $U_1$ via $\WT{U}_0$.

Note that $\WT{\bL_0}$ is twisted once along the neck region of the 4-punctured sphere as we deform.  The twisting produces the desired coordinate change. If we do not make this twisting, then
the resulting coordinate change switches $y$ with $z'$ and $z$ with $y'$.
%
Both of the Lagrangians have nontrivial spin structures which are represented by generic points (marked as {\color{red}$\times$} in Figure \ref{fig:twosei}) as before.
Intersections between $\WT{\bL_0}$ and the standard Seidel Lagrangian $\bL_1$ in the other pair-of-pants are drawn in Figure \ref{fig:twosei}. 

\begin{figure}[htb!]
    \includegraphics[scale=0.35]{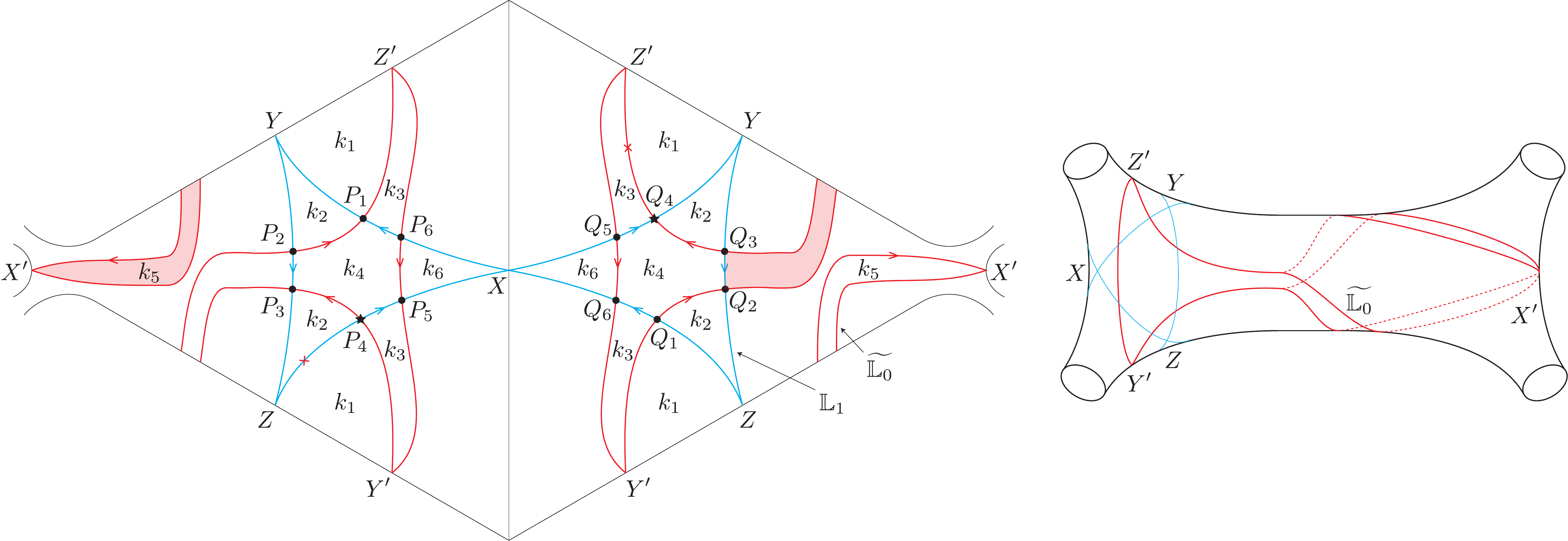}
    \caption{}
		\label{fig:twosei}
\end{figure}

We find a condition for $(\WT{\bL_0}, b'= x'X'+y'Y'+z'Z')$ and $(\bL_1,b=xX+yY+zZ)$ to be (quasi-)isomorphic. Let us look into their Floer complex $CF((\WT{\bL_0},b'),(\bL_1,b))$, which is generated by their 12 intersection points. We label them as $P_1, \cdots, P_6$ and $Q_1,\cdots, Q_6$ (see Figure \ref{fig:twosei}). Areas of regions enclosed by the Lagrangians are marked in Figure \ref{fig:twosei}. Here, the region $a_5$ starts from the $X'$ corner, goes over to the opposite side triangle and ends on the edge $\overline{Q_2 Q_3}$. See the shaded region.

\begin{prop}
Suppose $x x' \neq 0$, and define $\alpha = P_4 - Q_4 \in CF^0(\WT{\bL_0},\bL_1)$, $\beta = Q_1+P_1 \in CF^0(\bL_1,\WT{\bL_0})$. Then $m_1^{b',b}(\alpha) =0$ if (and only if)
\begin{equation*}
\left\{
\begin{array}{l}
x' = x^{-1}  T^{\delta} \\
y'= x y T^{-\epsilon} \\
z'= x z T^{ -\epsilon  },
\end{array} \right.
\end{equation*}
for $\delta=4k_1 + 2 k_2 + 2 k_3 - k_5 - k_6>0$, $\epsilon =  2k_1 +  2k_3- k_6 >0$. Furthermore, $\alpha,\beta$ provide isomorphisms between $(\WT{\bL_0},b')$ and $(\bL_1,b)$ if $\val(b'), \val(b) >0$.
\end{prop}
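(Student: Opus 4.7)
\medskip

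\noindent\textbf{Proof proposal.}

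The plan is to mimic the bookkeeping already carried out in Proposition~\ref{prop:lc}, but now for the pair $(\WT{\bL_0},\bL_1)$ in the four-punctured sphere. First I would enumerate every $J$-holomorphic polygon with boundary alternating on $\WT{\bL_0}$ and $\bL_1$ whose corners at immersed self-intersections of $\WT{\bL_0}$ (resp.~$\bL_1$) are inputs from $b'=x'X'+y'Y'+z'Z'$ (resp.~$b=xX+yY+zZ$) and whose remaining corners lie in $\{P_i,Q_i\}$. Because the two Seidel Lagrangians together with the deformation $\WT{\bL_0}$ cut the surface into regions with the area labels indicated in Figure~\ref{fig:twosei}, each polygon is determined by which of these regions it covers; the contribution is then $\pm T^{\text{area}}\cdot(\text{monomial in }x,y,z,x',y',z')$, where the sign is dictated by the two non-trivial spin structures (the marked points {\color{red}$\times$}).

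Next I would compute $m_1^{b',b}(P_4)$ and $m_1^{b',b}(Q_4)$ by grouping contributions according to the output generator. The expectation, guided by the figure, is that the outputs which actually appear are of the form $P_i$ or $Q_i$ for a small set of indices, and that for $\alpha=P_4-Q_4$ the outputs involving only the neck region (containing $a_5$) force the ratio $x'/x^{-1}$, while the outputs involving $Y,Z$ and $Y',Z'$ force the $y',z'$ equations. Concretely, one polygon with $X$-corner from $\bL_1$ and one polygon with $X'$-corner from $\WT{\bL_0}$ must share the same output, yielding an equation of the form $x T^{a}=x'^{-1}T^{b}$ after using $xx'\neq 0$; this is what accounts for the noncommutative-looking relation $x'=x^{-1}T^\delta$. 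The $y$- and $z$-equations arise similarly from triangles/quadrilaterals that simultaneously touch $Y'$ (resp.~$Z'$) on $\WT{\bL_0}$ and either $X,Y$ or $X,Z$ on $\bL_1$, which is why $x$ appears in $y'=xyT^{-\epsilon}$ and $z'=xzT^{-\epsilon}$. Setting $m_1^{b',b}(\alpha)=0$ coefficient by coefficient will then give exactly the three coordinate-change relations. The computation for $m_1^{b,b'}(\beta)=0$ with $\beta=Q_1+P_1$ must give the same system; this will follow by the $\Z/2$ reflection symmetry of the picture exchanging $P_i\leftrightarrow Q_i$ and the two Lagrangians.

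To upgrade $\alpha,\beta$ to an isomorphism pair in the sense of Definition~\ref{def:iso}, I would compute $m_2^{b',b,b'}(\alpha,\beta)$ and $m_2^{b,b',b}(\beta,\alpha)$. Two of the four $m_2$-contributions come from the same pairs of shaded polygons used in the $m_1$-computation; as in the proof of Proposition~\ref{prop:lc}, these now add rather than cancel because the sign contribution of a strip depends on whether the two marked inputs lie on the same boundary arc or not. The remaining polygons are constant or small bigons contributing $\be$-multiples. After substituting the coordinate change found above, the leading term should be $T^{k_0}\be_{\WT{\bL_0}}$ (resp.~$T^{k_0}\be_{\bL_1}$) for some explicit $k_0$. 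Assuming $\val(b),\val(b')>0$, all additional terms have strictly positive valuation, so they are of the form $m_1^{b',b'}(\gamma_0)$ and $m_1^{b,b}(\gamma_1)$ by invertibility of the leading term and a standard Maurer--Cartan/Novikov-filtration argument (multiply $\alpha,\beta$ by $T^{-k_0}$ to normalize to a strict unit plus an $m_1$-exact correction).

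The main obstacle I expect is not the algebra but the combinatorics: carefully enumerating all polygons that pass through the neck region of the four-punctured sphere, together with the correct spin-structure signs. In particular, the single twist of $\WT{\bL_0}$ through the neck changes which of $y'$ versus $z'$ is paired with $y$ versus $z$ in each contributing strip; without the twist one would obtain the swapped relations $y'\leftrightarrow z'$ mentioned right after the figure. Verifying that exactly the polygons producing $x'=x^{-1}T^\delta$, $y'=xyT^{-\epsilon}$, $z'=xzT^{-\epsilon}$ survive, and that no unexpected polygon violates these equations, is the delicate point; once this is done, the rest is a direct Novikov-filtration argument as above.
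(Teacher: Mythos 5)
Your proposal is correct and follows essentially the same route as the paper: enumerate the holomorphic polygons of Figure \ref{fig:twosei}, read off $m_1^{b',b}(P_4)$ and $m_1^{b',b}(Q_4)$ coefficient by coefficient to force the three coordinate-change relations, and then verify that the same polygons (now read as $m_2$-contributions whose boundaries sweep each Lagrangian once) exhibit $\beta=P_1+Q_1$ as the inverse of $\alpha$. The only cosmetic difference is that the paper obtains the unit on the nose from the $m_2$-count rather than needing your normalization-plus-Novikov-filtration step.
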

\begin{proof}

Our candidate for an isomorphism in $CF((\WT{\bL_0},b'),(\bL_1,b))$ is a combination of $P_4$ and $Q_4$. Their Floer differentials are given as follows, where the contributing polygons to $m_1^{b',b} ( P_4)$ are depicted in Figure \ref{fig:twosei1234} and Figure \ref{fig:m2twosei}. (Those for $m_1^{b,b'} (Q_4)$ are  obtained by rotating these polygons about the point $X$ in the figure.)
\begin{eqnarray*}
m_1^{b',b} ( P_4) &=&  \left( - xx' T^{k_4 + k_5 + k_6} + T^{4k_1 + 2 k_2 + 2 k_3 + k_4} \right) P_1 + xy T^{k_2 + k_4 + k_6} P_3 \\
&& - z' T^{2 k_1 + k_2 + 2 k_3 + k_4} Q_3 -  yT^{2k_1 + 2 k_2 + k_3 + k_4} Q_5 + x' z' T^{k_3 + k_4  + k_5} P_5 \\
m_1^{b',b} (Q_4) &=&  \left( xx' T^{k_4 + k_5 + k_6} - T^{4 k_1 + 2 k_2 + 2 k_3 + k_4} \right)Q_1  +  y'T^{2 k_1 + k_2 + 2 k_3 + k_4} P_3 \\
&& - xz T^{k_2 + k_4 + k_6} Q_3 - x'y' T^{k_3 + k_4 + k_5}  Q_5 + z T^{2k_1 + 2k_2 + k_3 + k_4} P_5
\end{eqnarray*}

\begin{figure}[htb!]
    \includegraphics[scale=0.25]{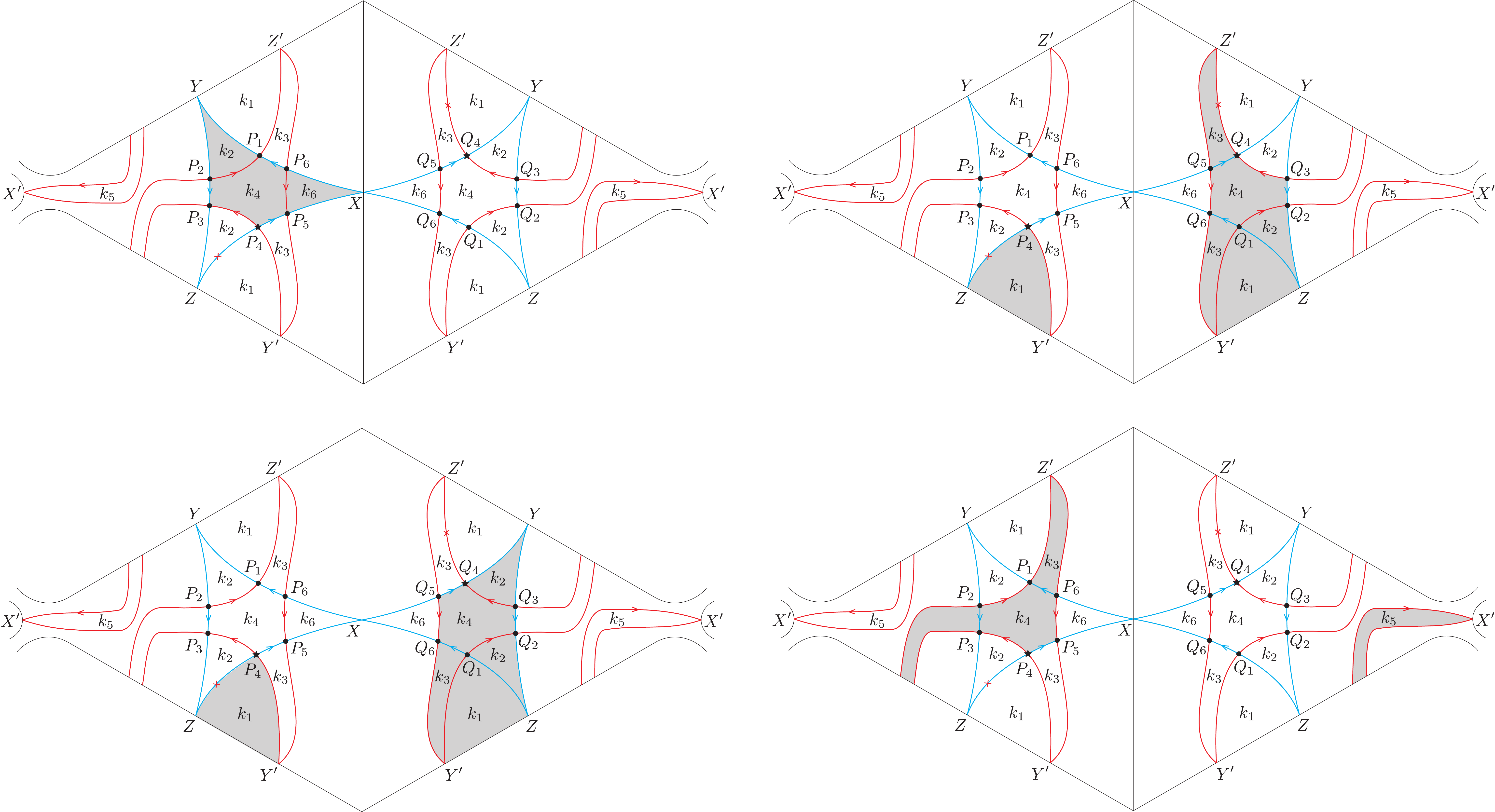}
    \caption{}
		\label{fig:twosei1234}
\end{figure}

It follows that $\alpha:= P_4 -Q_4$ becomes a cocycle if and only if
\begin{equation}\label{eqn:coordtwosei}
\left\{
\begin{array}{l}
x' = x^{-1}  T^{4k_1 + 2k_2 + 2k_3 - k_5 -k_6} \\
y'= x y T^{k_6 - 2k_1 - 2k_3} \\
z'= x z T^{ k_6 - 2k_1  - 2k_3  }\\
\end{array} \right.
\end{equation}
One can easily check that $\alpha$ admits an inverse $\beta = P_1 +  Q_1$ under this condition.
 Thus it gives an isomorphism between $(\WT{\bL_0}, b')$ and $(\bL_1 , b)$ provided that both $b$ and $b'$ have positive valuations. The polygons counted for the composition (either $m_2^{b',b,b'}$ or $m_2^{b,b',b}$) of $P_1$ and $P_4$ are drawn in Figure \ref{fig:m2twosei}. In fact, their $m_2$ count exactly the same polygons contributing to coefficients $P_1$ and $Q_1$ in $m_1^{b',b} (\alpha)$, and the boundaries of these polygons in fact sweep over $\WT{\bL_0}$ and $\bL_1$ once.

\begin{figure}[htb!]
    \includegraphics[scale=0.45]{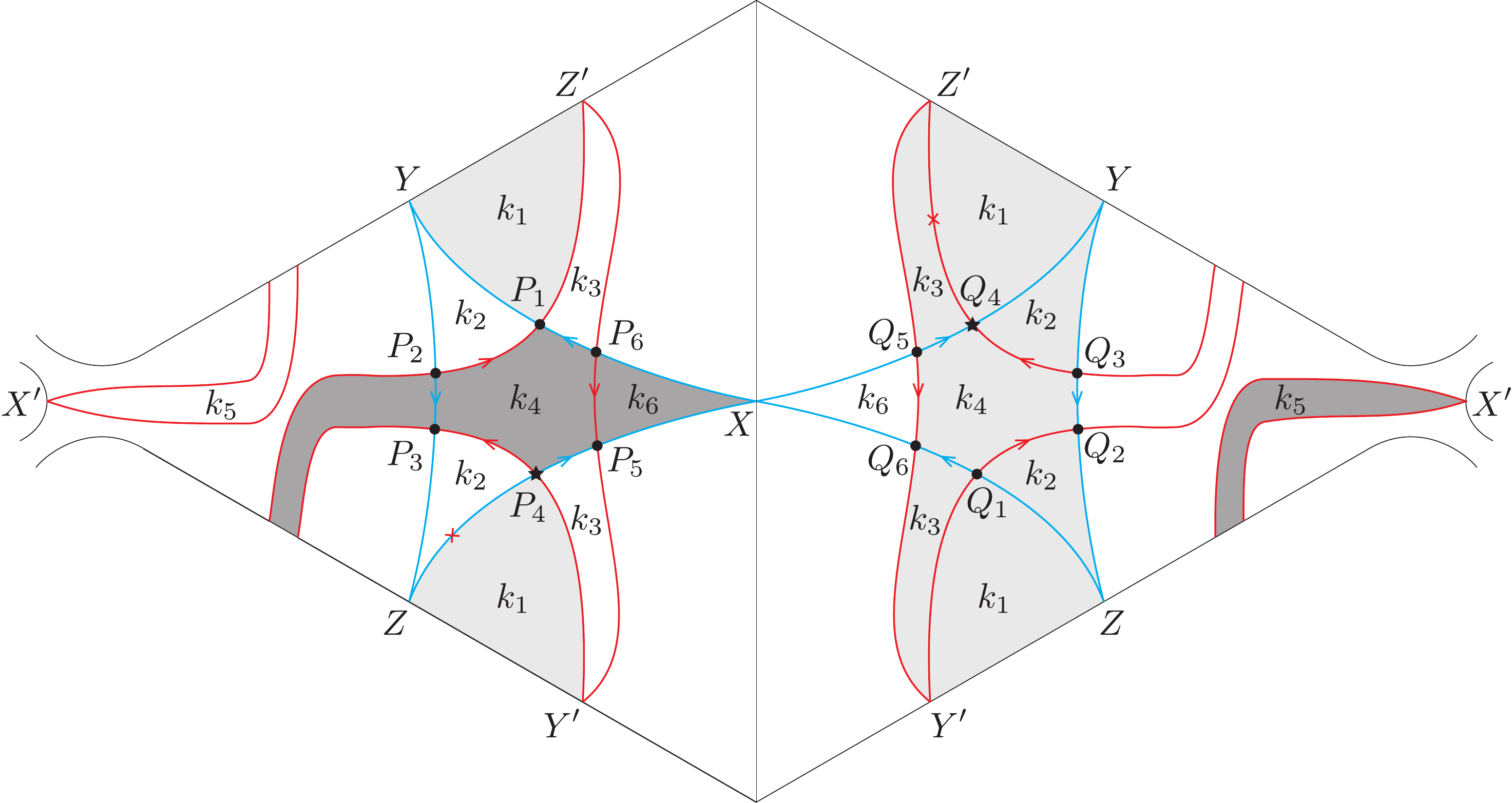}
    \caption{}
		\label{fig:m2twosei}
\end{figure}

We next analyze the valuations of the variables more precisely. Set $\delta:= 4k_1 + 2 k_2 + 2 k_3 - k_5 -K_6$, which is obviously positive from the picture. (Roughly, it is more or less the area of the cylinder that wrap around the neck region of the 4-punctured sphere once.), and $\epsilon:= - ( k_6 - 2k_1 - 2k_3)$ which is also positive. Then taking valuation of \eqref{eqn:coordtwosei} gives
\begin{equation*}
\left\{
\begin{array}{l}
\val (x') = -\val (x) +\delta \\
\val (y')= \val (x) + \val (y)-\epsilon \\
\val (z')= \val (x) + \val (z)  - \epsilon \\
\end{array} \right.
\end{equation*}
Therefore, we should have 
$$\val (x) < \delta ,\quad  \val (x) + \val (y) > \epsilon , \quad  \val (x) + \val (z) > \epsilon$$ 
in order to have a nontrivial overlap of two charts $\{(x,y,z) \in \Lambda_+^3  \}$ and $\{(x',y',z') \in \Lambda_+^3  \}$, and there are similar estimates for $(x',y',z')$.
Therefore, we glue two charts over the regions
\begin{equation*}
\begin{array}{l}
 \{ (x,y,z) \in \Lambda_+^3 \mid \val (x) < \delta , \val (x) + \val (y) > \epsilon , \val (x) + \val (z) > \epsilon \}, \\
 \{ (x',y',z') \in \Lambda_+^3 \mid \val (x') < \delta , \val (x') + \val (y') > \delta - \epsilon , \val (x') + \val (z') > \delta - \epsilon \}.
 \end{array}
 \end{equation*}
The above discussion proves proposition.
\end{proof}

Note that if we move $Y',Z'$ appropriately, then our coordinate change formula may not involves any area term.
Recall that the critical locus of the superpotential $xyz$ is the union of the coordinate axis.
Hence if $xx' \neq 0$, then we should be $y=z=0$ and well as $y'=z'=0$ to have non-trivial Floer cohomology. In this case, one can easily check that the resulting homology of the above complex has rank 8, which is the same as the rank of self-Floer homology of the Seidel Lagrangian. 

Combining the above with \eqref{eq:co}, we have a precise change of coordinate formula. For the general cases of
punctured Riemann surfaces, we will generalize this idea to find the global mirror and prove homological mirror symmetry.

\subsection{Other types of coordinate changes}\label{subsec:generalatwosei}
We have chosen $\alpha=P_4 - Q_4$ as an isomorphism in the previous section to deduce the coordinate change \eqref{eqn:coordtwosei}. However, this is not the only possible choice. In fact, one can choose different isomorphisms, which
results in different (but equivalent) Landau-Ginzburg models.
Such a phenomenon also appeared in Section  \ref{sec:isotg} as a choice of isotopy of gauge points.

Let us consider the morphism
$\alpha_{a} : = x^{a-1} P_4 - Q_4 \in CF( (\WT{\bL_0},b'),(\bL_1,b))$
(so, $\alpha= \alpha_1$ is one of its special cases). This is allowed since $a$ never vanished over the gluing region. In this case, $m_1^{b',b} (\alpha_{a})=0$ is equivalent to
\begin{equation*}
\left\{
\begin{array}{l}
x' = x^{-1}  T^{\delta} \\
y'= x^{a} y T^{-\epsilon} \\
z'= x^{2-a} z T^{ -\epsilon  },
\end{array} \right.
\end{equation*}
and one can check that $\alpha_{a}$ is also an isomorphism under this condition, where its inverse $\beta$ should be modified accordingly. The gluing region is also affected by this change, since we have new inequalities
\begin{equation*}
\left\{
\begin{array}{l}
0<\val (x') = -\val (x) +\delta \\
0<\val (y')= a \val (x) + \val (y)-\epsilon \\
0<\val (z')= (2-a) \val (x) + \val (z)  - \epsilon,
\end{array} \right.
\end{equation*}
which still gives a nonempty gluing region.
%
%
%

\section{Double-circles and Seidel Lagrangians}\label{sec:doublecircles}
In this section, we provide the relation between the (deformed) Seidel Lagrangian and the pair-of-circles.
As explained in Section \ref{sec:4sp}, if we smooth out one of the immersed points of the Seidel Lagrangian
it becomes a union of two circles which we call a pair-of-circles. For example, see the two red circles in Figure \ref{fig:cone}
or $C_1,C_2$ in Figure \ref{fig:pp-decomp}.
 
 We use a slightly different deformation of the Seidel Lagrangian from the one studied in Section \ref{sec:localisot} and \ref{sec:glue2s}.  The intersection of such a deformed Lagrangian with a Seidel Lagrangian in a neighboring pair-of-pants is simpler, namely they can be made to intersect at eight points (while the one used in Section \ref{sec:glue2s}) has 12 intersection points shown in Figure \ref{fig:twosei}).  We find a precise isomorphism between the pair-of-circles and the Seidel Lagrangian in this section.  This gives the gluing formula stated in Section \ref{sec:4sp}.
 
Let $S_1$ be a Seidel Lagrangian in a pair-of-pants.  We take a deformation $S_1^x$ as shown in Figure \ref{fig:cone} (which is different from the one shown in Figure \ref{fig_protosei} of Section \ref{sec:localisot}).

\begin{figure}[htb!]
    \includegraphics[scale=0.4]{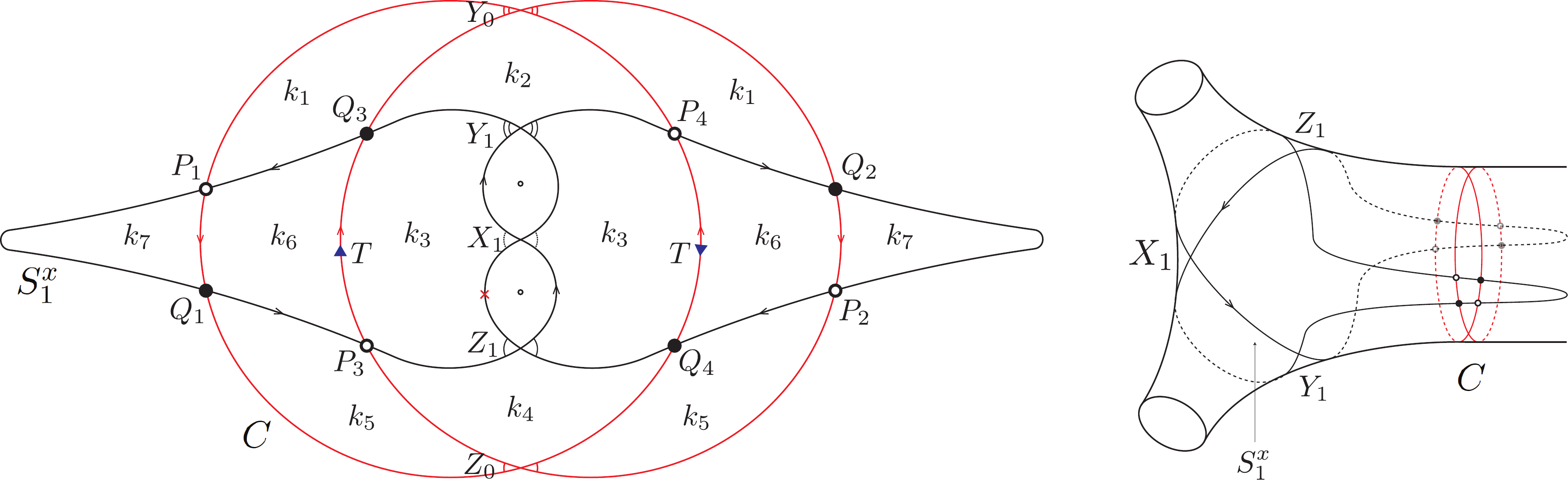}
    \caption{The deformed Seidel Lagrangian $S_1^x$ and the pair-of-circles $C$.  On the left hand side, one of the three punctures of the pair-of-pants is located at infinity, and the other two are located at the middle of the figure.  $k_i$ label the areas of the regions.}\label{fig:cone}
\end{figure}


Let $C$ be a pair-of-circles $S^1 \cup \phi_H(S^1)$ where  $\phi_H$ is a  Hamiltonian perturbation, and $C$ is preserved by the $\Z/2$-reflection (about the equator of the pair-of-pants).  Moreover the intersection $S^1 \cap \phi_H(S^1)$, which consists of two points, is arranged to be fixed by the reflection.

The Seidel Lagrangian $S_1$ does not intersect the pair-of-circles $C$, see Figure \ref{fig:pp-decomp}.  To obtain the Floer-theoretical relations, we take a deformation $S_1^x$ which intersects at eight points with $C$.

We shall see that we need to make a big deformation of $S_1$.
Namely, the region labeled $k_7$ is required to be larger than the cylindrical region between $S_1$ and $C$.
In particular, $S_1^x$ is NOT a Hamiltonian perturbation of $S_1$.
(Hamiltonian diffeomorphism gives a Floer theoretically isomorphic object which still does not have non-trivial morphism with $C$.)

Recall from Lemma \ref{lem:dc}, that $C$ has the bounding cochain $b = yY + zZ$.
\begin{defn}
Let $\nabla^{tT}$ be a flat connection on $C$ whose holonomy across the vanishing cycle of the surgery is $tT$. (See also Definition \ref{def:vc}.)
Denote by $b_1 := x_1X_1+y_1Y_1+z_1Z_1$ and $b_0:=(\nabla^{tT},y_0Y_0+z_0Z_0)$ the bounding cochains of $S_1^x$ and $C$ respectively, where $(t,y_0,z_0) \in \Lambda_0^\times \times \Lambda_+^2$
and $(x_1,y_1,z_1) \in \Lambda_+^3$ .

\end{defn}
The disc potential functions for $C$ and $S_1$ are $ty_0z_0$ and $T^A x_1y_1z_1$ respectively, where $A$ is the area of one of the two triangles bounded by $S_1$.  The relation between $S_1^x$ and $C$ is given as follows.


\begin{prop} \label{prop:cocycle_4punc}
Let $$\alpha = P_1+P_2 \in  CF(C, S_1^x), \beta \in Q_1 - Q_2 \in CF(S_1^x,C) $$
where $P_i, Q_i$ are intersection points given in Figure \ref{fig:cone}.
$(\alpha,\beta)$ gives an isomorphism between the objects $(S_1^x,b_1)$ and $(C,b_0)$ for $b_1 \in (\Lambda_+)^3, b_0 \in  \Lambda_0^\times \times \Lambda_+^2$ if and only if 
\begin{equation}\label{eq:ds}
\begin{cases} x_1=t T^{\delta} \\y_1=y_0 T^{-\eta_1} \\ z_1=z_0 T^{-\eta_2} \end{cases} \;\;\;\;\;\; \textrm{where} \;\; \begin{cases}  \delta = k_7 - k_1 - k_2 - k_3 - k_4 - k_5 \\  \eta_1 =  k_7-2k_1-k_2 \\ \eta_2 = k_7 -k_4-2k_5. \end{cases}
\end{equation}
\end{prop}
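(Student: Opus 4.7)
The plan is to apply the isomorphism criterion of Definition \ref{def:iso} directly to $(\alpha,\beta)$: show that under the relations \eqref{eq:ds} one has $m_1^{b_0,b_1}(\alpha) = 0$, $m_1^{b_1,b_0}(\beta) = 0$, and that $m_2^{b_0,b_1,b_0}(\alpha,\beta)$, $m_2^{b_1,b_0,b_1}(\beta,\alpha)$ are cohomologous to $\be_C$, $\be_{S_1^x}$ respectively. This is in the same spirit as the computation of Section \ref{sec:glue2s}, the essential new feature being that $C$ carries the flat $\Lambda_0^\times$-connection $\nabla^{tT}$ localized on the vanishing cycle, so every polygon whose boundary on $C$ crosses the vanishing cycle once picks up an extra factor of $tT$.

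First I would enumerate the $J$-holomorphic polygons in the pair-of-pants with boundary on $C \cup S_1^x$ contributing to $m_1^{b_0,b_1}(P_1 + P_2)$. Each contribution is a polygon with two corners among the eight intersection points and additional corners at immersed generators of $C$ or $S_1^x$, weighted by $T^{\mathrm{area}}$, by the appropriate monomial in $x_1,y_1,z_1,y_0,z_0,t$, and by a sign fixed by the spin structures (non-trivial on $S_1^x$, standard on each circle of $C$). The output lies in the span of $P_3,P_4,Q_3,Q_4$; setting each of these coefficients to zero produces a system of equations. The $\Z/2$-reflection symmetry of the configuration exchanges $P_i \leftrightarrow Q_i$ with a controlled sign and collapses the system to three independent relations, one for each generator $X_1,Y_1,Z_1$. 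The relation coming from polygons that wrap through the neck region and cross the vanishing cycle of $C$ gives $x_1 = tT^\delta$, where $\delta$ is the net area $k_7 - k_1 - k_2 - k_3 - k_4 - k_5$; the two relations from polygons with corners at $Y_1,Y_0$ (respectively $Z_1,Z_0$) give $y_1 = y_0 T^{-\eta_1}$ and $z_1 = z_0 T^{-\eta_2}$, with negative exponents reflecting the fact that areas on the $C$-side exceed those on the $S_1^x$-side.

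Having obtained the coordinate change, I would verify $m_1^{b_1,b_0}(\beta) = 0$ by the same enumeration; the $\Z/2$-reflection exchanging $(P_1,P_2) \leftrightarrow (Q_1,-Q_2)$ matches the $+$ and $-$ signs in $\alpha$ and $\beta$, so the equations extracted coincide with those already solved. For the composition identities, the small bigons with corners at $P_i,Q_i$ (no immersed corners) contribute the leading terms of $m_2(\alpha,\beta)$ and $m_2(\beta,\alpha)$, producing $\be_C$ and $\be_{S_1^x}$ respectively up to a Novikov scalar that is precisely absorbed by \eqref{eq:ds}; the signs here, unlike in the $m_1$ computation, add rather than cancel, as in the analogous computation of Section \ref{sec:glue2s}. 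Further polygons with additional immersed corners contribute terms of strictly positive Novikov valuation (since $b_0, b_1$ have positive-valuation components) and either cancel in $\Z/2$-pairs or are exact via explicit $\gamma_0, \gamma_1$.

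The hardest part is the sign and holonomy bookkeeping: the non-trivial spin structure on $S_1^x$, the $\Z/2$-pairing of polygons, and the orientation of the vanishing cycle must conspire so that symmetry-related polygons cancel in the off-diagonal output of $m_1^{b_0,b_1}(\alpha)$ while adding in $m_2^{b_0,b_1,b_0}(\alpha,\beta)$ to produce the unit. A subsidiary issue is the consistency of the valuation constraints: since $t \in \Lambda_0^\times$, the relation $x_1 = tT^\delta$ forces $\val(x_1) = \delta$, and one needs $\eta_1,\eta_2$ small enough relative to $\val(y_0), \val(z_0)$ for both charts $(x_1,y_1,z_1) \in \Lambda_+^3$ and $(t,y_0,z_0) \in \Lambda_0^\times \times \Lambda_+^2$ to overlap nontrivially under the gluing, which is what makes the \emph{if and only if} in the statement meaningful.
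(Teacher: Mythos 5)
Your proposal is correct and follows essentially the same route as the paper: enumerate the holomorphic strips between $C$ and $S_1^x$ with $b_0,b_1$ insertions, extract the three coordinate relations from the vanishing of the components of $m_1^{b_0,b_1}(P_1+P_2)$ (the neck-crossing strip through the vanishing cycle giving $x_1=tT^\delta$ and the $Y$-, $Z$-corner strips giving the other two), and then observe that the very same polygons contribute to $m_2(\alpha,\beta)$ and $m_2(\beta,\alpha)$, where their boundaries sweep each Lagrangian exactly once so that the outputs are $T^{k_7}$ times the units. The only discrepancies are cosmetic (your guess at which labeled points appear as outputs differs from the figure's labeling, and the paper records the unit with the explicit scalar $T^{k_7}$ rather than absorbing it), neither of which affects the argument.
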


\begin{remark}
	We shall take the limit $k_6=k_3=k_1=k_5=0$, so that we only leave with $k_7, k_2, k_4 > 0$.  The condition $\delta \geq 0$ reduces to $k_7 \geq k_2 + k_4$, where $k_2+k_4$ is the cylinderical area bounded byetween $S_1$ and $C$.  When $k_7 \leq k_2+k_4$ isomorphism does not exist.  This shows that the amount of stretching depends on the
	location of the pair-of-circles $C$.  Farther away $C$ is from $S_1$, bigger $k_7$ is required to be.
\end{remark}

\begin{proof}
We will see that $m_1^{b_0,b_1}(P_1 + P_2) = 0$, $m_1^{b_1,b_0}(Q_1 -  Q_2)=0$ provides the exact coordinate change as follows.
First, we count holomorphic strip with input $P_1,P_2$, with possible insertions of $b_0$ (in the upper boundary) and
$b_1$(in the lower boundary). In Figure \ref{fig:puncsph-discs}, the first diagram in the upper left corner is an honest holomorphic strip from $P_1$ to $Q_1$, and another strip with an insertion of $X_1$ is drawn in upper right corner. And these two are the only
contributions from $P_1$ to $Q_1$. By computing their areas and signs, we see that $Q_1$ component of $m_1^{b_0,b_1} (P_1)$ vanishes if and only if
\begin{equation}\label{eqn:dcx1t}
T^{k_7} - x_1 t^{-1}T^{k_1 + k_2 + k_3 + k_4 + k_5}=0
\end{equation}
This gives rise to the coordinate change formula between $x_1$ and $t$.
\begin{figure}[htb!]
    \includegraphics[scale=0.75]{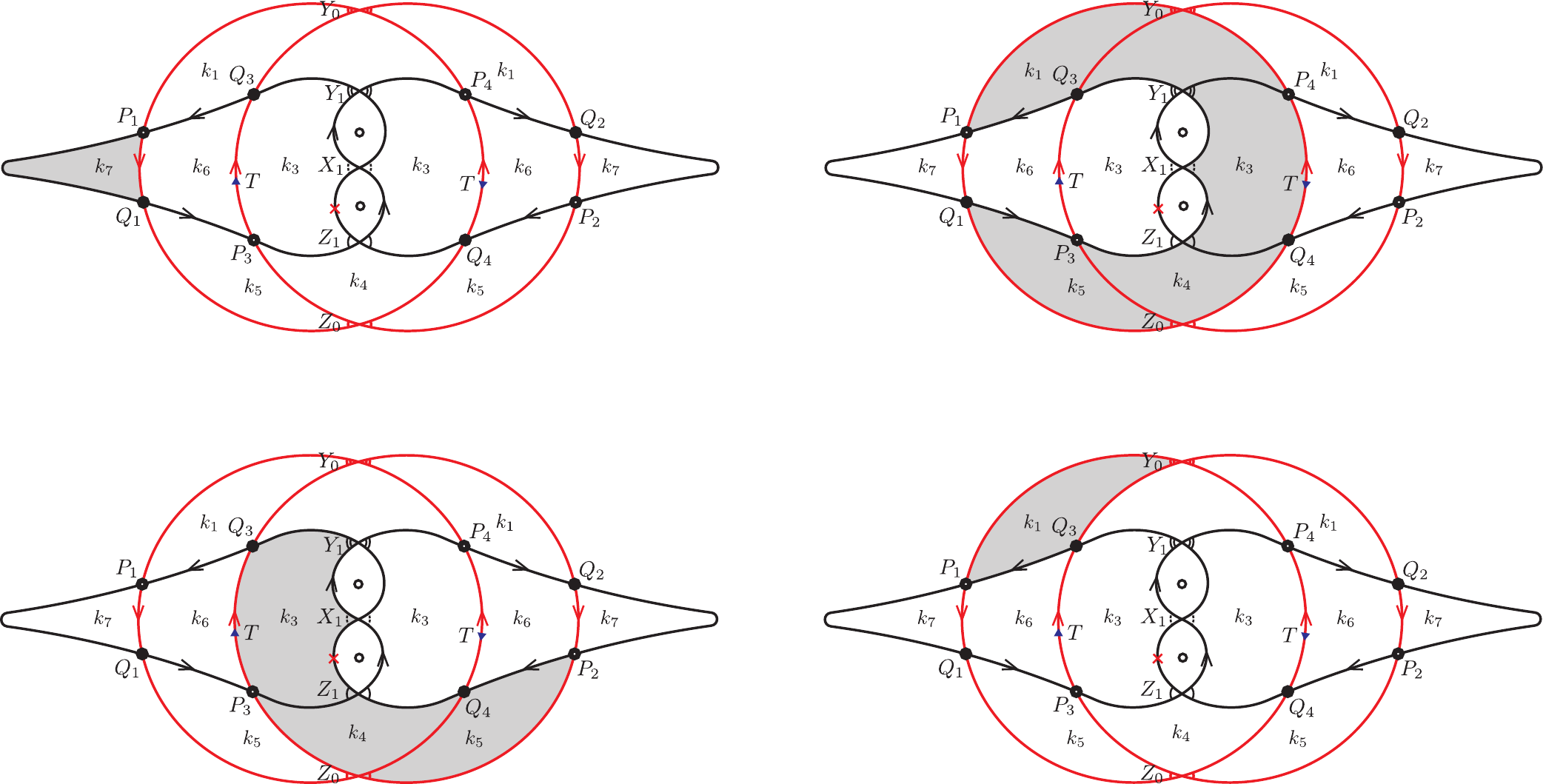}
    \caption{Holomorphic strips used in Proposition \ref{prop:cocycle_4punc}.}
		\label{fig:puncsph-discs}
\end{figure}

Similarly, $m_1^{b_0,b_1} (P_1)$ has $Q_3$ as an output (which is drawn in the lower right corner) and
note that we have $m_1^{b_0,b_1}(P_2)$ has $Q_3$ as an output also (drawn in the lower left corner).
Thus the vanishing of $Q_3$ contribution in  $m_1^{b_0,b_1} (P_1 + P_2)$ is equivalent to
$$y_0 T^{k_1} -  x_1 y_1 t^{-1} T^{k_3 + k_4 + k_5} = y_0 T^{k_1}  -y_1 T^{k_7 - k_1 - k_2} $$
where we used the identity  \eqref{eqn:dcx1t}, and the vanishing of this expression gives rise to the second coordinate change formula.

In the same way, we can check that the vanishing of $Q_4$ component in $m_1^{b_0,b_1} (P_1 + P_2)$
which gives  $$ T^{k_1+k_2+k_3}t^{-1}x_1z_1= z_0 T^{k_5}.$$ 
The computation for $Q_1- Q_2$  is similar and gives rise to the same coordinate changes.

In addition, we show that  $\alpha, \beta$  are
inverses to each other in the following lemma, which proves the proposition. \end{proof}
\begin{lemma}
Assume that \eqref{eq:ds} holds. The products of $\alpha=P_1+P_2$ and $\beta=Q_1-Q_2$ are given by
$$\begin{cases}
m_2^{b_0,b_1,b_0} (\alpha,\beta)= T^{k_7}\one_{S_1^x} \\
m_2^{b_1,b_0,b_1} (\beta,\alpha)=T^{k_7} \one_{C} 
\end{cases}$$
\end{lemma}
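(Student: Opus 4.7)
The plan is to compute the two $m_2$ operations by directly enumerating holomorphic polygons bounded by $C$ and $S_1^x$ with two distinguished corners at the $P_i$'s and $Q_j$'s, with arbitrary additional corners on the $C$-boundary decorated by $Y_0, Z_0$ or by the holonomy factor $tT$ at the vanishing cycle (coming from $\nabla^{tT}$ and $b_0$), and corners on the $S_1^x$-boundary decorated by $X_1,Y_1,Z_1$ (coming from $b_1$). The $\mathbf{1}$-coefficient of the output corner is determined in the Morse model of Lemma \ref{lem:pp}: it picks up those polygons whose output lands at the prescribed critical point representing the unit.

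The key observation, entirely parallel to the $m_1$-versus-$m_2$ comparison illustrated in Figure \ref{fig:m2twosei} of Section \ref{sec:glue2s}, is that essentially all the polygons contributing to $m_2^{b_0,b_1,b_0}(\alpha,\beta)$ are the same strips and discs already drawn in Figure \ref{fig:puncsph-discs} for the proof of Proposition \ref{prop:cocycle_4punc}; only the cyclic grouping of inputs around the disc boundary relative to the output corner is different. In the $m_1$-computation these polygons organize themselves into $\Z/2$-symmetric pairs that cancel thanks to the signs $\alpha=P_1+P_2$ and $\beta=Q_1-Q_2$ combined with the reflection symmetry of the configuration. Upon reinterpretation as $m_2$-polygons the relative sign between members of each pair flips, so instead of cancelling they add; this is exactly the mechanism used in Section \ref{sec:localisot} to pass from $m_1^{b,b'}(P_6)=0$ to $m_2(P_6,P_4)=T^k\,\mathbf{1}$.

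Applying this sign flip, I would check that all output contributions proportional to $X,Y,Z,\bar X,\bar Y,\bar Z,\mathrm{pt}$ (or to intersection points in $CF(C,S_1^x)$ viewed as obstructions to the relation) still cancel by the $\Z/2$ symmetry, while the contributions landing on the unit survive as non-cancelling pairs. Substituting the coordinate changes \eqref{eq:ds} into the area exponents of these surviving terms, all variables $x_1,y_1,z_1,t,y_0,z_0$ drop out and one is left with a single power $T^{k_7}$ — precisely the area of the distinguished thin annular region between $S_1^x$ and $C$ visible in Figure \ref{fig:cone}. The statement $m_2^{b_1,b_0,b_1}(\beta,\alpha)=T^{k_7}\mathbf{1}_C$ follows by the symmetric argument with the roles of the two Lagrangians exchanged.

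The main obstacle is the careful sign bookkeeping: for each disc enumerated in Proposition \ref{prop:cocycle_4punc} one must verify whether its reinterpretation as an $m_2$-polygon carries the same or the opposite sign, taking into account the orientation of the distinguished output corner and the non-trivial spin structure chosen on $C$ (standard) and on $S_1^x$ (marked in Figure \ref{fig:Seidel_Lag}). A secondary subtlety is to confirm completeness of the enumeration in the presence of arbitrarily many $b_0$- and $b_1$-insertions; convergence is automatic since $(y_0,z_0)$ and $(x_1,y_1,z_1)$ lie in $\Lambda_+$, but one must verify that no previously overlooked polygon with several boundary insertions contributes a non-unit output that the coordinate change \eqref{eq:ds} fails to kill.
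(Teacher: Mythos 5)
Your overall strategy --- reuse the polygons of Figure \ref{fig:puncsph-discs} from the proof of Proposition \ref{prop:cocycle_4punc}, reinterpret them as contributions to $m_2$ with the former output corner now an input, and track how the relative sign changes between the $m_1$- and $m_2$-readings --- is exactly the route the paper takes, and your analogy with the computation in Section \ref{sec:localisot} is the right one. The completeness and convergence worries you raise at the end are also legitimate but minor.

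The one place where your argument is imprecise enough to affect the answer is the identification of the output as the unit. You describe the surviving contributions as ``non-cancelling pairs'' landing on the unit: read literally, two polygons each contributing $T^{k_7}$ at the same unit generator would give $2T^{k_7}\one$, not $T^{k_7}\one$, and the coefficient is the entire content of the lemma. The mechanism the paper uses is different in a way that matters. Each contributing polygon outputs its boundary arc as a chain (the output marked point is free to move along the edge of the polygon lying on the relevant Lagrangian), and the key geometric fact --- which your proposal never states or verifies --- is that the union of the boundary arcs of the two shaded regions for $(P_1,Q_1)$ together with their reflection images for $(P_2,Q_2)$ covers $S_1^x$, respectively $C$, \emph{exactly once}. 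The output chain is therefore $T^{k_7}$ times the fundamental cycle, i.e. $T^{k_7}\one$; equivalently, in a strict Morse model only the single polygon whose boundary arc contains the unit critical point contributes, so there is no ``pair'' at the unit at all. The sign flip you emphasize is still needed, but its role is to make the distinct arcs add up to the fundamental cycle rather than cancel to zero; it is the covering-once statement that pins the coefficient to $T^{k_7}$ rather than $2T^{k_7}$ or $0$.
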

\begin{proof}
The same holomorphic polygons in the upper left and right of Figure \ref{fig:puncsph-discs} contribute to the above $m_2$-computation. The additional marked point will be an output marked point on the edges of the polygons, thus
contributing to  the deformed $m_2$-operation. Note that the union of boundaries of these two shaded regions (for $P_1,Q_1$)
and its reflection image (for $P_2,Q_2$) covers the Seidel Lagrangian $S_1^x$ and the pair-of-circles $C$ exactly once.
In this way, we can show that the outputs are the units.
We leave the detailed check as an exercise.

Now, since $t$ is a holonomy parameter, hence it has valuation $0$. To make $x_1= t T^{\delta}$ lie in $\Lambda_0$ (so that it defines
a bounding cochain of $S_1^x$ in Floer theory), we need $\delta \geq 0$. Also, since the critical locus of $W=xyz$ are given by coordinate axis, if we set two of the deformation parameter  $x_i,y_i,z_i$ to be non-zero, then the corresponding deformed Lagrangian turns out to be trivial. As we are considering the case of non-trivial $x_1$ (as a surgery parameter), we may set $y_i$ and $z_i$ to be zero. These assumptions are in fact not necessarily.  If $y_0,y_1,z_0,z_1 \in \Lambda_0$ and non-zero, then it should still
define (trivial) isomorphisms between zero objects.
\end{proof}


In the same manner, we can compute the relation between the Seidel Lagrangian $S_1$ and its deformation $S_1^x$.
\begin{figure}[htb!]
	\begin{center}
		\includegraphics[scale=0.7]{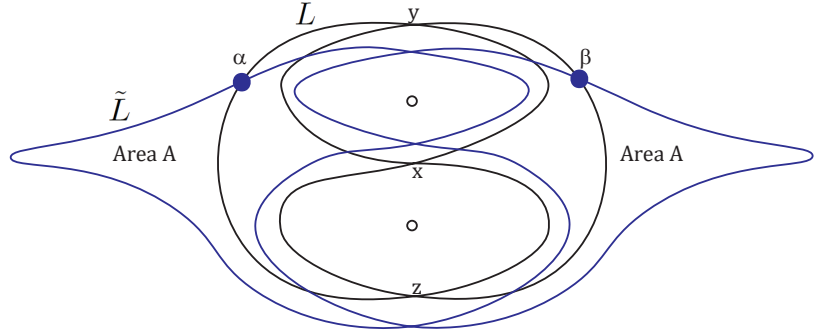}
		\caption{Comparing $S_1^x$ and $S_1$}
		\label{fig:deform-Seidel}
	\end{center}
\end{figure}
The coordinate change can be obtained using $\alpha$ and $\beta$ in Figure \ref{fig:deform-Seidel}, which
we leave as an exercise. A simplified formula in  the limiting case can be easily obtained.
\begin{prop} \label{prop:move-var}
Consider the case where Seidel Lagrangian $S_1$
limits to the skeleton of a pair of pants, so that the minimal triangle has zero area.
Then $(S_1, b=xX+yY+zZ)$  and $(S_1^x, \tilde{b}= \tilde{x}\tilde{X}+\tilde{y}\tilde{Y}+\tilde{z}\tilde{Z})$
are related via an isomorphism if
	$$ \tilde{x}=T^A x, \tilde{y}=T^{-A} y, \tilde{z}=T^{-A} z  $$
\end{prop}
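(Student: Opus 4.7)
The plan is to follow the exact same blueprint used in Proposition \ref{prop:cocycle_4punc}: construct explicit reflection-symmetric cocycles $\alpha \in CF((S_1,b),(S_1^x,\tilde{b}))$ and $\beta \in CF((S_1^x,\tilde{b}),(S_1,b))$ as linear combinations of intersection points in $S_1 \cap S_1^x$, and extract the coordinate change by imposing $m_1^{b,\tilde{b}}(\alpha)=0$ and $m_1^{\tilde{b},b}(\beta)=0$. The proposition is the infinitesimal Lagrangian-isotopy analogue of Proposition \ref{prop:cocycle_4punc} with $S_1^x$ replacing the pair-of-circles $C$, so the strategy is structurally identical but the geometry is far simpler because no smoothing (and hence no holonomy variable) is involved.

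First I would identify the intersection points of $S_1 \cap S_1^x$. Since $S_1^x$ is obtained from $S_1$ by a $\Z/2$-symmetric Lagrangian isotopy away from the immersed points, the intersections come in pairs interchanged by the reflection symmetry of the pair of pants; these are precisely the endpoints of the arrows drawn in Figure \ref{fig:deform-Seidel}. I would then take $\alpha$ and $\beta$ to be the symmetric/antisymmetric combinations of the two points in each relevant pair, exactly as $P_1+P_2$ and $Q_1-Q_2$ were used in Proposition \ref{prop:cocycle_4punc}, so that only reflection-symmetric polygon contributions survive.

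Second, I would enumerate the holomorphic strips contributing to $m_1^{b,\tilde{b}}(\alpha)$ in the degenerate limit where the minimal triangle of $S_1$ has zero area. In that limit, the only strips are the thin bigons between $S_1$ and $S_1^x$ that localize near each of the three immersed regions, possibly decorated with a single insertion from $b$ on the $S_1$-boundary or from $\tilde b$ on the $S_1^x$-boundary. Let $A$ denote the area of the lune swept out by the isotopy on the $X$-side (by $\Z/2$-symmetry the two halves contribute the same area to each output). The coefficient of the output near the $X$ corner receives contributions of the form $\tilde x \cdot 1 - x \cdot T^{A}$ (insertion of $\tilde x \tilde X$ versus no insertion, weighted by the displacement area), while the outputs near the $Y$ and $Z$ corners receive contributions of the form $\tilde y \cdot T^{A} - y \cdot 1$ and $\tilde z \cdot T^{A} - z \cdot 1$ respectively (here the strip that carries the $\tilde{y},\tilde{z}$ insertion traverses the full displacement region, whereas the strip with the $y,z$ insertion is thin). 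Setting each coefficient to zero gives
$$ \tilde x = T^A x, \qquad \tilde y = T^{-A} y, \qquad \tilde z = T^{-A} z, $$
as claimed. The same count with reversed orientation handles $m_1^{\tilde b,b}(\beta)=0$ and yields the identical relations.

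Finally, I would verify that $\alpha$ and $\beta$ are indeed mutually inverse in the sense of Definition \ref{def:iso}. The compositions $m_2^{b,\tilde b,b}(\beta,\alpha)$ and $m_2^{\tilde b,b,\tilde b}(\alpha,\beta)$ are computed from the same symmetric pairs of bigons as above, but with the extra marked point read as an output rather than an input; the union of the boundaries of the two bigons in each symmetric pair covers $S_1$ (resp.\ $S_1^x$) exactly once, producing $T^A \one_{S_1}$ and $T^A \one_{S_1^x}$ up to a global area factor, in perfect analogy with the computation in Proposition \ref{prop:cocycle_4punc}. The potential values then agree automatically by part (1) of Theorem \ref{thm:maingluealg}, consistent with $\tilde x \tilde y \tilde z = xyz$. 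The main obstacle is, as always in these computations, the sign bookkeeping: one must track orientations of the symmetric pairs of strips against the non-trivial spin structure on the Seidel Lagrangian; once this is done, the rest of the proof is a direct specialization of the earlier argument to the limiting geometry, which is why the authors leave the non-limiting case as an exercise.
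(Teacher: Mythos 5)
Your proposal follows exactly the route the paper intends: the authors explicitly leave this computation as an exercise, saying only that the coordinate change "can be obtained using $\alpha$ and $\beta$ in Figure \ref{fig:deform-Seidel}" in the same manner as Proposition \ref{prop:cocycle_4punc}, and your construction of the symmetric cocycles, the bigon count giving $\tilde{x}=T^Ax$, $\tilde{y}=T^{-A}y$, $\tilde{z}=T^{-A}z$, and the $m_2$ verification are precisely that argument. One small slip in your final sanity check: since $S_1^x$ bounds a triangle of area $A$ its potential is $T^A\tilde{x}\tilde{y}\tilde{z}$, so the consistency statement should be $T^A\tilde{x}\tilde{y}\tilde{z}=xyz$ rather than $\tilde{x}\tilde{y}\tilde{z}=xyz$ (indeed $\tilde{x}\tilde{y}\tilde{z}=T^{-A}xyz$ under your coordinate change), as the paper's remark following the proposition makes explicit.
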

We will use the above formula in Section \ref{sec:cr}.

\begin{remark}
	The disc potentials of $L,\tilde{L}$ are $xyz$, and $T^A \tilde{x}\tilde{y}\tilde{z}$ respectively, since $\tilde{L}$ bounds a triangle with area $A$, while $L$ bounds a triangle with area $0$ (in the limit).  The potentials respect the coordinate changes given in the above proposition. When $\val(\tilde{x})=0$, $\val(x)=-A<0$.  It indicates that from $S_1$ to $S_1^x$, we are deforming in the direction of $-\val(x)$.
\end{remark}

Now consider a 4-punctured sphere which is a union of two pair-of-pants.  (We shall study punctured Riemann surfaces in Section \ref{sec:mir-surf} and \ref{sec:HMS-surf}.) We have two Seidel Lagrangians $S_1,S_2$ in each pair of pants.  

Similar to Proposition \ref{prop:cocycle_4punc}, $S_1$ can be stretched to $S_1^x$ which intersects $S_2$ at eight intersection points (see Figure \ref{fig:Lag-winding} or \ref{fig:gluing-edges}).  We have a similar isomorphism $(\tilde{\alpha}=\tilde{P}_1+\tilde{P}_2,\tilde{\beta}=\tilde{Q}_1-\tilde{Q}_2)$ given by intersection points $\tilde{P}_i, \tilde{Q}_i$ for $i=1,\ldots,4$ between $S_1^x$ and $S_2$, under the relations
$$x_1 = T^{\tilde{k}_7 - \tilde{k}_2 - \tilde{k}_4} x_2^{-1}, y_1 = T^{-(\tilde{k}_7 - \tilde{k}_2)} y_2, z_1 = T^{-(\tilde{k}_7 - \tilde{k}_4)} x_2^2 z_2$$
where $\tilde{k}_2, \tilde{k}_4, \tilde{k}_7$ are areas of regions similar to above.  (We shall not care too much about these areas since we shall use exactness of $S_1^x,S_2$ to absorb these area terms into exact variables).  Such an isomorphism can be understood as the composition of the above isomorphism from $S_1^x$ to $C$, and that from $C$ to $S_2$ (assuming $C$ is taken to intersect $S_2$ at eight points as in Figure \ref{fig:Lag-winding}).  

As in Section \ref{subsec:generalatwosei}, we can take a different isomorphism $\tilde{\alpha}=x_1^{a_1}\tilde{P}_1+x_1^{a_2}\tilde{P}_2$ to get different coordinate relations
$$x_1 = T^{\tilde{k}_7 - \tilde{k}_2 - \tilde{k}_4} x_2^{-1}, y_1 = T^{-(\tilde{k}_7 - \tilde{k}_2)} x_2^{a_1-a_2} y_2, z_1 = T^{-(\tilde{k}_7 - \tilde{k}_4)} x_2^{2+a_2-a_1} z_2.  $$
This can be understood as the composition of the isomorphism from $S_1^x$ to $C$, a gauge change on $C$, and the isomorphism from $C$ to $S_2$.  We shall use such an isomorphism between $S_1^x$ and $S_2$ in Section \ref{sec:mir-surf} and \ref{sec:HMS-surf}.  The choice of $a_1,a_2\in \Z$ would be a part of the input data for the mirror construction.

Note that while the isomorphism $\tilde{\alpha}$ is valid for $\val(x_1)$ belonging to a non-empty open set, the interpretation via the pair-of-circle $C$ is only valid for a particular value of $\val(x_1)$ since $\val(t)=0$ where $t$ is the holonomy parameter for $C$.  In order to make the interpretation via $C$ valid for an open set of $\val(x_1)$, one can take a family of pair-of-circles $C$.  

It suggests an alternative approach of taking a family of (infinitely many) pair-of-circles interpolating between the immersed Lagrangians $S_1$ and $S_2$.  This is similar to the SYZ fibrations and is closer to the family Floer theory \cite{Fukaya-famFl, Tu-FM, Ab-famFl2}.  It involves infinitely many mirror charts and the gluing of functors becomes rather complicated.  Moreover the pair-of-circles are not exact in general.  

To implement this alternative approach for Riemann surfaces, 
one may take a countable set of generators of the exact Fukaya category, such that these generators  have the same intersection pattern with any of the pair-of-circles $C_t$ in this family $t \in I$.  This will provide a (small) rigid analytic chart $U_C$ (See (b) of Figure \ref{fig:modovernov} for an illustration) which can be glued with that of $S_1, S_1^x, S_2^{x'}, S_2$.

In this paper we only use the isomorphisms between $S_1, S_1^x, S_2$ which involve only finitely many charts, so that gluing of functors can be computed in a very efficient way.  The pair-of-circles $C$ provides a good interpretation of the choice of isomorphisms between $S_1^x$ and $S_2$, and also serves as a convenient intermediate step to compute the gluing between $S_1^x$ and $S_2$ in Section \ref{sec:HMS-surf}.  However we shall not use the chart of $C$ in our mirror construction.


\begin{figure}[htb!]
    \includegraphics[scale=0.45]{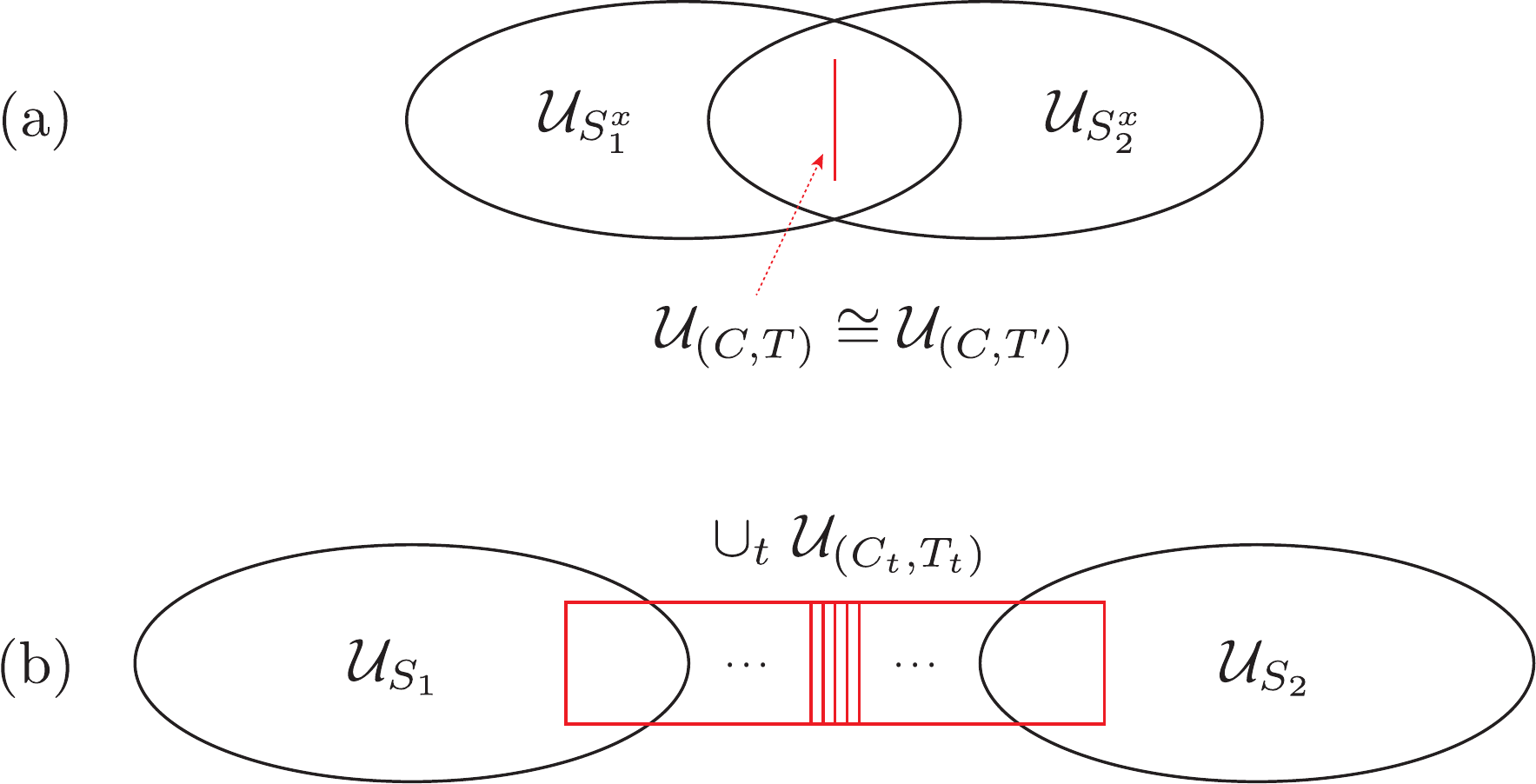}
    \caption{The moduli space over $\Lambda$.}
		\label{fig:modovernov}
\end{figure}

\section{$\C$-valued homological mirror functor}\label{sec:C}
In this section we explain how to obtain $\C$-valued theories from $\Lambda$-valued constructions in previous sections.
The localized mirror functor machinery works only with $\Lambda_+$ or $\Lambda_0$ coefficients.  It cannot be directly applied to Fukaya category with coefficients in $\C$.

To obtain $\C$-valued theory, we proceed in the following way. 
We first embed the exact Fukaya category $\mathcal{F}^{\C}(M)$ to 
 $\Lambda$-valued Fukaya category  $\mathcal{F}^\Lambda(M)$.
 In $\mathcal{F}^\Lambda(M)$, we obtain mirror charts, gluing data and homological mirror functors in $\Lambda$.
In fact, these data come from Lagrangian Floer theory between exact Lagrangian submanifolds.
Therefore, we can absorb all the area terms and obtain $\C$-valued theory using the embedding of exact Fukaya category
one more time.  

We do need to assume certain convergences so that the $\C$-reduced charts are
given by polynomial ring $\C[x_1,\cdots,x_n]$ instead of power series ring $\C[[x_1,\cdots,x_n]]$ and so on.
Also non-trivial coordinate changes as in Section \ref{sec:localisot} in $\Lambda$ may become trivial coordinate changes in $\C$.
Therefore, $\C$-reduced mirror could have multiple charts which are identical, but mirror functor and homotopies on those charts are not identical. Thus we still need to keep them to have the theory working. 

\subsection{Embedding of exact Fukaya category} \label{sec:exact}
The following is well-known and we review it to set up a convention. See \cite{Seidel-book} for more details.
Let $(M, \omega=d\theta)$ be a Liouville manifold. 
A Lagrangian submanifold is exact if $\theta |_L = d f_L$ for some $f_L$.
In addition, an exact non-compact Lagrangian submanifold $L$ is conical at infinity (invariant under the Liouville flow outside a compact set)
and required to have $\theta |_{L}$ vanishes outside a compact set.
Consider two exact Lagrangian submanifold  $L_0, L_1$ such that $\alpha|_{L_i} = d f_i$ for $i=0,1$.
We consider its path space $\Omega(L_0,L_1) :=\{ \gamma:[0,1] \to M \mid \gamma(0) \in L_0, \gamma(1) \in L_1\}$.
An action functional is defined as $A_{L_0,L_1}(\gamma)= - \int_\gamma \alpha   - f_0(\gamma(0))+f_1(\gamma(1))$.

We denote by $\mathcal{F}^{\C}(M)$ the exact (wrapped) Fukaya category of $M$ with coefficients in $\C$.
and by $\mathcal{F}^\Lambda(M)$ the $\Lambda$-valued Fukaya category of $M$ with the same set of objects.
(We may allow more general non-exact objects in $\mathcal{F}^\Lambda(M)$, but this is not needed for our purpose in this section).
We assume that the perturbation scheme of $\mathcal{F}^{\C}(M)$ and $\mathcal{F}^\Lambda(M)$ are the same.
Namely, for $\mathcal{F}^{\C}(M)$, one uses Floer datum for each pair of Lagrangians and compatible system of
domain dependent perturbation. This has been extended to the Morse-Bott setting and allowing also exact immersed Lagrangians in
\cite{Seidel-g2} and \cite{Sheridan11}, and we use the same perturbation scheme for  $\mathcal{F}^\Lambda(M)$.
For an immersed Lagrangian $L$, which is given by an immersion $i: \WT{L} \to X$, 
an immersed generator $X \in Hom(L,L)$ may be considered as an intersection of local Lagrangian branches 
in the order of $\WT{L}_0$  and $\WT{L}_1$. Recall that switching the branches gives the opposite generator $\bar{X}$. $L$ is exact if $i^*\theta = d(f_{\WT{L}})$. 
Each immersed generator $X$ has two pre-images $X_i \in \WT{L}_i$ for $i=0,1$.

We consider the inclusion functor following Lekili \cite{Lekili12} Lemma 1.2.
\begin{prop}
There is a fully faithful $\AI$-functor
$$\ee: \mathcal{F}^{\C}(M) \otimes_\C \Lambda_\C \to \mathcal{F}^{\Lambda}(M)$$
linear over $\Lambda_\C$. This is the identity on objects. 

For $X \in \Hom(L_0,L_1) = CF(\phi(L_0),L_1)$ where the Hamiltonian $\phi$ is from the Floer datum of the pair $(L_0,L_1)$ the functor $\ee$ is given by
$\ee(X) = T^{A_{\phi(L_0),L_1}(X)}X$.
For a Morse critical point $p \in \Hom(L,L)$, we take $\ee(p) = p$.
 For an immersed generator $X \in Hom(L,L)$,
 we take $\ee(X) = T^{-f_{\WT{L}}(X_0) + f_{\WT{L}}(X_1)}X$.
 
 The higher $\AI$-terms of $\ee$ are identically zero.
\end{prop}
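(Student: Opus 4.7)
The plan is to verify the two claims separately: $\ee$ is a (strict) $A_\infty$-functor, and it is fully faithful. Since $\ee_j = 0$ for $j \geq 2$, the $A_\infty$-functor relations collapse to the single family of equations
\[
m_k^{\Lambda}\bigl(\ee(X_1),\ldots,\ee(X_k)\bigr) \;=\; \ee\bigl(m_k^{\C}(X_1,\ldots,X_k)\bigr)
\]
for each $k\geq 1$ and each composable tuple of morphisms in $\mathcal{F}^{\C}(M)$. So the entire content reduces to this identity.

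The key step is the action-energy relation. Given a rigid holomorphic polygon $u$ contributing to $m_k^{\Lambda}(X_1,\ldots,X_k)$ with inputs $X_i \in \Hom(L_{i-1},L_i)$ and output $Y\in \Hom(L_0,L_k)$, exactness of the Lagrangians together with Stokes' theorem yields
\[
\omega(u) \;=\; A(Y) \;-\; \sum_{i=1}^k A(X_i),
\]
where $A(X)$ denotes the value of the action functional on the constant (or Floer-datum-perturbed) path at $X$; for a Lagrangian intersection this is $-f_{L_{i-1}}(X)+f_{L_i}(X)$ (with the appropriate $-\int H\,dt$ term when a Hamiltonian is used), for an immersed generator it is $-f_{\WT{L}}(X_0)+f_{\WT{L}}(X_1)$ using the branch lifts, and for a Morse critical point on $L$ the boundary arcs close up on $L$ so the action shift is zero. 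I would recall this identity (stated e.g.\ in Seidel's book and extended to the Morse-Bott/immersed setting in \cite{Seidel-g2,Sheridan11}) and cite it as the geometric input.

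Granted this, the proof of the functor identity is a direct calculation: writing $\ee(X)=T^{A(X)}X$ uniformly (for Morse critical points $A=0$), and using that the moduli spaces and signs used in $m_k^{\C}$ and $m_k^{\Lambda}$ are literally the same under the stipulated common perturbation scheme, one obtains
\[
m_k^{\Lambda}\bigl(\ee(X_1),\ldots,\ee(X_k)\bigr) \;=\; \sum_{u,Y}(\pm)\,T^{\sum_i A(X_i)+\omega(u)}\,Y \;=\; \sum_{u,Y}(\pm)\,T^{A(Y)}\,Y \;=\; \ee\bigl(m_k^{\C}(X_1,\ldots,X_k)\bigr),
\]
with the signs matching by definition of the $\C$-valued operation. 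This establishes that $\ee$ is an $A_\infty$-functor.

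For full faithfulness, $\ee_1$ sends each generator of $\Hom(L_0,L_1)\otimes_\C\Lambda_\C$ to the same generator multiplied by the unit $T^{A(X)}\in\Lambda^\times$, hence it is an isomorphism of $\Lambda_\C$-modules on the nose. By the $k=1$ case of the identity above, this isomorphism intertwines the $\C$-valued differential (extended $\Lambda_\C$-linearly) with the $\Lambda$-valued one, so it descends to an isomorphism on cohomology. I expect the only real obstacle to be bookkeeping: making sure the sign conventions in the Stokes identity, the orientations of the polygon boundary, and the two branches at each immersed corner are consistent with the sign conventions implicit in $m_k^{\C}$ and $m_k^{\Lambda}$ (in particular for Hamiltonian-perturbed Floer data, where the action acquires the $-\int H\,dt$ correction and the moduli spaces are of Floer strips rather than literal intersection polygons). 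Once the conventions of \cite{Seidel-book,Seidel-g2,Sheridan11} are fixed and imported, everything else is a direct check.
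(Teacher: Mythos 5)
Your argument is correct and is precisely the standard one the paper has in mind: the authors omit the proof, deferring to Lekili's Lemma 1.2, and your Stokes-theorem computation $\omega(u)=A(Y)-\sum_i A(X_i)$ (which checks out against the paper's convention $A_{L_0,L_1}(\gamma)=-\int_\gamma\theta-f_0(\gamma(0))+f_1(\gamma(1))$) together with the collapse of the $A_\infty$-functor equations for a strict functor is exactly that argument. Full faithfulness via multiplication by the units $T^{A(X)}\in\Lambda^\times$ is likewise the intended reasoning, so your proposal correctly supplies the omitted details.
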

The proof is similar to  that of \cite{Lekili12} and omitted.

To distinguish generators, we make the following definition.
\begin{defn}
A generator $X \in Hom(L_0,L_1)$ in $\mathcal{F}^{\Lambda}(M)$ is denoted as $X_{geo}$ in this section.
If $X$ is an immersed generator, $X_{geo}$ is given by a constant path at the immersed point.

For a generator $X \in Hom(L_0,L_1)$ in $\mathcal{F}^{\C}(M)$,
we denote its image under the inclusion functor $\ee$ by $X_{ex}$.
Namely, we call 
$$ X_{ex}:= \ee(X)$$
to be an exact generator which is a morphism in $\mathcal{F}^{\Lambda}(M)$.
For an immersed generator $X$, we have
$$X_{ex} = T^{-f_{\WT{L}}(X_0) + f_{\WT{L}}(X_1)} X_{geo}$$
\end{defn}
We just note that since $\ee$ is a functor with vanishing higher $\AI$-terms, we have
\begin{equation}\label{exsa}
m_k^\C(Y_1,\cdots,Y_k) = Y_0 \quad \Longrightarrow \quad m_k^\Lambda((Y_1)_{ex}, \cdots, (Y_k)_{ex}) = (Y_0)_{ex}
\end{equation}
Thus if we write inputs and outputs in exact generators (elements in the image of $\ee$), then
the coefficients lie in $\C$ (not just in $\Lambda_0$). 
This means that these exact data (such as $\theta, f$) assigns $x_{ex}$ at the canonical energy level (which is not of valuation zero in general)
so that all the transition functions can be $\C$-valued. This transition data defines $\C$-valued mirror and functors. Let us describe it in more detail.

\begin{defn}
Define dual variables $x_{ex} \in \Lambda$ and $x_{geo} \in \Lambda_0$ by 
$$ x_{ex} X_{ex} = x_{geo} X_{geo}.$$
\end{defn}
For $X$ immersed, we have
\begin{equation}\label{eq:exgeo}
 x_{ex} T^{-f_{\WT{L}}(X_0) + f_{\WT{L}}(X_1)}  = x_{geo}.
 \end{equation}
For Maurer-Cartan formalism, we need $\val(x_{geo}) \geq 0$  and hence we should have
$$\val (x_{ex}) \geq f_{\WT{L}}(X_0) - f_{\WT{L}}(X_1).$$
This inequality may be somewhat confusing at first. Note that the valuation of exact variables $x_{ex}$ is not necessarily zero or
sometimes cannot be zero! (note that the valuation of $\C \subset \Lambda_0$ is zero).
On the other hand, we will see that all the transition data are written in $\C$ in terms of these exact variables.

\subsection{$\C$-valued localized mirror functor}
Let us first define $\C$-valued localized mirror functor (see Section \ref{sec:lmf}).
Let $\bL$ be an exact immersed Lagrangian, with  bounding cochain from immersed generators
$$b=  \sum x_{ex,i} X_{ex,i} = \sum x_{geo,i} X_{geo,i}, \;\; \textrm{for}\;\; x_{geo,i} \in \Lambda_0$$
If we want to emphasize that we use exact variables, we may also denote $b$ by $b_{ex}$.

By our localized mirror construction, we obtain $\Lambda$-valued potential function  $W^\Lambda$ on $x_{geo}$-variables.
We have $W^\Lambda \in \Lambda_0 \ll x_{geo,1} ,\cdots, x_{geo,k} \gg$ which is
the completion of polynomial ring with respect to the filtration of $\Lambda_0$.
If we change variables of $W^\Lambda$ to exact variables using \eqref{eq:exgeo}, then obtain
a $\C$-valued potential function.
\begin{defn}
We define the exact potential $W^\C := W^\Lambda(b_{ex})$.
In other words,
since $\ee(\be^\C) = \be^\Lambda$, we have 
$$W^\C \cdot \be^\C = \sum_k \sum_{i_1,\cdots,i_k} x_{ex,i_1}\cdots x_{ex,i_k} m^\C_k (X_{i_1},\cdots, X_{i_k}) \in  \C[[x_{ex,1},\cdots,x_{ex,n}]] \cdot \be^\C$$
\end{defn}

%


For example, for a 3-punctured sphere, we have
$$W^\Lambda = T^{A} x_{geo}y_{geo}z_{geo} = x_{ex}y_{ex}z_{ex}=:W^\C: \C^3 \to \C$$
%
%
%

Now, similar idea can be used to show that we have a $\C$-valued analogue of localized mirror functor
in Definition \ref{def:lmf}.
\begin{defn} 
The $\C$-valued  localized mirror functor $F^{\bL, \C} : \mathcal{F}u^\C(M) \to MF(W^\C)$ is defined as follows.
Let $R= \C[[x_{ex,1},\cdots, x_{ex,n}]]$. For a Lagrangian $L$, mirror matrix factorization $F^{\bL,\C}(L)$ is given by $\Z/2$-graded $R$-module
$$ L  \mapsto (Hom(L,\bL)_{ex}, -m_1^{0,b_{ex}}).$$
Also, we define
$$F^{\bL, \C}(Y_1,\cdots,Y_k)  = \sum_{i \geq 1} m_{k+i}(Y_1,\cdots, Y_k, \bullet, b_{ex},\cdots,b_{ex}).$$
\end{defn}
From the discussion in \eqref{exsa}, the above functor is well-defined. We will make the following convergence assumption.

\begin{assumption}\label{as:cov}
Note that the $\C$-reduction naturally lies in the formal power series ring $\C[[x_{ex,1},\cdots,x_{ex,n}]]$ instead of polynomial ring $\C[x_{ex,1},\cdots, x_{ex,n}]$.
We assume that the potential $W^\C$ as
well as the functor $\mathcal{F}^\C$ can be written in the matrix factorization category with $R=\C[x_{ex,1},\cdots, x_{ex,n}]$.
\end{assumption}

\begin{lemma}
A Seidel Lagrangian $\bL$ in a punctured Riemann surface $\Sigma$ satisfies  the above assumption \ref{as:cov}.
In particular, $m_k^{0,b}$ is trivial if $b$ is inserted more than 3 times.
\end{lemma}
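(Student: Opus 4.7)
The plan is to first establish the ``in particular'' clause, namely that $m_k(\cdots, b, \cdots, b, \cdots) = 0$ whenever $b$ is inserted more than three times, and then deduce Assumption \ref{as:cov} as an immediate formal consequence. Write $b = x_{ex} X_{ex} + y_{ex} Y_{ex} + z_{ex} Z_{ex}$ using the three odd-degree positive immersed generators of the Seidel Lagrangian $\bL$. Expanding multilinearly, every contribution to an operation with $n$ insertions of $b$ comes from a rigid $J$-holomorphic polygon bounded by $\bL$ (possibly together with other Lagrangians from the external inputs) with exactly $n$ corners drawn from $\{X, Y, Z\}$, weighted by the corresponding monomial in $x_{ex}, y_{ex}, z_{ex}$.

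The main step is to show that no such polygon exists when $n \geq 4$. I would exploit the local sector picture at each of the three immersed points of $\bL$: each of $X, Y, Z$ occupies a single sector out of the four sectors created by the two local branches of $\bL$, and the orientation of the Seidel Lagrangian fixes which sector it is. A holomorphic polygon with a positive corner at, say, $X$ must locally fill exactly that sector. Since there are only three immersed points and in the limiting (hyperbolic, geodesic) model of the pair-of-pants the two $XYZ$-triangles of Lemma \ref{lem:pp} already use up all of them, a Gauss--Bonnet/angle-deficit count rules out any immersed holomorphic polygon with four or more positive corners. When external corners on other Lagrangians are also present, the same sector analysis at the immersed points of $\bL$ applies unchanged, and at most three positive corners are possible. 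This combinatorial/analytic bound is the hard part, and I would carry it out either by direct enumeration in a single pair-of-pants (which patches together across the pair-of-pants decomposition of $\Sigma$ since each Seidel Lagrangian is contained in its own pair-of-pants) or by invoking the analogous classification in Seidel \cite{Seidel-g2} and Sheridan \cite{Sheridan11}.

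Given this vanishing, the curvature $m_0^b = \sum_k m_k(b, \ldots, b)$ truncates at $k = 3$, so $W^\C \cdot \be^\C$ reduces to a polynomial of degree $\leq 3$ in $x_{ex}, y_{ex}, z_{ex}$, namely (by Lemma \ref{lem:pp}) a constant multiple of $x_{ex} y_{ex} z_{ex}$. Similarly, the higher components of the localized mirror functor
\[
\mathcal{F}_k^{\bL, \C}(Y_1, \ldots, Y_k)(\bullet) = \sum_{i \geq 1} m_{k+i}(Y_1, \ldots, Y_k, \bullet, b, \ldots, b)
\]
are finite sums in the $b$-variables for every fixed tuple of external inputs, so all relevant structure maps lie in $\C[x_{ex,1}, \ldots, x_{ex,n}]$ rather than merely in its completion $\C[[x_{ex,1}, \ldots, x_{ex,n}]]$. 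This verifies Assumption \ref{as:cov} for the Seidel Lagrangian.
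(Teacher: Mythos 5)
Your proposal is correct and follows essentially the same route as the paper: the key point in both is that, because of the punctures, any holomorphic polygon with boundary on the Seidel Lagrangian closes up into a minimal triangle after at most three turns at the odd immersed corners $X,Y,Z$, so all operations truncate after three $b$-insertions and the potential and functor become polynomial in the exact variables. The paper additionally makes explicit that exactness fixes the energy for given inputs and output (so Gromov compactness gives finiteness of each coefficient), a point you leave implicit but which is needed for the same conclusion.
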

\begin{proof}
From the exact condition, given inputs and an output, the energy of possible holomorphic polygon is fixed, and hence there are only finitely many contributions from the Gromov-compactness. But note that we allow infinitely many deformation by $b$-insertions for $m_k^{0,b}$. Thus, it is enough to show that if number of $b$ is more than 3, then $m_k^{0,b}$ is 0.
Since $b$'s are given by the sum of immersed generators $X, Y,Z$ a holomorphic curve component meeting these corners  is given by an immersed
polygon on $\Sigma$. Due to punctures, there are no big polygons with boundary on $\bL$. And also note that if we turn at immersed corners ($X,Y,Z$) twice, then the polygon already close up a minimal triangle for $\bL$.
Thus the claim follows.
\end{proof}
Thus, we may use $\C[X,Y,Z]$ for $\C$-valued mirror symmetry of punctured Riemann surfaces instead of $\C[[X,Y,Z]]$.

In the same way, gluing of charts and functors are defined over $\C$ if we use exact generators, and
we omit the details. We notice the following is a new feature.
Recall that in Section \ref{sec:localisot}, we moved $\bL_0$ to $\bL_1$ in non-Hamiltonian way, and the resulting coordinate
change between $U_0$ and $U_1$ is non-trivial in $(\Lambda_0)^3$. But the $\C$-reduction of this coordinate change is trivial,
and hence $U_0^\C = U_1^\C = \C^3$. 
Even though $U_0$ and $U_1$ is identical, we keep it as a separate chart. This is because the $\AI$-functor on these two charts are not identical (but only homotopic). One can observe this by considering an exact Lagrangian which intersect $\bL_0, \bL_1$ in different patterns. Thus we need to keep as many charts as in $\Lambda$-valued mirror symmetry, and this is what $\C$-reduction of
our construction in Corollary \ref{cor:manychart} gives.

\section{Mirror construction for punctured Riemann surfaces} \label{sec:mir-surf}
In this section, we construct mirrors of punctured Riemann surfaces by the method introduced in previous sections, namely gluing deformation spaces of immersed Lagrangians via isomorphisms.  The starting data is a tropical curve for the punctured surface, and an integer for each finite edge (which is the choice of an isomorphism coming from gauge change).

First we construct the `pseudo' mirror space $Y(\Lambda)$ consisting of `pseudo' deformations of the Seidel Lagrangians over vertices of the tropical curve.  It serves as the ambient space in which the actual mirror sits.  We shall see that $Y(\Lambda)$ is a $\Lambda$-valued toric Calabi-Yau obtained by taking dual fan of the tropical curve.  The $m_0$-terms of the Seidel Lagrangians glue to give a disc potential $W$ over $Y(\Lambda)$.

Next we recall and use the exact variables of the immersed Lagrangians explained in Section \ref{sec:exact}.  Using exact variables as charts, the change of coordinates will not involve the Novikov parameter $T$.  By restricting the exact variables to be $\Lambda_0$,$\Lambda_+$, or $\C$-valued, we obtain $Y(\Lambda_0) \supset Y(\Lambda_+) \supset Y(\C)$.  The disc potential $W$ written in exact variables do not involve the Novikov parameter.  $W$ restricts as $W_{Y(\Lambda_0)},W_{Y(\Lambda_+)},W_{Y(\C)}$ respectively, where $W_{Y(\C)}$ is a $\C$-valued function over the toric CY $Y(\C)$.  However, the change from exact variables to actual immersed variables of the immersed Lagrangians involve $T^{-A}$ for $A>0$ in general.  Thus none of these spaces is formed by merely formal deformations of the Seidel Lagrangians over the vertices. 

It is crucial to use formal deformations rather than pseudo deformations for construction of the mirror functor, because 
symplectic geometry makes sense only for formal deformations and also there are  convergence issues of the $A_\infty$ operations for pseudo-deformations.  We will choose a collection of Lagrangian immersions, which consists of (actual) deformations of the Seidel Lagrangians over the vertices, such that the formal deformation spaces of these objects gives the covering of  the critical locus of $W_{Y(\Lambda_0)}$.  

In Section \ref{sec:localisot} and \ref{sec:glue2s}, we have studied a particular way of deforming the Seidel Lagrangian to obtain relations between two adjacent pair-of-pants.  In Section \ref{sec:doublecircles}, we give another way to deform the Seidel Lagrangian, whose intersection with other immersed Lagrangians is slightly simpler.  One can choose either way to construct the mirror.  In this and the next section, we will use the deformed Lagrangians in 
Section \ref{sec:doublecircles} (denoted as $S_1^x$ in that section).

Moreover, to simplify the construction of the mirror functor, we choose the collection in such a way that their formal deformation spaces do not have triple intersection near the critical locus of $W_{Y(\Lambda_0)}$.  (Namely for any three different objects in the collection, the common intersection of their formal deformation spaces and a fixed neighborhood of the critical locus is empty.)  We shall denote the glued space of the formal deformations of this collection by $Y_+ \subset Y(\Lambda)$. $(Y_+,W_{Y_+})$ will be the mirror for the Fukaya category supported in a compact neighborhood of the finite edges, while $(Y(\C),W_{Y(\C)})$ will be the mirror for the wrapped Fukaya category.

\subsection{Novikov-valued toric CY}
We start with a Laurent polynomial in two variables which gives a convex Newton polytope in $\R^2$.  It defines a punctured Riemann surface in $(\C^\times)^2$.  In a tropical limit the surface is approximated by a tropical curve, which forms a dual graph of the triangulation.  To take the tropical limit, we take the coefficients of the polynomial to be generic t-powers $t^{\nu}$ for $\nu \in \R$, and the tropical curve is given by the $|t|\to+\infty$-limit of the image of the complex curve under $\log_t|\cdot|: (\C^\times)^2 \to \R^2$.  We make a generic choice such that the tropical curve is a trivalent graph.  This fixes the tropical curve which is our starting data.

The dual of the tropical curve gives the fan of a toric CY.  Namely, the tropical curve (as a graph) is dual to a triangulation of the Newton polytope by standard integral triangles (the triangles with vertices in $\Z^2$ and with affine area $1/2$).  Embedding $\R^2$ as $\R^2 \times \{1\} \subset \R^3$ and taking cone over the triangulated polygon, we obtain a fan defining a toric CY.

$(\C^\times)^2$ is equipped with the standard symplectic form which is the differential of the one-form $r_1 d\theta_1 + r_2 d\theta_2$, where $z_j = t^{r_j + i\theta_j}$ are the standard coordinates.  Here $t$ is a fixed complex number with $|t|$ very large such that the curve is well approximated by the union of cylinders in $(\C^\times)^2$ corresponding to the edges and coamoebas in torus fibers of $\log_t: (\C^\times)^2 \to \R^2$ over the vertices of the tropical curve.  

We take $t$ to be real, so that the curve is invariant under complex conjugation, which is an anti-symplectic involution on $(\C^\times)^2$.  

The tropical limit induces a pair-of-pants decomposition of the complex curve.  Moreover, over each vertex of the tropical curve, we have a Seidel Lagrangian which is exact and invariant under the complex conjugation.  The Lagrangian is taken limit to the `Y-shape' such that the two triangles it bounds have zero area.  Then the disc potential for such a Lagrangian is given by $W=xyz$ where $x,y,z \in \Lambda_+$ are the immersed variables.

We want to glue the formal deformation spaces of Seidel Lagrangians over the vertices.  However, since these Lagrangians are disjoint, their formal deformation spaces (which are copies of $\Lambda_+^3$) are disjoint from each other.   In a later subsection, we shall take a collection of immersed Lagrangians, which are (actual) deformations of the Seidel Lagrangians (rather than just the Seidel Lagrangians over the vertices), such that their formal deformation spaces have common intersections via isomorphisms in the Fukaya category.

In this subsection, we enlarge the formal deformation space by allowing $\Lambda$ (instead of $\Lambda_+$) valued deformations.  We call them pseudo deformations.  

\begin{defn}
	Let $L$ be a spin oriented immersed Lagrangian, and $X_i$ its degree-one immersed generators.  Let $\nabla$ be a flat $\C^\times$ connection (with a prescribed gauge).  $(\nabla,b)$, where $b=\sum_i x_i X_i$, is called a formal deformation for $x_i \in \Lambda_+$.   We have the $A_\infty$ algebra $\left(\mathrm{CF}(L),\left\{m_k^{(\nabla,b)}\right\}_{k=0}^\infty\right)$.
	
	$(\nabla,b)$ is called a {\em pseudo deformation} if $x_i\in \Lambda$, $\nabla$ is a flat $\Lambda_0^\times$ connection, and $m_k^{(\nabla,b)}$ are convergent over $\Lambda$ for all $k$ and all inputs.  As before, it is called to be weakly unobstructed if $m_k^{(\nabla,b)}$ is a multiple of the unit.
\end{defn}
The enlarged spaces may have  intersections even when the formal deformations spaces do not overlap and hence can be glued together. 
We will use this to define an ambient space in which the various spaces we shall define live, namely $Y(\Lambda_{0}), Y(\C), Y_+ \subset Y(\Lambda)$.  As we mentioned, $\Lambda$-valued boundary deformations with negative valuations are not allowed in Fukaya category, and we will not use it in the construction of the mirror functor.

For the case of a Seidel Lagrangian $L$ in a punctured Riemann surface, since $m_k^b$ only consists of finitely many terms for each $k$, it still converges for $x,y,z \in \Lambda$.  In other words, we can make sense of the $A_\infty$ category consists of the objects $(L,b)$ for $b=xX+yY+zZ$ where $x,y,z\in\Lambda$. The space of weakly unobstructed pseudo deformations is $\Lambda^3$. 

Then we glue these copies of $\Lambda^3$ over different vertices via chains of pseudo-isomorphisms.

\begin{defn}
	Let $\;\bL=(L,\nabla,b)$ and $\bL'=(L',\nabla',b')$ be Lagrangians with pseudo deformations.  $\alpha \in \CF^0(L,L';\Lambda)$
	is called a pseudo isomorphism from $\bL$ to $\bL'$ if the $A_\infty$ operations between $\bL$ and $\bL'$ converge over $\Lambda$, $m_1^{\bL,\bL'}(\alpha)=0$, and there exists $\beta \in \CF^0(\bL',\bL;\Lambda)$ with  $m_1^{\bL',\bL}(\beta)=0$  such that 
	$m_2(\alpha,\beta)$ and $m_2(\beta,\alpha)$ equals to identity up to image of $m_1$.
\end{defn}

In other words, we can make sense of the $A_\infty$ category consisting of the objects $\bL=(L,\nabla,b)$ and $\bL'=(L',\nabla',b')$, and a pseudo isomorphism is an isomorphism of objects in this category in the usual sense.  We warn that pseudo-isomorphisms do not compose well due to convergence issue.  Hence we use chains of pseudo-isomorphisms instead of a single one.

Now take a finite edge $\overline{v_1v_2}$ of the tropical curve, and consider the two Seidel Lagrangians $S_i$ over the vertices $v_i$ for $i=1,2$.  We first prove
\begin{prop} \label{prop:gluing_S12}
	Let $S_1$ and $S_2$ be Seidel Lagrangians over adjacent vertices.
	We have a chain of pseudo-isomorphisms between $(S_1,x_1X_1 + y_1Y_1+z_1Z_1)$ and $(S_2,x_2X_2 + y_2Y_2+z_2Z_2)$ with the coordinate change
	$$ x_2 = T^{-A} x_1^{-1}, y_2 = x_1^{-a_1+a_2+2} T^{A_y}y_1, z_2 = x_1^{a_1-a_2} T^{A_z}z_1$$
	where $A$ is the cylindrical area bounded between $S_1$ and $S_2$, $A_y,A_z \in \R$ are certain constants with $A_y + A_z = A$. Here $(x_i,y_i,z_i) \in \Lambda^3$ for $i=1,2$ and the coordinate change occurs when $x_1,x_2 \neq 0$.
\end{prop}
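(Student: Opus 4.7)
The plan is to establish the chain of pseudo-isomorphisms by composing three links, $S_1 \leftrightarrow S_1^x \leftrightarrow C \leftrightarrow S_2$, where $S_1^x$ is the deformation of $S_1$ stretched across the neck between the two adjacent pair-of-pants (as in Figure \ref{fig:cone}) until it meets $S_2$ in $8$ transverse intersections, and $C$ is the auxiliary pair-of-circles obtained by smoothing $S_1^x$ at its $x$-corner. The two integers $a_1,a_2$ enter only through the gauge freedom on $C$ recorded in Lemma \ref{lem:gc} and described in the last paragraph of Section \ref{sec:doublecircles}; keeping $S_1^x$ (and $C$) as an intermediate object, rather than insisting on a single composed pseudo-isomorphism, is essential because composition of pseudo-isomorphisms may fail to converge.

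First I would invoke Proposition \ref{prop:move-var} in the limit where the minimal triangle of $S_1$ has zero area, which produces the first link with coordinate change $\tilde{x}=T^{A_0}x_1$, $\tilde{y}=T^{-A_0}y_1$, $\tilde{z}=T^{-A_0}z_1$ for an area constant $A_0$ depending on how far $S_1^x$ is stretched. Next I would apply Proposition \ref{prop:cocycle_4punc} on the $S_1^x$-side, giving the second link $(\tilde{x},\tilde{y},\tilde{z}) \leftrightarrow (tT^{\delta}, y_0 T^{-\eta_1}, z_0 T^{-\eta_2})$ where $(t,y_0,z_0)$ are the holonomy and boundary deformation parameters of $C$. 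Then I would implement a gauge shift by $(a_1,a_2)\in\Z^2$ on $C$, which, by Lemma \ref{lem:gc}, modifies the boundary variables by $y_0\mapsto t^{a_1}y_0$, $z_0\mapsto t^{a_2}z_0$ without changing the underlying Floer-theoretic object. Finally I would apply the symmetric version of Proposition \ref{prop:cocycle_4punc} on the $S_2$-side to identify $t \leftrightarrow x_2^{-1}$, $y_0 \leftrightarrow y_2$, $z_0 \leftrightarrow z_2$ (up to Novikov factors). Eliminating the intermediate variables yields a relation of the promised shape
\[ x_2 = T^{-A}x_1^{-1}, \quad y_2 = x_1^{-a_1+a_2+2}T^{A_y}y_1, \quad z_2 = x_1^{a_1-a_2}T^{A_z}z_1,\]
where $A,A_y,A_z$ are accumulated from $A_0,\delta,\eta_1,\eta_2$ and the $\tilde{k}_i$ of Proposition \ref{prop:cocycle_4punc}; the monomial factors $x_1^{-a_1+a_2+2}$ and $x_1^{a_1-a_2}$ are traceable directly to the gauge shift on $C$ combined with the inversion $t \leftrightarrow x_2^{-1}$.

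Two structural constraints then fix everything that survives the bookkeeping. The identity $A_y+A_z = A$ is forced by part (1) of Theorem \ref{thm:maingluealg} applied to each link of the chain: the disc potential $xyz$ is preserved along pseudo-isomorphisms, so substituting the coordinate change into $x_2y_2z_2$ gives $T^{-A+A_y+A_z}x_1y_1z_1$, which must equal $x_1y_1z_1$. The identification of $A$ with the cylindrical area between $S_1$ and $S_2$ follows from tracking the total Novikov exponent of $x_1x_2$: it is the sum of the deformation area $A_0$ from $S_1$ to $S_1^x$ and the offset $\delta=k_7-k_1-\cdots-k_5$ of Proposition \ref{prop:cocycle_4punc}, which together are precisely the symplectic area of the cylinder bounded between $S_1$ and $S_2$ in the neck region.

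The main obstacle is the bookkeeping through four coordinate changes of the area constants ($A_0$, $\delta$, $\eta_1$, $\eta_2$, and the several $\tilde{k}_i$) and the checking of sign conventions; however the two structural constraints above cut through most of this, so what remains is essentially routine. Convergence at each individual link is automatic, since each $m_k^b$ on a punctured Riemann surface vanishes for large $k$ (only finitely many immersed polygons contribute) and since $x_1,x_2\neq 0$ is exactly the condition ensuring that the inversion in the $C$-gauge step is a well-defined invertible substitution over $\Lambda$.
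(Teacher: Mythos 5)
Your proposal follows essentially the same route as the paper's proof: the chain $S_1 \leftrightarrow S_1^x \leftrightarrow C \leftrightarrow (\text{gauge-changed } C) \leftrightarrow S_2$, assembled from Proposition \ref{prop:move-var}, Proposition \ref{prop:cocycle_4punc} (applied on both sides), and the gauge-change isomorphism on the pair-of-circles, with the $a_i$ entering only through the latter. One correction to an intermediate step: the gauge action on $C$ is not $y_0\mapsto t^{a_1}y_0$, $z_0\mapsto t^{a_2}z_0$; since $a_i$ counts the passes of the gauge point $p_i$ through the immersed point $z$ (so that $p_1,p_2$ pass through $y$ a total of $a_1-1$ and $a_2+1$ times respectively), the combined effect is $y=t^{a_1-a_2-2}y'$, $z=t^{a_2-a_1}z'$, and it is this formula — not the one you quote — that produces the exponents $-a_1+a_2+2$ and $a_1-a_2$ in the final coordinate change; your structural check via potential preservation and your convergence remarks are otherwise consistent with the paper.
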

The cylindrical area $A$ bounded between $S_1$ and $S_2$ is given by the affine length of the edge (namely $(v_2-v_1) \cdot v$ where $v$ is the primitive vector in the direction $v_2-v_1$).
\begin{proof}
We will use a pair-of-circles $C$ in the neck region corresponding to the edge to have a chain of pseudo-isomorphisms and a gauge-change.  The pair-of-circles is a union of two isotopic circles (surrounding the neck) intersecting with each other at two points, and we assume that the two strips bounded by $C$ have area zero (in the limit).

$S_i$ and $C$ are disjoint from each other.  We need to connect them by chains of pseudo isomorphisms.  Deform $S_1$ to $S_1^x$ which intersects with $C$ as shown in Figure \ref{fig:cone}.  Here we take the areas of regions to be $0=k_3=k_6=k_1=k_5=k_7$ in the limit.  So we are only left with $k_2,k_4 > 0$.  Note that $k_2+k_4$ is the area of the cylinder bounded between $S_1^x$ and $C$ .Recall that $T$ in the expression $\nabla^{tT}$ stands for the two gauge points in $C$.

By Proposition \ref{prop:cocycle_4punc}, we have the pseudo-isomorphism   $\alpha=P_1+P_2$ between $(S_1^x,x_1X_1 + y_1Y_1+z_1Z_1)$ and $(C,\nabla^{tT}, y_Y+zZ) $ if
$x_1=tT^{-k_2-k_4}, y_1=yT^{k_2}, z_1=zT^{k_4}$ for $x_1,y_1,z_1,t,y,z \in \Lambda$.
Note that we allow some of the variables to have negative valuations. These define an actual isomorphism if
$x_1,y_1,z_1,y,z \in \Lambda_0$ and $t \in \Lambda_0^\times$. The former is the enlargement via pseudo-deformation.

Also $S_1$ and $S_1^x$ intersect at eight points and they are indeed isomorphic, see Figure \ref{fig:deform-Seidel} where $\alpha$ is an isomorphism and see Proposition \ref{prop:move-var}, where $A=0$ here.  Combining, we have fixed a chain of pseudo-isomorphisms between $(S_1,x_1X_1 + y_1Y_1+z_1Z_1)$ and $(C,\nabla^{tT}, yY+zZ)$ with the above change of coordinates.

Now consider gauge change which can also be understood as isomorphism, see Section \ref{sec:doublecircles}.  Namely we fix the isomorphism $t^{a_2} e_1 + t^{a_1} e_2$ from $(C,\nabla^{tT}, yY+zZ)$ to $(C,\nabla^{t'T'}, y'Y+z'Z)$ where $t=t', y = t^{a_1-a_2-2}y', z=t^{a_2-a_1}z'$.  $a_i$ for $i=1,2$ are the number of times the two gauge points pass through the immersed point $z$ in $C$. This coordinate change follows from the condition for $t^{a_2} e_1 + t^{a_1} e_2$ to be an isomorphism, and in particular closed under the corresponding Floer differential $d$ which counts Morse trajectories with suitable weights from holonomies. See Figure \ref{fig:a1a2cpxv} where the pair-of-circles represents $C$, and $e_i$ are the degree zero Morse critical points.

\begin{figure}[h]
	\begin{center}
		\includegraphics[scale=0.5]{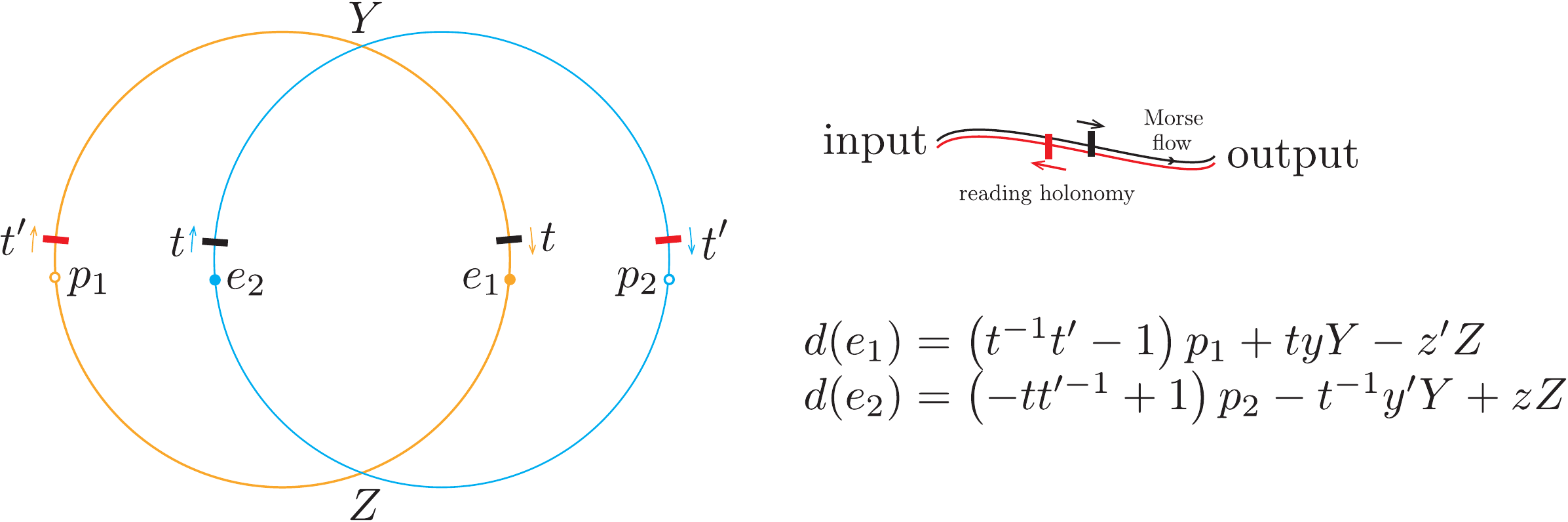}
		\caption{Isomorphism between $(C,\nabla^{tT}, yY+zZ)$ and $(C,\nabla^{t'T'}, y'Y+z'Z)$}
		\label{fig:a1a2cpxv}
	\end{center}
\end{figure}

The data of the tropical curve fixes the number $a_2-a_1$.  Namely we take this number from the relation between the three outward directions $\vec{x}_i,\vec{y}_i,\vec{z}_i$ at the adjacent vertices $v_i$: 
$$\vec{x}_2=-\vec{x}_1, \, \vec{y}_1 =\vec{y}_2 +(a_1-a_2-2) \vec{x}_1, \, \vec{z}_1 =\vec{z}_2 +(a_2-a_1)  \vec{x}_1.$$  We still have the freedom to choose the individual number $a_2 \in \Z$.  This is the number attached to each finite edge mentioned in the beginning of this section.  It does not affect the mirror space, but it will appear in the mirror functor.

$(S_2,x_2X_2 + y_2Y_2+z_2Z_2)$ and $(C,\nabla^{t'T'}, y'Y+z'Z)$ are related by a similar chain of pseudo-isomorphisms.   Combining, we obtained the proposition.
\end{proof}

Thus we have chosen a chain of pseudo-isomorphisms between $S_1$ and $S_2$ which gives the coordinate changes (up to Novikov factors) of the toric Calabi-Yau (for the two corresponding coordinate charts) given by the dual fan of the tropical curve.  Inductively we have fixed a chain of pseudo-isomorphisms between Seidel Lagrangians over any two different vertices.  (If the tropical curve has genus, we may have two chains of pseudo-isomorphisms between a pair of vertices; they are compatible in the sense that they give the same coordinate changes since we make the choice according to the tropical curve.)  Thus we obtain a $\Lambda$-valued toric Calabi-Yau $Y(\Lambda)$ (see Proposition \ref{propLCY}).

The constants $A_y, A_z, A$ for the finite edges may appear annoying.  In the following subsection we introduce exact variables, which make these constants much clearer.

\subsection{Exact variables}

We can keep track of the area terms systematically using the exactness of an immersed Lagrangian $S$.  Namely, the exact one-form $r_1 d\theta_1 + r_2 d\theta_2$ restricted to $S$ equals to $df$ for a function $f$ on the normalization of $S$.  Each odd immersed generator $X$ has its preimage being two points $X_+,X_-$ in the normalization of $S$ (where $X$ is turning from the branch of $X_-$ to that of $X_+$).  Recall that we have the exact variable $x_\ex := T^{-f(X_+) + f(X_-)} \cdot x $ from Equation \eqref{eq:exgeo}.
For a holomorphic polygon with corners $X^{(1)},\ldots,X^{(K)}$, the area $A$ is given by
$\sum_{i=1}^K \left(-f(X^{(i)}_+) + f(X^{(i)}_-)\right)$
by Stokes theorem.  Thus
$x^{(1)}_\ex \ldots x^{(K)}_\ex = T^A \cdot x^{(1)}\ldots x^{(K)}$.  In other words, all the Novikov factors have been absorbed to the exact variables and we no longer need to keep track of areas separately.

The exact variables for Seidel Lagrangians are easily obtained from the tropical data.

\begin{prop} \label{prop:exact-var}
	For the Seidel Lagrangian over a vertex $(a,b)\in \R^2$ of the tropical curve with the primitive edge directions (emanated from the vertex) $v_x,v_y,v_z\in \Z^2$, the exact variables and the immersed variables are related by
	$$ x_\ex = T^{-(v_x,(a,b))} \cdot x$$
	and similarly for $y_\ex$ and $z_\ex$.
\end{prop}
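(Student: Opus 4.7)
The plan is to compute the difference $f(X_+) - f(X_-)$ directly by integrating $\alpha = r_1\, d\theta_1 + r_2\, d\theta_2$ along an arc from $X_-$ to $X_+$ in the normalization $\tilde S$ of the Seidel Lagrangian at the vertex $(a,b)$. First I would parameterize a cylindrical neighborhood of the puncture in direction $v_x$: by the standard toric description (the collapsing cycle at the puncture sits in direction $v_x$), this cylinder has coordinates $(s,\theta)$ with $r_j = a_j + s\, v_{x,j}$ and $\theta_j = v_{x,j}\theta + \theta_j^0$, so that restricting $\alpha$ gives
\[
\alpha\bigl|_{\text{cyl}} \;=\; \bigl[(v_x,(a,b)) + s\,|v_x|^2\bigr]\,d\theta.
\]

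Next I would choose the arc from $X_-$ to $X_+$ in $\tilde S$ that traverses the arm opposite to the immersed point $X$, namely the arm emanating toward the puncture in direction $v_x$. In the Y-shape limit the triangle sides have zero length and contribute nothing, so only the arm integral survives. The complex-conjugation symmetry of the Seidel Lagrangian pairs the outgoing and incoming halves of the arm, making the $s\,|v_x|^2\,d\theta$ term cancel and leaving only the constant term $(v_x,(a,b))\,d\theta$. With the normalization in which the total angular change of the arm around the puncture equals one unit, this gives $f(X_+) - f(X_-) = (v_x,(a,b))$, and the analogous statements for $Y,Z$ follow by the same argument with $v_y,v_z$.

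The main obstacle is getting the sign and normalization of this angular winding correct, since it depends on the precise topological model of the Seidel Lagrangian and the chosen conventions for $\theta_j$. To sidestep this bookkeeping I would perform a consistency check using Proposition~\ref{prop:gluing_S12}: substituting the candidate relation $x_i = T^{(v_{x,i}, v_i)} x_{i,\ex}$ at two adjacent vertices $v_1, v_2$ into the Novikov-dressed gluing $x_2 = T^{-A} x_1^{-1}$ must cancel all powers of $T$ and reproduce the $\C$-valued toric transition of the dual-fan toric Calabi--Yau. Using $v_{x}^{(2)} = -v_{x}^{(1)}$ and $A = (v_x^{(1)}, v_2 - v_1)$ (the affine edge length), this identity holds precisely when $x_\ex = T^{-(v_x,(a,b))}\,x$, so the Novikov exponent is fixed uniquely and the same check for $y_\ex,z_\ex$ (with the CY balancing $v_x+v_y+v_z=0$ ensuring compatibility with the vanishing triangle areas) completes the proof.
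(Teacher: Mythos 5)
Your overall strategy --- computing $f(X_+)-f(X_-)$ as $\int\alpha$ over an arc in the normalization --- is the right one and is what the paper does, but both halves of your argument have gaps. The cancellation claim for the $s\,|v_x|^2\,d\theta$ term is not correct as stated: the conjugation $\theta_j\mapsto-\theta_j$ exchanges the two strands of the arm while fixing $s$, so when the closed-up arc is traversed consistently the two strands contribute to $\int g(s)\,d\theta$ with the \emph{same} sign for any function $g(s)$ (this is exactly why the constant term survives and produces the full winding number $1$ rather than $0$). The same symmetry cannot simultaneously double the constant term and kill the $s$-dependent term. In fact no symmetry argument can kill it in general, because the answer genuinely changes if the angular winding of the arm happens at $s=s_{\max}>0$ rather than at $s=0$: that is precisely the content of Proposition \ref{prop:move-var}, where the deformed Lagrangians $\tilde{S}$, $S'$ acquire extra factors $T^{\pm A}$. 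What eliminates the $s$-dependent term for the (undeformed) Seidel Lagrangian is the Y-shape/tropical limit: after smoothing at $X$, the relevant circle lies in (is homotopic within the Lagrangian to a circle in) the single torus fiber $T_{(a,b)}$, on which $\alpha$ restricts to the constant-coefficient form $a\,d\theta_1+b\,d\theta_2$, and integrating this over the class $v_x$ gives $(v_x,(a,b))$ with no correction term to dispose of. This is the paper's one-line proof, and it avoids the cylinder parametrization altogether.

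The fallback consistency check against Proposition \ref{prop:gluing_S12} also cannot ``fix the Novikov exponent uniquely.'' Writing $x_{i,\ex}=T^{c_i}x_i$, the requirement that $x_2=T^{-A}x_1^{-1}$ become $T$-free in exact variables imposes only $c_1+c_2=A$, a single linear relation per finite edge; it does not determine $c_1$ and $c_2$ separately. For a vertex all of whose adjacent edges are infinite (e.g.\ the three-punctured sphere) there is no gluing and hence no constraint at all, yet the proposition still asserts the definite value $-(v_x,(a,b))$. Moreover the relations coming from the transverse variables involve the constants $A_y,A_z$ of Proposition \ref{prop:gluing_S12}, which are themselves only pinned down by the very geometric computation you are trying to avoid. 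So the check is a useful sanity test of the answer, but it cannot replace the direct evaluation of $f(X_+)-f(X_-)$.
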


\begin{proof}
	The smoothing of the Seidel Lagrangian over $(a,b)$ at the odd immersed generator $X$ produces a union of two circles.  Consider one of the circles, which is homotopic to a circle in the fiber $T_{(a,b)}$ of $\log |\cdot|:(\C^\times)^2\to\R^2$ in the class $v_x\in H^1(T_{(a,b)})\cong \Z^2$.  The exact one form restricts as $a d\theta_1 + b d\theta_2$, and the integration over the circle gives $(v_x,(a,b))$.  Hence the difference of the exact function values in the two branches of $X$ is $(v_x,(a,b))$.
\end{proof}

\begin{example}
	Consider the Riemann surface given by $\{1+x+y+t^a/xy=0\} \subset (\C^\times)^2$ for $a\gg 0$.  Figure \ref{fig:exact-vari} shows the corresponding tropical curve, and the relation between exact and immersed variables.  Its mirror is the toric Calabi-Yau manifold $K_{\bP^2}$ with a superpotential.
\end{example}

\begin{figure}[h]
	\begin{center}
		\includegraphics[scale=0.8]{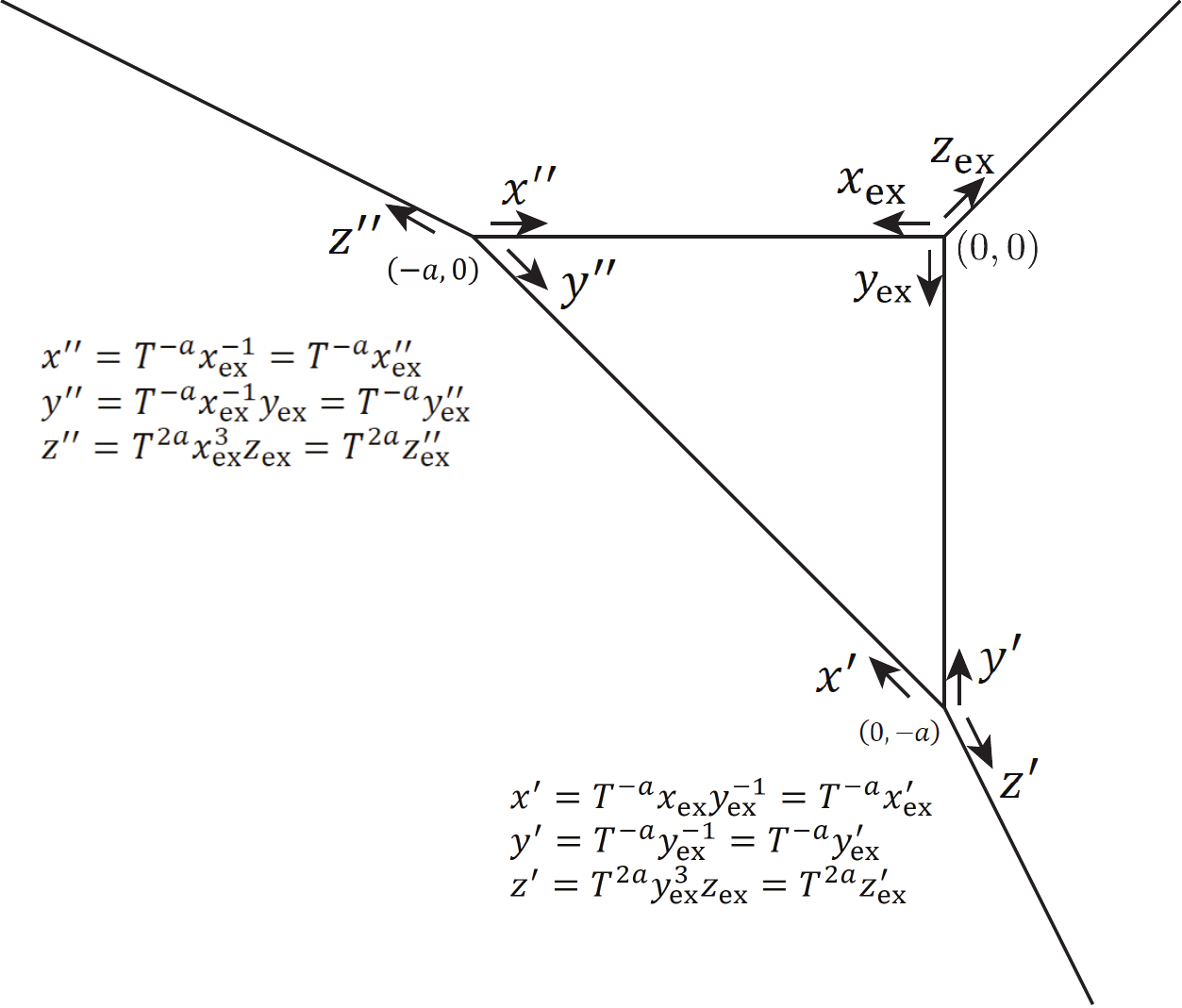}
		\caption{An example to illustrate the relation between exact and immersed variables.}
		\label{fig:exact-vari}
	\end{center}
\end{figure}

\begin{remark}
	Since $v_x+v_y+v_z=0$, the immersed Lagrangian over each vertex has the disc potential $W=xyz=x_\ex y_\ex z_\ex$ as expected.
\end{remark}

\begin{remark}
	A circle fiber over $(a,b)\in \R^2$ of the hypersurface is exact if and only if $(v,(a,b))=0$ where $v\in\Z^2$ is the direction of the fiber.  
\end{remark}

Using exact generators and exact variables, all the Novikov factors are absorbed away.  Thus the coordinate change in Proposition \ref{prop:gluing_S12} simplifies to
$$ (x_2)_\ex = (x_1)_\ex^{-1}, (y_2)_\ex = (x_1)_\ex^{-a_1+a_2+2} (y_1)_\ex, (z_2)_\ex = (x_1)_\ex^{a_1-a_2}(z_1)_\ex.  $$
We conclude that
\begin{prop}\label{propLCY}
	The space glued from the chosen chains of pseudo-isomorphisms is the $\Lambda$-valued toric CY $Y(\Lambda)$ whose fan is given by the dual of the tropical curve.
\end{prop}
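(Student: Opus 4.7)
The plan is to identify the glued space chart-by-chart with the standard affine covering of the toric CY $Y_\Sigma(\Lambda)$ whose fan $\Sigma$ is dual to the tropical curve. First I would construct $\Sigma$ explicitly: the tropical curve in $\R^2$ is dual to a triangulation of the Newton polygon into standard integral triangles $\{T_v\}$ indexed by the vertices $v$ of the tropical curve; embedding this triangulation into $\R^2\times\{1\}\subset\R^3$ and taking cones from the origin produces a fan $\Sigma$ whose maximal cones $\sigma_v=\mathrm{Cone}((P,1),(Q,1),(R,1))$ are smooth (since each $T_v$ is standard), giving affine charts $U_{\sigma_v}\cong\Lambda^3$ over $\Lambda$. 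The primitive generators of $\sigma_v^\vee$ are three elements $m_x,m_y,m_z\in\Z^3$, dual to the three rays of $\sigma_v$. I would identify the monomials $z^{m_x},z^{m_y},z^{m_z}$ on $U_{\sigma_v}$ with the exact variables $(x_v)_\ex,(y_v)_\ex,(z_v)_\ex$, ordered so that $m_x$ is dual to the ray opposite to the edge of $T_v$ perpendicular to the primitive outward direction $\vec{x}_v$, and similarly for $y,z$.

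Next I would verify that the coordinate change on each overlap $U_{\sigma_{v_1}}\cap U_{\sigma_{v_2}}$ matches the formula from Proposition~\ref{prop:gluing_S12} after passing to exact variables, namely
\[
(x_2)_\ex=(x_1)_\ex^{-1},\quad (y_2)_\ex=(x_1)_\ex^{-a_1+a_2+2}(y_1)_\ex,\quad (z_2)_\ex=(x_1)_\ex^{a_1-a_2}(z_1)_\ex.
\]
On the toric side, the overlap is $U_{\sigma_{v_1}}\cap U_{\sigma_{v_2}}=\mathrm{Spec}\,\Lambda[\sigma_{v_1}^\vee+\sigma_{v_2}^\vee]$, and the change of coordinates is dictated by expressing each primitive generator of $\sigma_{v_2}^\vee$ as an integer linear combination of generators of $\sigma_{v_1}^\vee$. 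Using the three relations $\vec{x}_2=-\vec{x}_1$, $\vec{y}_1=\vec{y}_2+(a_1-a_2-2)\vec{x}_1$ and $\vec{z}_1=\vec{z}_2+(a_2-a_1)\vec{x}_1$ among the primitive outward directions at $v_1,v_2$ (which by construction also govern the adjacency of the triangles $T_{v_1}, T_{v_2}$ in the dual triangulation), a short duality computation shows that the toric transition coincides with the formula above. Thus each pairwise gluing of pseudo-deformation charts agrees with the toric transition on the corresponding pair of affine charts.

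Finally I would address global consistency. When the tropical curve contains cycles, two different chains of pseudo-isomorphisms connecting the same pair of vertices yield \emph{a priori} different identifications; I would check that the composition of coordinate changes along any loop of the tropical curve is the identity. This follows because the coordinate changes are expressed entirely in terms of the local tropical data $(\vec{x}_v,\vec{y}_v,\vec{z}_v)$ and the edge integers $a_2\in\Z$, and these data are globally defined on the tropical curve; hence on the toric side the cocycle condition is built in, and the composite gluing is well defined. Combining the three steps identifies the glued space with $Y_\Sigma(\Lambda)$.

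The main obstacle will be the bookkeeping in the second step: one must fix compatible conventions for which ray of $\sigma_v$ is labelled by $x,y,z$, for the orientation of the rotation by $90^\circ$ that relates $\vec{x}_v\in\R^2$ to an edge of $T_v$, and for the sign of the gauge-change integer $a_2$ appearing in Proposition~\ref{prop:gluing_S12}. Once these conventions are pinned down, the remaining content is a direct dual-lattice computation; one sanity check, the equality $W=xyz=x_\ex y_\ex z_\ex$ of the disc potential on each chart, is invariant under the transition above and thus confirms the identification.
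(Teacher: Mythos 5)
Your plan is essentially the paper's own argument, which the paper leaves mostly implicit: it deduces the proposition directly from the exact-variable form of the coordinate change in Proposition~\ref{prop:gluing_S12} together with the relations $\vec{x}_2=-\vec{x}_1$, $\vec{y}_1=\vec{y}_2+(a_1-a_2-2)\vec{x}_1$, $\vec{z}_1=\vec{z}_2+(a_2-a_1)\vec{x}_1$ fixed by the tropical curve, and handles loops by the same observation that the gluing data is determined by globally defined tropical data. Your chart-by-chart identification with $\mathrm{Spec}\,\Lambda[\sigma_v^\vee]$ and the dual-lattice computation is the correct way to fill in what the paper asserts, so the proposal is sound and takes the same route.
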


By restricting the exact variables $x_\ex,y_\ex,z_\ex$ for each Seidel Lagrangian to be $\Lambda_+$, $\Lambda_0$ or $\C$, we obtain $Y(\Lambda_+), Y(\Lambda_0), Y(\C)$ respectively.  The disc potential is simply $W=x_\ex y_\ex z_\ex$ in each chart, and so it is $\C$-valued on $Y(\C)$.  We have the Landau-Ginzburg model $W_{Y(\Lambda_0)}$.  Its critical locus is crucial for the study of HMS.

Note that $x = T^{(v_x,(a,b))} \cdot x_\ex$ may involve negative power of $T$.  Thus even if we restrict $x_\ex \in \Lambda_+$, $x$ can still have negative valuation.  So points in $Y(\Lambda_+)$ are still pseudo deformations (rather than formal deformations) of a Seidel Lagrangian over a vertex.

Let's fix a vertex $(a,b)$ and consider the exact variables of the Seidel Lagrangian $S_0$ over this vertex.  The valuation images of $(x_\ex,y_\ex,z_\ex) \in (\Lambda_+-\{0\})^3$ gives the cone $\R_+^3 \subset \R^3$.  ($\R^3$ can be interpreted as the valuation image of $(\Lambda^\times)^3$, namely the open $\Lambda^\times$ orbit of the pseudo deformation space.)  The valuation image of  the formal deformation space is a translated cone $\R_+^3 - \{((v_x,(a,b)),(v_y,(a,b)),(v_z,(a,b)))\}$.  

We can express the exact and immersed variables of Seidel Lagrangians over other vertices in terms of the exact variables of $S_0$.  Restricting their exact variables to have valuations in $\R_+$, the valuation images form cones in $\R^3$ (passing through the origin).  They form the fan picture of the toric CY.  The valuation image of the formal deformation spaces give disjoint translated cones of the fan.  See (2) of Figure \ref{fig:Cones-KP2} for an example.

Pseudo deformations give a convenient way to understand the mirror spaces.  However, for the purpose of HMS, we need to consider isomorphisms of formal deformations (in $\Lambda_+$ rather than in $\Lambda$) to ensure convergence of the Floer theory in the whole Fukaya category.  On the other hand, the formal deformation spaces of the Seidel Lagrangians over different vertices are disjoint from each other.  In the next subsection, we shall make a collection of immersed Lagrangians which intersecting properly so that there are isomorphisms between their formal deformations.

\subsection{A collection of Lagrangian immersions covering the critical locus}\label{sec:cr}

In this subsection we choose a collection of Seidel Lagrangians whose deformation spaces cover the critical locus.  Such a collection of Lagrangians plays the role of a Lagrangian fibration in the SYZ formulation.

The critical locus of the disc potential (over $\Lambda_0$ for the exact variables) defined over the mirror toric CY is a union of the subsets $\{(x_\ex,0,0): x_\ex \in \Lambda_0\}\cup \{(0,y_\ex,0): y_\ex \in \Lambda_0\} \cup \{(0,0,z_\ex): z_\ex \in \Lambda_0\}$ in the toric charts $\Lambda_0^3$.  (Recall that gluing charts in exact variables does not involve any Novikov parameters.
Also note that $\val(x=0)=+\infty$.)  

From now on, we assume the tropical curve has the property that the position $(a,b)$ of the vertex adjacent to each infinite edge in (outgoing) direction $v$ satisfies $v \cdot (a,b) \geq 0$.  This condition is easily satisfied if we take the tropical curve large enough such that all these vertices lie outside the Newton polygon (and the origin lies in the Newton polygon).

First of all, the non-compact part of the critical locus which corresponds to infinite edges (of the moment polytope of the toric CY) is covered as the following proposition shows.  Then it remains to cover the parts of the critical locus corresponding to finite edges.

Recall that given a Seidel Lagrangian $S$, we considered two different deformations of $S$, one in 
Proposition \ref{prop:lc}, which we denote by $S'$, and the other in  Proposition \ref{prop:move-var}, which we denote by $\tilde{S}$
from now on.
\begin{prop} \label{prop:inf-edge}
	Consider a non-compact one-strata of the toric Calabi-Yau, which corresponds to an infinite edge in the polytope picture.  Let $S$ be the Seidel Lagrangian at the vertex incident to this edge.  Let $z$ be the deformation direction corresponding to the infinite edge.
	Consider the deformed Lagrangian in $z$-direction $S', \tilde{S}$.  Then $\{x=y=0, \val(z_\ex)\geq 0\}$ (which is a part of the critical locus) is a subset of the formal deformation spaces of $S'$ and $\tilde{S}$.
\end{prop}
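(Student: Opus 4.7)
The plan is to verify the inclusion by a direct computation that combines the coordinate change formulas from Propositions \ref{prop:lc} and \ref{prop:move-var} with the dictionary between immersed and exact variables provided by Proposition \ref{prop:exact-var}. Because the Seidel Lagrangian has a $\mathbb{Z}/3$-rotational symmetry permuting $X,Y,Z$, I would first cyclically relabel those propositions so that the deformation direction is the $z$-direction appearing in the statement. This turns Proposition \ref{prop:move-var} into a coordinate change of the form $\tilde{x}=T^{-A}x$, $\tilde{y}=T^{-A}y$, $\tilde{z}=T^{A}z$ for some $A\geq 0$, and turns Proposition \ref{prop:lc} into $x'=T^{-\delta}x$, $y'=T^{-\delta}y$, $z'=T^{2\delta}z$ for some $\delta\geq 0$; in both cases the Novikov factor attached to $z$ has the opposite sign from the others because we are extending the chart in the $-\mathrm{val}(z)$ direction.

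Next, I would describe each formal deformation region in the exact variables of $S$. For $\tilde{S}$, the requirement $\mathrm{val}(\tilde{x}),\mathrm{val}(\tilde{y}),\mathrm{val}(\tilde{z})\geq 0$ translates to $\mathrm{val}(x)\geq A$, $\mathrm{val}(y)\geq A$, $\mathrm{val}(z)\geq -A$ in $S$'s immersed coordinates. At $x=y=0$ the first two conditions are trivially satisfied, so only the condition on $z$ matters. Using $\mathrm{val}(z)=\mathrm{val}(z_\ex)+(v_z,(a,b))$ from Proposition \ref{prop:exact-var}, this becomes
$$\mathrm{val}(z_\ex)\geq -A-(v_z,(a,b)).$$
The standing assumption on the tropical curve for infinite edges gives $(v_z,(a,b))\geq 0$, and $A\geq 0$, so the right-hand side is $\leq 0$. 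Hence every $z_\ex$ with $\mathrm{val}(z_\ex)\geq 0$ lies in the formal deformation space of $\tilde{S}$, which is the claim for $\tilde{S}$. The verification for $S'$ is identical, with $A$ replaced by $2\delta$, giving the bound $\mathrm{val}(z_\ex)\geq -2\delta-(v_z,(a,b))\leq 0$.

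There is essentially no conceptual obstacle here beyond careful bookkeeping; the heart of the argument is the tropical positivity assumption $(v_z,(a,b))\geq 0$, which is precisely what allows the (possibly trivial) $z$-direction extension $\tilde{S}$ or $S'$ of $S$ to cover the positive ray $\{x=y=0,\mathrm{val}(z_\ex)\geq 0\}$ of the infinite edge. The main care-point will be to confirm that the cyclic relabeling of Propositions \ref{prop:lc} and \ref{prop:move-var} to the $z$-direction really produces the sign pattern described above---namely that the Novikov shift on the deformation coordinate has positive exponent while those on the two transverse coordinates are negative---since this is what controls which direction the translated cone extends in.
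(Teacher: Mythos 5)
Your proof is correct and follows essentially the same route as the paper's: both arguments combine the exact/immersed variable dictionary $z_\ex = T^{-(v_z,(a,b))}z$ of Proposition \ref{prop:exact-var} with the coordinate change $\tilde{z}=T^A z$ (resp. $z'=T^{2\delta}z$) from Proposition \ref{prop:move-var} (resp. \ref{prop:lc}), relabeled to the $z$-direction, and then invoke the standing positivity assumption $(v_z,(a,b))\geq 0$ on vertices adjacent to infinite edges. Your extra bookkeeping for the $x$ and $y$ coordinates (trivially satisfied at $x=y=0$) and the sign-pattern check on the relabeled coordinate changes are consistent with the paper.
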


\begin{proof}
	Denote by $(a,b)$ the position of the vertex incident to the infinite edge (with outgoing direction $v$).  By Proposition \ref{prop:exact-var}, $z_\ex=T^{-(v,(a,b))} \cdot z$, and we have taken such that $(v,(a,b))>0$.  Hence $\{x=y=0, \val(z_\ex)\geq 0 \}$ is contained in the formal deformation space of $S$.  
	By Proposition \ref{prop:move-var}, the formal deformations of $\tilde{S}$ (or $S'$) and $S$ are related by $\tilde{z}=T^{A} z = T^{(v,(a,b)) + A} z_\ex$ (or $z'=T^{(v,(a,b)) + 2A} z_\ex$) where $A\geq 0$.  Hence $\{x=y=0, \val(z_\ex)\geq 0 \}$ is contained in the deformation space of $\tilde{S}$ (or $S'$).
\end{proof}

The following is a corollary of Proposition \ref{prop:move-var}.
\begin{cor} \label{cor:cone}
	Let $S_0$ be the Seidel Lagrangian over the vertex $(0,0)$ (in the tropical curve) and denote its variables by $\underline{x}_\ex,\underline{y}_\ex,\underline{z}_\ex$.
	Consider a Seidel Lagrangian $S$ over a vertex and denote its formal deformations by $(x,y,z)$ (where $\val(x),\val(y),\val(z)\geq 0$).  The valuation image of the formal deformation space of $S$ forms an  affine cone $C = p + \R_{\geq 0}\{X,Y,Z\} \subset \R^3_{\underline{x}_\ex,\underline{y}_\ex,\underline{z}_\ex}$, where $X,Y,Z$ are primitive vectors dual to $x,y,z$ (which are vectors in $(\R^3_{\underline{x}_\ex,\underline{y}_\ex,\underline{z}_\ex})^\vee$).  Denote by $\bar{C}\subset \R^2$ the image cone under projection along the $Z$-direction.  
	
	If $S$ is deformed in $x$-direction (or $y$-direction) to $\tilde{S}$, then $\bar{C}$ is translated in direction of $-X+Y$ (or $-Y+X$ resp.).  If $S$ is deformed in $z$-direction to $\tilde{S}$, then $\bar{C}$ is translated in direction of $X+Y$. 
	
	Similarly, 	if $S$ is deformed in $x$-direction (or $y$-direction) to $S'$, then $\bar{C}$ is translated in direction of $-2X+Y$ (or $-2Y+X$ resp.).  If $S$ is deformed in $z$-direction to $S'$, then $\bar{C}$ is translated in direction of $X+Y$. 
\end{cor}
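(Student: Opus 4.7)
The plan is to read off the vertex translation directly from Propositions \ref{prop:move-var} and \ref{prop:lc}, given the key observation that each of the six deformations preserves the ray directions $X, Y, Z$ of the cone. Indeed, passing from $(x,y,z)$ to $(\tilde x,\tilde y,\tilde z)$ or $(x',y',z')$ only multiplies each immersed variable by a power of $T$, so their exponent vectors as monomials in $(\underline{x}_\ex,\underline{y}_\ex,\underline{z}_\ex)$ are unchanged. Hence only the vertex $p$ of $C$ translates, and the task reduces to computing this translation in each case.

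First I would treat $\tilde S$ deformed in the $x$-direction. Proposition \ref{prop:move-var} gives $\tilde x = T^A x$, $\tilde y = T^{-A} y$, $\tilde z = T^{-A} z$ for some $A \geq 0$, so the formal deformation space $\{\val\tilde x, \val\tilde y, \val\tilde z \geq 0\}$ corresponds, in $S$'s own valuation coordinates, to $\{\val x \geq -A,\ \val y \geq A,\ \val z \geq A\}$. Since the affine map from the $(\val x,\val y,\val z)$-space into $\R^3_{\underline{x}_\ex,\underline{y}_\ex,\underline{z}_\ex}$ sends the standard basis to $(X,Y,Z)$ by the very definition of these primitive vectors as duals of $x,y,z$, the new vertex becomes $\tilde p = p - AX + AY + AZ$. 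Projecting along $Z$, this produces the translation of $\bar C$ in direction $-X+Y$, as claimed. The $y$- and $z$-direction deformations of $\tilde S$ then follow by the obvious symmetry under permutation of $x, y, z$, giving translation directions $X - Y$ and $X + Y$, respectively.

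The case of $S'$ is entirely analogous, using Proposition \ref{prop:lc}, which replaces the Novikov weights $(T^A, T^{-A}, T^{-A})$ by $(T^{2\delta}, T^{-\delta}, T^{-\delta})$. The same computation then yields vertex shift $-2\delta X + \delta Y + \delta Z$ for the $x$-direction deformation, projecting to translation direction $-2X + Y$; the $y$- and $z$-direction deformations of $S'$ similarly give $X - 2Y$ and $X + Y$.

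I do not anticipate any serious obstacle. The only mild bookkeeping point is to make the dual-basis identification used in the statement match the affine linear map between valuation spaces, but this is built into the very definition of $X, Y, Z$ as primitive vectors dual to the monomials $x, y, z$. Once this is unwound, each of the six assertions reduces to a one-line substitution of the appropriate Novikov exponents.
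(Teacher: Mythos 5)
Your proposal is correct and is essentially the paper's own argument: the paper gives no separate proof, simply noting that the corollary follows from Proposition \ref{prop:move-var} (and, for $S'$, the analogous coordinate change of Proposition \ref{prop:lc}), exactly as you do by observing that multiplication of the immersed variables by powers of $T$ leaves the monomial exponents — hence the ray directions $X,Y,Z$ — unchanged and only translates the vertex by the vector of Novikov exponents. Your vertex computations $A(-X+Y+Z)$, $A(X+Y-Z)$, $\delta(-2X+Y+Z)$, etc., and their projections along $Z$ all match the stated directions.
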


Proposition \ref{prop:exact-var} gives the following corollary.

\begin{cor} \label{cor:cone-pos}
	Assume the same notations as in Corollary \ref{cor:cone}.  Let $S_1,\ldots,S_k$ be Seidel Lagrangians at vertices adjacent to the same face (which is possibly unbounded) in the tropical curve.  Denote by $z^{(i)}$ the immersed variables of $S_i$ corresponding to the edges which are not adjacent to the face.  (See for instance Figure \ref{fig:exact-vari}.)  Consider the cones $\bar{C}_i$ correpsonding to $S_i$ in projection along the $Z^{(i)}$ direction.
	
	Then the vertices of the cones $\bar{C}_i$ are given by taking negative of the corresponding vertices of the tropical curve (up to translation).
\end{cor}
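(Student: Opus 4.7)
The plan is to compute the apex $p_i$ of each cone $C_i$ in $\R^3_{\underline{x}_\ex, \underline{y}_\ex, \underline{z}_\ex}$ directly from Corollary \ref{cor:cone} and Proposition \ref{prop:exact-var}, to show that $p_i$ has first two coordinates equal to $v_i$ and third coordinate zero, and finally to project along $Z^{(i)}$, which I will show is a \emph{common} vector for all $i$ on the boundary of the face.  The interpretation of the projected image as the tropical vertex (up to sign and a common translation) then follows at once.

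First I would recall the toric set-up implicit in the construction.  Each tropical vertex $v_i$ is dual to a triangle $T_i$ in the Newton polygon with vertices $p_1^{(i)}, p_2^{(i)}, p_3^{(i)} \in \Z^2$; the primitive one-cone generators of the fan cone of $S_i$'s chart are the lifts $(p_k^{(i)}, 1) \in \R^3$; and the immersed variable whose tropical edge direction is the primitive outward normal $v_k^{(i)}$ of the Newton polygon edge opposite $p_k^{(i)}$ corresponds to the one-cone generator $(p_k^{(i)}, 1)$.  Proposition \ref{prop:exact-var} translates the formal deformation condition $\val(x), \val(y), \val(z) \geq 0$ into constraints on the exact variables with offsets $-(v_k^{(i)}, v_i)$, so by Corollary \ref{cor:cone} the apex is
\[
    p_i = -\sum_{k} (v_k^{(i)}, v_i)\,(p_k^{(i)},1) \in \R^3.
\]
The third coordinate equals $-\sum_k (v_k^{(i)}, v_i) = -((\sum_k v_k^{(i)}), v_i) = 0$ by the trivalent condition $\sum_k v_k^{(i)} = 0$.

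The key step is the combinatorial identity: for any unimodular triangle with vertices $p_k \in \Z^2$ and primitive outward edge normals $v_k$ opposite to $p_k$,
\[
    \sum_{k} (v_k \cdot u)\,p_k = -u \quad \text{for all } u \in \R^2.
\]
I would prove this by expanding $u = \alpha(p_2-p_1) + \beta(p_3-p_1)$ and computing pairings using the unimodularity relations $v_k \cdot p_a = v_k \cdot p_b =: r_k$ (for the two vertices $p_a, p_b$ on the edge opposite $p_k$) and $v_k \cdot p_k = r_k - 1$; one finds $v_2 \cdot u = -\alpha$, $v_3 \cdot u = -\beta$, and $v_1 \cdot u = \alpha + \beta$, from which the identity follows immediately.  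Applying it with $u = v_i$ to the triangle $T_i$ shows that the first two coordinates of $p_i$ are $-(-v_i) = v_i$, and hence $p_i = (v_i, 0)$.

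Finally I would note the crucial combinatorial observation that makes the common translation meaningful: since $F$ is dual to a Newton polygon vertex $p_F$ and every triangle $T_i$ for $v_i$ on $\partial F$ contains $p_F$, the edge of $T_i$ opposite $p_F$ is precisely the one dual to $v_z^{(i)}$, so the one-cone generator $Z^{(i)} = (p_F, 1)$ is the same vector for all $i$ on the face.  The projection along this common $Z = (p_F, 1)$ is therefore canonical, and under the natural linear projection $\R^3 \to \R^2$ given by $(u_1,u_2,u_3) \mapsto -(u_1,u_2) + u_3\,p_F$ (which factors through $\R^3/\R Z$ since it kills $Z$, identifying the quotient with the plane of the tropical curve), the apex $p_i = (v_i, 0)$ maps to $-v_i$.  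This establishes the claim with zero common translation, while other choices of identification merely shift this by a uniform vector.  The main obstacle is setting up and verifying the combinatorial identity above, which elegantly captures the duality between the tropical curve and the Newton polygon triangulation and relies essentially on the unimodularity hypothesis.
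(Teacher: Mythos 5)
Your proof is correct and follows the only route the paper has in mind --- the paper offers no argument beyond the single line ``Proposition \ref{prop:exact-var} gives the following corollary,'' so your computation genuinely supplies what is omitted: from Corollary \ref{cor:cone} the apex is $p_i = -\sum_k (v_k^{(i)}\cdot v_i)\,(p_k^{(i)},1)$, and the content is your identity $\sum_k (v_k\cdot u)\,p_k = -u$ for a unimodular triangle with primitive outward edge normals (together with $\sum_k v_k = 0$, which kills the third coordinate), which I have checked on the standard triangle and which is translation- and $\SL_2(\Z)$-equivariant, hence general. Your observation that $Z^{(i)} = (q_f,1)$ is literally the same ray for every vertex adjacent to the face $f$ (all $z^{(i)}$ cut out the same divisor $D_f$) is also the point that makes ``projection along $Z^{(i)}$'' produce comparable images at all, and is needed for the statement to be meaningful.
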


In the following we use the deformation $\tilde{S}$.  The same construction can be done using $S'$ and we shall not repeat.

\begin{example} \label{ex:charts-KP2}
	Consider the pair-of-pants decomposition that is mirror to $K_{\bP^2}$.  See Figure \ref{fig:exact-vari}.  We have three disjoint Seidel Lagrangians $S_0,S_1,S_2$ sitting over the three vertices $(0,0), (0,a), (a,0)$ respectively.  Their formal deformation spaces give three disjoint copies of $\Lambda_0^3$.  The valuation images of the formal deformation spaces (in terms of the deformation parameters of $S_0$) are given by cones $C_i \cong \R_{\geq 0}^3$ for $i=0,1,2$, whose positions are given by Corollary \ref{cor:cone-pos} as shown in (2) of Figure \ref{fig:Cones-KP2} near the toric divisor $z=0$.  To get the planar figure, we have taken the projection along the $z$-direction, and the cones $C_i$ project to $\bar{C}_i \subset \R^2$.
	
	The $\Lambda_{0}$-valued critical locus in local charts $(x^{(i)}_\ex,y^{(i)}_\ex,y^{(i)}_\ex)$ are given by $$\{(x^{(i)}_\ex,0,0): \val (x^{(i)}_\ex) \geq 0\}\cup \{(0,y^{(i)}_\ex,0): \val (y^{(i)}_\ex) \geq 0\}\cup\{(0,0,z^{(i)}_\ex): \val (z^{(i)}_\ex) \geq 0\}.$$
	
	By Proposition \ref{prop:inf-edge}, the infinite edges of the $\Lambda_{0}$-valued critical locus are covered by the formal deformation spaces of $S_0,S_1,S_2$.  It remains to cover the part of the critical locus contained in the toric divisor $\{z=0\}\cong \bP^2$, which are the toric divisors of $\{z=0\}$.  They are located at infinity in (2) of Figure \ref{fig:Cones-KP2}.
	
	Now deform $S_1$ in the $y'$ direction to $\tilde{S}_1$ so that the formal deformation space of $\tilde{S}_1$ intersect with that of $S_0$.  By Corollary \ref{cor:cone}, $\bar{C}_1$ moves to $\bar{C}_1 + (2,1) h$, where $h$ is taken such that $h-A>0$.  Similarly, deform $S_2$ (and $S_0$) in the $y''$ direction (and $x$ direction resp.) to $\tilde{S}_2$ ($\tilde{S}_0$ resp.) such that $\bar{C}_2$ shifts to $\bar{C}_2 - (1,2)k$ for suitably chosen $k>0$ which intersects with $\bar{C}_1$ but not  $\bar{C}_1 + (2,1) h$ (resp. $\bar{C}_0$ shifts to $\bar{C}_0 + (-1,1) p$ for some suitably chosen $p>0$ which intersects with $\bar{C}_2$ but not $\bar{C}_2 - (1,2)k$.).
	See (3) of Figure \ref{fig:Cones-KP2}.
	
	The formal deformation spaces of $\tilde{S}_0,S_0,\tilde{S}_1,S_1,\tilde{S}_2,S_2$ cover a neighborhood of the whole critical locus.  Also at most two of them intersect with each other by construction.
\end{example}

\begin{figure}[h]
	\begin{center}
		\includegraphics[scale=0.5]{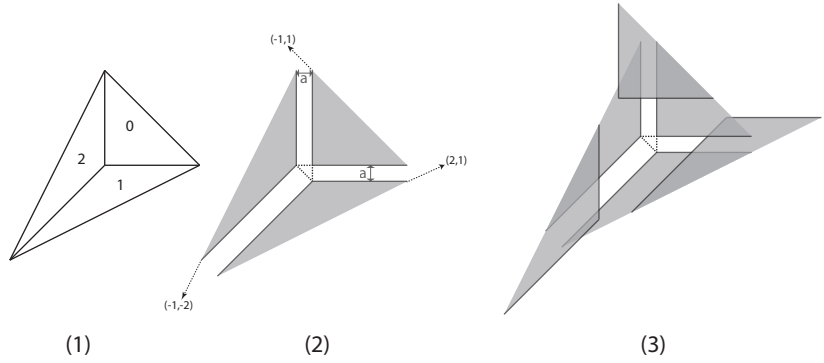}
		\caption{Deformation spaces covering the critical locus in $K_{\bP^2}$.}
		\label{fig:Cones-KP2}
	\end{center}
\end{figure}

\begin{example}
	Consider the pair-of-pants decomposition shown in Figure \ref{fig:toricCYeg}, which is mirror to the toric Calabi-Yau whose fan is shown in (1) of Figure \ref{fig:Cone-toricCYeg}.  Let's consider the formal deformation spaces of $S_i$ (sitting inside the mirror toric Calabi-Yau) near the divisor $y=0$.  We only need to consider $S_0,S_2,S_3,S_4$, since we can take a neighborhood of $y=0$ which is disjoint from the formal deformation space of $S_1$.
	
	Using Corollary \ref{cor:cone-pos}, the valuation images of $S_i$ for $i=0,2,3,4$, projected along $y^{(i)}$-direction, are shown in (2) of Figure \ref{fig:Cone-toricCYeg}.  Note that they are disjoint from each other.
	
	First consider $S_0,S_1,S_2$ whose formal deformation spaces are disjoint and lie in the neighborhood of the divisor $z=0$.  It is similar to Example \ref{ex:charts-KP2}.  We deform $S_2$ towards $S_1$ to get $\tilde{S}_2$, deform $S_0$ towards $\tilde{S}_2$ to get $\tilde{S}_0$, and deform $S_1$ towards $\tilde{S}_0$ to get $\tilde{S}_1$.
	 Thus we have the Lagrangians $S_1,\tilde{S}_2,\tilde{S}_0,\tilde{S}_1$ whose formal deformation spaces intersect with each other and cover the part of the critical locus lying in $\{z=0\}$.  See (3) of Figure \ref{fig:Cone-toricCYeg} for their valuation images projected along $y^{(i)}$-direction.
	
	We still need to deform $S_3$ and $S_4$ in order to cover the whole critical locus.  We deform $S_4$ in the $x^{(4)}$ direction to make $\tilde{S}_4$ intersect with $S_3$, deform $S_3$ in the $z^{(3)}$ direction to make $\tilde{S}_3$ intersect with $\tilde{S}_2$, and also deform $\tilde{S}_0$ in the $z^{(3)}$ direction to make $\tilde{\tilde{S}}_0$ intersect with $S_4$.  The movement of the cones are given by  Corollary \ref{cor:cone}.  We choose the deformations such that $\tilde{S}_0, \tilde{S}_2, \tilde{S}_3, S_3, \tilde{S}_4, S_4, \tilde{\tilde{S}}_0$ cover the part of critical locus contained in the divisor $y=0$, and such that at most two of their formal deformation spaces intersect.  See (4) of Figure \ref{fig:Cone-toricCYeg}.
\end{example}

The same procedure works in general and gives the following.

\begin{figure}[h]
	\begin{center}
		\includegraphics[scale=0.5]{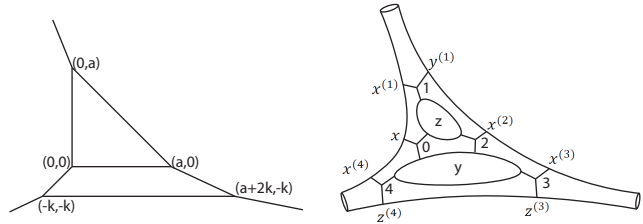}
		\caption{A more complicated example of a pair-of-pants decomposition.  The Seidel Lagrangians sitting over vertices are doubling of the $Y$-shapes (showing only the front).}
		\label{fig:toricCYeg}
	\end{center}
\end{figure}

\begin{figure}[h]
	\begin{center}
		\includegraphics[scale=0.5]{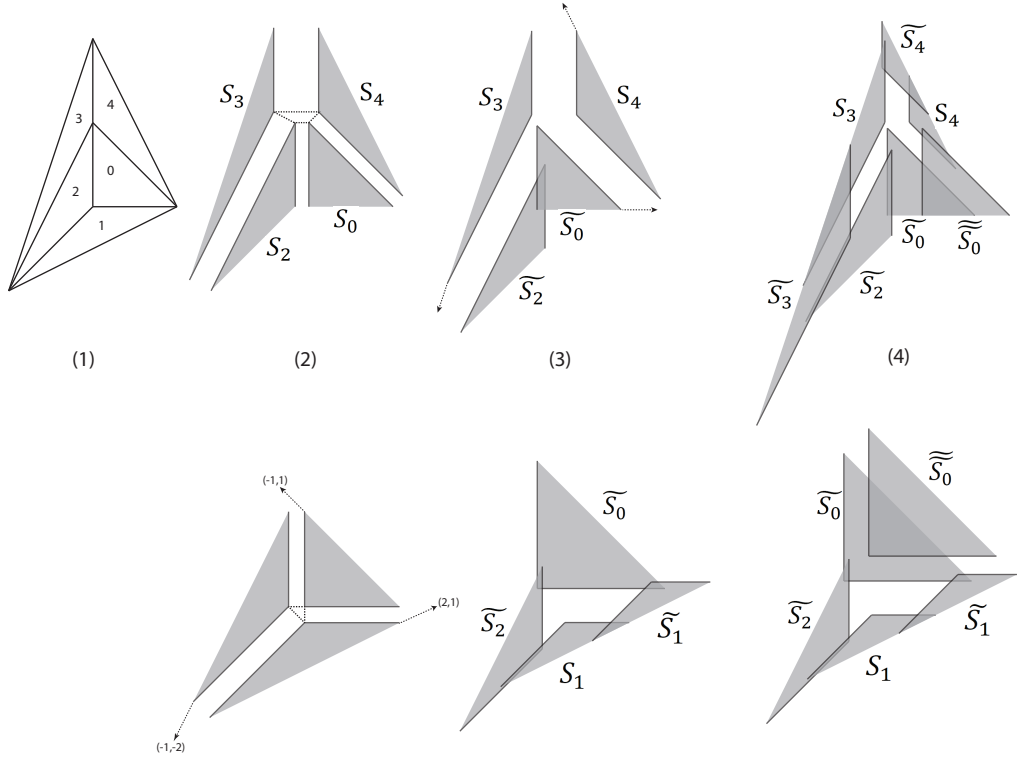}
		\caption{Charts for a more complicated example.}
		\label{fig:Cone-toricCYeg}
	\end{center}
\end{figure}

\begin{prop} \label{prop:Lag-coll}
	There exists a collection of Seidel Lagrangians whose formal deformation spaces cover the whole $\Lambda_0$-valued critical locus of $W$, and at most two intersect when restricted to a neighborhood of the critical locus.  
\end{prop}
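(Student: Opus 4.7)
The plan is to build the collection inductively, face by face of the tropical curve, where the faces correspond to the compact toric divisors of the mirror $Y$. By Proposition \ref{prop:inf-edge}, the non-compact one-strata of the critical locus (corresponding to infinite edges of the polytope) are automatically covered once we include, for each infinite edge, the Seidel Lagrangian $S$ over the adjacent vertex. Therefore the substantive task is to cover, for each bounded face $F$ of the tropical curve, the part of the critical locus lying in the corresponding compact toric divisor $D_F$.

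Fix such a bounded face $F$ with boundary vertices $v_1,\ldots,v_k$ cyclically ordered, and let $S_1,\ldots,S_k$ be the Seidel Lagrangians over these vertices. Expressed in the exact coordinates of a reference Seidel Lagrangian $S_0$ and projected along the direction $Z$ dual to $D_F$, Corollary \ref{cor:cone-pos} tells us that the cones $\bar{C}_1,\ldots,\bar{C}_k$ have vertex positions obtained (up to translation) by negating the tropical positions of $v_1,\ldots,v_k$. Since these cones are copies of $\R_{\ge 0}^2$, they are initially pairwise disjoint. For each consecutive pair $(v_i,v_{i+1})$, let $e_i$ be the edge joining them and let $w_i$ be the primitive integer tangent direction along $e_i$; by Corollary \ref{cor:cone}, deforming $S_i$ in the direction whose corresponding edge is $e_i$ translates $\bar{C}_i$ precisely in a direction parallel to $w_i$ (toward $\bar{C}_{i+1}$). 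Choose positive deformation parameters $h_i$ so that the translated cone $\tilde{\bar{C}}_i$ meets $\bar{C}_{i+1}$, and iterate cyclically. A standard combinatorial inspection, using that the $w_i$ are exactly the edge directions of the cycle $\partial F$, shows that one can pick the $h_i$ small enough so that every translated cone meets only its two cyclic neighbors in a neighborhood of the critical axis, so at most two formal deformation spaces overlap near $D_F$. This is precisely the pattern illustrated in (3) of Figure \ref{fig:Cones-KP2} and (3)--(4) of Figure \ref{fig:Cone-toricCYeg}.

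Next one must check that when the same vertex $v_i$ is shared by several faces $F,F',\ldots$, the various deformations requested on $S_i$ can be simultaneously realized. Since the three immersed variables of $S_i$ are independent and each deformation uses only one of them (the one dual to the edge shared with the neighboring vertex on a given face), choosing a distinct deformation $\tilde S_i, \tilde{\tilde S}_i, \ldots$ of $S_i$ in each requested direction---one copy per face---solves the issue and keeps the total collection finite. Proposition \ref{prop:exact-var}, together with the standing hypothesis that $(v,(a,b))\ge 0$ for every outward infinite-edge direction $v$, ensures that all these deformations can be performed with the formal (not merely pseudo) deformation spaces contained in $Y(\Lambda_0)$, so they are genuine objects of $\Fuk(X)$.

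The main obstacle is not the existence of each individual deformation but the global bookkeeping: making sure the deformation amounts chosen face by face are small enough that the deformed cones attached to \emph{different} faces stay separated (outside a neighborhood of the critical locus) while still meeting their cyclic neighbors on a common face, and that no spurious triple intersection appears at a junction where two faces meet along an edge. This is a finite, purely combinatorial constraint: for each bounded face $F$ choose $h_i^F>0$ smaller than a threshold determined by the combinatorics of the neighboring faces, exactly as in the step-by-step construction of Example \ref{ex:charts-KP2} and the subsequent example in Figure \ref{fig:Cone-toricCYeg}. Since the tropical curve has finitely many faces and vertices, a uniform such choice exists, yielding the desired collection with the double-intersection property.
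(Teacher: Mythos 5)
There is a genuine gap, and it sits exactly where you wave your hands: the ``one deformation copy per face'' rule at a shared vertex. An interior trivalent vertex $v_i$ of the tropical curve is adjacent to three faces, so your scheme attaches to the maximal cone of $v_i$ three deformed Lagrangians, one pushed in each of the three edge directions. By Proposition \ref{prop:move-var} these have formal deformation spaces whose valuation images are translates of the octant by (positive multiples of) $(-1,1,1)$, $(1,-1,1)$ and $(1,1,-1)$ respectively, i.e.\ the sets $\{\val(x)\ge -h_1,\val(y)\ge h_1,\val(z)\ge h_1\}$, etc. The triple intersection is $\{\val(x)\ge \max(h_2,h_3),\ \val(y)\ge\max(h_1,h_3),\ \val(z)\ge\max(h_1,h_2)\}$, which contains the points $x=y=0$, $\val(z)$ large --- points \emph{on} the critical locus. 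So three charts meet inside every neighborhood of the critical locus, no matter how the $h_i$ are chosen; shrinking the deformation parameters, which is your proposed fix in the last paragraph, cannot repair this. The obstruction is structural: you must never place more than two Lagrangians on the same maximal cone. This is precisely what the paper's proof is organized around. It inducts over rays of the fan (not just those dual to bounded faces) and maintains the invariant that a maximal cone acquires a second Lagrangian only when all three of its facets are already covered, so that cone is never revisited and a third copy is never requested. Reusing already-deformed Lagrangians as anchors when processing a new ray, rather than minting a fresh copy per face, is the essential mechanism, and it is absent from your argument.

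A secondary but also real gap is the reduction to bounded faces. The compact components of the critical locus ($\bP^1$'s) correspond to \emph{finite edges} of the tropical curve, and a finite edge can be adjacent to two unbounded faces: for the four-punctured sphere the tropical curve has one finite edge and no bounded face at all, so your induction is vacuous and the $\bP^1$ in the resolved conifold is never covered. Proposition \ref{prop:inf-edge} only handles the non-compact strata coming from infinite edges. The paper's induction therefore continues past the interior rays to the non-extremal boundary rays of the fan polytope, which correspond to unbounded faces carrying finite edges; your proof needs the analogous extension.
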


\begin{proof}
	We construct the collection of Lagrangians inductively on the ray generators of the fan of the mirror toric Calabi-Yau.  First, we start with the collection of Seidel Lagrangians over the vertices which are adjacent to at least one infinite edge in the tropical curve.  Their formal deformation spaces cover the infinite edges of the critical locus.  We will expand our collection step-by-step to cover other parts of the critical locus.
	
	At this stage, for each maximal cone of the fan there is at most one corresponding Seidel Lagrangian included in our collection.  In general, in each step we make sure the following: if there are two Lagrangians in our collection corresponding to the same maximal cone (in particular one is the deformation of the other), the part of the critical locus corresponding to the three facets of the maximal cone has already been covered (by our collection).  This helps to avoid including three Seidel Lagrangians corresponding to the same maximal cone in our collection in the next step (which would violate the requirement that at most two charts intersect with each other).
	
	Now consider a ray $r$ of the fan which is contained in the interior of the support of the fan, with the property that at least one of its adjacent maximal cones corresponds to
	a Seidel Lagrangian already included in our collection.   (For instance a Seidel Lagrangian corresponding to a maximal cone with one of its facets lying in the boundary of the fan polytope is included in our collection in the very first step.)  Denote the maximal cones adjacent to $r$ in clockwise order by $C_1,\ldots,C_k$.  At least one of these maximal cones correspond to deformed Seidel Lagrangians already in our collection. 
	
	Let us denote the (deformed) Seidel Lagrangians which is already in the collection by $S_{i_1},\ldots,S_{i_p}$
	and the corresponding  valuations of formal deformation spaces by $C_{i_1}+v_1,\ldots,C_{i_p}+v_p$ where $i_j < i_{j+1}$ and $v_j \in \R^3$.  
	 For each $j$, we pick deformed Seidel Lagrangians $S_{i_j+1},\ldots,S_{i_{j+1}-1}$ corresponding to the cones $C_{i_j+1},\ldots,C_{i_{j+1}-1}$ such that the deformation space of $S_{i_j+l}$ intersects with that of $S_{i_j+l-1}$ for each $l=1,\ldots,i_{j+1}-i_j-1$.  Then we also deform $S_{i_{j+1}}$ to $\tilde{S}_{i_{j+1}}$ such that the formal deformation space of $\tilde{S}_{i_{j+1}}$ intersect with that of $S_{i_{j+1}-1}$.  As a result, the formal deformation spaces of $\{\tilde{S}_{i_j},S_{i_j},S_{i_j+1},\ldots,S_{i_{j+1}-1}:j=1,\ldots,p\}$ cover the part of the critical locus contained in the divisor corresponding to the ray $r$.  We add these Lagrangians into our collection.
	
	In the above step, the cones that correspond to more than one Lagrangians in our collection are $C_{i_j}:j=1,\ldots,p$.  For each of these cones, the facet which is not adjacent to $r$ is contained in the boundary of the fan polytope, whose corresponding part of the critical locus has been covered.  The two facets which are adjacent to $r$ are also covered by the above step.  Thus we have made sure that all the three facets of the maximal cone have been covered.
	
	Then we proceed inductively to the next ray $r'$ contained in the interior of the support of the fan, with at least one of its adjacent maximal cones corresponds to a Seidel Lagrangian already included in our collection.  In the previous steps we have made sure any maximal cone that has two deformed Seidel Lagrangians corresponding to it in our collection has all its three facets already covered.  Thus those Seidel Lagrangians that we need to deform in this step originally have at most one copy in our collection.  This ensures there are still at most two Lagrangians corresponding to the same maximal cone in our expanded collection after this step.  Also for each cone that corresponds to two Lagrangians in the expanded collection after this step, the facet which is not adjacent to $r'$ is already covered in previous steps (since the cone is already included in our collection before this step).  And the two facets which are adjacent to $r'$ are also covered in this step.  So such a cone has all its three facets covered.  
	
	In this way the part of critical locus contained in any compact divisor has been covered.  Then we proceed to rays that lie in the boundary of the fan polytope (but which are not extremal).  Proceeding in the same procedure, we obtain the required collection of Lagrangians whose formal deformation spaces cover the whole critical locus, and at most two of them intersect with each other near the critical locus.
\end{proof}
The figure \ref{fig:Cone-toricCYeg} illustrates the induction where we start with the ray corresponding to the lower interior vertex,
and followed by the ray corresponding to the upper interior vertex.

We denote by $Y_+ \subset Y(\Lambda)$ the space glued from the formal deformation spaces of our collection of Lagrangians.  $Y_+$ contains the critical locus of $W_{Y(\Lambda_0)}$.

$Y(\Lambda)$ naturally comes with a sheaf of rigid analytic functions.  The structure sheaf of $Y(\Lambda)$ restricts to give the structure sheaves of $Y(\Lambda_+)$,$Y(\Lambda_0)$ and $Y_+$.  Recall that in the work of Fukaya-Oh-Ohta-Ono on mirror symmetry for toric varieties \cite{FOOO-T3}, they defined the completion of Laurent polynomial ring using the moment map polytope.  We define the sheaf of analytic functions in analogous to their formulation.  The main difference is that our variables take value in $\Lambda$ whose valuation can be $+\infty$, while the variables in the toric case \cite{FOOO-T3} take value in $\Lambda^\times$ which have valuation in $\R$.

$\Lambda[x_\ex,y_\ex,z_\ex]$ can be regarded as the ring of Novikov-convergent functions on the space $\Lambda^3$.  
For an open subset $U=\val^{-1}(U') \subset \Lambda^3$ where $U'$ is open in $(-\infty,+\infty]^3$, let $M_{U'}$ be the monoid of Laurent monomials $x_\ex^py_\ex^qz_\ex^r$ for $p,q,r \in \Z$, whose valuations are well-defined and not equal to $-\infty$.  (It means if $U'$ contains any point of the form $(+\infty,b,c)$, then $p\geq 0$, and similar for $q,r$.)  $\mathscr{O}(U)$ is defined as the completion of $\Lambda[M_{U'}]$ with respect to the norm $e^{-\val_{U'}}$, where
$$\val_{U'} = \inf_{u\in U'} \val_u, \, \val_u(x_\ex)=u_1, \, \val_u(y_\ex)=u_2, \, \val_u(z_\ex)=u_3$$
and $u=(u_1,u_2,u_3)$.
For each $f\in \Lambda[x_\ex,y_\ex,z_\ex]$, $\val_u(f)$ is continuous on $u \in \Lambda^3$. 

It is obvious that for $V' \subset U'$, $\Lambda[M_{U'}] \subset \Lambda[M_{V'}]$; for $f \in \Lambda[M_{U'}]$, 
$\val_{V'}(f) \geq \val_{U'}(f)$ and so $e^{-\val_{V'}(f)} \leq e^{-\val_{U'}(f)}$.  Thus there is a canonical restriction homomorphism $\mathscr{O}(U) \to \mathscr{O}(V)$ (where $V=\val^{-1}(V')$).

$Y(\Lambda)$ is glued from charts which are copies of $\Lambda^3$, and the overlapping regions between two charts are $\Lambda^{3-i} \times (\Lambda^\times)^{i} \subset \Lambda^3$ for some $i=1,2,3$.  Open subsets in $Y(\Lambda)$ are generated by $U=\val^{-1}(U')$ in the charts where $U' \subset (-\infty,\infty]^3$ are relatively compact.  Such an open set $U$ is said to be relatively compact.  (Since the gluing maps on $\left(\val_u(x_\ex),\val_u(y_\ex),\val_u(z_\ex)\right)$ are linear and hence continuous, relative compactness is preserved.)

Suppose $U'$ is contained in the intersection of (the valuation images of) two charts.
The toric gluing identifies a Laurent monomial $T^A (x^{(1)}_\ex)^{i_1}(y^{(1)}_\ex)^{j_1}(z^{(1)}_\ex)^{k_1}$ in a chart to a Laurent monomial $T^A (x^{(2)}_\ex)^{i_2}(y^{(2)}_\ex)^{j_2}(z^{(2)}_\ex)^{k_2}$ in the other chart via the change of coordinates.  This identifies the monoids $M_{U'}^{(i)}$ for the two charts.  Thus we have the ring of regular functions over $U$, which is $\Lambda[M_{U'}^{(i)}]$ in coordinates.  ($U$ is the valuation preimage of $U'$ in the first chart which is identified with that of the second chart under the gluing.)  The gluing identifies $u^{(i)} \in (-\infty,+\infty]^3$ for the two charts.  By definition $\val_{u^{(1)}}(T^A (x^{(1)}_\ex)^{i_1}(y^{(1)}_\ex)^{j_1}(z^{(1)}_\ex)^{k_1})=\val_{u^{(2)}}(T^A (x^{(2)}_\ex)^{i_2}(y^{(2)}_\ex)^{j_2}(z^{(2)}_\ex)^{k_2})$.  Hence the metrics from the two charts are the same, and so we have a well-defined completion $\mathscr{O}(U)$ for the ring of regular functions.  Thus the above defines the structure sheaf $\mathscr{O}$ of $Y(\Lambda)$. \footnote{For general non-toric varieties, the coordinate changes are polynomials or even series living in the completion.  One has to directly deal with the completion in that case.}

By Theorem \ref{thm:maingluealg}, we have a functor from the (wrapped) Fukaya category of the punctured Riemann surface to the category glued from local matrix factorizations of $W_{Y_+}$.  In the next section, we shall show that the derived functor on $W\Fuk$ is an equivalence to its image, which is the category of $\C$-valued matrix factorizations of $W_{Y(\C)}$.

\section{Explicit computations for the mirror functor of punctured Riemann surfaces} \label{sec:HMS-surf}

Homological mirror symmetry for punctured two-spheres was proved by \cite{AAEKO}, and that for punctured Riemann surfaces was proved by \cite{HLee}.  In their works, they found and matched generators of the categories on the two sides, and proved that the morphism spaces are isomorphic.  

In this section, we compare our functor obtained from the gluing construction with the results of \cite{AAEKO,HLee}.   We make explicit computations of our functor in object and morphism levels for the generating set of the wrapped Fukaya category.  The result is as follows.

\begin{thm} \label{thm:surf}
	Let $X$ be a punctured Riemann surface associated to a tropical curve (with finitely many edges) in $\R^2$.  
	Let $\{L_i: i \in I\}$ be the collection of Lagrangian immersions constructed in Proposition \ref{prop:Lag-coll}, and we have fixed isomorphisms between their formal deformations by assigning an integer to every finite edge.  Denote by $Y(\C) \subset Y(\Lambda_0) \subset Y(\Lambda)$ the $\C$-valued (and $\Lambda_0$-valued, $\Lambda$-valued  respectively) toric CY dual to the tropical curve, and $Y_+ \subset Y(\Lambda)$ the glued space from $\{L_i: i \in I\}$.  Let $W_{Y_+}$ be the disc potential of $\{L_i: i \in I\}$.
	
	We have a functor $\mathrm{WFuk}(X) \to \MF_{\mathrm{glued}}(W_{Y_+})$.  The derived functor on $D\mathrm{WFuk}(X)$ is a quasi-equivalence to its image which is  $D\MF(W_{Y(\C)}) \subset D\MF_{\mathrm{glued}}(W_{Y_+})$.
\end{thm}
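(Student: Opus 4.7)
The plan is to prove Theorem~\ref{thm:surf} by following the ``generate and match'' strategy used in the HMS proofs of \cite{AAEKO} and \cite{HLee}, but now deducing these comparisons as consequences of the canonically constructed functor $\mathcal{F}^{\mathrm{global}}$ of Theorem~\ref{thm:maingluealg}. First I would fix an explicit generating set $\mathcal{G}$ for $\mathrm{WFuk}(X)$. A natural choice is the collection of non-compact Lagrangian arcs dual to the pair-of-pants decomposition: for each edge $e$ of the tropical curve, take an arc $A_e$ connecting two punctures that crosses exactly the cylinder of $e$ transversely (and for finite edges, one arc per edge); these generate the wrapped Fukaya category by neck-stretching / Viterbo restriction arguments as in \cite{HLee}. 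All such $A_e$ are exact, so by Section~\ref{sec:C} their images under $\mathcal{F}^{\mathrm{global}}$ can be written in exact variables with coefficients in $\C$.

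Second, I would compute $\mathcal{F}^{\mathrm{global}}(A_e)$ chart-by-chart. On each local chart $U_i \cong \Lambda_+^3$ with potential $W_i = x_{\mathrm{ex}}y_{\mathrm{ex}}z_{\mathrm{ex}}$, the complex $\CF(A_e,(L_i,b_i))$ is a free module of small rank generated by the transverse intersections $A_e \cap L_i$, and the differential $-m_1^{0,b_i}$ is a sum of triangles weighted by the immersed variables; using Assumption~\ref{as:cov} this is a finite $\C$-polynomial expression, so one obtains an explicit $\C$-valued matrix factorization of $x_{\mathrm{ex}}y_{\mathrm{ex}}z_{\mathrm{ex}}$ of rank $1+1$ on each chart. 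Comparing with the local calculations already done in \cite{CHL} and the pair-of-pants HMS of \cite{AAEKO}, each such local factorization is quasi-isomorphic to a Koszul-type factorization, i.e.\ to the matrix factorization representing the structure sheaf of a toric coordinate axis $\{y_{\mathrm{ex}}=z_{\mathrm{ex}}=0\}$ of the local chart. Gluing via the coordinate changes of Proposition~\ref{prop:gluing_S12} (expressed in exact variables so that the Novikov parameters disappear), the local pieces patch up to the structure sheaf of a compact toric stratum of $Y(\C)$. Thus $\mathcal{F}^{\mathrm{global}}$ sends the generators $\{A_e\}$ to a collection of structure sheaves of compact toric strata of $Y(\C)$, which by the work of Orlov and Preygel is known to generate $D\MF(W_{Y(\C)})$.

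Third, I would verify that $\mathcal{F}^{\mathrm{global}}$ is cohomologically fully faithful on the generators. On the $A$-side, $HW^*(A_e,A_{e'})$ has an explicit combinatorial description in terms of boundary-parallel arcs and iterated wrapping (see \cite{HLee}). On the $B$-side, $\mathrm{Ext}^*$ between the corresponding matrix factorizations can be computed either via the Koszul resolution on each chart and a \v{C}ech-type argument for the gluing, or by the description of $D\MF(W_{Y(\C)})$ as a dg-enhancement of the singularity category of $W^{-1}(0)$. I would then match these two computations generator-by-generator; the coefficients in the gluing formulas involve exactly the integer choices $a_i$ attached to the finite edges, which precisely reproduce the winding numbers needed to identify the degrees in $HW^*$ with the internal gradings on the $B$-side. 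In particular, the map $HW^*(A_e,A_{e'}) \to \mathrm{Ext}^*\bigl(\mathcal{F}^{\mathrm{global}}(A_e),\mathcal{F}^{\mathrm{global}}(A_{e'})\bigr)$ induced by $\mathcal{F}^{\mathrm{global}}$ is an isomorphism of $\Z/2$-graded vector spaces.

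Finally, because $\mathcal{F}^{\mathrm{global}}$ is a triangulated functor between split-closed pretriangulated categories that is cohomologically fully faithful on a generating set $\mathcal{G}$, it is a quasi-equivalence from $D\mathrm{WFuk}(X)$ onto the triangulated subcategory generated by $\mathcal{F}^{\mathrm{global}}(\mathcal{G})$; by Step~2 this subcategory is all of $D\MF(W_{Y(\C)})$, proving the theorem. The main obstacle will be Step~3: the explicit matching of morphism spaces. The Floer-theoretic computation of $HW^*$ uses Hamiltonian perturbations and pair-of-pants decompositions, while the mirror computation is algebraic/geometric, and one must track carefully how the integer labels $a_i$ on finite edges transport signs and exponents between the two descriptions (in particular to obtain the correct generators of $\mathrm{Ext}^1$ corresponding to the ``wrapping'' generators of $HW^*$). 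Once this matching is pinned down on the explicit basis of holomorphic polygons, the rest of the argument is formal.
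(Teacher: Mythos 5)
Your high-level strategy (compute the canonical functor on a generating set, match the images with known B-side generators, verify full faithfulness on morphisms) is the same as the paper's, but two of your concrete choices create genuine gaps. First, the generators: the paper does not use arcs indexed by edges that cross each cylinder once; it uses Lagrangian paths circulating around the \emph{faces} of the tropical curve with arbitrary winding numbers $m^e$ about the cylinders of the adjacent finite edges. This winding is not optional: by Theorem \ref{thm:mir_obj} the image of such an object is the push-forward of the divisor line bundle $\sum_e (a_2^e+m^e)\cdot\{z_e=0\}$ on the toric divisor $D_f$, and one needs all values of $m^e$ to realize the restrictions $\mathcal{O}_D(k)$ of \cite{HLee}, which is what gives a generating set of $D\MF(W_{Y(\C)})$ (Remark \ref{rmk:Lee}). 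Your non-winding arcs $A_e$ would produce only a fixed finite list of objects, and you give no argument that these generate; the appeal to ``Orlov and Preygel'' does not supply one in this glued, non-affine setting.

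Second, your local computation is internally inconsistent: a rank $1|1$ matrix factorization of $x_{\ex}y_{\ex}z_{\ex}$ has cokernel $R/(f)$ for a single factor $f$, hence is supported on a codimension-one toric divisor such as $\{z_{\ex}=0\}$; it cannot be quasi-isomorphic to the Koszul factorization of the codimension-two stratum $\{y_{\ex}=z_{\ex}=0\}$, which has rank $2|2$. The paper's computation (Lemma \ref{lem:MF_S1} and Proposition \ref{prop:glue-obj}) shows the local cokernels are skyscrapers on the divisors $\{z_i=0\}$ together with summands that are trivial in $D\mathrm{Sing}$, and the real content is the gluing of the surviving generators across charts via Proposition \ref{prop:glueMF_12}; this is exactly where the integers $a_2^e$ and the winding numbers enter and where the divisor degree of the image line bundle is determined. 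Without that gluing computation the identification of the image objects cannot be completed, and consequently neither can the generation statement nor the morphism-level matching of your Step~3 (which the paper carries out by explicit polygon counts for $P_0,P_1,Q_0,H_i$ and then bootstraps to all morphisms using compatibility with composition).
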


The integer assigned to each edge will be denoted by $a_1^e$ or simply $a_1$ throughout this section.  We have another integer $d^e$ associated to each finite edge of a tropical curve: the tropical curve near the finite edge gives the toric diagram of $\mathcal{O}(-d^e-2,d^e)$.  Then we define $a_2^e := a_1^e + d^e$.  The two numbers $a_1^e$ and $a_2^e$ can be understood as a choice of gauge change on a pair-of-circles in the cylinder corresponding to $e$.  Namely $a_i$ for $i=1,2$ are the numbers of times that the two gauge points pass through the immersed point $z$.  See also Figure \ref{fig:gluing-obj-gauge}.

Note that $W_{Y_+}$ can be extended as $W_{Y(\Lambda)}$ over $Y(\Lambda)$ and is $\C$-valued when restricted as $W_{Y(\C)}$ on $Y(\C)$.  
Also recall that $Y_+$ contains the critical locus of $W$ over $Y(\Lambda_0)$ (even though $Y_+$ does not contain the whole $Y(\Lambda_0)$ in general).

In this section, many computations are done by a direct count of holomorphic polygons in the punctured surface, which will be illustrated by figures.  The counts of holomorphic polygons involved are rather direct and simple, since the Seidel Lagrangians in the pair-of-pants are parts of the boundary conditions, and they help to localize the polygons in a small region (which is either a pair-of-pants or a four-punctured sphere).	 We remark that bigger polygons may contribute to the higher part of the mirror functor, but we do not need to count them for the proof.

\subsection{Objects}
First consider the mirror functor in the object level.  The tropical curve in $\R^2$ produces a planar diagram which has a number of faces, edges, and vertices.
Fix a (possibly non-compact) face $f$, and let $L$ be a (possibly non-compact) oriented Lagrangian path circulating around $f$ and winding about the cylinders corresponding to finite edges adjacent to $f$.  It is an object used in \cite{HLee}.

The easiest case is that $f$ has no adjacent finite edge (implying that it has only two adjacent infinite edges).  Let $v$ be the only vertex adjacent to $f$.  By the construction in the last section, the Seidel Lagrangian $S$ over $v$ is in our collection.  It has three immersed variables $x,y,z$, and the disc potential is $W=xyz$ in its formal deformation space.  Let $z$ be the immersed variable corresponding to the face $f$ (so the divisor $D_f$ corresponding to $f$ in the mirror is defined by $z=0$).

$L$ intersects $S$ at two points $A,B$.
The image matrix factorization of $L$ over the formal deformation space of $S$ is $(\mathrm{Span}_R\{A,B\},\delta)$ where $\delta$ is given by $A \mapsto z \cdot B, B \mapsto xy \cdot A$, $R = \Lambda_0[x,y,z]$. The cokernel is given by $\mathrm{Span}_R\{B\} / \langle z \cdot B \rangle$.  It simply reduces to a computation in the pair-of-pants, see Figure \ref{fig:MF-simplest}.

\begin{figure}[h]
	\begin{center}
		\includegraphics[scale=0.5]{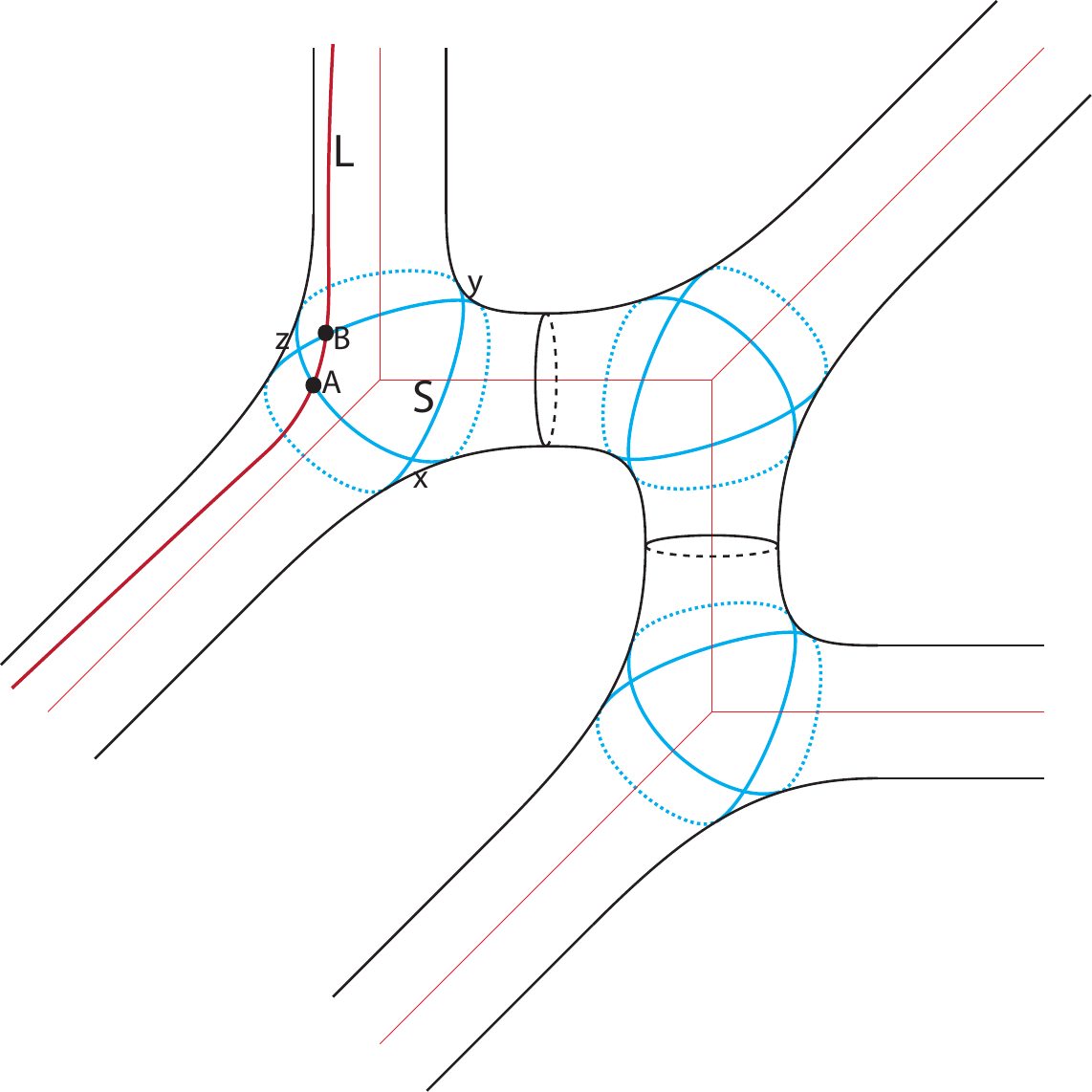}
		\caption{The easiest case.  $L$ divides the triangle $zxy$ into two parts, which corresponds to the factorization $z$ times $xy$.  The image of $L$ is $\mathscr{O}_{D_f}$ in $D\mathrm{Sing}(W_{Y(\C)})$.}
		\label{fig:MF-simplest}
	\end{center}
\end{figure}

For any object $S'$ other than $S$ in our collection, the image of $L$ transformed by $S'$ is trivial (namely it has trivial cokernel in $D\mathrm{Sing}$, since the strips bounded by $S'$ and $L$ either has no corner or involves all the three corners $x,y,z$ once).  Thus the image object of $L$ is supported in $D_f$, and indeed is simply the push forward of $\mathscr{O}_{D_f}$ in $D\mathrm{Sing}(W_{Y(\C)})$.

From now on we consider the case that $f$ has at least one adjacent finite edge.
Fix a finite edge $e=\overline{v_1v_2}$ adjacent to $f$, and suppose $L$ winds $m$ times around the cylindrical part corresponding to $e$.  

Consider the two pair-of-pants corresponding to the two adjacent vertices $v_1, v_2$.  The union of these two pair-of-pants gives a four-punctured sphere.  We have explained the gluing for two pair-of-pants in Section \ref{sec:glue2s}.

In our collection of Lagrangian immersions we have two objects $S_1,S_2$ in the two pair-of-pants, whose formal deformation spaces cover the part of the ($\Lambda_{0}$-valued) critical locus corresponding to the finite edge.  The deformation variables of $S_i$ are denoted by $x_i,y_i,z_i \in \Lambda_+$ for $i=1,2$, where 
deformation in the edge $e$-direction corresponds to the variables $x_i$.  Also the notations are chosen such that $z_i$ are the variables corresponding to the given face $f$.  (Namely the divisor $D_f$ in the mirror corresponding to $f$ is given by $z_i=0$. )

In the following we assume that $S_1$ is the deformation along the $x_1$-direction of the Seidel Lagrangian over the vertex $v_1$ (see Figure \ref{fig:Lag-winding}), and $S_2$ is exactly the Seidel Lagrangian over the vertex $v_2$.   $S_2$ bounds a triangle of area $0$ (in the limit).  For simplicity assume
$S_1$ bounds a triangle of area $A$, where $A$ is the area of the cylinder bounded by the Seidel Lagrangians over $v_1,v_2$.  Under this area condition we have $x_1 = x_2^{-1}$.  Note that no Novikov parameter appears in this equality.  In general, $S_1$ and $S_2$ can be some other different deformations of the Seidel Lagrangians along the $x_1$-direction.  The computations and results will be the same.

$L$ is taken such that it intersects $S_2$ at two points, denoted by $A$ and $B$, while it intersects $S_1$ at $2+2m$ points, denoted by $C_i$ and $D_i$ for $i=0,\ldots,m$.  See Figure \ref{fig:Lag-winding}.  $A$ and $C_i$ are odd morphisms, while $B$ and $D_i$ are even.  The intersection points $C_0, D_0$ and $A,B$ occur very close to $z_1$ and $z_2$ respectively (so that the areas of the triangles with corners $(z_1, C_0, D_0)$ and $(z_2, A,B)$ can be taken to be zero in the limit).

\begin{figure}[h]
	\begin{center}
		\includegraphics[scale=0.5]{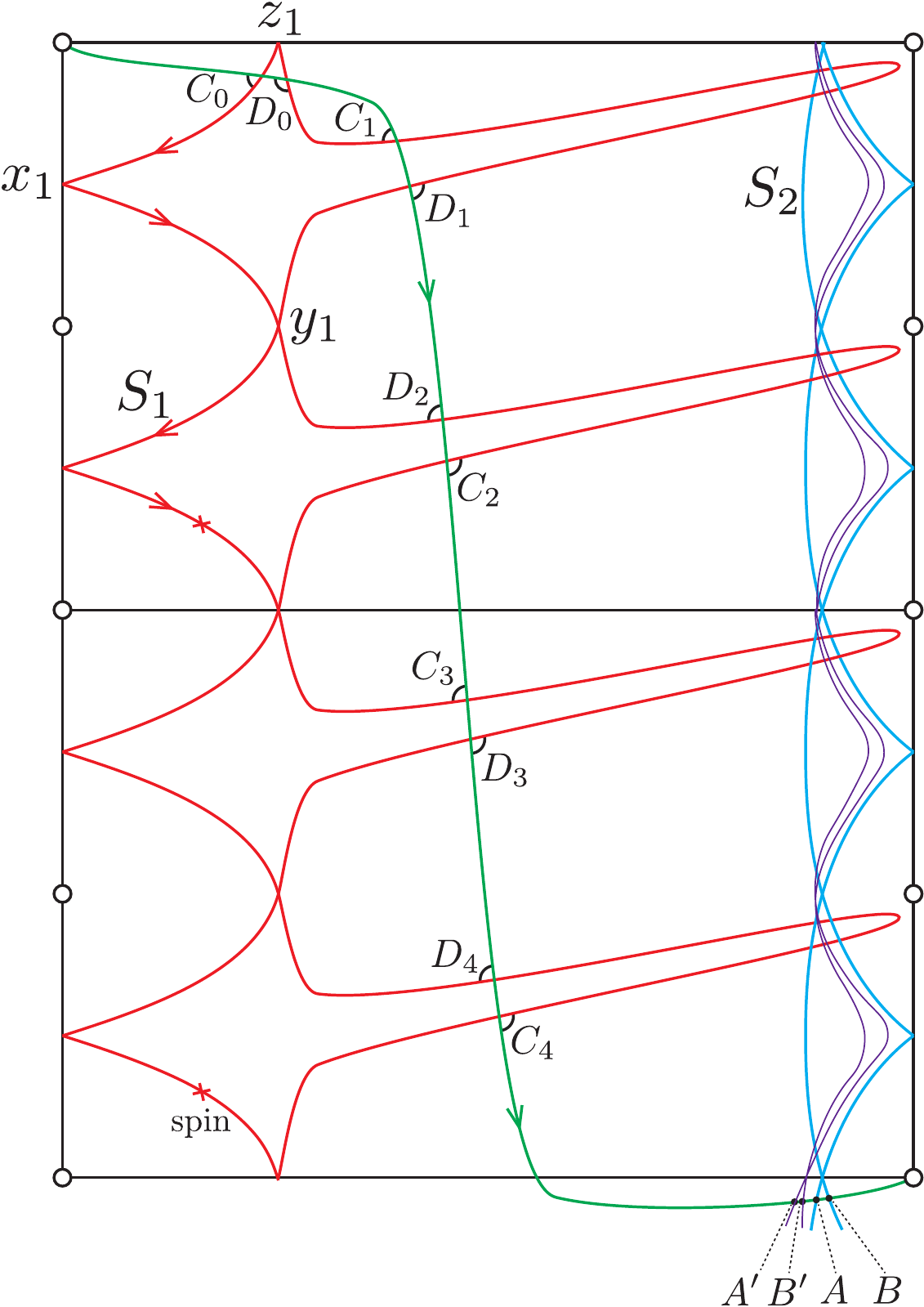}
		\caption{The Lagrangian path to be transformed shown in the cylindrical part corresponding to a chosen edge.  The figure shows the associated four-punctured sphere whose fundamental domain is given as a rectangle.  (In this figure $L$ has winded around the cylindrical part twice.)}
		\label{fig:Lag-winding}
	\end{center}
\end{figure}

We have an isomorphism between $S_1$ and $S_2$ as explained in the end of Section \ref{sec:doublecircles}.  To be more geometric, we also consider a pair-of-circles $C$ which is a smoothing of $S_2$ at the odd generator $x_2$.  The isomorphism from $S_1$ to $S_2$ can be obtained as composition of that from $S_1$ to $C$, gauge change on $C$, and that from $C$ to $S_2$.  The immersed variables of $C$ are denoted by $y,z$.  (Note that the circle components of $C$ may not be exact.)

$C$ is taken very close to $S_2$, so that the strips bounded by $C$ and $S_2$ can be assumed to have zero area in the limit.  $C$ intersects $L$ at two points, denoted by $A'$ and $B'$ as shown in Figure \ref{fig:Lag-winding}.  $A'$ is an odd morphism and $B'$ is even.  Again $A',B'$ occur very close to $z$, so that the area of the triangle with corners $(z, A',B')$ can be taken to be zero in the limit.

Recall that the gauge cycles of $C$ are important.  Denote by $P=\{p_1,p_2\} \subset C$ the gauge cycle which is the vanishing cycle corresponding to the smoothing of $S_1$ at $x_1$, and $P'=\{p_1',p_2'\} \subset C$ the gauge cycle corresponding to the smoothing of $S_2$ at $x_2$.  The gauge change, which depends on the way of moving $P$ to $P'$, is determined by the given tropical curve as explained in Section \ref{sec:mir-surf}.

The following lemma for local matrix factorizations is given by a direct computation of $m_1$ between $(S_1,x_1X_1+y_1Y_1+z_1Z_1)$ and $L$, see Figure \ref{fig:MF-loc}.


\begin{lemma} \label{lem:MF_S1}
	The local matrix factorization mirror to $L$ transformed by $(S_1,x_1X_1+y_1Y_1+z_1Z_1)$ equals to $(\mathrm{Span}_\Lambda \{C_i,D_i: i=0,\ldots,m\} \otimes_\Lambda \mathscr{O}(\Lambda_+^3),\delta_1)$, where $\delta_1$ is given by
	\begin{align*}
	C_0 \mapsto& z_1 D_0\\
	D_0 \mapsto& T^A x_1 y_1 C_0\\
	C_1 \mapsto& T^A D_1 - D_0\\
	D_1 \mapsto& x_1y_1z_1 C_1 + x_1 y_1 C_0\\
	C_{2k} \mapsto& z_1x_1(-y_1 D_{2k}+D_{2k-1}) \textrm{ for } k=1,\ldots,m\\
	D_{2k} \mapsto& -T^A C_{2k} + x_1 (z_1 C_{2k-1}+C_{2k-2}) \textrm{ for } k=1,\ldots,m\\
	C_{2k+1} \mapsto&T^A D_{2k+1} + x_1y_1 D_{2k} - x_1 D_{2k-1} \textrm{ for } k=1,\ldots,m-1\\
	D_{2k+1} \mapsto& x_1y_1z_1 C_{2k+1} + x_1 y_1 C_{2k} \textrm{ for } k=1,\ldots,m-1.
	\end{align*}
	The local matrix factorizations transformed by $(C,\nabla_{tP},yY+zZ)$, $(C,\nabla_{t'P'},y'Y'+z'Z')$, and $(S_2,x_2X_2 + y_2Y_2 + z_2Z_2)$ equal to
	$$\left\{ \begin{array}{ll}
	A' \mapsto& z B' \\
	B' \mapsto& ty A',
	\end{array}
	\right.$$
	$$\left\{ \begin{array}{ll}
	A' \mapsto& z' B' \\
	B' \mapsto& (t')^{-1}y' A',
	\end{array}
	\right.$$
	and
	$$\left\{ \begin{array}{ll}
	A \mapsto& z_2 B \\
	B \mapsto& x_2 y_2 A
	\end{array}
	\right.$$
	respectively.
\end{lemma}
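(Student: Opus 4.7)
The plan is a direct computation: for each intersection point of $L$ with the reference Lagrangian, enumerate the $J$-holomorphic polygons bounded by $L$ and the reference Lagrangian (with possible corners at the self-intersections of the latter carrying weights from the bounding cochain) that contribute to $m_1^{0,b}$, and read off the coefficients and signs. The geometry is localized in the four-punctured sphere obtained by gluing the two pair-of-pants corresponding to the edge $e$, so each polygon lies in a bounded region and can be enumerated directly from Figure \ref{fig:Lag-winding}.

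I would first handle the three simpler matrix factorizations, where $L$ meets the reference Lagrangian at only two points. For $(S_2,b_2)$, the computation is essentially the same as the easiest case (Figure \ref{fig:MF-simplest}): the small bigon through $z_2$ gives $\delta(A)=z_2B$, and the triangle bounded by $S_2$ with corners $x_2,y_2,z_2$, truncated by $L$, gives $\delta(B)=x_2y_2\,A$. The two cases for the pair-of-circles $C$ are the same, except that the polygon contributing to $\delta(B')$ crosses one of the gauge points, producing the holonomy factor $t$ (for the gauge cycle $P$) or $(t')^{-1}$ (for $P'$), the sign being dictated by the orientation of the crossing.

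Next, for the main case $(S_1,b_1)$, I would enumerate the polygons systematically. The near-puncture pair $C_0,D_0$ reproduces the basic pair-of-pants answer, yielding the first two formulas (with the area $T^A$ recording that the triangle bounded by $S_1$ now has nonzero area $A$). The winding generators $C_k,D_k$ for $k\geq 1$ come in pairs, one pair per loop of $L$ around the neck. Each loop contributes short strips between consecutive intersection points with no $S_1$ corner (these give the $\pm D_{k-1}$ or $\pm C_{k-1}$ terms), together with polygons passing through one or more of the corners $x_1,y_1,z_1$ of $S_1$ (giving the monomial terms). The even/odd distinction in the index reflects which side of the neck the intersection lies on, and hence which combination of $S_1$ corners the bounding polygon traverses; this directly yields the split formulas for $C_{2k},D_{2k}$ versus $C_{2k+1},D_{2k+1}$.

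The main obstacle is the careful bookkeeping of signs: for each polygon one compares its boundary orientation with the fixed spin structures on $L$ and on the reference Lagrangian to determine the sign of the contribution, and one must verify that strip and triangle contributions add or cancel as the stated formulas require. As a consistency check, the identity $\delta_1^2 = T^A x_1 y_1 z_1 \cdot \mathrm{id}$ must hold; this follows automatically from the weak unobstructedness of $(S_1,b_1)$ established earlier, but it is a very useful verification that no polygon has been missed or double-counted.
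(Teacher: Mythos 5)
Your proposal matches the paper's own argument, which is exactly a direct enumeration of the holomorphic strips and polygons between $L$ and each reference Lagrangian (localized in the relevant pair-of-pants or four-punctured sphere and read off from Figure \ref{fig:MF-loc}), with holonomy factors inserted at gauge-point crossings for the pair-of-circles. Your added consistency check $\delta_1^2 = T^A x_1y_1z_1\cdot\mathrm{id}$ is not in the paper but does hold for the stated formulas and is a sensible safeguard against missed polygons.
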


\begin{figure}[h]
	\begin{center}
		\includegraphics[scale=0.3]{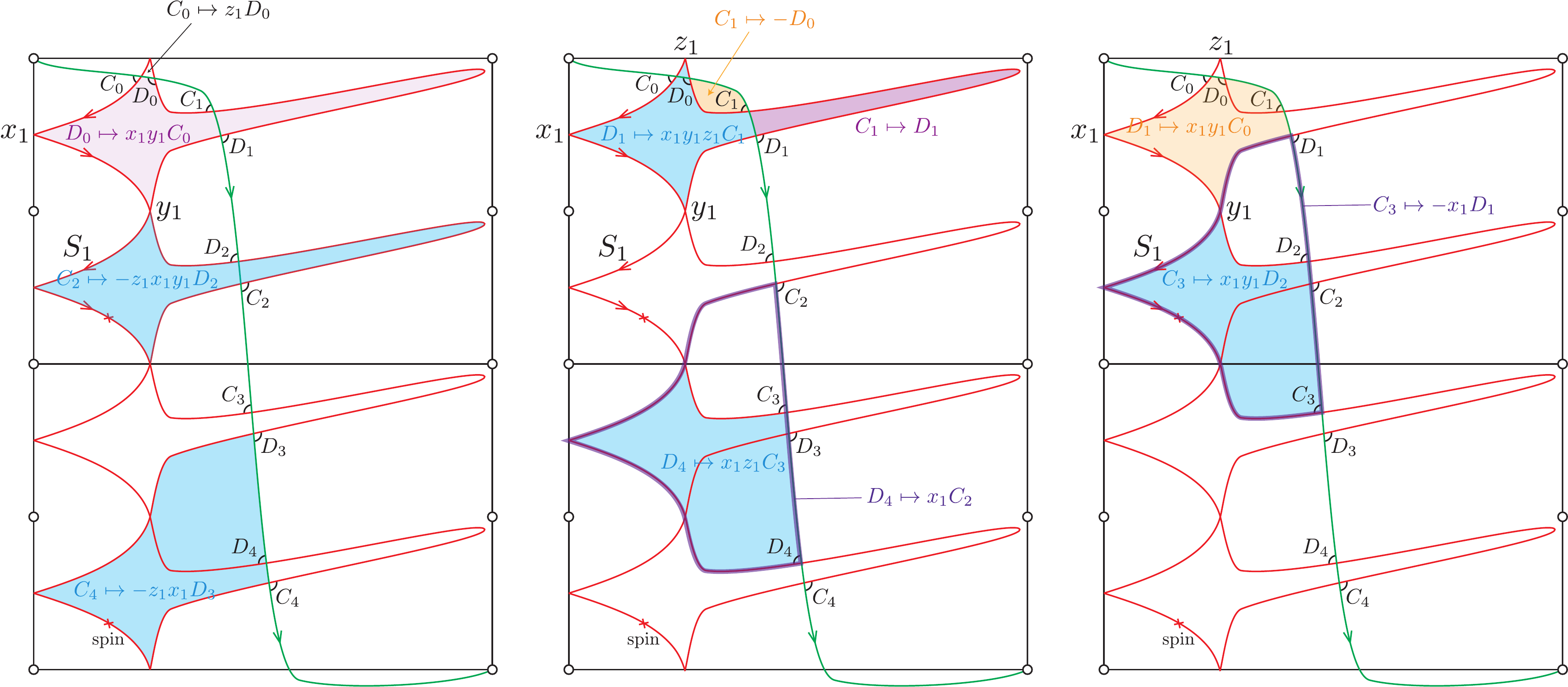}
		\caption{The strips contributing to the matrix factorization transformed by $S_1$ in Lemma \ref{lem:MF_S1}.}
		\label{fig:MF-loc}
	\end{center}
\end{figure}

Recall that the coordinate change obtained from isomorphisms between $(S_1,x_1X_1+y_1Y_1+z_1Z_1)$ and $(C,\nabla_{tP},yY+zZ)$ is given by
$$
x_{1}= t,
T^{A_1}y_{1}=y,
T^{A_2}z_{1}=z.
$$
where $A_1,A_2 \geq 0$ and $A_1+A_2=A$.  ($A_i$ are determined by the tropical curve.)

We take the Lagrangian $L$ very close to the union of the real Lagrangian around the given face and the circle around the neck (corresponding to the finite edge), which occurs very close to $S_1$.  The real Lagrangian divides the strip bounded by $S_1$ and $C$ that has area $A_1$ into half.  It accounts for the term $A_1/2$ appearing below.

We shall compute the gluing induced by the isomorphism between the local matrix factorization transformed by $S_1$ and that transformed by $S_2$.  The isomorphism between $S_1$ and $S_2$ has been explained in the end of Section \ref{sec:doublecircles}.  To be more geometric, we compute it via composing the isomorphisms $S_1\to C$, gauge change on $C$, and $C\to S_2$.

First consider $S_1$ and $C$.  The isomorphism that we have taken between $(S_1,x_1X_1+y_1Y_1+z_1Z_1)$ and $(C,\nabla_{tP},yY+zZ)$ gives the following gluing between the corresponding local matrix factorizations.  See Figure \ref{fig:gluing-obj-sm} for the holomorphic polygons involved, which are terms in $m_2$.  

\begin{lemma}
	The isomorphism induces the following maps between the two matrix factorizations
	$$\left(\mathrm{Span}_\Lambda \{C_i,D_i: i=0,\ldots,m\} \otimes_\Lambda \mathscr{O}(\Lambda_+^3),\delta_1\right) \leftrightarrow
	 \left(\mathrm{Span}_\Lambda \{A',B'\} \otimes_\Lambda \mathscr{O}(\Lambda_0^\times \times \Lambda_+^2), \delta \right) 
	$$ 
where the former is transformed by $(S_1,x_1X_1+y_1Y_1+z_1Z_1)$ and the latter by $(C,\nabla_{tP},yY+zZ)$):
	\begin{align*}
	C_{2m} \leftrightarrow& -T^{-A_2-A_1/2} A'\\
	T^{A_1/2} x_1(-y_1 D_{2m} + D_{2m-1}) \leftarrow& B'\\
	T^{A_2+A_1/2} D_{2m-1} \rightarrow& t^{-1} B'
	\end{align*}
	and all $C_i$ for $i\not=2m$ and $D_j$ for $j\not=2m-1$ are sent to zero.
\end{lemma}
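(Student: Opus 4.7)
The plan is to unpack the explicit formula for the natural transformations from part~(3) of Theorem~\ref{thm:maingluealg}, applied to the quasi-inverse pair $(P_1+P_2,\,Q_1-Q_2)$ constructed in Proposition~\ref{prop:cocycle_4punc}, and then to enumerate the holomorphic polygons that contribute in each direction. In both directions the natural transformation on a Floer generator $\bullet$ is schematically $\pm\, m(\bullet, e^{b}, \gamma, e^{b'})$ with $\gamma$ either $P_1+P_2$ or $Q_1-Q_2$, and the lemma amounts to computing these expressions on each $C_i$, $D_j$, $A'$, $B'$.

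The first step is to localize the possible polygons. The isomorphism generators $P_j$ and $Q_j$ all lie in a small neighbourhood of the immersed point $z = z_1$ at which $C$ is obtained by smoothing $S_1$, and the intersections $A', B'$ lie in the corresponding neighbourhood on $C$; hence any polygon with boundary on $L\cup S_1\cup C$ whose $S_1\cap C$-corner is some $P_j$ or $Q_j$ must have its $L\cap S_1$-corner within the last turn of the $m$-fold winding of $L$ around the neck. This forces the corner on $S_1$ to be one of $C_{2m}, D_{2m}, D_{2m-1}$, which kills every other $C_i, D_j$ and restricts the outputs on the $C$-side to $A', B'$. The three non-trivial assignments then correspond to the following triangles: (i) $(C_{2m}, A', P_j)$ (and its mirror through $Q_j$) of geometric area $A_1/2$, giving $C_{2m} \leftrightarrow -T^{-A_2-A_1/2}A'$ after applying the coordinate change $x_1 = t,\ T^{A_1}y_1 = y,\ T^{A_2}z_1 = z$; (ii) two triangles through a $P_j$ with corner at $B'$, distinguished by whether the relevant boundary arc picks up a $Y_1$-insertion from $b_1$, producing $B' \mapsto T^{A_1/2}x_1(-y_1 D_{2m} + D_{2m-1})$; and (iii) the mirror triangle through a $Q_j$ with corner at $D_{2m-1}$, producing $T^{A_2+A_1/2}D_{2m-1} \mapsto t^{-1}B'$, where the factor $t^{-1}$ records the holonomy of $\nabla_{tP}$ across a single gauge point traversed against its orientation.

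Finally, I would check that the three assignments intertwine the differentials $\delta_1$ and $\delta$ recorded in Lemma~\ref{lem:MF_S1}. This compatibility is automatic from $m_1(P_1+P_2) = m_1(Q_1-Q_2) = 0$ together with part~(2) of Theorem~\ref{thm:maingluealg}, but also provides a direct algebraic sanity check via the identities $x_1 = t,\ T^{A_1}y_1 = y,\ T^{A_2}z_1 = z$. The main obstacle I expect is the bookkeeping of signs (from Definition~\ref{def:lmf}) and of the powers of~$T$: in particular, tracking the half-area $A_1/2$ that records the precise placement of $L$ inside the strip between $S_1$ and $C$, the gauge-holonomy factors from $\nabla_{tP}$, and their interplay with the coordinate change. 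Once a consistent geometric picture is fixed, each contributing triangle can be read off from Figure~\ref{fig:Lag-winding} together with Proposition~\ref{prop:cocycle_4punc}, and no further global curve count is needed.
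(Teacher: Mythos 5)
Your proposal is correct and follows essentially the same route as the paper: the gluing maps are the $m_2$-type natural transformations evaluated on the generators, computed by a direct count of the holomorphic triangles (the paper's entire proof is the reference to Figure \ref{fig:gluing-obj-sm}), combined with the coordinate change $x_1=t$, $T^{A_1}y_1=y$, $T^{A_2}z_1=z$ and the observation that $L$ bisects the area-$A_1$ strip, which is exactly your accounting of the $A_1/2$ factor. The localization step forcing the $S_1$-corners to be $C_{2m}$, $D_{2m}$, $D_{2m-1}$ is left implicit in the paper's figure but is the correct justification for why all other generators map to zero.
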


\begin{figure}[h]
	\begin{center}
		\includegraphics[scale=0.4]{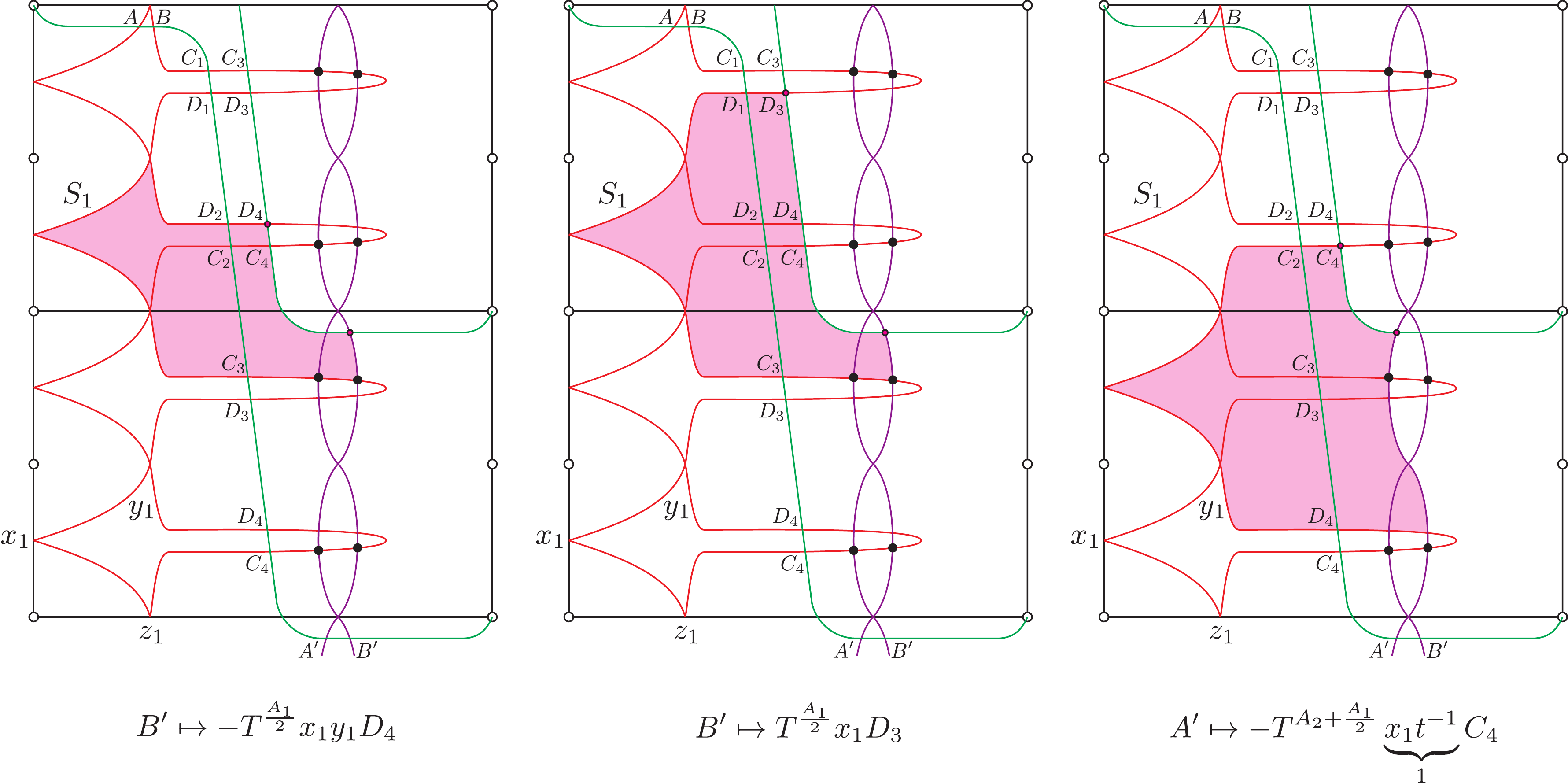}
		\caption{The holomorphic triangles contributing to the maps between the matrix factorizations transformed by $S_1$ and $C$.}
		\label{fig:gluing-obj-sm}
	\end{center}
\end{figure}

Now we consider the gauge change over $C$ from $P=\{p_1,p_2\}$ to $P'=\{p_1',p_2'\}$.  Denote by $a_i\in\Z$ the number of times (counted with signs) that $p_i$ passes through the immersed point $z$ (in order to change to $P'$).  Then the number of times that $p_i$ passes through the immersed point $y$ is $a_1-1$ and $a_2+1$ respectively.  $a_1,a_2$ are fixed in the very beginning for every finite edge of the tropical curve.

Recall that we have taken the intersection points $A',B'$ to be very close to the immersed point $z$. Whenever $p_i$ passes through $z$, it passes through $A$ for $i=1$ and $B$ for $i=2$.  Below we use the convention that $p_i$ passes through the immersed points in upward direction in Figure \ref{fig:gluing-obj-gauge}.  The coordinate change is inversed for downward direction.

\begin{figure}[h]
	\begin{center}
		\includegraphics[scale=0.4]{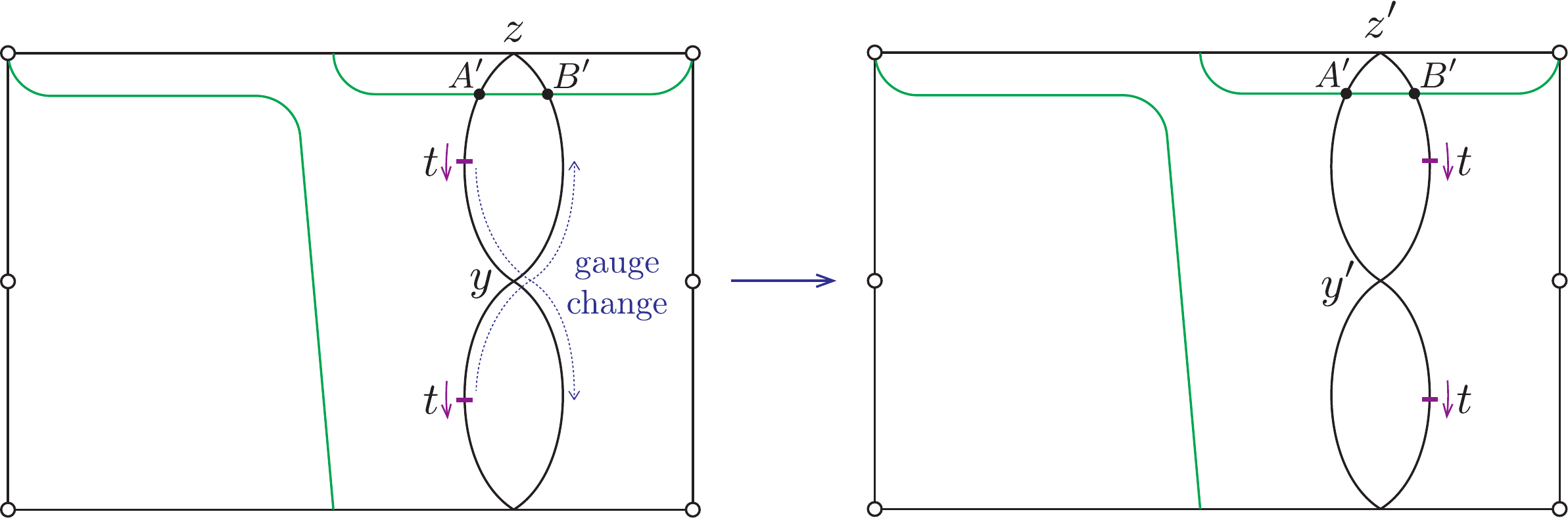}
		\caption{Gauge change of the pair-of-circles $C$.}
		\label{fig:gluing-obj-gauge}
	\end{center}
\end{figure}

For each time $p_1$ passes through $z$, the coordinate change and gluing are 
$$
A' \leftrightarrow t^{-1}A',
B' \leftrightarrow B',
z=t^{-1}z', t=t', y = y'.
$$

For each time $p_2$ passes through $z$,
$$
A' \leftrightarrow A',
B' \leftrightarrow t^{-1} B',
z=tz', t=t', y = y'.
$$

For each time $p_i$ passes through $y$, $y=ty'$ for $i=1$ and $y=t^{-1}y'$ for $i=2$.  Moreover, $z=z', t=t'$ and the gluing on $A',B'$ is trivial.

It results in an overall coordinate change and gluing summarized as follows.

\begin{lemma}
	The gauge change on $C$ from $P$ to $P'$, where $p_i$ passes through the immersed point $z$ by $a_i$ times, produces the gluing
$$
\left\{ \begin{array}{lll}
z&=t^{a_{2}-a_{1}}z', \\
	y&=t^{a_{1}-a_{2}-2}y', \\
	t&=t'
		\end{array}
	\right.  \hspace{1cm}
	\left\{ \begin{array}{ll}
	A'&\leftrightarrow t^{-a_{1}}A',\\
	B'&\leftrightarrow t^{-a_{2}}B'. 
		\end{array}
	\right.
$$
%
\end{lemma}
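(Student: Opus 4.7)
The plan is to prove this by decomposing the gauge-change isotopy from $P$ to $P'$ into a finite composition of elementary moves, each of which slides one of the two gauge points $p_i$ across a single immersed point of $C$ in a fixed direction. The coordinate changes associated to these three types of elementary moves are already tabulated in the three displays preceding the lemma. Since each elementary rule depends only on which gauge point is moved, which immersed point ($y$ or $z$) is crossed, and the sign of the crossing, the composite transformation depends only on the signed crossing counts, not on the particular isotopy representative. Thus the proof reduces to counting crossings and multiplying.

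The counting step is as follows. By hypothesis $p_i$ passes through $z$ exactly $a_i$ times (with signs), and the text states that $p_1$ consequently passes through $y$ exactly $a_1-1$ times while $p_2$ passes through $y$ exactly $a_2+1$ times. This is a topological statement about isotopies of two points on a disjoint union of two circles relative to the two marked immersed points: on each circle component, a full loop crosses $y$ and $z$ equally, so the difference between the $y$- and $z$-crossing counts of $p_i$ is an integer depending only on the starting arc of $p_i \in P$ and its ending arc in $P'$. Inspection of Figure \ref{fig:gluing-obj-gauge} fixes this offset as $-1$ for $p_1$ and $+1$ for $p_2$.

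Composing the elementary rules is then arithmetic. For the $z$-variable: the $a_1$ passages of $p_1$ through $z$ give $z = t^{-a_1} z'$, while the $a_2$ passages of $p_2$ through $z$ give $z = t^{a_2} z'$, combining to $z = t^{a_2-a_1} z'$. For the $y$-variable: the $a_1-1$ passages of $p_1$ through $y$ contribute $y = t^{a_1-1} y'$ and the $a_2+1$ passages of $p_2$ through $y$ contribute $y = t^{-(a_2+1)} y'$, yielding $y = t^{a_1-a_2-2} y'$. The holonomy variable $t$ is an isotopy invariant of the flat $\Lambda_0^\times$-connection, so $t = t'$. The $A'$-gluing receives contributions only from $p_1$-through-$z$ crossings, giving $A' \leftrightarrow t^{-a_1} A'$, and symmetrically $B' \leftrightarrow t^{-a_2} B'$ from $p_2$-through-$z$ crossings. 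These are exactly the formulas in the lemma.

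The main obstacle is the purely topological counting step, namely rigorously confirming that the $y$-crossing counts are forced by the $z$-crossing counts up to the specified offsets. Once this is established by a direct combinatorial argument (for instance, cutting $C$ along $\{y,z\}$ and tracking which arcs are traversed by each $p_i$ on its way from $P$ to $P'$), the remainder of the proof is the routine bookkeeping of composing the three elementary rules as above.
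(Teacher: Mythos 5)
Your proposal is correct and follows essentially the same route as the paper: the text immediately preceding the lemma tabulates the elementary coordinate changes for each passage of $p_1$ or $p_2$ through $y$ or $z$, asserts the crossing counts ($a_i$ through $z$, hence $a_1-1$ and $a_2+1$ through $y$), and the lemma is exactly the composition of these rules, with the same arithmetic you carry out. Your added remark on rigorously justifying the $y$-crossing counts from the $z$-crossing counts is a point the paper simply asserts by reference to the figure, so your treatment is, if anything, slightly more careful.
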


Finally, the gluing between $(C,\nabla_{t'P'},y'Y'+z'Z')$, and $(S_2, x_2X_2 + y_2Y_2 + z_2Z_2)$ resulted from the isomorphism is straightforward:
$$ x_2=(t')^{-1}, y_2 = y', z_2=z', A' \leftrightarrow A, B' \leftrightarrow B. $$

Composing all the above, we obtain the gluing between $(S_1, x_1X_1 + y_1Y_1 + z_1Z_1)$ and $(S_2, x_2X_2 + y_2Y_2 + z_2Z_2)$.

\begin{prop} \label{prop:glueMF_12}
	Let $L$, $S_1$ and $S_2$ be Lagrangians given in the beginning of this subsection.  The gluing between the matrix factorizations 
	$(\mathrm{Span}_\Lambda \{C_i,D_i: i=0,\ldots,m\} \otimes_\Lambda \mathscr{O}(\Lambda_+^3),\delta_1)$ and $(\mathrm{Span}_\Lambda \{A,B\} \otimes_\Lambda \mathscr{O}(\Lambda_+^3),\delta_2)$,
	which are mirror to $L$ and transformed by $S_i$, is given by
	\begin{align*}
	C_{2m} \leftrightarrow& -T^{-A_2-A_1/2} t^{-a_1} A\\
	T^{A_1/2} x_1(-y_1 D_{2m} + D_{2m-1}) \leftarrow& t^{-a_2}B\\
	T^{A_2+A_1/2} D_{2m-1} \rightarrow& t^{-a_2-1} B
	\end{align*}
	and all $C_i$ for $i\not=2m$ and $D_j$ for $j\not=2m-1$ are sent to zero.  The coordinate changes are
	$x_1=t=x_2^{-1}, T^{A_1} y_1=t^{a_1-a_2-2}y_2, T^{A_2} z_1=t^{a_2-a_1}z_2.$
\end{prop}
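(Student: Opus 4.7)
The plan is to obtain the gluing between the matrix factorizations over $S_1$ and $S_2$ by composing the three gluings already computed in the three preceding lemmas: namely the isomorphism from $(S_1,b_1)$ to $(C,\nabla_{tP},yY+zZ)$, the gauge change on $C$ from $P$ to $P'$, and the isomorphism from $(C,\nabla_{t'P'},y'Y'+z'Z')$ to $(S_2,b_2)$. By construction, the isomorphism between $(S_1,b_1)$ and $(S_2,b_2)$ that defines the gluing on the mirror side is exactly this three-step composition, and Theorem \ref{thm:maingluealg}(2) guarantees that the natural transformations produced by $\mathcal{F}^{\bL}$ from composable isomorphisms compose as well. So no new holomorphic polygon counts are required beyond those already encoded in the three lemmas.

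First I would compose the coordinate changes. Concatenating the three relations
$$x_1=t,\quad T^{A_1}y_1=y,\quad T^{A_2}z_1=z; \qquad t=t',\ y=t^{a_1-a_2-2}y',\ z=t^{a_2-a_1}z'; \qquad x_2=(t')^{-1},\ y_2=y',\ z_2=z'$$
and eliminating the intermediate variables $t,t',y,y',z,z'$ produces the three relations stated in the proposition, with $t=x_1=x_2^{-1}$ serving as the bookkeeping parameter.

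Next I would compose the maps between the matrix factorizations. Starting from
$C_{2m}\leftrightarrow -T^{-A_2-A_1/2}A'$, $T^{A_1/2}x_1(-y_1D_{2m}+D_{2m-1})\leftarrow B'$, and $T^{A_2+A_1/2}D_{2m-1}\to t^{-1}B'$, I would apply the gauge-change substitution $A'\leftrightarrow t^{-a_1}A'$ and $B'\leftrightarrow t^{-a_2}B'$ from the middle lemma, and finally the identification $A'\leftrightarrow A$, $B'\leftrightarrow B$ from the last step. This yields exactly $C_{2m}\leftrightarrow -T^{-A_2-A_1/2}t^{-a_1}A$, $T^{A_1/2}x_1(-y_1D_{2m}+D_{2m-1})\leftarrow t^{-a_2}B$, and $T^{A_2+A_1/2}D_{2m-1}\to t^{-a_2-1}B$. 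All other generators $C_i$ with $i\neq 2m$ and $D_j$ with $j\neq 2m-1$ remain mapped to zero, as they already were at the first step.

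The only point that requires some care, and which I expect to be the main (though minor) obstacle, is consistency: one must check that the three intermediate gluings actually \emph{commute} with the matrix-factorization differentials $\delta_1$ and $\delta_2$ after substituting $t=x_1=x_2^{-1}$, so that the composed map is truly a morphism of matrix factorizations rather than merely a chain map of the underlying modules. This amounts to verifying the $A_\infty$-relation $m_1^{b_1,b_2}(\alpha)=0$ for the composed isomorphism $\alpha$, which follows from the individual cocycle conditions established in the three preceding lemmas together with the fact that $m_1$ is compatible with composition of pseudo-isomorphisms. Once this consistency is confirmed, the explicit formulas above give the desired gluing.
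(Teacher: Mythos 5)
Your proposal is correct and follows essentially the same route as the paper: the paper obtains the proposition precisely by composing the three preceding gluings ($S_1\to C$ via the isomorphism, the gauge change on $C$ from $P$ to $P'$, and $C\to S_2$), concatenating the coordinate changes and the maps on generators exactly as you do. The only difference is that the paper treats the compatibility with the differentials as automatic (it is built into the fact that each step comes from an isomorphism in the Fukaya category, as in Theorem \ref{thm:maingluealg}), whereas you flag it as a point to verify; that is a harmless extra precaution.
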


In the above proposition, if we use exact variables $(x_i)_\ex,(y_i)_\ex,(z_i)_\ex$ and exact generators $(C_i)_\ex,(D_i)_\ex,A_\ex,B_\ex$, then all area terms are absorbed and gone.  In other words, the gluing for these exact variables and generators is obtained by setting $A=0$ in the above proposition which simplifies the expressions.  We shall use exact variables and exact generators from now on.

The space glued from $\C^3_{((x_1)_\ex,(y_1)_\ex,(z_1)_\ex)}$ and $\C^3_{((x_2)_\ex,(y_2)_\ex,(z_2)_\ex)}$ defined by the above coordinate change is the total space of $\mathscr{O}_{\bP^1}(a_1-a_2-2,a_2-a_1)$ (which is CY since $(a_1-a_2-2)+(a_2-a_1)=-2$).

\begin{prop} \label{prop:glue-obj}
	Over the total space of $\mathscr{O}_{\bP^1}(a_1-a_2-2,a_2-a_1)$ with the disc potential $W$, the image of $L$ in $D\mathrm{Sing}(\{W=0\})$ under the derived functor is the push-forward of a line bundle over the hypersurface $D_f = \bigcup_{i=1,2} \{(z_i)_\ex=0\}$.  The line bundle corresponds to the divisor $(a_2+m)\cdot \{(x_1)_\ex=0\}$ in $D_f$.
\end{prop}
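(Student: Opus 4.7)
The plan is to simplify the local matrix factorizations on the $S_1$ and $S_2$ charts from Lemma \ref{lem:MF_S1} to recognize them as structure sheaves of the two components of $D_f$, then glue via Proposition \ref{prop:glueMF_12} and read off the transition function. Throughout I work with exact variables and write $x_i, y_i, z_i$ for $(x_i)_\ex, (y_i)_\ex, (z_i)_\ex$.

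On the $S_2$ chart, the rank-$2$ factorization $A \stackrel{z_2}{\to} B \stackrel{x_2 y_2}{\to} A$ manifestly represents $\cO_{\{z_2 = 0\}}$ in $D\mathrm{Sing}(W_{Y(\C)})$, with the cokernel generated by $[B]$. For the $S_1$ chart, I iteratively cancel contractible direct summands. The leading coefficient of $C_{2k}$ in $d^0(D_{2k}) = -C_{2k} + x_1(z_1 C_{2k-1} + C_{2k-2})$ is a unit, so each pair $(D_{2k}, C_{2k})$ for $1 \le k \le m$ is acyclic. Similarly the leading coefficient of $D_{2k+1}$ in $d^1(C_{2k+1}) = D_{2k+1} + x_1 y_1 D_{2k} - x_1 D_{2k-1}$ is a unit, so each pair $(C_{2k+1}, D_{2k+1})$ for $1 \le k \le m-1$ is acyclic. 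Cancelling all of these leaves the rank-$4$ factorization on $C_0, C_1, D_0, D_1$. A final basis change $\tilde D_1 := D_1 - D_0$ makes $(C_1, \tilde D_1)$ another acyclic pair; cancelling it yields the rank-$2$ factorization $C_0 \stackrel{z_1}{\to} D_0 \stackrel{x_1 y_1}{\to} C_0$, which represents $\cO_{\{z_1 = 0\}}$, generated by $[D_0]$. Combining the two charts shows the global image is a line bundle on $D_f = \{z_1 = 0\} \cup \{z_2 = 0\}$.

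To determine its degree, I trace $[D_{2m-1}]$ through the reduction. Cancelling $(C_{2k+1}, D_{2k+1})$ substitutes $D_{2k+1} \equiv -x_1 y_1 D_{2k} + x_1 D_{2k-1}$, while cancelling $(D_{2k}, C_{2k})$ sets $D_{2k} \equiv 0$ for $k \geq 1$. Iterating from $k = m-1$ downward, and using $[D_1] = [D_0]$ from the final basis change, yields
\[ [D_{2m-1}] = x_1 [D_{2m-3}] = x_1^2 [D_{2m-5}] = \cdots = x_1^{m-1} [D_1] = x_1^{m-1} [D_0]. \]
Combining with the gluing $D_{2m-1} \to t^{-a_2-1} B$ and $t = x_1$ from Proposition \ref{prop:glueMF_12} gives
\[ [B] = t^{a_2+1}\,[D_{2m-1}] = x_1^{\,a_2 + m}\, [D_0]. \]
Taking $e_{S_1} := [D_0]$ and $e_{S_2} := [B]$ as local trivializations, the transition function $x_1^{a_2+m}$ identifies the image as the line bundle $\cO_{D_f}\bigl((a_2 + m)\cdot\{x_1 = 0\}\bigr)$. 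The other gluing relation $x_1(-y_1 D_{2m} + D_{2m-1}) \leftarrow t^{-a_2} B$ is automatically consistent since $[D_{2m}] \equiv 0$ and $t \cdot t^{-a_2-1} = t^{-a_2}$.

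The most delicate step will be the bookkeeping inside the $S_1$ reduction: at each cancellation one must update all differentials that reference the removed generators to check that the resulting summand really is acyclic, and then verify that the cross-terms involving $y_1$ in the iteration for $[D_{2m-1}]$ indeed vanish modulo the already-eliminated even-index classes, so that the surviving power of $x_1$ is exactly $m-1$ before the gluing is applied.
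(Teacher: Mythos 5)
Your proposal is correct and follows essentially the same route as the paper: both reduce the $S_1$-chart factorization to the rank-two factorization $z_1\cdot(x_1y_1)$ with cokernel generated by $[D_0]$, and then trace the gluing of $B$ through the chain $D_{2m-1}\sim x_1 D_{2m-3}\sim\cdots\sim x_1^{m-1}D_0$ to obtain the transition function $x_1^{a_2+m}$. The only cosmetic difference is that you cancel acyclic summands at the chain level (Gaussian elimination), whereas the paper changes basis in the even part and discards the summands $R/\langle x_1y_1z_1\rangle$ as trivial objects of $D\mathrm{Sing}$ -- the two computations are identical.
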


\begin{proof}
	The corresponding object in $D\mathrm{Sing}(\{W=0\})$ is obtained by taking cokernel of $\delta_i$ from the odd part to the even part.  Over $\C^3_{((x_1)_\ex,(y_1)_\ex,(z_1)_\ex)}$, using 
	$$\{(D_0)_\ex, (D_1)_\ex - (D_0)_\ex, (D_{2k})_\ex, (D_{2m})_\ex, (D_{2k+1})_\ex+(x_1)_\ex ((y_1)_\ex (D_{2k})_\ex - (D_{2k-1})_\ex) \textrm{ for } k=1,\ldots,m-1\}$$ 
	as a basis of the even part, the cokernel of $\delta_1$ equals to
	$$ \left(\C[(x_1)_\ex, (y_1)_\ex, (z_1)_\ex] \big/ \langle (z_1)_\ex \rangle \right) (D_0)_\ex \oplus \left( \bigoplus_{k=1}^m \left(\C[(x_1)_\ex, (y_1)_\ex, (z_1)_\ex] \big/ \langle (x_1)_\ex (y_1)_\ex (z_1)_\ex\rangle \right) (D_{2k})_\ex \right).  $$
	Note that $R_1 / \langle (x_1)_\ex (y_1)_\ex (z_1)_\ex \rangle$ is trivial in $D\mathrm{Sing}(\{ (x_1)_\ex (y_1)_\ex (z_1)_\ex=0\})$.  Thus the corresponding local object is the skyscraper sheaf supported on the divisor $(z_1)_\ex=0$.
	
	Over $\C^3_{((x_2)_\ex,(y_2)_\ex,(z_2)_\ex)}$, the cokernel of $\delta_2$ equals to
	$\left(\C[(x_2)_\ex, (y_2)_\ex, (z_2)_\ex] / \langle (z_2)_\ex \rangle \right) B_\ex$.  By Proposition \ref{prop:glueMF_12}, the gluing is given by sending $B_\ex$ to
	\begin{align*}
	(x_1)_\ex^{a_2} (x_1)_\ex (-(y_1)_\ex (D_{2m})_\ex + (D_{2m-1})_\ex) \sim& (x_1)_\ex^{a_2+1} (D_{2m-1})_\ex \\
	\sim& (x_1)_\ex^{a_2+2} (-(y_1)_\ex (D_{2m-2})_\ex + (D_{2m-3})_\ex)\\
	\vdots& \\
	\sim& (x_1)_\ex^{a_2+m} (D_1)_\ex \sim  (x_1)_\ex^{a_2+m} (D_0)_\ex
	\end{align*}
	mod $ (x_1)_\ex (y_1)_\ex (D_{2k})_\ex$ (since we don't care the components $\left(R_1 / \langle (x_1)_\ex (y_1)_\ex (z_1)_\ex\rangle \right) (D_{2k})_\ex$) and the image of $\delta_1$ (see Lemma \ref{lem:MF_S1}).
	
	Hence the object is the push-forward of a line bundle over the divisor $(z_i)_\ex=0$.  The line bundle has a section $B_\ex = (x_1)_\ex^{a_2+m}(D_0)_\ex$ which has a unique zero of multiplicity $a_2+m$ at $(x_1)_\ex=0$.
\end{proof}

We have computed the effect of winding around a finite edge $e=e_1$.  Now we need to glue with the edge $e_2$ which is adjacent to the face $f$ and also the vertex $v=v_1$.
$e_2$ could either be a finite or infinite edge.  Suppose  $e_2=\overline{v_1v_3}$ is a finite edge.
Consider the two immersed Lagrangians in our collection that cover the part of $\Lambda_0$-valued critical locus corresponding to $e_2$.  There are two possibilities.  The first case is that $S_1$ and $S_3$, a deformation of the Seidel Lagrangian over $v_3$, cover $e_2$.  The second case is that another deformation $S_1'$ of the Seidel Lagrangian over $v_1$ and $S_3$ cover $e_2$.  See Figure \ref{fig:gluing-edges}.
For the situation that $e_2$ is an infinite edge, the Seidel Lagrangian $S_1'$ over $v_1$ is in our collection.  $S_1'$ and $S_1$ cover the part of $\Lambda_0$-valued critical locus corresponding to $e_2$.  This is similar to the second case above and so we do not separately consider this.

\begin{figure}[h]
	\begin{center}
		\includegraphics[scale=0.4]{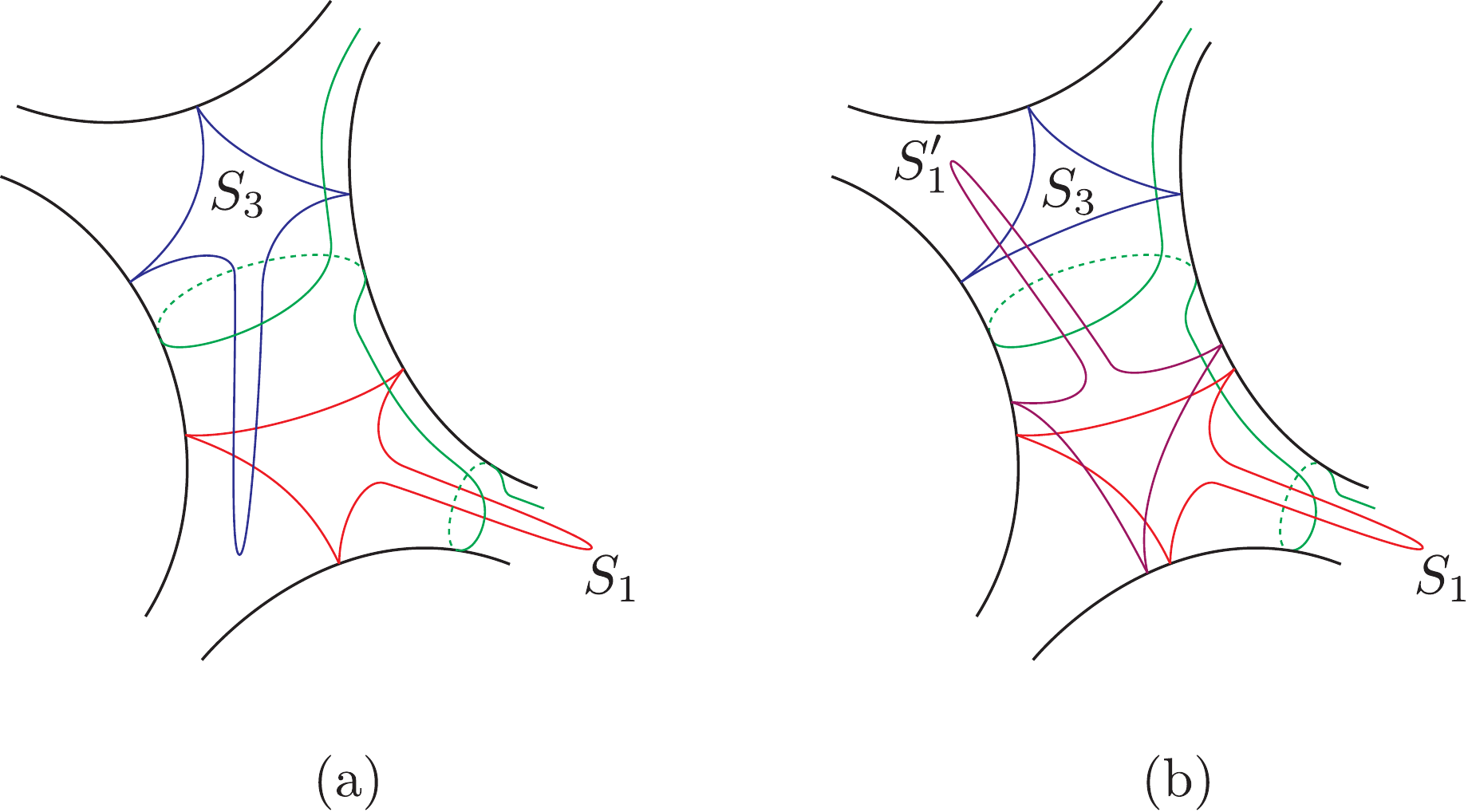}
		\caption{The two possibilities of gluing with another edge.}
		\label{fig:gluing-edges}
	\end{center}
\end{figure}

In the first case, $L$ intersects $S_3$ at ($2+2m'$) points, where $m'$ is the number of times that $L$ winds around $e_2$.  (Recall that $L$ winds around $e_1$ $m$-times, and it intersects $S_1$ at ($2+2m$) points.)  The immersed variables of $S_3$ are $x_3,y_3,z_3$, where $y_3$ corresponds to deformations along the edge $e_2$, and $z_3$ corresponds to the face $f$.  The intersection points between $L$ and $S_3$ are denoted as $C_i', D_i'$ for $i=0,\ldots,m$ as shown in Figure \ref{fig:gluing-edges1}.  The cokernel of the local matrix factorization transformed by $S_3$ equals to 
$$ \left(\C[(x_3)_\ex, (y_3)_\ex, (z_3)_\ex] / \langle (z_3)_\ex \rangle \right) (D_0')_\ex \oplus \left( \bigoplus_{k=1}^m \left(\C[(x_3)_\ex, (y_3)_\ex, (z_3)_\ex] / \langle (x_3)_\ex (y_3)_\ex (z_3)_\ex\rangle \right) (D_{2k}')_\ex \right) $$
where $\C[(x_3)_\ex, (y_3)_\ex, (z_3)_\ex] / \langle (x_3)_\ex (y_3)_\ex (z_3)_\ex\rangle$ is trivial in $D\mathrm{Sing}$.  Thus the object is supported over $(z_3)_\ex=0$.

\begin{figure}[h]
	\begin{center}
		\includegraphics[scale=0.5]{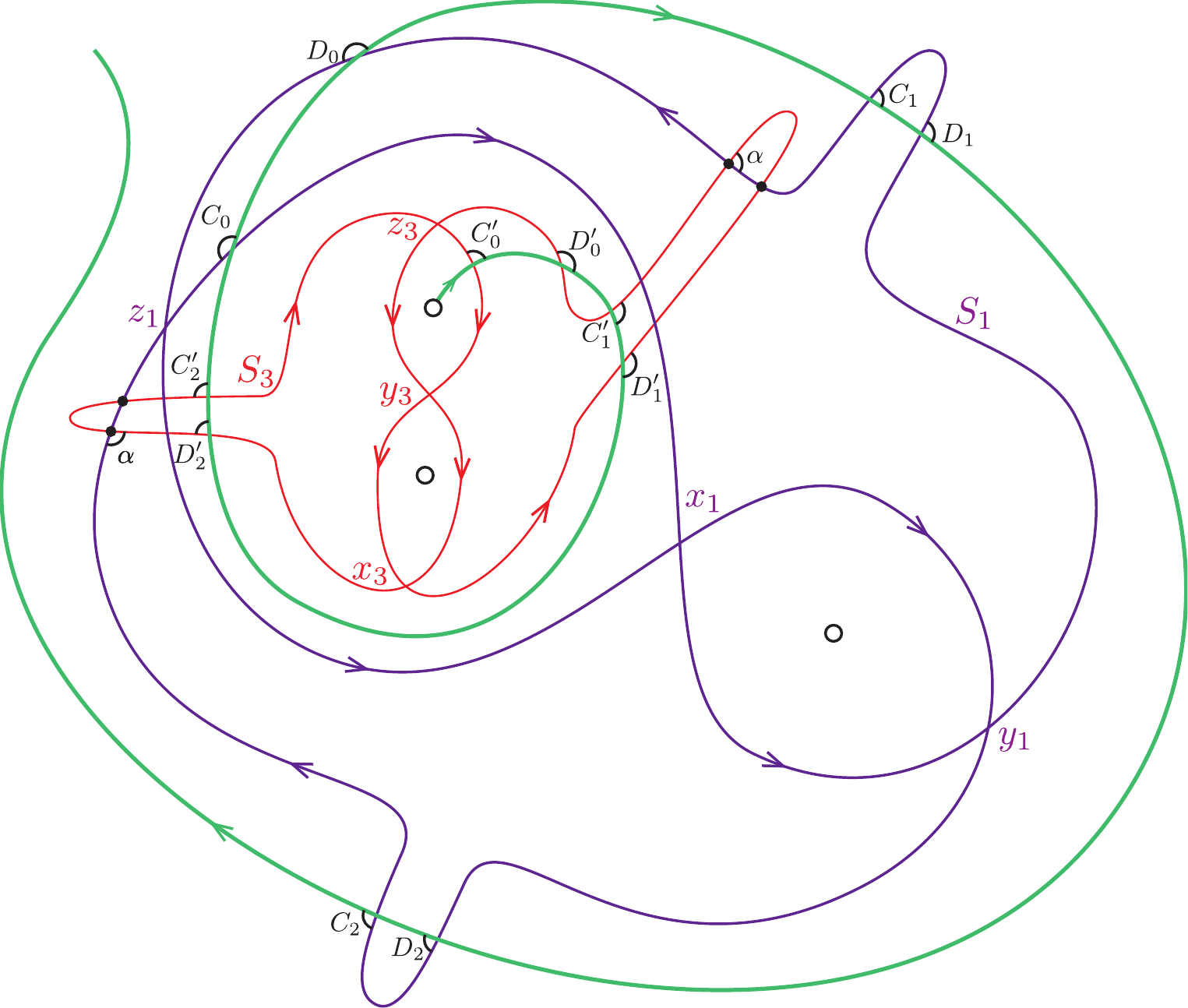}
		\caption{The first case of gluing with another edge.  It shows the four-punctured sphere (which is identified with a three-punctured plane) that contains both $S_1$ and $S_3$.}
		\label{fig:gluing-edges1}
	\end{center}
\end{figure}

The gluing between $S_3$ and $S_1$ is similar to what we have done for that between $S_1$ and $S_2$.  Namely we compose the isomorphism from $S_3$ to a pair-of-circles $C_{31}$ over $e_2$, gauge change for $C_{31}$ (determined by the tropical curve), and isomorphism from $C_{31}$ to $S_3$.  It can be checked that the counting of strips is essentially the same as that for $(S_1,S_2)$ in Proposition \ref{prop:glueMF_12} (see Figure \ref{fig:gluing-edges1-strips}), and hence the resulting gluing is given by
\begin{align*}
(y_{3})_\ex^{-a'_{1}}(C_{0})_\ex \mapsto& -(C_{2m}')_\ex\\
(y_{3})_\ex^{-a'_{2}}(D_{0})_\ex \mapsto& (y_{3})_\ex (-(x_{3})_\ex (D_{2m}')_\ex +(D_{2m-1}')_\ex ) \\ 
(z_{3})_\ex =&(y_{3})_\ex^{a_{2}'-a_{1}'}(z_{1})_\ex,\text{~ }(x_{3})_\ex=(y_{3})_\ex^{a_{1}'-a_{2}'-2}(x_{1})_\ex,\text{~ }(y_{3})_\ex=(y_{1})_\ex^{-1}
\end{align*}
where $a_i'\in\Z$ (for $i=1,2$) is the number of times (counted with signs) that the gauge points $p_i'$ in $C_{31}$ passes through the immersed point $z'$ (corresponding to $z_3$ and $z_1$).  ($C_i,D_i$ for $i>0$ are sent to zero.)

\begin{figure}[h]
	\begin{center}
		\includegraphics[scale=0.35]{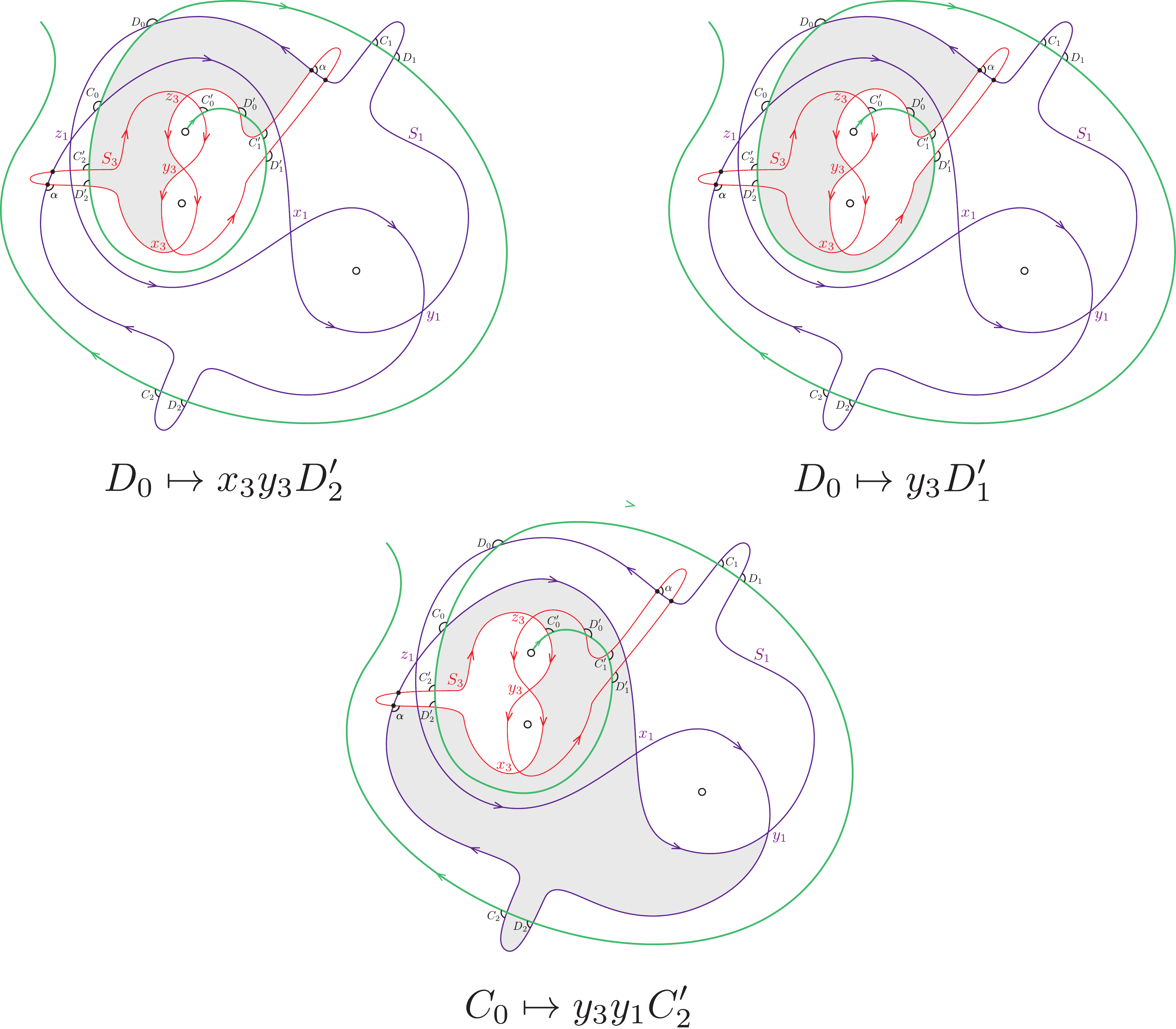}
		\caption{The strips that contribute to the gluing between $S_1$ and $S_3$.}
		\label{fig:gluing-edges1-strips}
	\end{center}
\end{figure}

As in the proof of Proposition \ref{prop:glue-obj}, we see that $(D_0)_\ex$ is glued with $(y_3)_\ex^{a_2'+m'} (D_0')_\ex$.  Combining with that $B_\ex$ is glued with $(x_1)_\ex^{a_2+m}(D_0)_\ex$, we conclude that the glued object is (the push-forward of) the divisor line bundle over $\{(z_i)_\ex=0\}$ corresponding to the divisor $\left((a_2+m)\cdot \{(x_1)_\ex=0\} +(a_2'+m') \cdot \{(y_3)_\ex=0\}\right)$ in $\{(z_i)_\ex=0\}$.

In the second case, we need to compute the gluing between $S_1'$ and $S_1$.  Denote the immersed variables of $S_1'$ by $x_1',y_1',z_1'$.
$L$ intersects $S_1'$ at $(2+2m')$ points and intersects $S_3$ at two points.  Denote the intersection points between $S_1'$ and $L$ by $C_i',D_i'$ for $i=0,\ldots,m'$.  See Figure \ref{fig:gluing-edges2}.  

\begin{figure}[h]
	\begin{center}
		\includegraphics[scale=0.55]{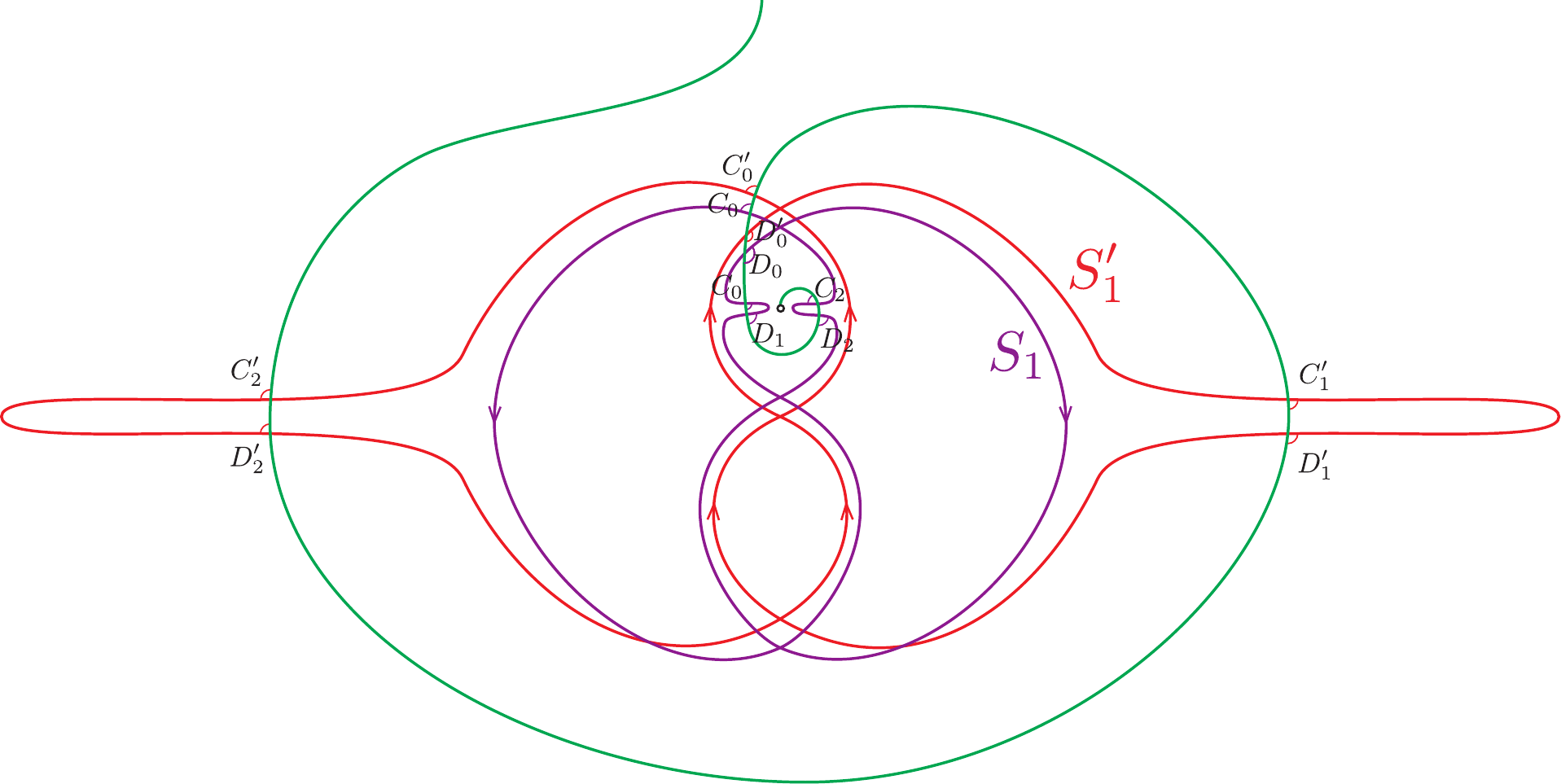}
		\caption{The second case of gluing with another edge.  It shows the pair-of-pants that contains both $S_1$ and $S_1'$.}
		\label{fig:gluing-edges2}
	\end{center}
\end{figure}

The local matrix factorization transformed by $S_1'$ is similar to that given in Lemma \ref{lem:MF_S1} for $\delta_1$.  So the cokernel is again
$$ \left(\C[(x_1')_\ex, (y_1')_\ex, (z_1')_\ex]/ \langle (z_1')_\ex \rangle \right) (D_0')_\ex \oplus \left( \bigoplus_{k=1}^m \left(\C[(x_1')_\ex, (y_1')_\ex, (z_1')_\ex] / \langle (x_1')_\ex (y_1')_\ex (z_1')_\ex\rangle \right) (D_{2k}')_\ex \right).  $$
It can be checked that the gluing between the matrix factorizations transformed by $S_1$ and $S_1'$ is simply given by
$$ (C_0)_\ex \mapsto (C_0')_\ex, (D_0)_\ex \mapsto (D_0')_\ex$$
and all $C_i,D_i$ for $i>0$ are mapped to zero (see Figure \ref{fig:gluing-edges2-strips}).  (Also the coordinate change is trivial: $(x')_\ex=x_\ex, (y')_\ex=y_\ex, (z')_\ex=z_\ex$.)  Hence the gluing is simply identity.  By Proposition \ref{prop:glueMF_12} we have the divisor line bundles $(a_2+m)\cdot \{(x_1)_\ex=0\}$ and $(a_2'+m') \cdot \{(y_3)_\ex=0\}$ for the edges $e_1$ and $e_2$ respectively.  They are simply glued by identity in the common intersection, and hence we again get the divisor line bundle $\left((a_2+m)\cdot \{(x_1)_\ex=0\} +(a_2'+m') \cdot \{(y_3)_\ex=0\}\right)$ in $\{(z_i)_\ex=0\}$.

\begin{figure}[h]
	\begin{center}
		\includegraphics[scale=0.4]{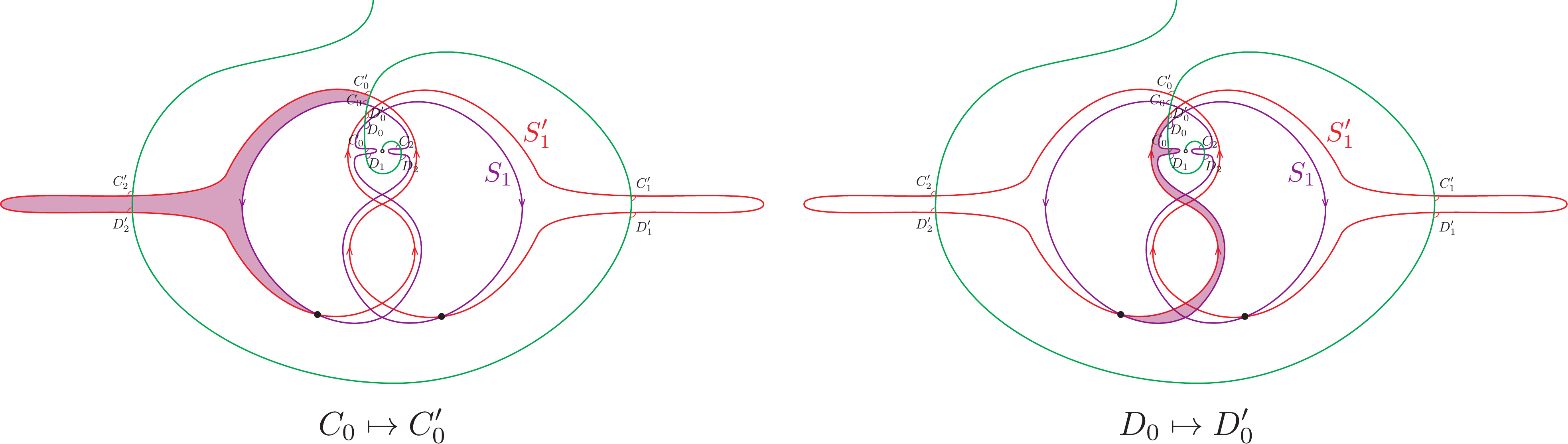}
		\caption{The strips that contribute to the gluing between $S_1$ and $S_1'$.}
		\label{fig:gluing-edges2-strips}
	\end{center}
\end{figure}

By gluing all the edges adjacent to the face $f$, we obtain a divisor line bundle over $\{(z_i)_\ex=0\}$.
For immersed Lagrangians $S$ in our collection corresponding to vertices not adjacent to $f$, the strips bounded by $L$ and $(S, xX+yY+zZ)$ either never involve the $x,y,z$ angles, or involve all the $x,y,z$ angles once.  Hence the cokernel of the corresponding matrix factorization is a direct sum of $\C[x_\ex, y_\ex, z_\ex]/\langle x_\ex y_\ex z_\ex\rangle$ which is trivial in $D\mathrm{Sing}$.  Thus the mirror object is merely supported over $\{(z_i)_\ex=0\}$.  We conclude the following.

\begin{thm} \label{thm:mir_obj}
	Let $L$ be a Lagrangian path around a face $f$ of the tropical curve, and let $m^e$ be the winding numbers of $L$ around the cylindrical part of the surface corresponding to the finite edges $e$ adjacent to $f$.  Its image in $D\mathrm{Sing}(W_{Y(\C)})$ under our functor is the push-forward of a divisor line bundle over the toric divisor corresponding to $f$, where the divisor line bundle is given by
	$$ \sum_e (a_2^e + m^e)\cdot \{z_e = 0\}$$
	where the sum is over finite edges $e$ adjacent to $f$, $z_e$ is the toric variable corresponding to the primitive vector parallel to the edge and along the counter-clockwise boundary orientation of $f$, and $a_2^e$ is the number of times that the gauge point $p_2$ of the pair-of-circles in $e$ passes through the immersed point $z$.  
\end{thm}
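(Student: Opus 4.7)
The plan is to synthesize the theorem from the local-to-global computations already established in Propositions \ref{prop:glueMF_12}--\ref{prop:glue-obj} and the subsequent case analysis for gluing along two adjacent edges. The global object on the mirror $Y(\C)$ is obtained by assembling charts coming from the collection $\{L_i\}$ of Proposition \ref{prop:Lag-coll}, so the proof amounts to (i) identifying the local image in each chart, (ii) identifying the transition gluing along each finite edge adjacent to $f$, and (iii) checking that the collection of local line bundles with these transitions is precisely the advertised divisor line bundle on the toric divisor $D_f$ corresponding to $f$.

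First I would handle the vertices $v$ not adjacent to $f$. For any immersed Lagrangian $S$ in our collection sitting over such a vertex, a direct classification of the holomorphic strips from $L$ to $S$ shows that each strip either bounds no $x,y,z$-corner at all, or picks up all three corners $x,y,z$ once (since $L$ stays away from the pair-of-pants around $v$ except through the neck). Hence, in exact variables, the cokernel of the local $\delta$ is a direct sum of copies of $\C[x_\ex,y_\ex,z_\ex]/\langle x_\ex y_\ex z_\ex\rangle$, which is zero in $D\mathrm{Sing}(\{W=0\})$. This confirms that the support of the mirror object is contained in $D_f$.

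Next I would assemble the contribution from each vertex $v_1$ adjacent to $f$. Lemma \ref{lem:MF_S1} combined with the computation in Proposition \ref{prop:glue-obj} shows that, in the chart of the Lagrangian $S_1$ over $v_1$, the cokernel modulo trivial summands is the skyscraper sheaf on $\{(z_1)_\ex=0\}$ generated by $(D_0)_\ex$. Now look at a finite edge $e$ adjacent to $f$ and $v_1$, with the second endpoint $v_2$ and corresponding Lagrangian $S_2$. Proposition \ref{prop:glueMF_12} (in exact variables, setting $A=0$) identifies $B_\ex\in\mathrm{coker}\,\delta_2$ with $(x_1)_\ex^{a_2^e+m^e}(D_0)_\ex$ in $\mathrm{coker}\,\delta_1$; this is exactly the transition data for the divisor line bundle $(a_2^e+m^e)\{z_e=0\}$ restricted to the $\bP^1$ corresponding to $e$. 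For a second edge $e'$ adjacent to the same face at $v_1$, the discussion following Proposition \ref{prop:glue-obj} distinguishes the two possible configurations: either $S_1$ and $S_3$ already cover the deformation strip over $e'$ (in which case a computation entirely analogous to the one for $e$ gives the transition $(y_3)_\ex^{a_2^{e'}+m^{e'}}$), or an intermediate $S_1'$ is needed (in which case the $S_1\leftrightarrow S_1'$ gluing is by the identity on the generators $(C_0)_\ex,(D_0)_\ex$ and the divisor twist appears entirely from the $S_1'\leftrightarrow S_3$ step). Either way, the twist contributed by $e'$ agrees with $(a_2^{e'}+m^{e'})\{z_{e'}=0\}$.

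Finally I would iterate this edge-by-edge gluing around the whole face $f$. Since Proposition \ref{prop:Lag-coll} guarantees that at most two of the charts meet in a neighborhood of the critical locus, there are no triple intersections to worry about, and the cocycle condition that validates the global divisor class is enforced automatically by the no-monodromy hypothesis built into Assumption \ref{as:manychart} (compatibility of the integer data $a^e_1,a^e_2$ assigned to the finite edges of the tropical curve). Taking the resulting glued object in $D\mathrm{Sing}(W_{Y(\C)})$, the transition functions define the line bundle $\mathcal{O}_{D_f}\!\left(\sum_{e} (a_2^e+m^e)\{z_e=0\}\right)$, and pushing forward gives the desired image of $L$. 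The main obstacle is the bookkeeping at each finite edge: one must show that the winding number $m^e$ combines cleanly with the pre-chosen gauge integer $a_2^e$ (and not with $a_1^e$, nor with both), and this is the content of the chain of equalities $B_\ex\sim(x_1)_\ex^{a_2+1}(D_{2m-1})_\ex\sim\cdots\sim(x_1)_\ex^{a_2+m}(D_0)_\ex$ already established in the proof of Proposition \ref{prop:glue-obj}. Given this, extending from one edge to all edges of $f$ and from one vertex to all vertices of $f$ is formal, and the proof is complete.
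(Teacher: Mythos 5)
Your proposal is correct and follows essentially the same route as the paper: the paper's proof of Theorem \ref{thm:mir_obj} is precisely the accumulation of Lemma \ref{lem:MF_S1}, Propositions \ref{prop:glueMF_12} and \ref{prop:glue-obj}, the two-case analysis for gluing adjacent edges (Figures \ref{fig:gluing-edges1} and \ref{fig:gluing-edges2}), and the observation that vertices not adjacent to $f$ contribute summands of the form $\C[x_\ex,y_\ex,z_\ex]/\langle x_\ex y_\ex z_\ex\rangle$ which vanish in $D\mathrm{Sing}$. Your identification of the key bookkeeping step, namely the chain $B_\ex\sim(x_1)_\ex^{a_2+1}(D_{2m-1})_\ex\sim\cdots\sim(x_1)_\ex^{a_2+m}(D_0)_\ex$, is exactly where the paper locates the content as well.
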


The numbers $a_2^e \in \Z$ for finite edges $e$ are fixed in the very beginning, which corresponds to the choice of isomorphisms between Lagrangians in our collection.

\begin{remark} \label{rmk:Lee}
	In the work of H. Lee \cite{HLee}, the moment-map polytope (which is given by the tropical curve) defines a global line bundle $\cL_Y$ over the toric CY $Y(\C)$.  Let $D$ be an irreducible toric divisor (corresponding to the face $f$ in the above notation).  $O_D(k)$ is the restriction of $\cL_Y^{\otimes k}$ to $D$, which is equivalent to the divisor 
	$$ \sum_e (k \cdot n^e)\cdot \{z_e = 0\}$$
	in the above notation, where $n^e$ is the affine length of the edge $e$ in the polytope.  By the above theorem, $O_D(k)$ is mirror to the Lagrangian $L$ circulating around the face $f$ with winding numbers $m^e = k \cdot n^e - a_2^e$ around each finite edge adjacent to $f$.  
	
	Comparing with the notations in \cite{HLee}, we set $f=\alpha$, and the edge $e$ to be the intersection of the faces $\alpha$ and $\beta$.  Moreover, set
	\begin{equation*}
	a_{1}^e=-\delta _{\beta,\alpha}+1, a_{2}^e=-\delta _{\alpha,\beta}.
	\end{equation*}
	Then
	$a^e_{2}-a^e_{1}=\delta _{\beta,\alpha}-\delta _{\alpha, \beta}-1 = d_{\alpha,\beta}$ (which is responsible for the coordinate change in $z$) ;  $a^e_{1}-a^e_{2}-2=\delta _{\alpha,\beta}-\delta _{\beta,\alpha}-1=d_{\beta,\alpha}$  (which is responsible for the coordinate change in $y$).  Thus $m^e = k \cdot n_{\alpha\beta} + \delta_{\alpha,\beta}$, which agrees with the result of \cite{HLee} that such an $L$ is mirror to $O_D(k)$.  This implies our functor sends generators of $D\mathrm{WFuk}(X)$ to generators of $D\MF(W_{Y(\C)})$.
\end{remark}

\subsection{Morphisms}
Next we shall show that the functor is an isomorphism on morphism spaces between objects.  Let's denote by $\cL \in D\MF(W_{Y(\C)})$ the mirror object of a Lagrangian $L$ given in Theorem \ref{thm:mir_obj}.  For two such mirror objects $\cL_1,\cL_2$, the morphism space is non-zero only when the corresponding facets in the toric diagram intersect at an edge.  Moreover, the morphism space is explicitly known.  We shall compute our functor on morphism spaces and show that they are isomorphisms.

The computations in this subsection are over $\Z_2$, namely we just compute up to $\pm$ sign.  Although everything is defined over $\Z$, we do not bother about explicit signs since we just need to determine the mirror morphisms up to sign in order to show the isomorphisms.

First consider endomorphisms of $L$ given as in Theorem \ref{thm:mir_obj}.  Take a Hamiltonian perturbation $\phi(L)$ which wraps around punctures as shown in Figure \ref{fig:morphisms-inf-01}, such that intersection points between $\phi(L)$ and $L$ occur only in cylindrical parts corresponding to infinite edges of the tropical curve.  In particular, if $L$ is circulating around a compact face, then its endomorphism space is trivial.  So we assume that $L$ is circulating around a non-compact face (whose boundary has two infinite edges).

Let's orient $L$ counter-clockwisely around the non-compact face.  Consider a pair-of-pants containing the non-compact edge that $L$ ends (or begins) with.  Let $S_2$ (or $S_1$ resp.) be the Seidel Lagrangian over the vertex $v_2$ (or $v_1$ resp.) adjacent to this non-compact edge.  See Figure \ref{fig:morphisms-inf-01} which depicts $S_k$, $L$ and $\phi(L)$ in the pair-of-pants.  Denote by $P_i$ for $i\in \Z_{> 0}$ (or $i \in \Z_{\leq 0}$ resp.) the morphisms from $\phi(L)$ to $L$ as shown in the figure.  The endomorphism space of $L$ is spanned by $P_i$ for $i\in\Z$.

The mirror matrix factorization $\cL$ restricted to the formal deformation space of $S_k$ is given by $\mathrm{Span}\{A,B\}$.  Consider the endomorphisms $P_0$ and $P_1$.
By counting triangles bounded by $(S_k,xX+yY+zZ),\phi(L),L$ as shown in Figure \ref{fig:morphisms-inf-01}, we obtain the mirror endomorphisms of $\cL$ restricted to the formal deformation space of $S_k$ as follows.

\begin{figure}[h]
	\begin{center}
		\includegraphics[scale=0.4]{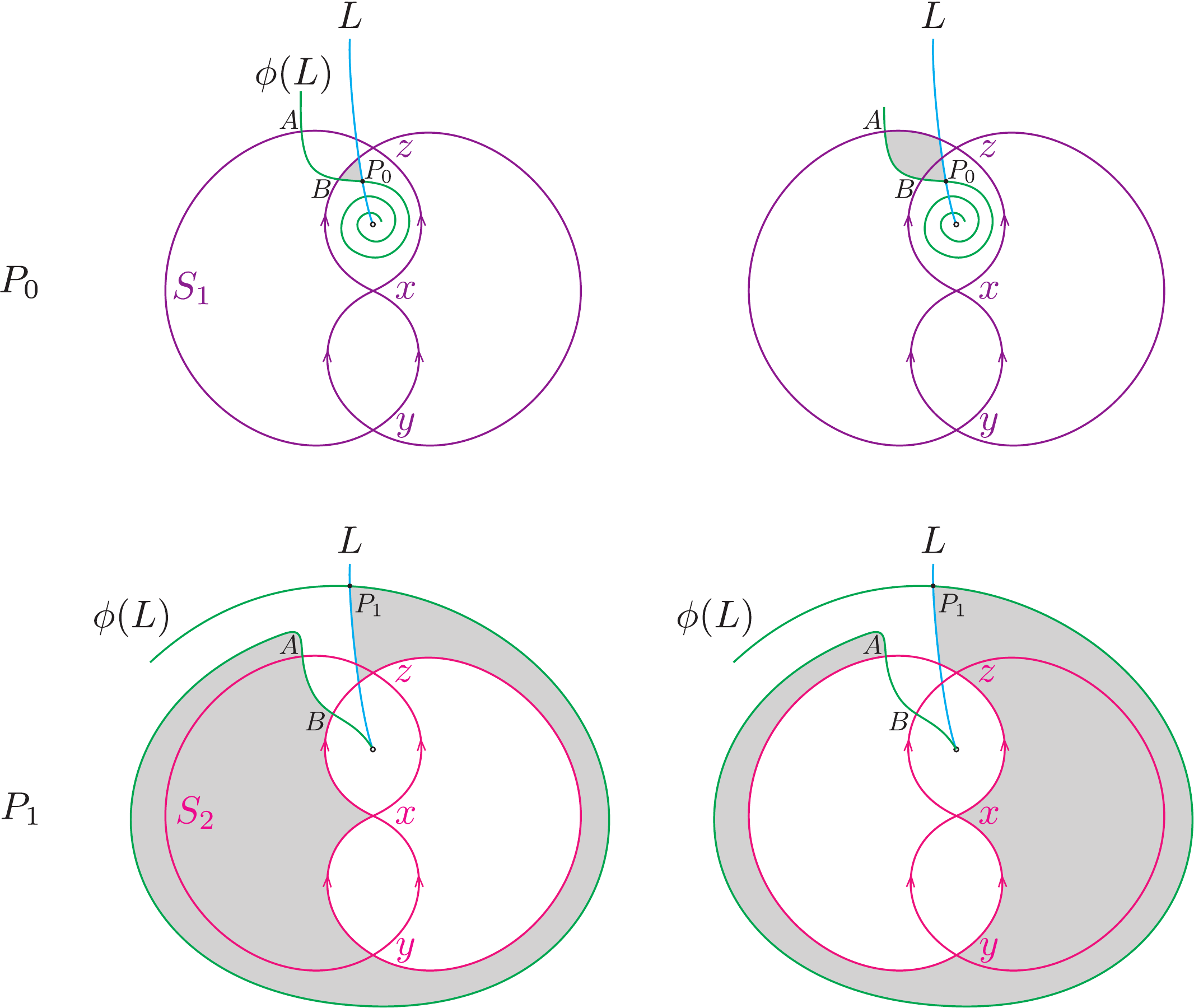}
		\caption{Transforming the morphisms $P_0$ and $P_1$ in an infinite edge.}
		\label{fig:morphisms-inf-01}
	\end{center}
\end{figure}

\begin{lemma} \label{lem:mor-inf-edge}
	Let $P_0$ and $P_1$ be the endomorphisms of $L$ given above.  Under the mirror functor, their images restricted to $\C^3$, the formal deformation space of $S_k$ (where $k=1$ for $P_0$ and $k=2$ for $P_1$), are given by $\mathrm{Id}$ and multiplication by $x$ on $\cL|_{\C^3}=(\mathrm{Span}\{A,B\},\delta)$ respectively.
\end{lemma}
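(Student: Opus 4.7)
The plan is to apply Definition \ref{def:lmf} directly and enumerate the contributing holomorphic polygons in the pair-of-pants. For $P_i \in \Hom(\phi(L), L)$ and $y \in \Hom(L,(\bL_k,b)) = \mathrm{Span}\{A,B\}$, the image $\mathcal{F}^{\bL_k}_1(P_i)(y)$ is a sum of $m$-operations $m(P_i, y, b, \ldots, b)$ with $b = xX+yY+zZ$, counting $J$-holomorphic polygons bounded by $\phi(L), L, \bL_k$ with the prescribed corners and arbitrary $b$-insertions on $\bL_k$. Since $\phi(L), L$ and $S_k$ are exact in the pair-of-pants and $P_i$, $A$, $B$ all sit in a small neighborhood of the chosen infinite edge, the enumeration is finite, and I would work in exact variables so that all area factors are absorbed.

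For $P_0$ transformed by $S_1$, inspection of Figure \ref{fig:morphisms-inf-01} exhibits two small thin triangles with vertex sets $(P_0, A, A)$ and $(P_0, B, B)$, neither having any immersed corner on $S_1$. They contribute $m_2(P_0, A) = A$ and $m_2(P_0, B) = B$ under the canonical identification between the $L$-$\bL_1$ and $\phi(L)$-$\bL_1$ intersections, yielding the identity map.

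For $P_1$ transformed by $S_2$, the position of $P_1$ in Figure \ref{fig:morphisms-inf-01} is such that any polygon reaching $A$ or $B$ must cross the immersed corner $X$ of $S_2$ exactly once. The two minimal quadrilaterals with corners $(P_1, A, X, A)$ and $(P_1, B, X, B)$ then contribute $m_3(P_1, A, xX) = x \cdot A$ and $m_3(P_1, B, xX) = x \cdot B$, which is multiplication by $x$ on $\mathrm{Span}\{A,B\}$.

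The main obstacle is to rigorously exclude polygons with multiple $b$-insertions. Here the combinatorics of the Seidel Lagrangian helps: any holomorphic polygon bounded by $\phi(L), L, S_k$ with more than one $X,Y,Z$-corner on $S_k$ would either close up into one of the two minimal $XYZ$-triangles of $S_k$ (inconsistent with the prescribed corners $P_i$ and $A$ or $B$) or wrap non-trivially around a puncture of the pair-of-pants, which is excluded by exactness. Signs follow the conventions fixed in the proof of Theorem \ref{lem:familyyoneda}; since the lemma is stated up to the natural identification of generators, they need not be tracked explicitly.
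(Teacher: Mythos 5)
Your proof is correct and follows essentially the same route as the paper, which simply counts the holomorphic polygons bounded by $\phi(L)$, $L$ and $(S_k,xX+yY+zZ)$ shown in Figure \ref{fig:morphisms-inf-01} and reads off $\mathrm{Id}$ for $P_0$ and multiplication by $x$ for $P_1$. Your additional remarks on finiteness of $b$-insertions and the use of exact variables are consistent with the paper's treatment elsewhere (Sections \ref{sec:C} and \ref{sec:exact}) and do not change the argument.
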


On the other hand, the endomorphism space of $\cL$ is explicitly known, which is $$\mathrm{Span}\left(\{\mathbf{x}^i: i\in\Z_{> 0}\} \cup \{\mathbf{y}^j: j\in\Z_{\geq 0}\}\right).$$  
The restriction of $\mathbf{x}^i$ (or $\mathbf{y}^j$) on the chart $\C^3$ corresponding to the vertex $v_2$ (or $v_1$ resp.) is given by multiplication by $x^i$ (or $y^j$ respectively).  Thus $P_0,P_1$ must be mapped to $\mathbf{y}^0$ and $\mathbf{x}$ respectively.

The module structures on the endomorphism spaces of $L$, $\cL$ are known: $m_2 (P_i, P_j) = P_{i+j}$ (already descended to cohomology level); $\mathbf{x}^i \cdot \mathbf{x}^j =\mathbf{x}^{i+j}$ (for $i,j>0$), $\mathbf{y}^i \cdot \mathbf{y}^j = \mathbf{y}^{i+j}$ (for $i,j\geq 0$), and $\mathbf{x}^i \cdot \mathbf{y}^j$ (where $i > 0, j \geq 0$) equals to $\mathbf{x}^{i-j}$ for $i>j$ and $\mathbf{y}^{j-i}$ for $i\leq j$.
Since the functor preserves compositions of morphisms, it follows that $P_i$ is mapped to $\mathbf{x}^i$ for $i>0$ and  $\mathbf{y}^{|i|}$ for $i \leq 0$.  Alternatively we can also directly check that the morphism corresponding to $P_i$ is multiplication by $x^i$ if $i \geq 0$ and $y^{|i|}$ if $i<0$, see Figure \ref{fig:morphisms-inf-other}.

\begin{figure}[htb!]
	\centering
	\begin{subfigure}[b]{0.3\textwidth}
		\centering
		\includegraphics[width=\textwidth]{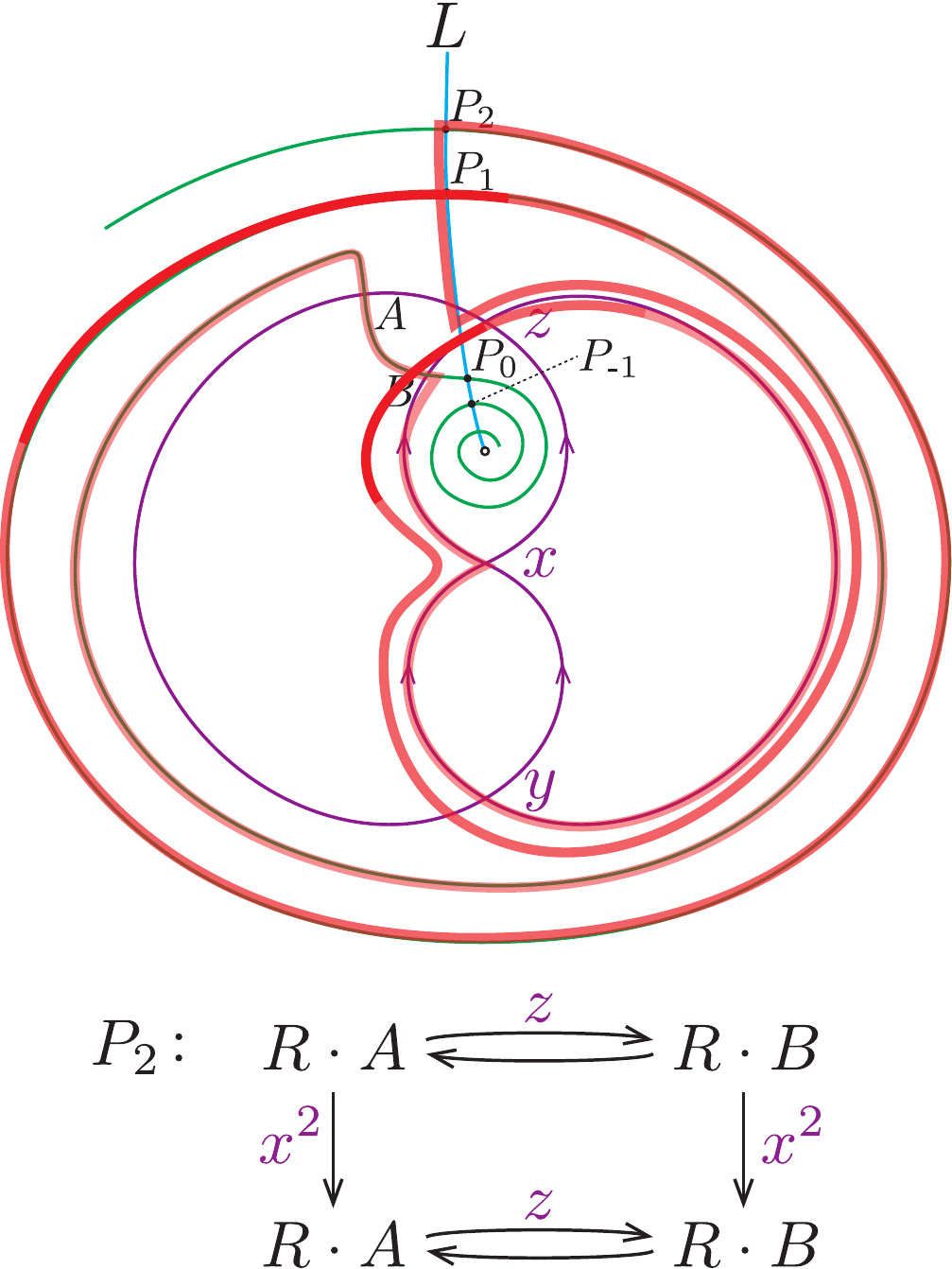}
		\caption{The morphism $P_2$ corresponds to multiplication by $x^2$.}
	\end{subfigure}
	\hspace{10pt}
	\begin{subfigure}[b]{0.55\textwidth}
		\centering
		\includegraphics[width=\textwidth]{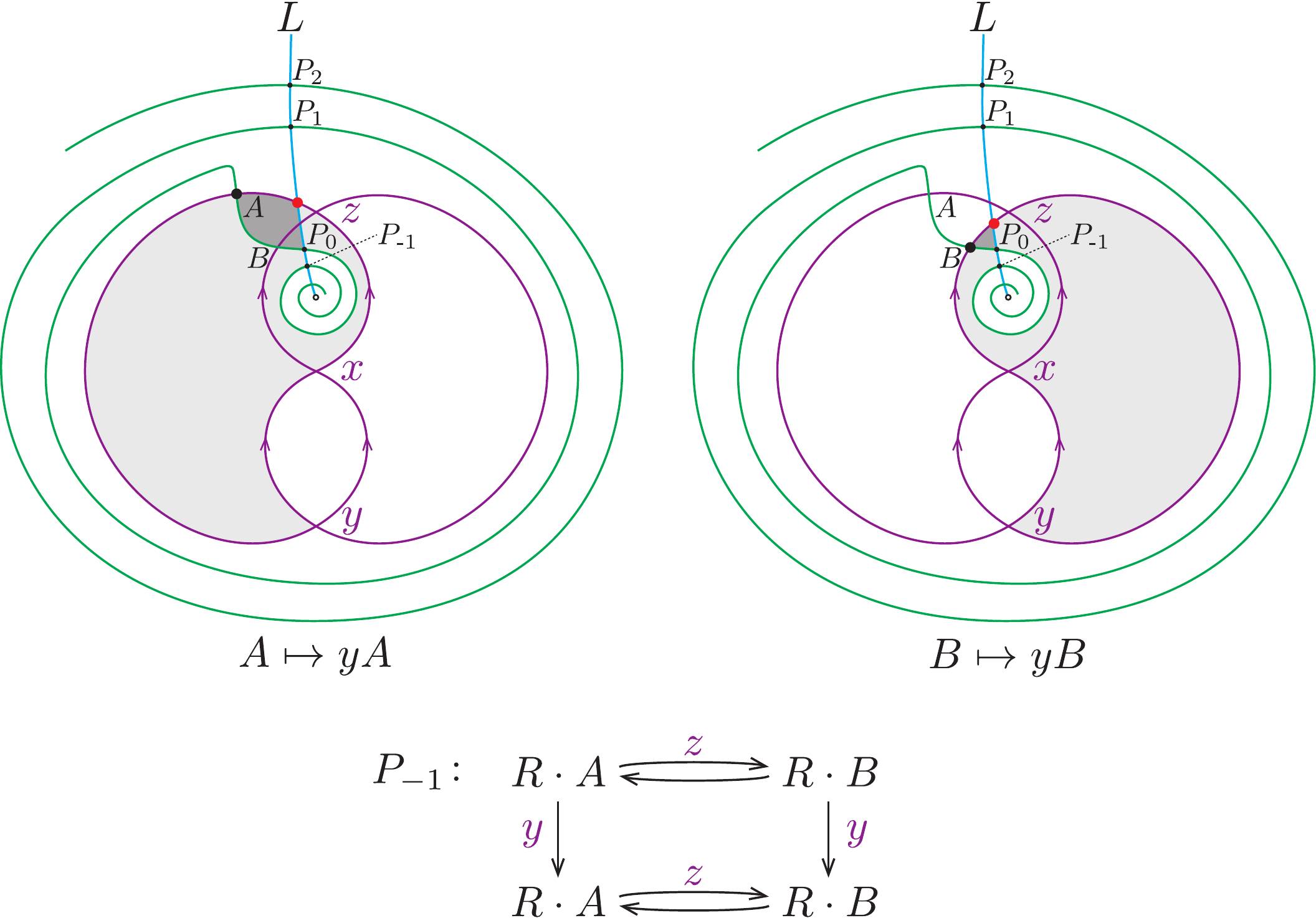}
		\caption{The morphism $P_{-1}$ corresponds to multiplication by y.}
	\end{subfigure}
	\caption{Mirrors of the morphisms $P_2$ and $P_{-1}$.  $R=\C[x_\ex,y_\ex,z_\ex]$.}
	\label{fig:morphisms-inf-other}
\end{figure}

As a consequence we have the following.
\begin{cor}
	The mirror functor derives isomorphisms on endomorphism spaces.
\end{cor}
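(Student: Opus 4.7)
The strategy is to verify that the derived functor gives a bijection on bases of $\End(L)$ and $\End(\cL)$, and then to invoke multiplicativity to extend the identification to the full morphism spaces. I will focus on the case where $L$ circles a non-compact face; the compact case would be handled analogously after a slight modification of the Hamiltonian perturbation.

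First, I would identify the mirrors of the generators of small index. By Lemma \ref{lem:mor-inf-edge}, the derived functor sends $P_0 \mapsto \mathrm{Id} = \mathbf{y}^0$ and $P_1 \mapsto \mathbf{x}$ when restricted to the chart $\C^3$ associated with $S_2$. A symmetric triangle count in the chart associated with $S_1$, of the kind illustrated in Figure \ref{fig:morphisms-inf-other}(b), gives $P_{-1} \mapsto \mathbf{y}$. The remaining step is to propagate this to all $P_i$ with $|i| \geq 2$.

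For this, I would use that an $A_\infty$-functor descends to a functor of cohomological categories and therefore respects $m_2$ on cohomology, together with the product structures $m_2(P_i,P_j) = P_{i+j}$ on $H^\ast\End(L)$ and $\mathbf{x}^i \cdot \mathbf{x}^j = \mathbf{x}^{i+j}$, $\mathbf{y}^i \cdot \mathbf{y}^j = \mathbf{y}^{i+j}$ on $\End(\cL)$. Induction on $|i|$ then forces $P_i \mapsto \mathbf{x}^i$ for $i > 0$ and $P_i \mapsto \mathbf{y}^{|i|}$ for $i \leq 0$. Since $\{\mathbf{x}^i : i > 0\} \cup \{\mathbf{y}^j : j \geq 0\}$ is a basis of $\End(\cL)$, this gives the required bijection on bases, hence an isomorphism on endomorphism spaces.

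The main subtlety I anticipate is ruling out hidden scalar factors in the identifications $P_i \mapsto \mathbf{x}^i$ and $P_i \mapsto \mathbf{y}^{|i|}$---that is, confirming that the images are literally the basis elements and not merely nonzero multiples thereof. Working over $\Z_2$ as in the surrounding computations reduces this to a count modulo $2$, which the figures make transparent. A cleaner alternative to the multiplicativity argument is to directly count the holomorphic triangles bounded by $\phi(L)$, $L$, and $S_k$ that contribute to the image of $P_i$ for arbitrary $i$: the relevant triangle is unique and localized in a single pair-of-pants, producing the factor $x^i$ or $y^{|i|}$ with no further corrections from the higher components of the functor.
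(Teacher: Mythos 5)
Your proposal is correct and follows essentially the same route as the paper: pin down the images of $P_0$, $P_1$ (and $P_{-1}$) by the triangle counts of Lemma \ref{lem:mor-inf-edge} and Figure \ref{fig:morphisms-inf-other}, then propagate to all $P_i$ using that the functor respects $m_2$ on cohomology together with the known ring structures $m_2(P_i,P_j)=P_{i+j}$ and $\mathbf{x}^i\cdot\mathbf{x}^j=\mathbf{x}^{i+j}$, $\mathbf{y}^i\cdot\mathbf{y}^j=\mathbf{y}^{i+j}$, with the direct count of the unique localized triangle as a cross-check. The only minor discrepancy is the compact-face case, which the paper dispatches by noting the endomorphism space is already trivial there rather than by rerunning the argument.
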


Next we consider the morphism space between $L$ and another Lagrangian $L'$ (which is also circulating around certain face and winds about the adjacent edges).  $L$ and $L'$ may be circulating around either a compact or a non-compact face of the tropical curve.  There are two cases: the faces that $L$ and $L'$ circulate around are the same, or they are distinct.

\noindent {\bf Case 1:  $L$ and $L'$ circulate around the same face.}
The intersection points between $L$ and $L'$ can occur at any of the boundary edges of the face.  For infinite edges the computation is exactly the same as the above for endomorphisms.  Labeling the edges around the face (assumed to be non-compact for the moment) counterclockwisely, the first and last edges
(which are non-compact) have intersection points $P_i$ for $i\leq 0$ and intersection points $P_i$ for $i>0$ respectively.  As above, $P_i$ is mapped to $\mathbf{x}^i$ for $i>0$ and $\mathbf{y}^{|i|}$ for $i \leq 0$.  Hence it is an isomorphism (on the part corresponding to the infinite edges).

For a compact edge adjacent to the face, let $m$ and $m'$ be the winding numbers of $L$ and $L'$ around this edge respectively.  $L$ and $L'$ are arranged such that they intersect at $|m-m'|$ points in the edge labeled by $H_1,\ldots,H_{m'-m}$ when $m'>m$ and $H_{-1},\ldots,H_{m'-m}$ when $m'<m$.  See Figure \ref{fig:morphisms-fin-01} showing the case $m'>m$.  

For the case $m'>m$, let $S$ be the Seidel Lagrangian over one of the boundary vertices of the edge.  $L$ and $L'$ intersect $S$ at the points $A,B$ and $A',B'$ respectively.  By counting strips shown in Figure \ref{fig:morphisms-fin-01}, we obtain the following.

\begin{lemma} \label{lem:cpt-edge}
	Let $L$ and $L'$ be Lagrangians circulating around the same face, and let $m<m'$ be the winding numbers of $L$ and $L'$ around a compact edge as described above.
	For $i>0$, the image of the morphism $H_i$ from $L$ to $L'$ under the mirror functor is the morphism $\mathrm{Span}\{A',B'\} \to \mathrm{Span}\{A,B\}$, $A' \mapsto x^{i-1} A$, $B' \mapsto x^{i-1} B$.
\end{lemma}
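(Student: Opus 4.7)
To prove the lemma I will apply the mirror functor formula of Definition \ref{def:lmf} directly to the morphism $H_i \in \Hom(L, L')$. Its image under $\mathcal{F}^{\bL}$ is the chain map $M_{L'} \to M_L$ sending $y \in \{A', B'\}$ to $\sum_{k \geq 0} m_{2+k}(H_i, y, b, \ldots, b)$ (with $k$ copies of $b = xX + yY + zZ$), where $S$ is the Seidel Lagrangian over the vertex incident to the compact edge. Because the Lagrangians are exact and we work in exact variables (Section \ref{sec:C}), all Novikov weights are absorbed, and following the $\Z/2$-convention of Section \ref{sec:HMS-surf}, the problem reduces modulo signs to enumerating the holomorphic polygons bounded by $L, L', S$ whose corners in cyclic order are $H_i$, $y$, a string of immersed corners at $X$, $Y$ or $Z$ of $S$, and an output corner $A$ or $B$, and reading off the resulting monomial in $x,y,z$.

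Near the compact edge, $L$ consists of $m$ parallel strands and $L'$ of $m'$ strands ($m < m'$) winding around a cylinder, meeting at the $m'-m$ points $H_j$ arranged linearly along the edge, while $S$ caps the edge at one vertex (Figure \ref{fig:morphisms-fin-01}). A direct inspection in this local model produces, for each $i$, a unique embedded polygon with corners $(H_i, A', X, X, \ldots, X)$ using exactly $i-1$ copies of $X$ and output $A$. Geometrically, the $i-1$ copies of $X$ correspond to the $i-1$ strands of $L'$ that the boundary of the polygon must sweep past between $H_i$ and the vertex of $S$. The mirror polygon with input $B'$ and output $B$ is obtained from the first by the $\Z/2$-reflection symmetry of $S$, which preserves the $X$-corner while swapping $\{A \leftrightarrow B, A' \leftrightarrow B', Y \leftrightarrow Z\}$, giving $B' \mapsto x^{i-1} B$.

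The main obstacle will be to exclude all other potential contributions, of which there are three kinds: (i) polygons with corners at $Y$ or $Z$; (ii) polygons with more than $i-1$ copies of $X$; and (iii) polygons escaping the local edge neighborhood. Type (i) can be ruled out because the $Y$- and $Z$-branches of $S$ point into the two neighboring faces and force any polygon turning at $Y$ or $Z$ to leave the cylindrical neighborhood of the edge; by the exactness argument and the absence of disc bubbles on exact Lagrangians in a punctured surface (as exploited in Section \ref{sec:C}), no such extended polygon can close up. Type (ii) is combinatorial: in the universal cover of the cylinder, the boundary of a polygon from $H_i$ to $A$ on $S$ lifts uniquely, and the winding constraints force exactly $i-1$ intermediate $X$-corners. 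Type (iii) is handled by exactness: in exact variables each candidate polygon has a determined area (namely zero after absorbing all exact terms), and Gromov compactness leaves only finitely many candidates, all of which are covered by (i) and (ii). Combined with the single-polygon count of the previous paragraph, this yields $A' \mapsto x^{i-1} A$ and $B' \mapsto x^{i-1} B$, as claimed.
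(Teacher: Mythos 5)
Your proposal is correct and follows essentially the same route as the paper: the paper's proof of this lemma is precisely a direct count of the holomorphic strips bounded by $L$, $L'$ and $S$ shown in Figure \ref{fig:morphisms-fin-01}, contributing to the first-order term $\mathcal{F}_1(H_i)(\bullet)=\sum m(H_i,\bullet,b,\ldots,b)$, with the $i-1$ passes through the immersed point $X$ producing the factor $x^{i-1}$ and the $\Z/2$-reflection supplying the companion strip from $B'$ to $B$. Your added exclusion arguments (no $Y$, $Z$ corners, winding count in the universal cover, exactness/localization) merely make explicit what the paper leaves to the figure.
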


The above morphisms correspond to the sections $x^{i-1} \in H^0(\mathcal{O}_{\bP^1}(m'-m))$ for $i=1,\ldots,m'-m$.  They form a basis of $H(\cL,\cL')$.  Hence it is an isomorphism (on the part corresponding to such a finite edge).

\begin{figure}[h]
	\begin{center}
		\includegraphics[scale=0.4]{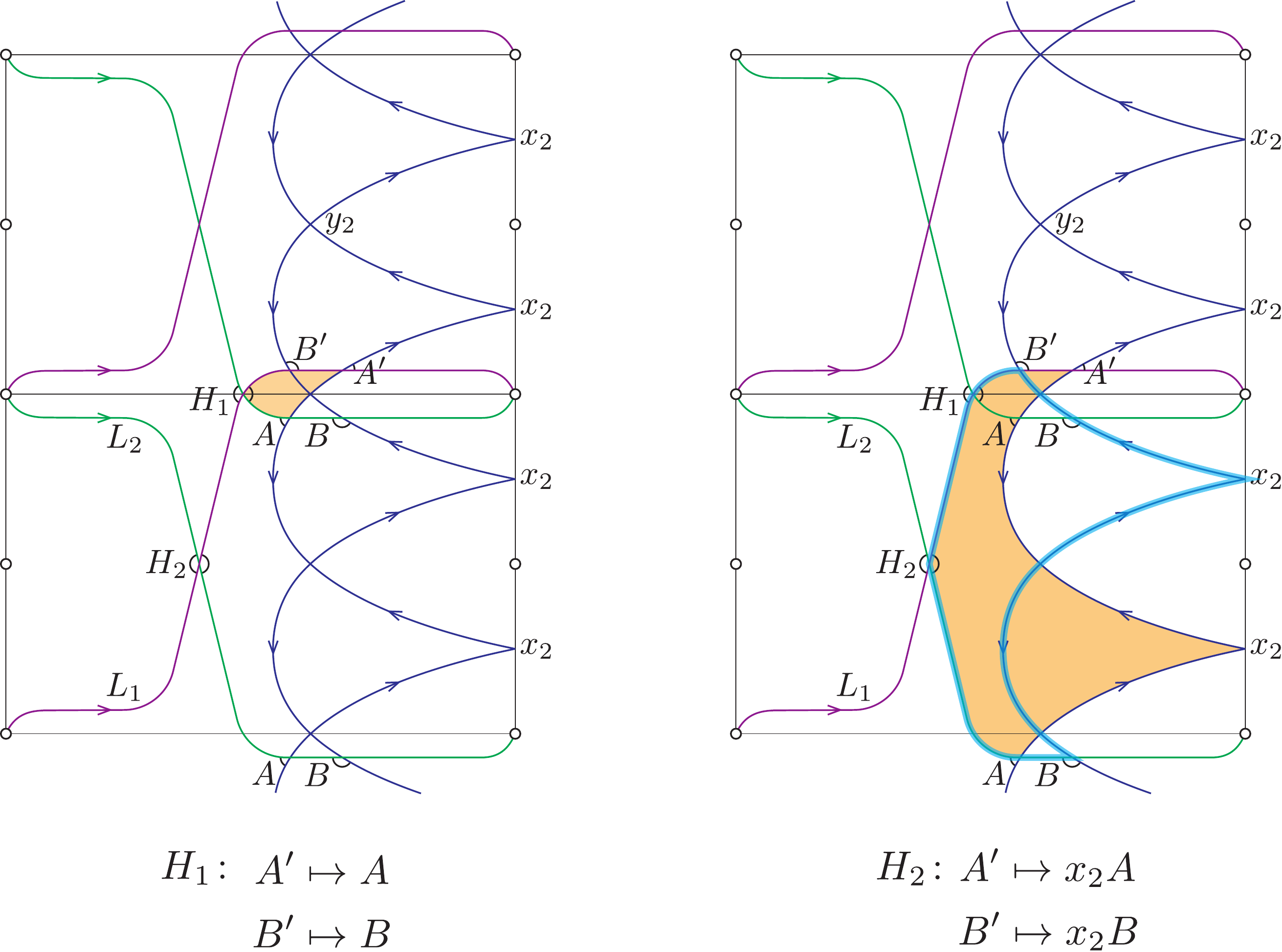}
		\caption{Transforming the morphisms $H_i$ for $i\geq 0$ in a finite edge, where $L$ and $L'$ are circulating around the same face.  In the figure $m=-1$ and $m'=1$.}
		\label{fig:morphisms-fin-01}
	\end{center}
\end{figure}

Now consider the case $m'<m$.  Take $L''$ to be a Lagrangian circulating around the same face as $L$ and winding around the edge $e$ for $m''$ times with $m'' > m > m'$.  For $i=-1,\ldots,m'-m$, we have $m_2(H^{L,L'}_i,H^{L',L''}_j)=H^{L,L''}_{i+j}$ for $j=m-m'+1,\ldots,m''-m'$.  On the mirror side, $H(\cL,\cL')$ has a basis $\{\mathcal{H}^{\cL,\cL'}_i: i=-1,\ldots,m'-m\}$ which has a similar equality for composition: $\mathcal{H}^{\cL',\cL''}_j \circ \mathcal{H}^{\cL,\cL'}_i = \mathcal{H}^{\cL,\cL''}_{i+j}$ for $j=m-m'+1,\ldots,m''-m'$.  The functor preserves compositions on the two sides.  Moreover, as shown above it induces isomorphisms $HF(L',L'') \to H(\cL',\cL'')$  and $HF(L,L'') \to H(\cL,\cL'')$ which send $H^{L',L''}_j$ to $\mathcal{H}^{\cL',\cL''}_j$ and $H^{L,L''}_{k}$ to $\mathcal{H}^{\cL,\cL''}_{k}$ respectively.  As a result $H^{L,L'}_i$ must be sent to  $\mathcal{H}^{\cL,\cL'}_i$ for  $i=-1,\ldots,m'-m$.  Hence it is an isomorphism.

\noindent {\bf Case 2:  $L$ and $L'$ circulate around two different faces.}
Two distinct faces intersect at most along one edge (and they must intersect along one edge if they intersect, since all vertices are trivalent).  If the two faces are not adjacent, morphism spaces on both sides are zero, and the induced map is just zero.  Thus we only need to consider adjacent faces.  The adjacent faces may intersect at either a finite or an infinite edge.  

First consider the case of an infinite edge.  The morphism space from $\phi(L)$ to $L'$ is spanned by $Q_j$ for $j\geq 0$ (which are all of odd degree), and that from $\cL$ to $\cL'$ is spanned by $\mathcal{Q}_j$ for $j\geq 0$.
We can compute explicitly (by counting strips similar to Lemma \ref{lem:mor-inf-edge}) that the image of $Q_0$ under our functor is the morphism $\mathcal{Q}_0$ (which sends $A$ to $B'$ and $B$ to $x\cdot A'$).  We already know that our functor preserves composition of morphisms.  We have $m_2(P_i,Q_0)=Q_i$ for $i>0$ (where $P_i$ are endomorphisms of $L$ defined before Lemma \ref{lem:mor-inf-edge}).  Moreover, $\mathbf{x}^i \cdot \mathcal{Q}_0 = \mathcal{Q}_i$ for $i>0$ (where $\mathbf{x}^i$ are the endomorphisms of $\cL$ right after Lemma \ref{lem:mor-inf-edge}).  Since $P_i$ is mapped to $\mathbf{x}^i$, $Q_i$ must be mapped to $\mathcal{Q}_i$ under our functor.  Hence it is an isomorphism on morphism spaces.

Now consider the case of a finite edge.  $L$ and $L'$ intersect at $|m'-m|$ points in the edge labeled by $H_1,\ldots,H_{m'-m}$ when $m'>m$ and $H_{-1},\ldots,H_{m'-m}$ when $m'<m$.  See Figure \ref{fig:morphisms-fin-diff-01} which shows the case $m'>m$.  Take the Seidel Lagrangian over one of the vertices adjacent to the edge.

For $m'>m$, the images of $H_i$ under our functor are the morphisms $\mathcal{H}_i$, whose restriction to the formal deformation space of $S$ is $A' \mapsto x^i_\ex A,  B' \mapsto x^{i-1}_\ex B$.  See Figure \ref{fig:morphisms-fin-diff-01}.  $\mathcal{H}_i$ form a basis of the morphism space from $\cL$ to $\cL'$ in $D\MF(W)$, and hence we have an isomorphism between the morphism spaces.

\begin{figure}[h]
	\begin{center}
		\includegraphics[scale=0.35]{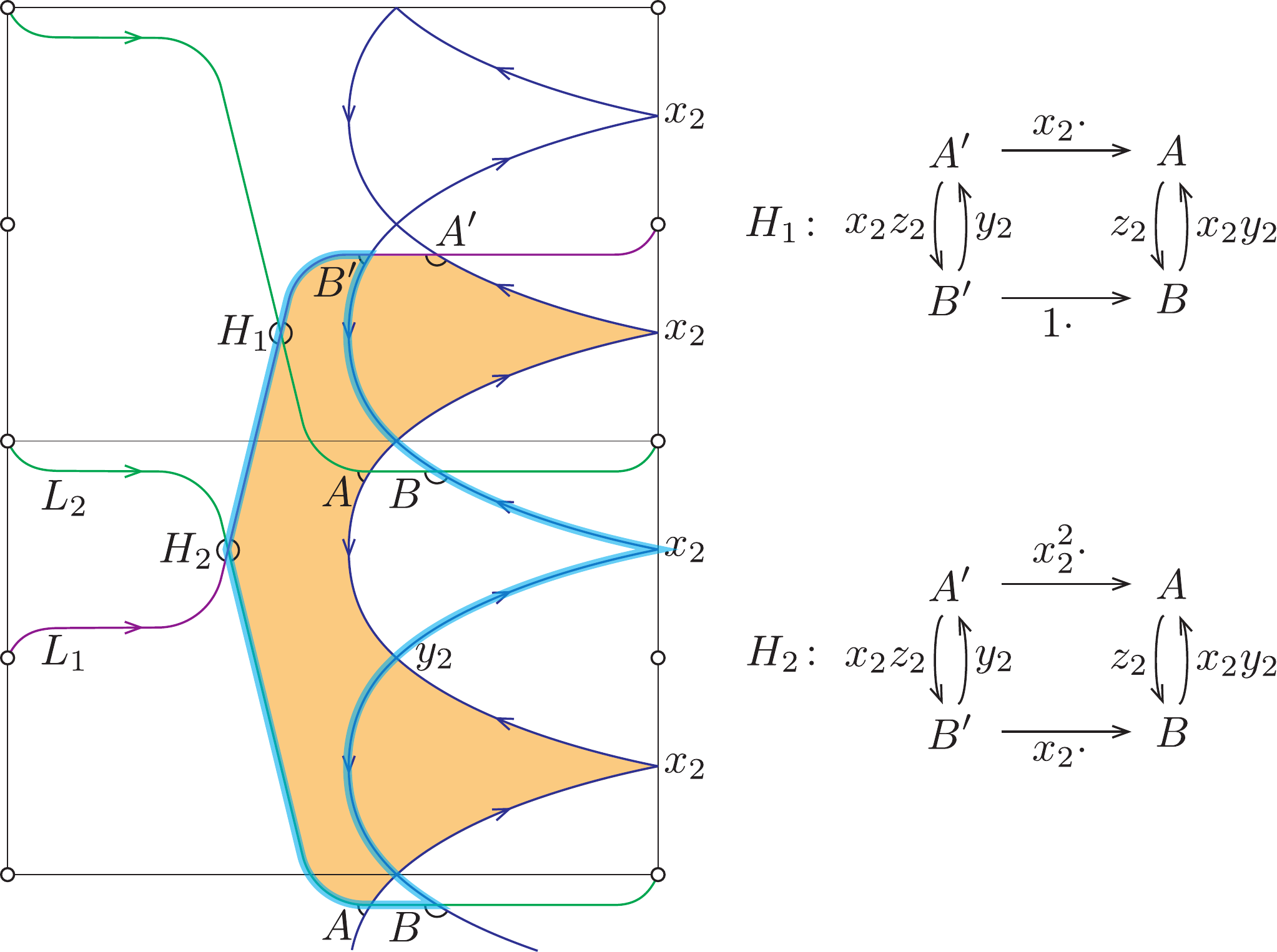}
		\caption{Transforming the morphisms $H_i$ for $i\geq 0$ in a finite edge, where $L$ and $L'$ are circulating around different faces.  In this figure $m=-1$ and $m'=1$.}
		\label{fig:morphisms-fin-diff-01}
	\end{center}
\end{figure}

For the case $m'<m$, the same argument as the last paragraph in Case 1 shows that $H_i$ maps to $\mathcal{H}_i$ and hence it is an isomorphism.  Combining all the cases, we conclude the following.

\begin{thm}
	The functor induces isomorphisms on morphism spaces.
\end{thm}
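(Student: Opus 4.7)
The plan is to assemble the theorem from the case analysis already carried out in the preceding pages, verifying that every pair of generating Lagrangians $(L,L')$ in $D\mathrm{WFuk}(X)$ has been covered and that the induced map on each summand of the morphism space is explicitly an isomorphism. By Theorem \ref{thm:mir_obj} and Remark \ref{rmk:Lee}, the Lagrangians $L$ circulating around faces with prescribed winding numbers are mirror to the line bundles $\mathcal{O}_D(k)$ on toric divisors $D \subset Y(\C)$, and it is known from \cite{HLee} that these form a generating set of $D\MF(W_{Y(\C)})$ (in fact of $D\mathrm{Sing}(W_{Y(\C)}^{-1}(0))$). So it suffices to check isomorphism on $HF(L,L') \to H\Hom_{\MF}(\mathcal{L},\mathcal{L}')$ for all such pairs.

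First I would organize the argument by the geometric location of intersection points of $L$ and a Hamiltonian perturbation of $L'$ (or of $L,L'$ when already transverse): each intersection point lives either in the cylindrical region corresponding to an infinite edge, or in that corresponding to a finite edge. The morphism space splits accordingly, and on the mirror side the morphism space between line bundles restricted to the corresponding toric strata (a non-compact $\A^1$ for an infinite edge, a $\bP^1$ for a finite edge) splits in exactly the same way by the local-to-global computation of $\Ext$-groups of divisorial sheaves on the singular locus. Thus I can treat each edge separately.

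Second I would invoke the explicit computations already done: Lemma \ref{lem:mor-inf-edge} together with the subsequent paragraph settles the infinite edge case by sending $P_i$ to $\mathbf{x}^i$ (or $\mathbf{y}^{|i|}$), using the known module structure on the endomorphism algebra and multiplicativity of our $A_\infty$-functor on cohomology. Lemma \ref{lem:cpt-edge} settles the finite-edge case when $m'>m$ (and when $L,L'$ circulate around the same face), and the trick of introducing an auxiliary $L''$ with very large winding number $m''$ reduces the case $m'<m$ to this one by using that our functor preserves compositions on cohomology, combined with the fact that the already-established isomorphisms on $HF(L',L'')$ and $HF(L,L'')$ force $H_i^{L,L'}$ to map to $\mathcal{H}_i^{\mathcal{L},\mathcal{L}'}$. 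The different-face case (second case) is handled by the same strategy: the elementary count of Figure \ref{fig:morphisms-fin-diff-01} (resp.\ of $Q_0$ for an infinite edge) identifies one generator $\mathcal{H}_i$ (resp.\ $\mathcal{Q}_0$), and multiplicativity under the endomorphism action of $L$ propagates this to the full basis.

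The main obstacle — and really the only delicate point — is to check that the compositions $m_2(P_i,Q_0)=Q_i$, $m_2(\mathbf{1}_L,\mathcal{H}_0)$, $\mathcal{Q}_i=\mathbf{x}^i\cdot \mathcal{Q}_0$, etc., that I would invoke to propagate the isomorphism from a single generator to the whole basis, actually hold at chain level up to an exact term, so that passing to cohomology yields the stated equalities. This is a direct strip count bounded by $L,\phi(L),L'$ (and the immersed Lagrangian $S$) inside a single pair-of-pants or four-punctured sphere, and the relevant strips are exactly of the same type as those already depicted in Figures \ref{fig:morphisms-inf-other} and \ref{fig:morphisms-fin-01}; since our functor is an $A_\infty$-functor, the induced cohomology-level composition law then forces the isomorphism on every summand. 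Assembling these edge-by-edge isomorphisms along the decomposition of $HF(L,L')$ described above gives the theorem.
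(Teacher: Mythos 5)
Your proposal is correct and follows essentially the same route as the paper: the same edge-by-edge decomposition of the morphism spaces, the same explicit strip counts (Lemma \ref{lem:mor-inf-edge}, Lemma \ref{lem:cpt-edge}, and the $Q_0$ and $\mathcal{H}_i$ computations) to pin down a single generator per edge, and the same propagation to the full basis via the known module structures and the multiplicativity of the functor on cohomology, including the auxiliary $L''$ trick for the case $m'<m$. No substantive difference to report.
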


From Remark \ref{rmk:Lee}, the derived functor sends a generating set of objects in $D\mathrm{WFuk}(X)$ to a generating set of objects in $D\MF(Y(\C))$.  Moreover, it is an isomorphism on morphism level.  Thus the functor derives a quasi-equivalence between $D\mathrm{WFuk}(X)$ and $D\MF(Y(\C))$ as stated in Theorem \ref{thm:surf}.

\section{Relation to stability conditions and flops}\label{sec:flop}
Given a punctured Riemann surface, we can take different choices of pair-of-pants decompositions.  It is related to the choice of quadratic diferentials and stability conditions.  Below we discuss an example of the 4-punctured sphere.  

We shall see that taking a different pair-of-pants decomposition induces the Atiyah flop on the mirror side.  This section is more expository and we do not intend to give a systematic study of stability conditions in this paper.  Stability conditions for punctured Riemann surfaces were studied by Haiden-Katzarkov-Kontsevich \cite{HKK}.  We wish to understand the relation with the work of \cite{HKK} which is left for future investigation.

\begin{figure}[htb!]
    \includegraphics[scale=0.45]{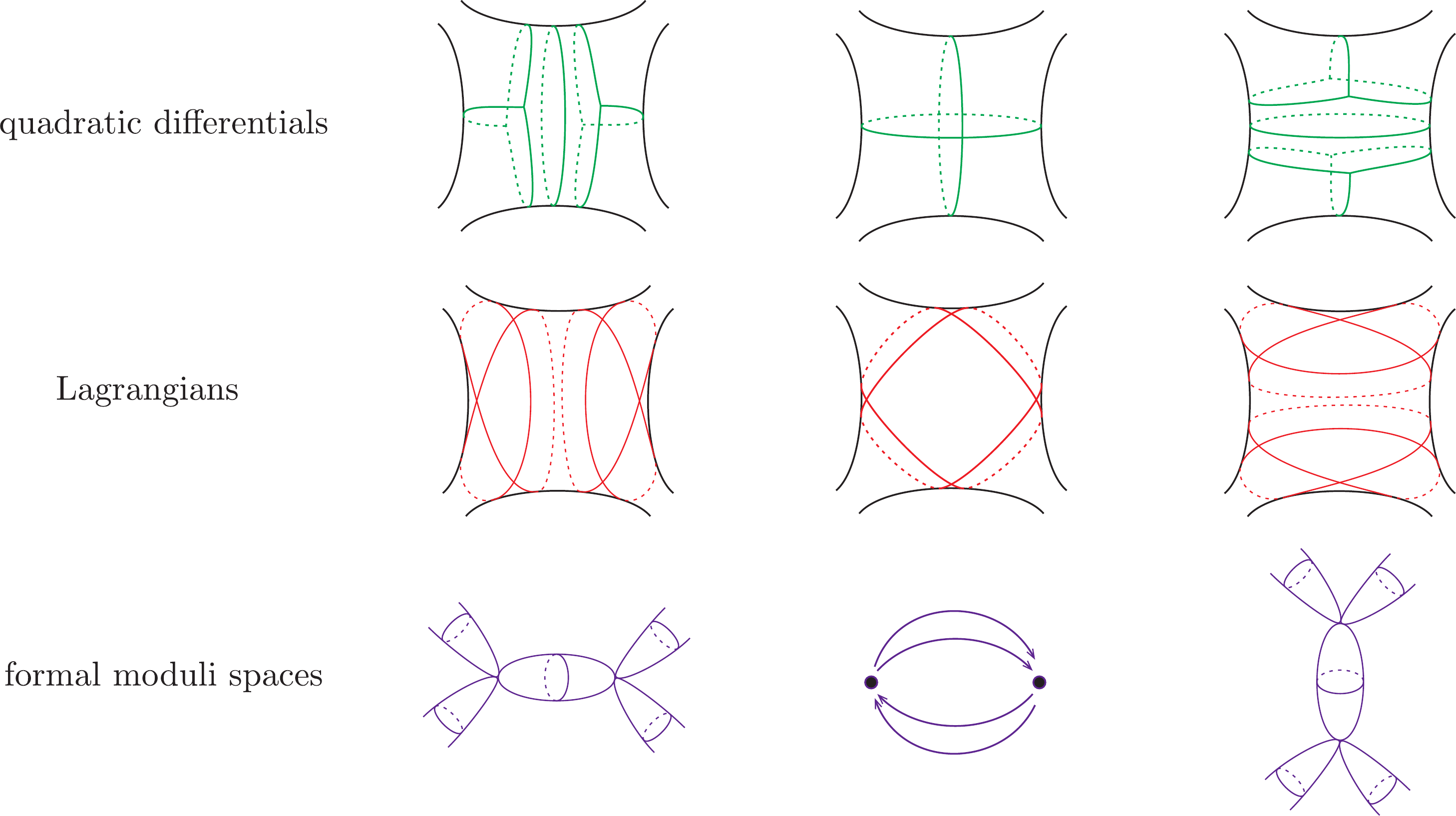}
    \caption{Different choices of collections of immersed Lagrangians leads to different strata of the K\"ahler moduli in the mirror side.
    	different moduli related by flops.}
		\label{fig:flop}
\end{figure}
Figure \ref{fig:flop} depicts two different pair-of-pants decompositions and the corresponding Seidel Lagrangians.  In the middle the two Seidel Lagrangians are merged together to  form a more degenerate immersed Lagrangian (which is a union of two circles).  

Non-commutative homological mirror symmetry for the more degenerate immersed Lagrangian was studied in our previous work \cite{CHL2}.  The resulting mirror is the non-commutative resolution of the conifold corresponding to a quiver (together with a superpotential). 

In \cite{FHLY} we studied ``flop" on a Lagrangian fibration for $T^* \bS^3$, which results in the Atiyah flop on the mirror resolved conifold.  If we take the union of two certain $S^3$'s in $T^* \bS^3$ as a reference Lagrangian, then we produced the non-commutative resolution of the conifold.  
It is a three-dimensional analog of the construction in this section.

%
%

Let $\bL$   be a union of two circles in $\Sigma:=S^2 \setminus \{\textnormal{4 points}\}$.
We denote four immersed points of $\bL$ by $X,X',Y,Z$ as in Figure \ref{fig:surg}. More precisely, if $\bL= S_1 \oplus S_2$, then we have the following eight immersed generators
$$X,X',\bar{Y},\bar{Z} \in CF(S_1,S_2),  \quad \bar{X},\bar{X'},Y,Z \in CF(S_2,S_1) $$
as well as generators $e_1,[\pt]_1$ and $e_2, [\pt]_2$ from $H(S_1)$ and $H(S_2)$, respectively.

Now, let us consider the following formal deformations of $\bL$ at $X$ and $X'$ respectively
$$\bL_1:= (\bL,T^{\delta'} X'), \quad \bL_2:= (\bL, T^{\delta} X),$$
for some $\delta, \delta' \in \mathbb{R}_{>0}$.
One can check that their self-Floer cohomologies are 8 dimensional. For example $HF(\bL_1,\bL_1)$ has a basis consisting of 
$$\big\{ e_1 + e_2, X,Y,Z, \bar{Y},\bar{Z},\bar{X} \pm \bar{X'}, [\pt]_1 \pm [\pt]_2 \big\}.$$  

Although $\bL_1, \bL_2$ are formal deformations, one can find isomorphisms to two Seidel Lagrangians which may be obtained
as an actual surgery as in Figure \ref{fig:surg}. Moreover, the pair-of-circles on the middle of $\Sigma$ can be
obtained from the further formal deformations.
$$(\mathbb{L}_1,T^{\delta} X) \cong (\mathbb{L}_2,T^{\delta'} X') \cong (\mathbb{L}, X + X'),$$
These are left as exercises as we will not use them.

\begin{figure}[h]
\begin{center}
\includegraphics[height=1in]{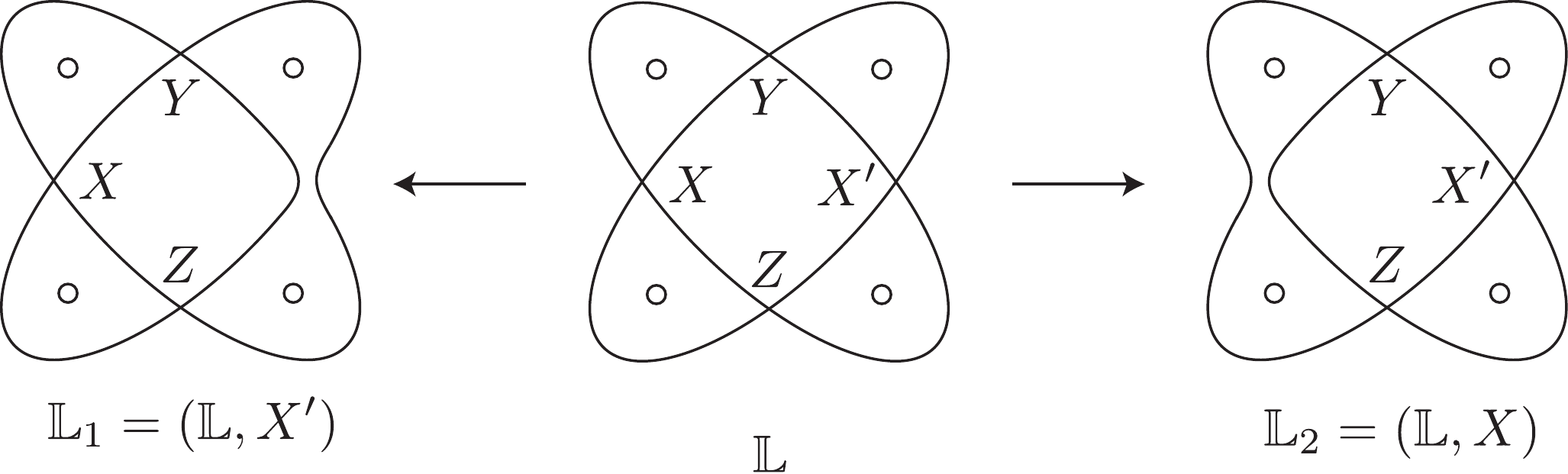}
\caption{two Lagrangians obtained by opposite corners in $\bL$}\label{fig:surg}
\end{center}
\end{figure}

Instead of using Seidel Lagrangians of the pair of pants, we show how to use $\bL_1,\bL_2$ to construct mirrors.\footnote{Readers are warned that the deformation parameters for $\bL$ were taken from a certain quiver algebra in \cite{CHL2}, and we are looking at slightly different boundary deformation of $\bL$, here.}
Let us consider further formal deformations of  $\bL_1$ and $\bL_2$ by $b=xX+yY+zZ$ and $b'=x'X'+y'Y+z'Z$ respectively. It is easy to check that $b$ and $b'$ solve weak Maurer-Cartan equation for any $(x,y,z) \in \Lambda_+^3$ and $(x',y',z') \in \Lambda_+^3$, and the corresponding potentials are $W_1=T^{\alpha + \delta'} xyz$ and $W_2=T^{\alpha +\delta} x'y'z'$, respectively where $\alpha$ is the symplectic area of the rectangle with four corners $X,Y,X',Z$. For instance, $m^{\bL_1}(e^b)$ admits contributions from two rectangles symmetric with respect to the equator, and the following gives one of cancelling pairs in $m^{\bL_1} (e^b)$: 
$$m_3^{\bL} (T^{\delta'} X', yY, xX ) = T^{\alpha + \delta'} xy \bar{Z},  \quad m_3^{\bL} (xX, yY, T^{\delta'} X' ) = - T^{\alpha + \delta'} yx\bar{Z}$$
since $x$ and $y$ commute with each other.

Let us first look at the case $y=z=y'=z'=0$ (i.e. deforming $\bL_1$ and $\bL_2$ only by $xX$ and $x'X'$) and seek for the condition in order for $(\bL_1,xX)$ and $(\bL_2,x'X')$ to be isomorphic to each other. Obviously, if $x=T^\delta$ and $x'=T^{\delta'}$, then these objects are isomorphic, but we rather want to have isomorphic \emph{family} of objects.

Observe that $(\bL_1, xX) \cong (\bL, T^{\delta'} X' + xX)$ and $(\bL_2,x'X') \cong (\bL, T^{\delta}X + x'X)$ by the definition of boundary deformation of $A_\infty$-algebras, and hence 
$$CF((\bL_1, xX) , (\bL_2,x'X')) = CF ((\bL, T^{\delta'} X' + xX), (\bL, T^{\delta} X + x'X')).$$

\begin{prop}
$(\bL_1,xX)$ and $(\bL_2,x'X')$ are isomorphic for $x x' = T^{\delta + \delta'}$ (with $x\neq 0$ and $x' \neq 0$).
\end{prop}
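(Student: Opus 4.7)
The idea is to write an explicit isomorphism $\alpha\in CF^0((\bL_1,xX),(\bL_2,x'X'))$ as a linear combination of the two circle units,
$$ \alpha = \lambda_1\, e_1 + \lambda_2\, e_2, \qquad \beta = \mu_1\, e_1 + \mu_2\, e_2, $$
for scalars $\lambda_i,\mu_i\in\Lambda$ to be determined, and to verify directly the conditions of Definition~\ref{def:iso}. Observe that every term of $b_1=T^{\delta'}X'+xX$ and $b_2=T^{\delta}X+x'X'$ lies in $CF(S_1,S_2)$ (the generators $X,X'$ are both odd immersed generators going $S_1\to S_2$), whereas $e_1,e_2$ sit on the diagonal. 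This composability pattern will automatically trivialize most of the $A_\infty$-contributions.

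First I would compute $m_1^{b_1,b_2}(\alpha)=\sum_{k,\ell\ge 0} m_{k+\ell+1}(b_1^{\otimes k},\alpha,b_2^{\otimes \ell})$. Any term with $k\ge 2$ (respectively $\ell\ge 2$) vanishes because two consecutive elements of $CF(S_1,S_2)$ cannot be composed; the mixed term $m_3(b_1,\alpha,b_2)$ vanishes for the same reason regardless of which summand of $\alpha$ is used. Thus only $m_2(b_1,\alpha)$ and $m_2(\alpha,b_2)$ survive, and unitality of $e_i$ gives
$$ m_2(b_1,\lambda_2 e_2)= \pm\lambda_2\, b_1, \qquad m_2(\lambda_1 e_1, b_2)= \pm\lambda_1\, b_2, $$
while the cross-terms $m_2(b_1,\lambda_1 e_1)$ and $m_2(\lambda_2 e_2,b_2)$ are zero by composability. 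The equation $m_1^{b_1,b_2}(\alpha)=0$ becomes the linear system (reading off the coefficients of $X$ and $X'$)
$$ \lambda_2\, x = \pm\lambda_1\, T^{\delta}, \qquad \lambda_2\, T^{\delta'} = \pm\lambda_1\, x', $$
whose consistency is precisely $xx'=T^{\delta+\delta'}$. Under this relation one may set $\lambda_1=x,\ \lambda_2=T^{\delta}$ (equivalently $\lambda_1=T^{\delta'},\ \lambda_2=x'$, the two choices being proportional).

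Next I would construct $\beta$ by the symmetric computation, obtaining the same condition $xx'=T^{\delta+\delta'}$ and $\mu_1,\mu_2$ determined up to a common scalar. For the composition $m_2^{b_1,b_2,b_1}(\alpha,\beta)$, the same composability argument together with the unit axiom (every $m_k$ with $k\ge 3$ vanishes whenever a strict unit is an input) kills all higher corrections, leaving
$$ m_2^{b_1,b_2,b_1}(\alpha,\beta) = \lambda_1\mu_1\, e_1 + \lambda_2\mu_2\, e_2, $$
which equals the unit $\be_{\bL_1}=e_1+e_2$ once we normalize $\mu_i=\lambda_i^{-1}$; this is possible because $x,x'\neq 0$ make both $\lambda_1$ and $\lambda_2$ invertible. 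The same computation with $b_1$ and $b_2$ swapped yields $m_2^{b_2,b_1,b_2}(\beta,\alpha)=\be_{\bL_2}$. Hence $(\alpha,\beta)$ is a strict isomorphism in the sense of Definition~\ref{def:iso}.

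The main obstacle is not conceptual but bookkeeping: one must pin down the $A_\infty$-sign conventions (the $(-1)^{|\bullet|'}$ factors in the unit axiom and in the Maurer--Cartan equation) so that the two equations in the system above are \emph{simultaneously} solvable with a single ratio $\lambda_1/\lambda_2$, rather than accidentally incompatible. Once the signs are fixed, no geometric input (no counting of holomorphic polygons in $\Sigma$) is needed beyond what is already contained in the definitions of $b_1,b_2$, since the whole argument runs purely on the unitality of $e_1,e_2$ and the directionality $X,X'\in CF(S_1,S_2)$.
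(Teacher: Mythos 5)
Your proof is correct and follows essentially the same route as the paper: the paper likewise exhibits the isomorphism as a combination of the circle units, taking $\alpha = xT^{-\delta}e_1 + e_2$ and $\beta = x^{-1}T^{\delta}e_1 + e_2$, derives the relation $xx' = T^{\delta+\delta'}$ from $d$-closedness of that combination, and verifies $m_2(\alpha,\beta) = e_1+e_2$ purely by unitality. The sign ambiguities you flag do resolve consistently --- the paper's computation gives $d(e_1) = T^{\delta}X + x'X'$ and $d(e_2) = -T^{\delta'}X' - xX$, so your linear system is solvable with the single ratio $\lambda_1/\lambda_2 = x/T^{\delta} = T^{\delta'}/x'$.
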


Thus the gluing region in this case is given by $\left\{ x \mid  \val(x) \leq (\delta + \delta') \right\}$.

\begin{proof}
The Floer differential $d$ on $CF ((\bL, X' + xX), (\bL,X + x'X'))$ can be computed as follows:
\begin{eqnarray*}
d(e_1)&=& m_2 ( e_1, T^{\delta} X) + m_2 ( e_1, x'X') =  T^{\delta} X +x' X',\\
d(e_2)&=& m_2 (T^{\delta'} X', e_2) + m_2 (xX, e_2) =  -T^{\delta'} X' - xX,\\
d(Y)&=& m_3 (xX, Y, x'X') + m_3 ( T^{\delta'}X' , Y,  T^{\delta} X) =  (x x' - T^{\delta+ \delta'}) \bar{Z},\\
d(Z)&=& m_3 (xX, Z, x'X') + m_3 ( T^{\delta'} X', Z, T^{\delta} X) =  (x x' - T^{\delta+ \delta'}) \bar{Y},\\
d(\bar{X})&=& m_2 (xX, \bar{X}) + m_2 (  \bar{X}, T^{\delta} X) =  x[\pt]_1 \pm T^{\delta} [\pt]_2,\\
d(\bar{X'})&=& m_2 (\bar{X'}, x'X') + m_2 ( T^{\delta'}X', \bar{X'}) =  x'[\pt]_2 \pm T^{\delta'} [\pt]_1,
\end{eqnarray*}
and $d(\bar{Y}) = d(\bar{Z}) = d([\pt]_1) = d([\pt]_2)=0$.

Therefore $\alpha:=x T^{-\delta} e_1 + e_2 \in CF((\bL_1, xX) , (\bL_2,x'X')) $ and $\beta:= x^{-1} T^{\delta} e_1 +e_2  \in CF((\bL_2,x'X'), (\bL_1, xX)) $ serve as isomorphisms 
since they are $d$-closed and
$m_2 ( \alpha, \beta) =e_1 + e_2$
due to unital property (and similar for $m_2 (\beta,\alpha)$).

\end{proof}

If one considers full boundary deformations of $\bL_1$ and $\bL_2$ (i.e. $(\bL_1, b)$ and $(\bL_2,b')$), then one additionally have the condition $xyz=x'y'z'$ since their potentials should match. For example, one can choose the coordinate change to be
\begin{equation}\label{eqn:beforeF}
\left\{
\begin{array}{c}
x'= x^{-1} T^{\delta + \delta'} \\
y'= x y T^{-\delta}\\
z' = x z T^{-\delta}
\end{array}\right..
\end{equation}
In fact, if we compute the resulting Floer differential $\WT{d}$ on  $CF ((\bL, T^{\delta'} X' + xX + y Y + z Z_), (\bL, T^{\delta} X + x'X' + y' Y + z' Z))$, we see that
\begin{eqnarray*}
\WT{d} (e_1)&=& m_2 ( e_1, T^{\delta} X) + x' m_2 ( e_1, X') + y m_2 ( Y,e_1)+  z m_2 ( Z, e_1) \\
&=& T^{\delta} X +x' X' - y Y + zZ,\\
\WT{d} (e_2)&=& m_2 (X', e_2) + x m_2 (T^{\delta'}X, e_2) +  y' m_2 ( e_2, Y)+ z' m_2 (  e_2,Z)   \\
&=&  -T^{\delta'} X' - xX + y' Y - z' Z.
\end{eqnarray*}
Therefore, $\WT{d} ( \alpha) = \WT{d} (x T^{-\delta} e_1 +e_2) = 0$ gives rise to the above relations.

Let us reduce the coefficients to $\C$ as discussed in Section \ref{sec:C}, and explain how flop appears.
We can play the same game with $(Y,Z)$ instead of $(X,X')$ as in Figure \ref{fig:flop2}. By the same argument as above, the corresponding mirror in this case has two $\C^3$ patches parametrized by $(y,x,x')$ and $(z,\tilde{x}, \tilde{x}')$ respectively, and they glue by the relation
\begin{equation}\label{eqn:afterF}
\left\{
\begin{array}{c}
z= y^{-1} \\
\tilde{x} = y x\\
\tilde{x}' = y x'
\end{array}\right. .
\end{equation}

\begin{figure}[h]
\begin{center}
\includegraphics[height=3.5in]{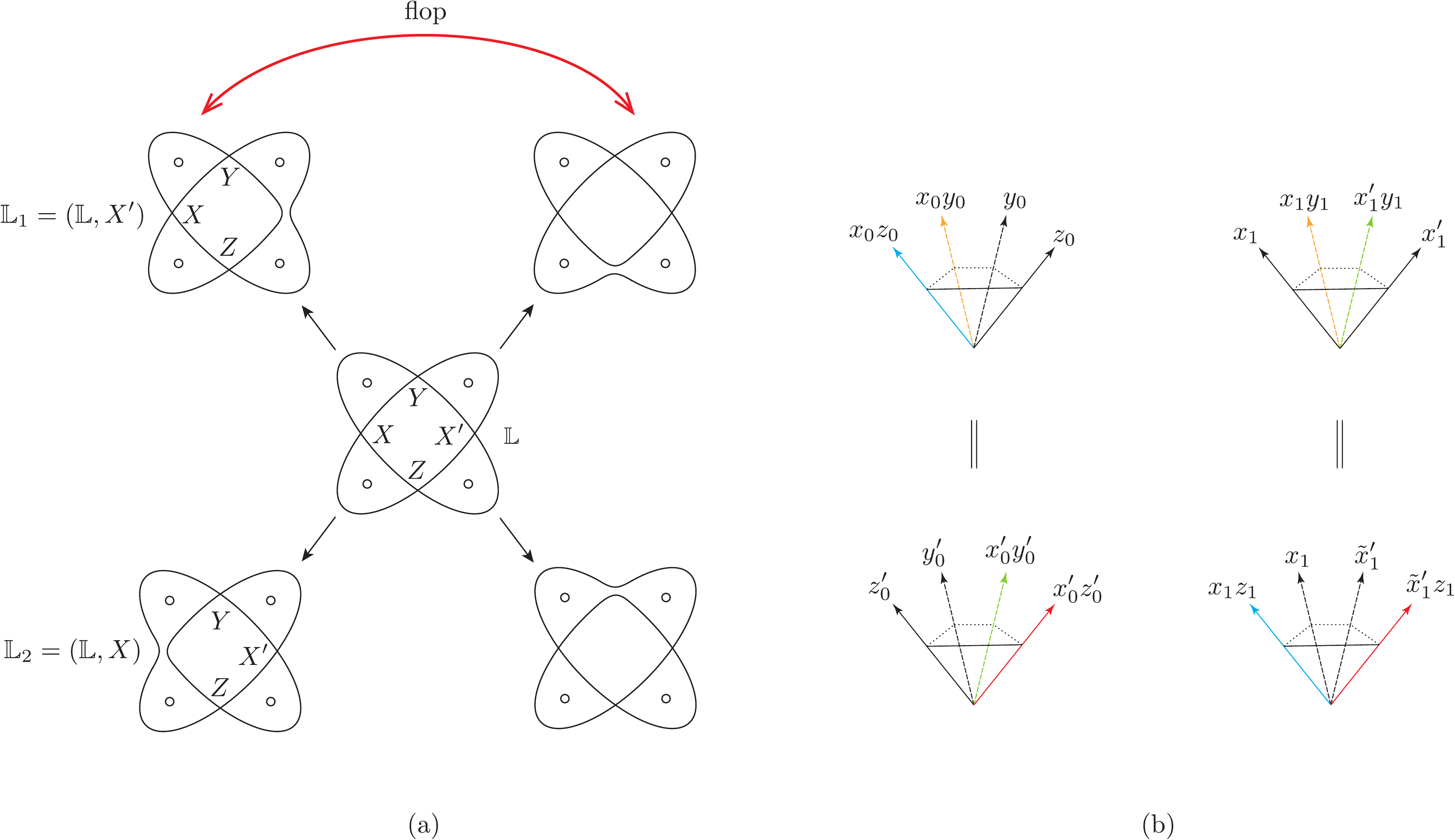}
\caption{two different surgeries of $\BL$ are related by flop.}\label{fig:flop2}
\end{center}
\end{figure}

To distinguish two different spaces \eqref{eqn:beforeF} and \eqref{eqn:afterF}, we put subindices to each set of coordinates in the following way.
\begin{equation}\label{eqn:befaftF}
\left\{
\begin{array}{c}
x'_0= x_0^{-1} \\
y'_0= x_0 y_0\\
z'_0 = x_0 z_0
\end{array}\right.
\qquad 
\left\{
\begin{array}{c}
z_1= y^{-1}_1 \\
\tilde{x}_1 = y_1 x_1\\
\tilde{x}'_1 = y_1 x'_1
\end{array}\right. .
\end{equation}
In terms of these coordinates, one has a natural birational map
\begin{equation}\label{eqn:flopcoordexp}
 (x_0,y_0,z_0) \mapsto (y_1,x_1,x_1')=(y_0 z_0^{-1}, x_0 z_0, z_0)
 \end{equation}
which identifies two spaces away from the zero section $\bP^1$ of $\cO (-1) \oplus \cO(-1)$ (given by $\{y_0=z_0=0\}$ and $\{x_1=x_1'=0\}$). The map extends to other coordinate charts via \eqref{eqn:befaftF}. To see this map is naturally induced, one can contract $\bP^1$ in both of spaces, and compare coordinates of the resulting conifolds. As in (b) of Figure \ref{fig:flop2}, the map \eqref{eqn:flopcoordexp} identifies coordinate axes of the toric diagrams in a natural way. Here, $(x_0 y_0,z_0,x_0 z_0,y_0) = (y'_0,x'_0 z'_0, z'_0, x'_0 y'_0)$ serves as coordinates for one the conifolds, and so does $(y_1 x_1, x'_1, y_1x'_1, x_1) = (\tilde{x}_1, z \tilde{x}'_1, \tilde{x}'_1, z_1\tilde{x}_1)$. Observe that the potentials on both spaces are compatible with \eqref{eqn:flopcoordexp}. (Both of the potentials vanish over the zero sections of $\cO (-1) \oplus \cO(-1)$.) Therefore, two different mirrors are related by the Atiyah flop on the underlying space of the LG model. 

On the other hand, one can think of these two mirrors as being obtained by gluing formal deformation spaces of two different pairs of Seidel Lagrangians as drawn in (a) of Figure \ref{fig:flop2}. This is because, if we boundary-deform $\bL$ by precisely one angle, then it smoothes out to a Seidel Lagrangian, or more precisely, the resulting object is isomorphic to a Seidel Lagrangian.

Notice that each pair of Seidel Lagrangians determines a unique pair-of-pants decomposition of the 4-punctured sphere. Indeed, if we push these Lagrangians two either ends as much as possible, each of them sits in exactly one pair-of-pants in the corresponding decomposition of the 4-punctured sphere. We conclude that taking a different pair-of-pants decomposition induces the Atiyah flop on the mirror side.

%
%
%

\section{Proof of isomorphisms in $\AI$-category}\label{sec:pfgluethmy}
In this section, we give a proof of Theorem \ref{thm:ye}, which claims that  Yoneda functors corresponding to $L_0$ and $L_1$ are quasi-isomorphic. Here, we assume that two objects  $L_0$ and $L_1$ of $\mathcal{C}$ are isomorphic via $\alpha \in \Hom_{\mathcal{C}} (L_0,L_1)$ and $\beta \in \Hom_{\mathcal{C}} (L_1,L_0)$ (see Definition \ref{def:iso}). 
 Since Yoneda embedding is fully faithful, this will imply that the two objects are quasi-isomorphic in the original category.

First, we remark that  isomorphisms form equivalence relations in $\mathcal{C}$.
Consider three objects $L_0,L_1,L_2$ in an $A_\infty$-category $\mathcal{C}$, and suppose we are given morphisms 
\begin{equation*}
\xymatrix{ L_0 \ar@<0.5ex>[r]^{\alpha} & L_1  \ar@<0.5ex>[r]^{\gamma} \ar@<0.5ex>[l]^{\beta} & L_2 \ar@<0.5ex>[l]^{\delta}}
\end{equation*}
such that all of them are $m_1$-closed and $m_2 (\alpha,\beta)= id+ m_1 (x)$, $m_2 (\gamma,\delta)= id+  m_1 (y)$ for some $x$ and $y$. Then  
\begin{lemma}\label{lem:isocomp}
 $m_2(\alpha,\gamma)$ and $m_2(\delta,\beta)$ are isomorphisms.
\end{lemma}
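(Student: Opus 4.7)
The plan is to verify the two conditions of Definition \ref{def:iso} directly for the composites $m_2(\alpha,\gamma) \in \Hom(L_0, L_2)$ and $m_2(\delta,\beta) \in \Hom(L_2, L_0)$. That is, I will show both are $m_1$-closed, and that $m_2(m_2(\alpha,\gamma), m_2(\delta,\beta))$ and $m_2(m_2(\delta,\beta), m_2(\alpha,\gamma))$ agree with $\one_{L_0}$ and $\one_{L_2}$ respectively modulo the image of $m_1$.

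First, the $m_1$-closedness follows immediately from the two-input $A_\infty$-identity
$m_1(m_2(a,b)) = \pm m_2(m_1 a, b) \pm m_2(a, m_1 b)$, combined with the hypothesis that $\alpha, \beta, \gamma, \delta$ are $m_1$-closed. In particular $m_2(\delta,\beta)$ is itself $m_1$-closed, so we may treat it as a legitimate input to higher operations.

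The main step is the computation of $m_2(m_2(\alpha,\gamma), m_2(\delta,\beta))$ via two applications of homotopy associativity (the three-input $A_\infty$-relation). Applied to the triple $(\alpha, \gamma, m_2(\delta,\beta))$, and using the $m_1$-closedness noted above to kill the $m_3(\cdot, \cdot, m_1(\cdot))$ terms, this relation gives
\[
m_2(m_2(\alpha,\gamma), m_2(\delta,\beta)) \,\equiv\, \pm\, m_2\bigl(\alpha,\, m_2(\gamma, m_2(\delta,\beta))\bigr) \pmod{\mathrm{im}\, m_1}.
\]
Next, applying the three-input relation to $(\gamma,\delta,\beta)$ yields
\[
m_2(\gamma, m_2(\delta,\beta)) \,\equiv\, \pm\, m_2(m_2(\gamma,\delta), \beta) \,=\, \pm\, m_2(\one_{L_1} + m_1(y), \beta) \pmod{\mathrm{im}\, m_1}.
\]
By strict unitality $m_2(\one_{L_1}, \beta) = \pm \beta$, while the Leibniz identity turns $m_2(m_1(y), \beta)$ into $m_1(m_2(y, \beta))$, hence is $m_1$-exact. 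Thus $m_2(\gamma, m_2(\delta,\beta)) \equiv \pm\beta \pmod{\mathrm{im}\, m_1}$. Substituting and using the hypothesis $m_2(\alpha,\beta) = \one_{L_0} + m_1(x)$, together with the fact that $m_2(\alpha, m_1(\cdot)) \in \mathrm{im}\, m_1$ (again by the Leibniz identity and $m_1(\alpha)=0$), we conclude
\[
m_2(m_2(\alpha,\gamma), m_2(\delta,\beta)) \,\equiv\, \one_{L_0} \pmod{\mathrm{im}\, m_1}.
\]
The symmetric assertion for $m_2(m_2(\delta,\beta), m_2(\alpha,\gamma)) \equiv \one_{L_2}$ is obtained by the same argument with the roles of the two isomorphism pairs exchanged, giving that $m_2(\alpha,\gamma)$ and $m_2(\delta,\beta)$ are mutually inverse isomorphisms in the sense of Definition \ref{def:iso}.

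The only genuine difficulty is sign bookkeeping, which is handled by the standard Koszul convention on shifted degrees; no deeper categorical input is needed beyond the $A_\infty$-relations for $n\leq 3$ and strict unitality.
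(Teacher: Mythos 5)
Your proof is correct and follows essentially the same route as the paper: two applications of the three-input $A_\infty$-relation (homotopy associativity) together with unitality, the hypothesis $m_2(\gamma,\delta)=\one_{L_1}+m_1(y)$, and the Leibniz rule to absorb $m_1$-exact error terms. The only difference is cosmetic --- the paper reassociates $\bigl(m_2(\alpha,\gamma),\delta,\beta\bigr)$ first and then the inner triple $(\alpha,\gamma,\delta)$, whereas you reassociate $\bigl(\alpha,\gamma,m_2(\delta,\beta)\bigr)$ first and then $(\gamma,\delta,\beta)$ --- and your explicit remark on the $m_1$-closedness of the composites is a detail the paper leaves implicit.
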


\begin{proof}
Set $m_2 (\gamma,\delta) = id + m_1(z)$. We compute $m_2( m_2(\alpha,\beta),m_2(\gamma,\delta))$ to see that it is the identity up to an image of $m_1$:
\begin{eqnarray*}
m_2( \overbrace{m_2(\alpha,\gamma)}^{\star_1},m_2(\overbrace{\delta}^{\star_2},\overbrace{\beta}^{\star_3})) &=& - m_2( m_2(m_2(\alpha,\gamma),\delta),\beta) - m_1 \circ m_3 ( m_2(\alpha,\gamma ), \delta, \beta) \\
&=&  m_2 (m_2 ( \alpha, m_2(\gamma,\delta) ), \beta) \mod \,\, \mathrm{Im} \, m_1\\
&=& m_2 (\alpha, \beta) +  m_2 ( m_2 (\alpha, m_1(z)), \beta)  \mod \,\, \mathrm{Im} \, m_1 \\
&=& id - m_2 (m_1 \circ m_2 (\alpha,z), \beta)   \mod \,\, \mathrm{Im} \, m_1 \\
&=& id - m_1 \circ m_2 ( m_2 (\alpha,z),\beta)   \mod \,\, \mathrm{Im} \, m_1 \\
&=& id    \mod \,\, \mathrm{Im} \, m_1.
\end{eqnarray*}
Computation for $m_2$ in the other direction is similar. 
Thus we see that two morphisms $m_2(\alpha,\gamma)$ and $m_2(\delta,\beta)$ compose to give the identity up to an image of $m_1$. 
\end{proof}


In order to justify Definition \ref{def:iso}, we need to consider Yoneda embeddings into chain complexes $\mathcal{CH}_{A_\infty}$
(with sign convention of dg-category of chain complexes as in  \ref{def:dgsign}).
\begin{defn}
The Yoneda functor of $L_0$ by $\mathcal{Y}^{0}: \mathcal{C} \mapsto \mathcal{CH}_{A_\infty}$ is defined as follows.
On the object level, we have
$$\CY^0(C) = \big( \Hom_{\mathcal{C}} (C,L_0),  -m_1\big).$$
On the morphism level, for $a_{i,i+1} \in  \Hom_{\mathcal{C}} ( C_i,C_{i+1}) $, $i=0,\ldots,k-1$, $$\CY^0_k(a_{01},\cdots,a_{(k-1)k}) :\Hom_{\mathcal{C}} (C_k,L_0) \to \Hom_{\mathcal{C}} (C_0,L_0)$$
is defined to be
$$ \bullet \mapsto m_k(a_{01},\cdots,a_{(k-1)k},\bullet).$$
\end{defn}
One can check that  $\CY^0$ is an $A_\infty$-functor as in Theorem \ref{lem:familyyoneda}.
The Yoneda functor $\CY^1$ for $L_1$ is defined in the same way. Our local mirror functor $\mathcal{F}^{\bL_i}$ is nothing but a curved family version of the above, where $L_0$ is replaced by a family of weakly unobstructed objects $(\bL_i,b_i)$ with $b_i$ varying over the (weak) Maurer-Cartan space.

Now, we can compare $L_0$ and $L_1$ by comparing their Yoneda functors.

\begin{lemma}
We have a pre-natural transformation (Appendix \ref{def:nattrans}) $N_{01}$ from $\CY^0$ to $\CY^1$ induced by a morphism $\beta \in \Hom(L_1,L_0)$.
\end{lemma}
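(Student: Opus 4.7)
The plan is to define $N_{01}$ component-by-component by inserting $\beta$ into the $\AI$-operations of $\mathcal{C}$, in the spirit of the family Yoneda functor of Definition \ref{def:lmf} but with $\beta$ playing the role there played by a bounding cochain $b$. Recall the sign convention $\Hom_{\mathcal{CH}_{\AI}}(V,W)=\Hom_{\mathcal{CH}_{dg}}(W,V)$ from Appendix \ref{sec:dgsign}: a pre-natural transformation $N_{01}:\CY^0\to\CY^1$ is a coherent collection of multilinear maps
\[
(N_{01})_k:\Hom(C_0,C_1)\otimes\cdots\otimes\Hom(C_{k-1},C_k)\longrightarrow \Hom_{\mathcal{CH}_{\AI}}\big(\CY^0(C_0),\CY^1(C_k)\big)
\]
for $k\ge 0$, equivalently a dg-morphism in the reverse direction $\CY^1(C_k)=\Hom(C_k,L_1)\to\CY^0(C_0)=\Hom(C_0,L_0)$. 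Since $\beta\in\Hom(L_1,L_0)$, the natural prescription is
\[
(N_{01})_k(a_1,\ldots,a_k)(\bullet) \;:=\; (-1)^{|a_1|'+\cdots+|a_k|'+|\bullet|}\, m_{k+2}(a_1,\ldots,a_k,\bullet,\beta)
\]
for $\bullet\in\Hom(C_k,L_1)$. For $k=0$ this reduces to $(N_{01})_0(C)$ being ``right multiplication by $\beta$'' in the $\AI$-sense, $\bullet\mapsto\pm m_2(\bullet,\beta)$; the signs follow the same template used in the proof of Theorem \ref{lem:familyyoneda}.

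To conclude, I would only check that each $(N_{01})_k$ lands in the correct graded component: the internal degree count $|m_{k+2}|=2-(k+2)=-k$ together with $|\beta|=0$ exactly matches the degree demanded of an element of $\Hom_{\mathcal{CH}_{\AI}}(\CY^0(C_0),\CY^1(C_k))$ of cohomological degree $\sum_i|a_i|'$. No further structural identities are imposed on a pre-natural transformation (Appendix \ref{def:nattrans}), so the lemma is immediate from this construction.

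The only mildly delicate issue is to pin down the overall sign uniformly enough that at the next step, where one verifies (en route to Theorem \ref{thm:ye}) that $N_{01}$ is an actual natural transformation, one can invoke $m_1(\beta)=0$ and the $\AI$-relations of $\mathcal{C}$ to make all unwanted terms cancel. I expect this bookkeeping, not the construction itself, to be the main hurdle; but it proceeds by the same Koszul template as in the proof of Theorem \ref{lem:familyyoneda} and presents no genuine obstacle.
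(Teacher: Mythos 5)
Your construction coincides with the paper's: the paper defines $(N_{01})_k(a_1,\ldots,a_k)(\bullet)=(-1)^{|\mathbf{a}|'}(-1)^{|\bullet|}m(a_1,\ldots,a_k,\bullet,\beta)$ with the $k=0$ component $\bullet\mapsto(-1)^{|\bullet|}m_2(\bullet,\beta)$, exactly your formula, and likewise observes that a pre-natural transformation requires no closedness condition, only the degree check ($\|N_{01}\|=0$ since $|\beta|'=-1$ and $|m|'=1$), deferring $M_1$-closedness to the subsequent lemma where $m_1(\beta)=0$ is used. This is essentially the same argument as the paper's.
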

Let us define $N_{01}$ first.
\begin{defn}
To a given object $C_0$ in $\mathcal{C}$, 
we assign an element $N_{01} (C_0)$ in 
$ \Hom_{\mathcal{CH}_{A_\infty}}(\CY^0(C_0),
\CY^1(C_0)) =  \Hom_{\mathcal{CH}_{dg}}(\CY^1(C_0),
\CY^0 (C_0))$, 
defined as
$$N_{01}(C_0):\Hom_{\mathcal{C}} (C_0,L_1) \to \Hom_{\mathcal{C}} (C_0,L_0): \bullet \mapsto  {(-1)^{|\bullet|}} m_2(\bullet, \beta).$$
Moreover,
$$N_{01}(C_0,C_1,\ldots,C_k):\Hom_{\mathcal{C}} (C_0,C_1)\times \cdots \times \Hom_{\mathcal{C}} (C_{k-1},C_k)\to \Hom_{\mathcal{CH}_{A_\infty}} (\CY^0(C_0),
\CY^1(C_k))$$
is defined as the map 
$$ (a_{01},\cdots,a_{(k-1)k}) \mapsto  { (-1)^{|\mathbf{a}|'}} { (-1)^{|\bullet|}}m_k(a_{01},\cdots,a_{(k-1)k}, \bullet, \beta)$$
for $\bullet \in \Hom_\mathcal{C} (C_k, L_1)$.
\end{defn}

Note that an element in $ \Hom_{\mathcal{CH}_{A_\infty}} (\CY^0(C_0),
\CY^1(C_k))$ should be a map from $\Hom_\mathcal{C} (C_k, L_1) $ to $\Hom_\mathcal{C} (C_0, L_0)$ due to our convention \eqref{eqn:convadg}. The degree of $N_{01}$ is given by $||N_{01}|| = 0$ since $|\alpha|' = |\beta|' = -1$ and $|m_k|'=1$, and hence $||N_{01}||'=-1$.


We have the following lemmas whose proofs will be given in short.

\begin{lemma}\label{lem:NM1close}
If $m_1(\beta)=0$, then $N_{01}$ is a natural transformation i.e., $M_1$-closed. (See 
Appendix \ref{def:nattrans}.) 
\end{lemma}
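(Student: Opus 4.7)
The plan is to show $M_1(N_{01}) = 0$ by direct evaluation against a test tuple, exploiting that every term produced is a piece of a single $A_\infty$-relation in $\mathcal{C}$. Fix composable morphisms $(a_1,\ldots,a_k)$ and an element $\bullet \in \Hom_\mathcal{C}(C_k, L_1)$, and unpack
$$\bigl(M_1(N_{01})(C_0,\ldots,C_k)(a_1,\ldots,a_k)\bigr)(\bullet).$$
By the definition of the differential on pre-natural transformations between $A_\infty$-functors, this splits into three families of contributions: (i) the $\mathcal{CH}_{A_\infty}$-differential applied to $N_{01}(C_0,\ldots,C_k)(\mathbf a)$, which after evaluation at $\bullet$ produces terms $\pm\, m(\ldots, m_1(\bullet),\ldots,\beta)$ and $\pm\, m_1\bigl(m(\mathbf a, \bullet, \beta)\bigr)$; (ii) compositions in which $N_{01}$ is pre/post-composed with $\CY^0$ or $\CY^1$ applied to a sub-tuple of the $a_i$, which after evaluation at $\bullet$ become nested expressions $\pm\, m(\mathbf a^{(1)}, m(\mathbf a^{(2)}, \bullet, \beta), \mathbf a^{(3)})$ and similarly with the outer $m$ on the $\beta$-side; and (iii) the `inner' terms $N_{01}(\mathbf a^{(1)}, m^{\mathcal C}(\mathbf a^{(2)}), \mathbf a^{(3)})$, which produce $\pm\, m(\mathbf a^{(1)}, m(\mathbf a^{(2)}), \mathbf a^{(3)}, \bullet, \beta)$.

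The crucial observation is that the union of these three families is exactly the set of terms appearing in the $A_\infty$-relation applied to the single tuple $(a_1,\ldots,a_k,\bullet,\beta)$,
$$\sum \pm\, m\bigl(a_1,\ldots,a_i, m(a_{i+1},\ldots,a_j,\mathrm{rest}), \mathrm{tail}\bigr) = 0,$$
where the inner $m$ ranges over all consecutive sub-tuples of $(a_1,\ldots,a_k,\bullet,\beta)$. There is precisely one such sub-term not accounted for in families (i)--(iii), namely the term in which the inner $m$ is $m_1$ applied to $\beta$ alone; this term equals $\pm\, m_{k+1}(a_1,\ldots,a_k,\bullet, m_1(\beta))$ and vanishes by the hypothesis $m_1(\beta)=0$. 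Hence $(M_1(N_{01}))(\mathbf a)(\bullet)=0$.

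The remaining step is to verify that the signs really match. The sign conventions were chosen deliberately: the functors $\CY^i$ use $-m_1^{0,b}$ as a differential and require no further sign correction (Theorem \ref{lem:familyyoneda}), and the factor $(-1)^{|\bullet|}$ in $N_{01}$ is the same factor coming from the convention $\Hom_{\mathcal{CH}_{A_\infty}}(X,Y)=\Hom_{\mathcal{CH}_{dg}}(Y,X)$. With these choices the Koszul signs produced by $M_1$ coincide with the Koszul signs $(-1)^{\|\cdot\|'}$ appearing in the $A_\infty$-relation for the tuple $(a_1,\ldots,a_k,\bullet,\beta)$. I would first carry this out in the base case $k=0$, where the identity reduces to
$$m_1\bigl(m_2(\bullet,\beta)\bigr) \,\pm\, m_2\bigl(m_1(\bullet),\beta\bigr) \,\pm\, m_2\bigl(\bullet, m_1(\beta)\bigr)=0,$$
i.e.\ the graded Leibniz rule, which holds up to the vanishing term $m_2(\bullet,m_1(\beta))$; the general $k$ case then follows by the same mechanism from the higher $A_\infty$-relations.

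The only genuine obstacle is the sign bookkeeping for general $k$: one must check that in every family (i)--(iii) the Koszul sign predicted by the definition of $M_1$ agrees with the Koszul sign of the corresponding subterm in the $A_\infty$-relation for $(a_1,\ldots,a_k,\bullet,\beta)$. This is routine but tedious, and is the place where the covariance sign $(-1)^{|\mathbf a|'}$ together with the evaluation sign $(-1)^{|\bullet|}$ built into $N_{01}$ play their role. No new conceptual ingredient is required beyond the $A_\infty$-relation itself and the assumption $m_1(\beta)=0$.
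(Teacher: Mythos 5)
Your proposal is correct and follows essentially the same route as the paper: evaluate $M_1(N_{01})$ on $(\mathbf a)$ and then on a test element $\bullet$, recognize the resulting terms as exactly the $A_\infty$-relation in $\mathcal{C}$ for the tuple $(a_1,\ldots,a_k,\bullet,\beta)$ (up to an overall sign $(-1)^{|\mathbf a|'}(-1)^{|\bullet|}$), and observe that the only unmatched terms are those containing $m_1(\beta)$, which vanish by hypothesis. The paper's proof is precisely this computation with the Koszul signs written out term by term, so the only thing you have deferred is the bookkeeping you correctly identify as routine.
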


Similarly, one can define $N_{10}$ which is a natural transformation from $\CY^1$ to $\CY^0$ for a given $m_1$-closed morphism $\alpha \in \Hom_{\mathcal{C}} (L_1, L_0)$.

Here is the proof of theorem \ref{thm:ye}.
\begin{proof}
The composition $M_2( N_{01}, N_{10})$ is a natural transformation from $\CY^0$ to $\CY^0$ which is given explicitly as
%
\begin{equation}\label{eqn:M2twonat}
\begin{array}{lcl}
M_2 (N_{01}, N_{10}) (\mathbf{a})(\bullet) &=&
\sum (-1)^{||N_{10}||' \cdot |\mathbf{a}^{(1)}|'} m_2^{\mathcal{D}} (N_{01} (\mathbf{a}^{(1)} ), N_{10} (\mathbf{a}^{(2)} )) (\bullet) \\
&=& \sum (-1)^{ |\mathbf{a}^{(1)}|'} \overbrace{(-1)^{|N_{01} (\mathbf{a}^{(1)})|}}^{ {\rm dg} \leftrightarrow A_\infty\textnormal{-dg} } N_{01} (\mathbf{a}^{(1)}) \circ N_{10} (\mathbf{a}^{(2)}) (\bullet)\\
&=&  \sum (-1)^{|\bullet|} (-1)^{|\mathbf{a}^{(2)}|'} N_{01} (\mathbf{a}^{(1)}) \left( m^{\mathcal{C}} (\mathbf{a}^{(2)} ,\bullet, \alpha) \right)\\
&=& \sum (-1)^{|\bullet|}  (-1)^{|\mathbf{a}^{(2)}|'}    (-1)^{| m^{\mathcal{C}}(\mathbf{a}^{(2)} ,\bullet, \alpha)|}  (-1)^{  |\mathbf{a}^{(1)}|'} m^{\mathcal{C}}(\mathbf{a}^{(1)},m^{\mathcal{C}} (\mathbf{a}^{(2)} ,\bullet, \alpha),\beta) \\
&=& \sum (-1)^{|\mathbf{a}^{(1)}|'} m^{\mathcal{C}}(\mathbf{a}^{(1)},m^{\mathcal{C}} (\mathbf{a}^{(2)} ,\bullet, \alpha),\beta) \\
\end{array}
\end{equation}
Since $m^{\mathcal{C}}(\mathbf{a}^{(2)} ,\bullet, \alpha)| = |m^{\mathcal{C}}(\mathbf{a}^{(2)} ,\bullet, \alpha)|'+1 = |\mathbf{a}^{(2)}|' + |\bullet|'  $. $M_1$-closedness of $M_2(N_{01}, N_{10})$ simply follows from that of $N_{01}$ and $N_{10}$.

The theorem \ref{thm:ye} is an immediate consequence of the following Lemma \ref{lem:Ncohomtoid1}.
\end{proof}

\begin{lemma}\label{lem:Ncohomtoid1}
If $\alpha$ and $\beta$ are isomorphisms in Definition \ref{def:iso}, then $N_{01}\circ N_{10}$ is cohomologous to identity, and so is $N_{10} \circ N_{01}$.
\end{lemma}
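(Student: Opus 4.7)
The plan is to build an explicit pre-natural transformation $H : \CY^0 \Rightarrow \CY^0$ of degree $-1$ out of the homotopy data $\gamma_0 \in \Hom_{\mathcal{C}}(L_0,L_0)$ satisfying $m_2(\alpha,\beta) = \one_{L_0} + m_1(\gamma_0)$, and then verify $M_1(H) = N_{01}\circ N_{10} - \mathrm{Id}_{\CY^0}$ by a direct $A_\infty$-relation calculation. The case $N_{10}\circ N_{01}$ proceeds identically with $\gamma_1$ in place of $\gamma_0$.

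First, I would define $H$ by the formulas
\begin{equation*}
H(C_0)(\bullet) \;=\; (-1)^{|\bullet|}\, m_2(\bullet,\gamma_0), \qquad H(C_0,\dots,C_k)(\mathbf{a})(\bullet) \;=\; (-1)^{|\mathbf{a}|'}(-1)^{|\bullet|}\, m_{k+2}(\mathbf{a},\bullet,\gamma_0),
\end{equation*}
mirroring exactly the construction of $N_{01}$ (with $\beta$ replaced by $\gamma_0$). A degree check, using $|\gamma_0|'=-1$, confirms $||H|| = -1$, so $H$ is a legitimate degree $-1$ pre-natural transformation in the sense of Appendix \ref{def:nattrans}.

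Next I would compute $M_1(H)$. By definition $M_1(H) = m_1^{\mathcal{D}}\circ H - (-1)^{||H||'} H\circ m^{\mathcal{C}}$, evaluated in the same way as in equation \eqref{eqn:M2twonat}. The first piece produces a sum of the form $\sum\pm m(\mathbf{a}^{(1)},m(\mathbf{a}^{(2)},\bullet,\gamma_0),\dots)$ plus a boundary term $\sum\pm m(\mathbf{a},\bullet,m_1(\gamma_0))$. Using the $A_\infty$-relations I would repackage everything into the form
\begin{equation*}
M_1(H)(\mathbf{a})(\bullet) \;=\; \sum \pm\, m(\mathbf{a}, \bullet, m_1(\gamma_0)) \;+\; (\text{terms that cancel by }A_\infty\text{-relations and }m_1(\alpha)=m_1(\beta)=0).
\end{equation*}
Now I substitute $m_1(\gamma_0) = m_2(\alpha,\beta) - \one_{L_0}$. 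The term involving $m_2(\alpha,\beta)$, after one more application of the $A_\infty$ relation pulling $m_2(\alpha,\beta)$ apart into a bracketing $m(\dots,\alpha,\beta)$, reproduces precisely the expression for $N_{01}\circ N_{10}$ derived in \eqref{eqn:M2twonat}. The term involving $\one_{L_0}$ collapses, via (weak) unitality of $\mathcal{C}$, to $\bullet$ when $k=0$ and to zero for $k\geq 1$, which is exactly $\mathrm{Id}_{\CY^0}$. Putting the pieces together yields $M_1(H) = N_{01}\circ N_{10} - \mathrm{Id}_{\CY^0}$, as desired.

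The main obstacle is bookkeeping: correctly tracking Koszul signs through the $A_\infty$ vs.\ dg conversion (which already appears in \eqref{eqn:M2twonat}), ensuring that the terms arising from applying $m$ to inputs among $\mathbf{a}$ in $M_1(H)$ cancel in matched pairs with the analogous terms arising when $M_2(N_{01},N_{10})$ is expanded, and handling the unit contributions carefully (since $\mathcal{C}$ is only homotopy unital in general). Once the signs are pinned down, the computation is essentially the $A_\infty$-analog of Lemma \ref{lem:isocomp}, with $H$ playing the role of the homotopy that was hidden in the ``$\mathrm{mod}\ \mathrm{Im}\,m_1$'' equalities there.
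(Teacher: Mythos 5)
Your overall strategy is sound, but the proposed homotopy is incomplete, and the gap sits exactly at the step you describe as ``one more application of the $A_\infty$ relation.''  With $H(\mathbf{a})(\bullet)=\pm\, m(\mathbf{a},\bullet,\gamma_0)$ alone, the $A_\infty$-relation with inputs $(\mathbf{a},\bullet,\gamma_0)$ gives $M_1(H)(\mathbf{a})(\bullet)=\pm\, m(\mathbf{a},\bullet,m_1(\gamma_0))=\pm\, m(\mathbf{a},\bullet,m_2(\alpha,\beta))\mp N_{id}$.  But $m(\mathbf{a},\bullet,m_2(\alpha,\beta))$ is \emph{not} equal to $M_2(N_{01},N_{10})(\mathbf{a})(\bullet)=\sum\pm\, m(\mathbf{a}^{(1)},m(\mathbf{a}^{(2)},\bullet,\alpha),\beta)$: the $A_\infty$-relation with inputs $(\mathbf{a},\bullet,\alpha,\beta)$ shows they differ by the three families
\begin{equation*}
\sum\pm\, m(\mathbf{a}^{(1)},m(\mathbf{a}^{(2)},\bullet,\alpha,\beta))+\sum\pm\, m(\mathbf{a}^{(1)},m(\mathbf{a}^{(2)},\bullet),\alpha,\beta)+\sum\pm\, m(\mathbf{a}^{(1)},m(\mathbf{a}^{(2)}),\mathbf{a}^{(3)},\bullet,\alpha,\beta),
\end{equation*}
which are nonzero in general (already for $\mathbf{a}$ empty they include $m_1(m_3(\bullet,\alpha,\beta))+m_3(m_1(\bullet),\alpha,\beta)$).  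These leftover terms are themselves $M_1$-exact — they are precisely $M_1$ of the pre-natural transformation $\bullet\mapsto m(\mathbf{a},\bullet,\alpha,\beta)$ — so your conclusion survives, but only if you enlarge the homotopy to $H(\mathbf{a})(\bullet)=m(\mathbf{a},\bullet,\alpha,\beta)+m(\mathbf{a},\bullet,\gamma_0)$.  This is exactly what the paper does, with the emphasis reversed: there the term $m(\mathbf{a},\bullet,\alpha,\beta)$ is the principal homotopy (it alone suffices when $\alpha,\beta$ are strict isomorphisms) and the $\gamma_0$-term is the correction for non-strictness, whereas you kept only the correction and dropped the principal term.  As written, the asserted identity $M_1(H)=N_{01}\circ N_{10}-\mathrm{Id}_{\CY^0}$ is false for your $H$.

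Two smaller points.  First, your degree check is off: since $m_2(\alpha,\beta)-\one_{L_0}=m_1(\gamma_0)$ has degree $0$, $\gamma_0$ has degree $-1$ and hence $|\gamma_0|'=-2$, not $-1$; the conclusion $||H||=-1$ is still correct because of the $+1$ carried by $m$.  Second, the unit argument is fine but note the paper works with genuine units (the term $m(\mathbf{a},\bullet,\one_{L_0})$ vanishes for $|\mathbf{a}|\geq 1$ and gives $\pm\bullet$ otherwise); if you only assume homotopy unitality you would need yet another correction term, so you should state strict unitality as a hypothesis as the paper implicitly does.
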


\begin{proof}[Proof of Lemma~\ref{lem:NM1close}]

Since $\mathcal{D}$ is ($A_\infty$-)dg and $||N_{01}||'=-1$, $M_1$ reduces to the following equation,
\begin{equation*}
\begin{array}{rl}
&M_1(N_{01})(a_1,\ldots,a_k) \\
=& m_1^{\mathcal{D}} ( N_{01} (\mathbf{a}) ) + (-1)^{||N_{01}||' \cdot |\mathbf{a}^{(1)}|'} m_2^{\mathcal{D}} (\CY^0 (\mathbf{a}^{(1)}), N_{01} (\mathbf{a}^{(2)}) ) +  m_2^{\mathcal{D}} (N_{01} (\mathbf{a}^{(1)}), \CY^1 (\mathbf{a}^{(2)}) )\\
&{ -} \sum (-1)^{||N_{01}||' + |\mathbf{a}^{(1)}|'} N_{01} ( \mathbf{a}^{(1)}, m^{\mathcal{C}}_*(\mathbf{a}^{(2)}),\mathbf{a}^{(3)})\\
=& m_1^{\mathcal{D}} ( N_{01} (\mathbf{a}) ) +  (-1)^{|\mathbf{a}^{(1)}|'} m_2^{\mathcal{D}} (\CY^0 (\mathbf{a}^{(1)}), N_{01} (\mathbf{a}^{(2)}) ) +  m_2^{\mathcal{D}} (N_{01} (\mathbf{a}^{(1)}), \CY^1 (\mathbf{a}^{(2)}) )\\
&{ +} \sum (-1)^{ |\mathbf{a}^{(1)}|'} N_{01} ( \mathbf{a}^{(1)}, m^{\mathcal{C}}_*(\mathbf{a}^{(2)}),\mathbf{a}^{(3)}).
\end{array}
\end{equation*}
If we plug in $\bullet \in \Hom_\mathcal{C} (C_k, L_1)$ to the above equation, each of first three terms becomes
$$m_1^{\mathcal{D}} (N_{01} (\mathbf{a})) (\bullet) = - { (-1)^{|\mathbf{a}|'}} { (-1)^{|\bullet|}} m_1^{\mathcal{C}} (m^{\mathcal{C}}(\mathbf{a}, \bullet,\beta)) -  { (-1)^{|\bullet|+1}} { (-1)^{|\mathbf{a}|'}}  (-1)^{|N_{01} (\mathbf{a}) |} m^{\mathcal{C}} (\mathbf{a}, - m_1^{\mathcal{C}}(\bullet),\beta)$$
{ (Recall that $|N_{01} (\mathbf{a})| = ||N_{01}|| + |\mathbf{a}|' = |\mathbf{a}|'$.)}
\begin{eqnarray*}
(-1)^{|\mathbf{a}^{(1)}|'}  m_2^{\mathcal{D}} (\CY^0(\mathbf{a}^{(1)}),N_{01} (\mathbf{a}^{(2)})) (\bullet) &=& (-1)^{|\mathbf{a}^{(1)}|'} \overbrace{(-1)^{|\CY^0 (\mathbf{a}^{(1)})|}}^{{\rm dg} \leftrightarrow A_\infty\textnormal{-dg}} { (-1)^{|\mathbf{a}^{(2)}|'}} { (-1)^{|\bullet|}} m^{\mathcal{C}}(\mathbf{a}^{(1)}, m^{\mathcal{C}} (\mathbf{a}^{(2)}, \bullet,\beta))\\
&=& -{ (-1)^{|\mathbf{a}^{(2)}|'} } { (-1)^{|\bullet|}} m^{\mathcal{C}}(\mathbf{a}^{(1)}, m^{\mathcal{C}} (\mathbf{a}^{(2)}, \bullet,\beta)) \\
&=& -  (-1)^{|\mathbf{a}|'} (-1)^{|\mathbf{a}^{(1)}|'} { (-1)^{|\bullet|}} m^{\mathcal{C}}(\mathbf{a}^{(1)}, m^{\mathcal{C}} (\mathbf{a}^{(2)}, \bullet,\beta))
\end{eqnarray*}
\begin{eqnarray*}
m_2^{\mathcal{D}} (N_{01}(\mathbf{a}^{(1)}),\CY^1(\mathbf{a}^{(2)})) (\bullet) &=& \overbrace{(-1)^{|N_{01}  (\mathbf{a}^{(1)})|}}^{{\rm dg} \leftrightarrow A_\infty\textnormal{-dg}} { (-1)^{|\mathbf{a}^{(1)}|'}} { (-1)^{|m^\mathcal{C} (\mathbf{a}^{(2)}, \bullet)|}} m^{\mathcal{C}} ( \mathbf{a}^{(1)},m^{\mathcal{C}} (\mathbf{a}^{(2)}, \bullet) ,\beta) \\
&=&   { (-1)^{| \mathbf{a}^{(2)}|' +  |\bullet|  + 1}} m^{\mathcal{C}} ( \mathbf{a}^{(1)},m^{\mathcal{C}} (\mathbf{a}^{(2)}, \bullet) ,\beta) \\
&=& - (-1)^{|\mathbf{a}^{(1)}|'  } { (-1)^{| \mathbf{a}| '} (-1)^{  |\bullet|}} m^{\mathcal{C}} ( \mathbf{a}^{(1)},m^{\mathcal{C}} (\mathbf{a}^{(2)}, \bullet) ,\beta) 
\end{eqnarray*}
The last term gives
\begin{equation*}
\begin{array}{l}
\sum (-1)^{ |\mathbf{a}^{(1)}|'} N_{01} ( \mathbf{a}^{(1)}, m^{\mathcal{C}}_*(\mathbf{a}^{(2)}),\mathbf{a}^{(3)}) (\bullet) \\
= \sum (-1)^{|\mathbf{a}^{(1)}|'} { (-1)^{|\mathbf{a}^{(1)}|' + |\mathbf{a}^{(2)}|' +1  + |\mathbf{a}^{(3)}|'}} { (-1)^{|\bullet|}} m^\mathcal{C} (\mathbf{a}^{(1)}, m^{\mathcal{C}} (\mathbf{a}^{(2)} ), \mathbf{a}^{(3)}, \bullet, \beta) \\
= -   (-1)^{|\mathbf{a}|'}  { (-1)^{|\bullet|}} \sum (-1)^{|\mathbf{a}^{(1)}|'}  m^\mathcal{C} (\mathbf{a}^{(1)}, m^{\mathcal{C}} (\mathbf{a}^{(2)} ), \mathbf{a}^{(3)}, \bullet, \beta)
\end{array}
\end{equation*}

Thus $M_1(N_{01})=0$  corresponds to the $\AI$-equation in $\mathcal{C}$ with inputs $\mathbf{a}, \bullet,\beta$ after overall multiplication by $(-1)^{|\mathbf{a}|'} (-1)^{|\bullet|'}$. (The terms involving $m_1(\beta)$ are missing, which are simply zero by our assumption.)
\end{proof}

\begin{proof}[Proof of Lemma~\ref{lem:Ncohomtoid1}]
Let us first assume that $\alpha$ and $\beta$ are strict isomorphisms.
We consider the $\AI$-equation with inputs $\mathbf{a}, \bullet,\alpha,\beta$, which involves 
$$(-1)^{|\mathbf{a}^{(1)}|'} m^{\mathcal{C}}(\mathbf{a}^{(1)},m^{\mathcal{C}} (\mathbf{a}^{(2)} ,\bullet, \alpha),\beta).$$
Apart from the term \eqref{eqn:M2twonat}, the rest of the terms are
\begin{equation}\label{eqn:restabalbe}
\begin{array}{l}
(-1)^{|\mathbf{a}^{(1)}|'} m^{\mathcal{C}}(\mathbf{a}^{(1)},m^{\mathcal{C}} (\mathbf{a}^{(2)} ,\bullet, \alpha,\beta)) \\
(-1)^{|\mathbf{a}|' + |\bullet|'} m^{\mathcal{C}}(\mathbf{a},\bullet,m^{\mathcal{C}}( \alpha,\beta)) \left\{
\begin{array}{ll} 
= (-1)^{|\bullet|'} (-1)^{|\bullet|} \bullet = - \bullet \,\, \mbox{when there is no} \,\, \mathbf{a}  \\
=0 \,\, \mbox{otherwise}
\end{array} \right.
\\
(-1)^{|\mathbf{a}^{(1)}|'} m^{\mathcal{C}}(\mathbf{a}^{(1)},m^{\mathcal{C}} (\mathbf{a}^{(2)} ,\bullet), \alpha,\beta) \\
(-1)^{|\mathbf{a}^{(1)}|'} m^{\mathcal{C}}(\mathbf{a}^{(1)},m^{\mathcal{C}} (\mathbf{a}^{(2)}), \mathbf{a}^{(3)} ,\bullet, \alpha,\beta)).
\end{array}
\end{equation}
%
Therefore, we see that
\begin{equation}\label{eqn:diffidcomp}
\begin{array}{lcl}
 \left(M_2(N_{01},N_{10}) - N_{id} \right) (\mathbf{a}) &=& - \left( \sum (-1)^{|\mathbf{a}^{(1)}|'} m^{\mathcal{C}}(\mathbf{a}^{(1)},m^{\mathcal{C}} (\mathbf{a}^{(2)} ,\bullet, \alpha,\beta))\right. \\
 && + \sum  (-1)^{|\mathbf{a}^{(1)}|'} m^{\mathcal{C}}(\mathbf{a}^{(1)},m^{\mathcal{C}} (\mathbf{a}^{(2)} ,\bullet), \alpha,\beta) \\
 &&\left. + \sum  (-1)^{|\mathbf{a}^{(1)}|'} m^{\mathcal{C}}(\mathbf{a}^{(1)},m^{\mathcal{C}} (\mathbf{a}^{(2)}), \mathbf{a}^{(3)} ,\bullet, \alpha,\beta)) \right).
 \end{array}
 \end{equation}

If we define pre-natural transformation $H$ by
$$H(\mathbf{a}) =   m^{\mathcal{C}} ( \mathbf{a},\bullet,\alpha,\beta)$$
Notice that $||H|| = -1$, so $||H||'$ is even. $M_1(H)$ is given as follows.
\begin{equation*}
\begin{array}{lcl}
M_1 (H) (\mathbf{a}) &=&  m_1^{\mathcal{D}} ( H(\mathbf{a}) ) +  m_2^{\mathcal{D}} (\CY^0 (\mathbf{a}^{(1)}), H (\mathbf{a}^{(2)} ))  + m_2^{\mathcal{D}} (H(\mathbf{a}^{(1)}), \CY^0 (\mathbf{a}^{(2)}) ) \\
&& - \sum (-1)^{|\mathbf{a}^{(1)}|'} H(\mathbf{a}^{(1)}, m^{\mathcal{C}} (\mathbf{a}^{(2)}), \mathbf{a}^{(3)}) \\
&=& - m_1 (   m^{\mathcal{C}} ( \mathbf{a},\bullet,\alpha,\beta)) - (-1)^{|H(\mathbf{a})|} m^{\mathcal{C}} ( \mathbf{a},- m_1^{\mathcal{C}}(\bullet),\alpha,\beta)) \\
 && + \overbrace{(-1)^{|\CY^0 (\mathbf{a}^{(1)})|}}^{ {\rm dg} \leftrightarrow A_\infty\textnormal{-dg} } m^{\mathcal{C}} ( \mathbf{a}^{(1)},  m^{\mathcal{C}} ( \mathbf{a}^{(2)},\bullet,\alpha,\beta)) \\
 && + \overbrace{(-1)^{|H(\mathbf{a}^{(1)}|}}^{{\rm dg} \leftrightarrow A_\infty\textnormal{-dg}}  m^{\mathcal{C}} ( \mathbf{a}^{(1)} ,m^{\mathcal{C}} (\mathbf{a}^{(2)}, \bullet), \alpha,\beta) \\
 &&- \sum (-1)^{|\mathbf{a}^{(1)}|'}   m^{\mathcal{C}}(\mathbf{a}^{(1)},m^{\mathcal{C}} (\mathbf{a}^{(2)}), \mathbf{a}^{(3)} ,\bullet, \alpha,\beta)).
\end{array}
\end{equation*}
Observe that $|\CY^0 (\mathbf{a}) | = |\CY^0 (\mathbf{a}) |' + 1 =|\mathbf{a}|' +1$ and $|H(\mathbf{a})|=||H|| + |\mathbf{a}|' =|\mathbf{a}|' -1 $. Hence the right hand side reads
\begin{equation}\label{eqn:M1H}
\begin{array}{l}
-   m_1^{\mathcal{C}} (  m^{\mathcal{C}} ( \mathbf{a},\bullet,\alpha,\beta)) - (-1)^{|\mathbf{a}|' } m^{\mathcal{C}} ( \mathbf{a}, m_1^{\mathcal{C}}(\bullet),\alpha,\beta) \\
-  (-1)^{|\mathbf{a}^{(1)}|'  }  m^{\mathcal{C}} ( \mathbf{a}^{(1)},  m^{\mathcal{C}} ( \mathbf{a}^{(2)},\bullet,\alpha,\beta)) -(-1)^{|\mathbf{a}^{(1)}|'}  m^{\mathcal{C}} ( \mathbf{a}^{(1)} ,m^{\mathcal{C}} (\mathbf{a}^{(2)}, \bullet), \alpha,\beta) \\
- \sum (-1)^{|\mathbf{a}^{(1)}|'}   m^{\mathcal{C}}(\mathbf{a}^{(1)},m^{\mathcal{C}} (\mathbf{a}^{(2)}), \mathbf{a}^{(3)} ,\bullet, \alpha,\beta))
\end{array}
\end{equation} 
which is precisely \eqref{eqn:diffidcomp}. Thus we see that $M_2(N_{01},N_{10}) - N_{id}= M_1(H)$. \

If $m_2(\alpha,\beta) = \one_{L_0} + m_1(x)$ with nontrivial $m_1(x)$, then the second term in \eqref{eqn:restabalbe} additionally involves  
$$ (-1)^{|\mathbf{a}|' + |\bullet|'} m^{\mathcal{C}} (\mathbf{a}, \bullet, m_1(x) )$$
which by $A_\infty$-relation equals to 
\begin{equation*}
\begin{array}{l}
-m_1^{\mathcal{C}} (m^{\mathcal{C}} (\mathbf{a}  , \bullet, x) ) - (-1)^{|\mathbf{a}|' } m^{\mathcal{C}} (\mathbf{a}  , m_1(\bullet), x) \\
 - (-1)^{|\mathbf{a}^{(1)}|'} m^{\mathcal{C}} (\mathbf{a}^{(1)}, m^\mathcal{C} (\mathbf{a}^{(2)}, \bullet, x)) - (-1)^{|\mathbf{a}^{(1)}|'}  m^{\mathcal{C}}(\mathbf{a}^{(1)}, m^\mathcal{C} (\mathbf{a}^{(2)}, \bullet), x) \\
- \sum (-1)^{|\mathbf{a}^{(1)}|'} m^{\mathcal{C}} (\mathbf{a}^{(1)}, m^\mathcal{C} (\mathbf{a}^{(2)}), \mathbf{a}^{(3)}, \bullet, x)
\end{array}
\end{equation*}
Note that this is precisely the same as \eqref{eqn:M1H} except that $x$ sits at the end of each term instead of $\alpha,\beta$. Therefore, if one sets
$$H=  m^{\mathcal{C}} ( \mathbf{a},\bullet,\alpha,\beta) +  m^{\mathcal{C}} ( \mathbf{a},\bullet,x),$$
the difference between $N_{01} \circ N_{10}$ and the identity natural transformation is  $M_1(H)$, which completes the proof.
\end{proof}

\section{Proof of main gluing theorem \ref{thm:maingluealg}}\label{sec:pfgluethm}
We give a detailed proof of Theorem \ref{thm:maingluealg}.
First note that a similar statement to Theorem \ref{thm:ye} holds for the curved case. Consider two object $L_0$ and $L_1$ in the curved $A_\infty$-category with the curvature $C$. Namely,  $m_0^{L_0} = C \cdot \one_{L_0}$ and $m_0^{\BL_1} = C \cdot \one_{L_1}$ for same $C$. Then the corresponding Yoneda functors lands on the category of curved complexes with the curvature $C$, which is again a dg-category.
Notice that the proofs of Lemma \ref{lem:NM1close} and \ref{lem:Ncohomtoid1} involve $m_k$ always with more than one inputs (at least it has either $(\bullet,\beta)$ or $(\bullet,\alpha,\beta)$ or $(\bullet,x)$). Therefore, inserting $m_0$ terms does not make any difference as they all vanish due to the property of the unit. We conclude that two Yoneda functors are still quasi-isomorphic through natural transformations when two objects in a curved $A_\infty$-category are related by isomorphisms.

We will follow the new convention given in \ref{sec:dgsign} throughout the argument, and we write $\mathcal{B}_{\AI}:=\MF_{A_\infty} (W^{\bL_0})$, $\mathcal{C}_{\AI}:=\MF_{A_\infty} (W^{\bL_1})$, $\mathcal{D}_{\AI}:=\MF_{A_\infty} (W^{\bL_0}|_{V})$ . Recall that the  $A_\infty$-functor
$$\mathcal{F} : \Fuk (X) \to \mathcal{B}_{\AI} \times^h_{\mathcal{D}_{\AI}}  \mathcal{C}_{\AI}$$ 
is defined as follows (where the right hand side is regarded as $A_\infty$-dg category). For an object $L \in \Fuk (X)$, the image $\mathcal{F} (L)$ is defined as
\begin{equation*}\label{eqn:hptyfpobjfunc}
\begin{array}{c}
\left(\mathcal{F}^{\bL_0}(L), \quad \mathcal{F}^{\bL_1}(L),\quad  \mathcal{F}_0^{\bL_0 }(L)|_{V } \stackrel{}{\longrightarrow}  \mathcal{F}_0^{\bL_1}(L)|_{V_1}  \right) = \\
 \left(CF(L, (\bL_0 , b_0 )), \quad   CF(L,(\bL_1,b_1) ,\quad \xymatrix{ CF(L, (\bL_0 , b_0 ))|_{V_0 } & CF(L, (\bL_1, \phi(b_0)))|_{V_1} \ar[l]_{ N_{01}(L)} }
 \right)
 \end{array}
\end{equation*}
where 
$$N_{01}(L)=  (-1)^{|\bullet|}  m (\bullet,e^{\phi(b_0)},\beta, e^b) \in \Hom_{MF_{A_\infty}} ( \mathcal{F}^{\bL_0 }(L)|_{V_0} ,\mathcal{F}^{\bL_1}(L)|_{V_1} ).$$
The map $N_{01}$ induces an isomorphism on between $m_1^{\phi(b_0)}$ and $m_1^{b_0}$ cohomologies since the similarly defined map $N_{10}$ using $\alpha$ instead of $\beta$ induces its inverse on the cohomology level.
For simplicity, let us write $N_{01}(L)=N_0, N_{10}(L)=N_0'$. We have the following.
\begin{lemma}\label{lem:n0hptyeq}
$m_2^{\mathcal{D}_{A_\infty}}(N_0, N_0')-id= m_1^{\mathcal{D}_{A_\infty}} (H)$ with
$$ H (\bullet)= m(\bullet, e^{b_1},\beta, e^{b_0}, \alpha, e^{b_1}) + m(\bullet, e^{b_1},x, e^{b_1}) $$
where we set $m_2^{b_1,b_0 ,b_1} (\alpha,\beta) = \one_{\bL_1} + m_1^{b_1,b_1}(x)$.
\end{lemma}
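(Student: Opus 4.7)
The plan is to mimic the uncurved argument of Lemma~\ref{lem:Ncohomtoid1}, but now keeping track of the curvatures $m_0^{b_0}=W_0(b_0)\cdot\one_{\bL_0}$ and $m_0^{b_1}=W_1(b_1)\cdot\one_{\bL_1}$, which agree on $V_0\cap V_1$ by part (1) of Theorem~\ref{thm:maingluealg}. The main idea is a single application of the $A_\infty$--relation to the string of inputs $(\bullet, e^{b_1},\beta, e^{b_0},\alpha, e^{b_1})$, with the term isolating the factor $m^{b_1,b_0,b_1}_2(\beta,\alpha)=\one_{\bL_1}+m_1^{b_1}(x)$ producing exactly the wanted $\mathrm{id}$, an obstruction $m_1(x)$ handled by the second summand of $H$, and all remaining terms reorganising into $m_1^{\mathcal{D}_{A_\infty}}$ applied to the first summand of $H$.

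First I would unfold $m_2^{\mathcal{D}_{A_\infty}}(N_0,N_0')$ using the $A_\infty$-dg conventions of Section~\ref{sec:dgsign}: since both $N_0$ and $N_0'$ are defined as pre-compositions with $\bullet$ followed by $m$-operations absorbing $e^{b_0}$, $e^{b_1}$ and one of $\alpha,\beta$, the composition is (up to the sign dictated by $|\bullet|$ and the dg$\leftrightarrow A_\infty$-dg sign) equal to
\[
(\bullet)\;\longmapsto\;\pm\, m\bigl(\bullet, e^{b_1},\beta, e^{b_0},\alpha, e^{b_1}\bigr).
\]
Next, I would apply the $A_\infty$--relation to the same input string. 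The single term in which the nested $m$ swallows precisely $(\beta,e^{b_0},\alpha)$ produces $m(\bullet,e^{b_1},m_2^{b_1,b_0,b_1}(\beta,\alpha),e^{b_1})=m(\bullet,e^{b_1},\one_{\bL_1},e^{b_1})+m(\bullet,e^{b_1},m_1^{b_1}(x),e^{b_1})$; unitality collapses the first piece to $\bullet$ (this is the $\mathrm{id}$) and leaves the second piece as precisely the $m_1^{\mathcal{D}_{A_\infty}}$ of $m(\bullet,e^{b_1},x,e^{b_1})$ after another use of the $A_\infty$--relation for the string $(\bullet,e^{b_1},x,e^{b_1})$ (the terms that this second $A_\infty$--relation produces but are not in $m_1^{\mathcal{D}_{A_\infty}}$-form precisely cancel against the $m_0^{b_1}=W_1\one$ insertions, using $W_0=W_1$ on $V_0\cap V_1$).

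All remaining terms of the first $A_\infty$--relation (those in which the inner $m$ covers some \emph{other} consecutive substring of $(\bullet,e^{b_1},\beta,e^{b_0},\alpha,e^{b_1})$) are by inspection exactly the terms that arise in $m_1^{\mathcal{D}_{A_\infty}}\bigl(m(\bullet,e^{b_1},\beta,e^{b_0},\alpha,e^{b_1})\bigr)$: the dg differential on $\mathcal{D}_{A_\infty}$ is $-m_1^{0,b_0}$ on one side and $-m_1^{0,b_1}$ on the other side of the composition (again absorbing the $e^{b_0},e^{b_1}$ strings), and these correspond to the nested $m$ sitting inside either the outermost $b_1$-string, or engulfing $\bullet$ and part of the $b_1$-string, or engulfing the tail $b_1$-string and $\alpha$, etc. Matching the two lists of terms is what the uncurved Lemma~\ref{lem:Ncohomtoid1} already did; the curved additions $m_0^{b_i}=W_i\one$ contribute only the pair $W_0-W_1$, which vanishes on $V_0\cap V_1$.

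The main obstacle is the bookkeeping of signs: we must consistently track the Koszul sign $(-1)^{|\bullet|}$ implicit in the definitions of $N_0,N_0'$, the dg$\leftrightarrow A_\infty$-dg translation sign arising each time we use $m_1^{\mathcal{D}_{A_\infty}}$ or $m_2^{\mathcal{D}_{A_\infty}}$, and the $(-1)^{|\mathbf a^{(1)}|'}$ signs coming from the $A_\infty$-relation. These are exactly the signs managed in the proofs of Lemma~\ref{lem:NM1close} and Lemma~\ref{lem:Ncohomtoid1}; having fixed the conventions there, the present curved version is a direct transcription with the additional, harmless insertions of $e^{b_0}$ and $e^{b_1}$ between the ``named'' inputs and with the curvature contributions cancelling by $W_0(b_0)=W_1(\phi(b_0))$.
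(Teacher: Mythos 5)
Your overall strategy is the paper's: apply the $A_\infty$-relation to the input string $(\bullet, e^{b_1},\beta, e^{b_0},\alpha, e^{b_1})$, isolate the term where the inner $m$ swallows $(\beta,e^{b_0},\alpha)$ to produce $\one_{\bL_1}+m_1^{b_1,b_1}(x)$, use unitality to extract $\mathrm{id}$, and absorb the $m_1(x)$ remainder by a second $A_\infty$-relation applied to $(\bullet,e^{b_1},x,e^{b_1})$, yielding the second summand of $H$. This is exactly the computation in the paper's displayed equation for $m(m(\bullet,e^{b_1},\beta,e^{b_0}),e^{b_0},\alpha,e^{b_1})$.

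However, your opening step contains a genuine error. You assert that the composition $m_2^{\mathcal{D}_{A_\infty}}(N_0,N_0')$ equals $\pm\, m(\bullet, e^{b_1},\beta, e^{b_0},\alpha, e^{b_1})$. It does not: the composition is the \emph{nested} expression $\pm\, m\bigl(m(\bullet,e^{b_1},\beta,e^{b_0}),e^{b_0},\alpha,e^{b_1}\bigr)$, while the \emph{flat} expression $m(\bullet, e^{b_1},\beta, e^{b_0},\alpha, e^{b_1})$ is the first summand of the homotopy $H$ itself. These two cannot be equal for degree reasons alone: since $|\alpha|'=|\beta|'=-1$ and each $m_k$ has shifted degree $+1$, the nested composition preserves the degree of $\bullet$ (it is an even morphism, as an isomorphism of matrix factorizations must be), whereas the flat expression shifts degree by one (it is odd, as a homotopy must be). This misidentification then corrupts your term accounting: in the $A_\infty$-relation for the string, the nested composition is itself one of the ``remaining terms'' (the one where the inner $m$ swallows $(\bullet,e^{b_1},\beta,e^{b_0})$), so it is not true that all terms other than the $(\beta,e^{b_0},\alpha)$-term assemble into $m_1^{\mathcal{D}_{A_\infty}}$ of the flat expression. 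The correct bookkeeping is: the relation has (i) the nested composition, (ii) the $(\beta,e^{b_0},\alpha)$-term giving $\mathrm{id}$ plus the $m_1(x)$ remainder, (iii) the two terms where the inner $m$ swallows either just $(\bullet,e^{b_1})$ or the entire string, which together are $m_1^{\mathcal{D}_{A_\infty}}$ of the flat expression, and (iv) terms vanishing because $m_1^{b_0,b_1}(\alpha)=m_1^{b_1,b_0}(\beta)=0$. Solving for (i) gives the lemma. One further small point: the curvature insertions $m_0^{b_i}=W_i\cdot\one$ do not need the identity $W_0(b_0)=W_1(b_1)$ here; every such term is an $m_{\geq 3}$ with a unit input and vanishes outright by unitality, which is how the paper disposes of them.
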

\begin{proof}
We compute $m_2^{\mathcal{D}_{A_\infty}}(N_0, N_0')$ using $A_\infty$-relations as follows. (We set $b_1:=\phi(b_0)$ in the following computation for simplicity.)
\begin{equation}\label{eqn:mmqisom}
\begin{array}{lcl}
m(m(\bullet,e^{b_1}, \beta, e^{b_0}),e^{b_0}, \alpha, e^{b_1}) &=& (-1)^{|\bullet|} m  (\bullet, e^{b_1},  m_2^{b_1,b_0,b_1} (\beta, \alpha), e^{b_1}) - m ( m(\bullet, e^{b_1},  \beta,e^{b_0} , \alpha, e^{b_1}),  e^{b_1}  )\\
&&- m(m(\bullet,e^{b_1}), e^{b_1},\beta, e^{b_0}, \alpha, e^{b_1})   \\
&=&  (-1)^{|\bullet|} m  (\bullet, e^{b_1},  \one_{\bL_1} + m_1^{b_1,b_1} (x), e^{b_1}) - m_1^{0,b_1} ( m(\bullet, e^{b_1},  \beta,e^{b_0} , \alpha, e^{b_1}) )\\
&&- m(m_1^{0,b_1} (\bullet), e^{b_1},\beta, e^b, \alpha, e^{b_1})   \\
&=& \bullet + (-1)^{|\bullet|} m_2^{0,b_1,b_1}  (\bullet,  m_1^{b_1,b_1} (x) ) - m_1^{0,b_1} ( m(\bullet, e^{b_1},  \beta,e^{b_0} , \alpha, e^{b_1}) )\\
&&- m(m_1^{0,b_1} (\bullet), e^{b_1},\beta, e^{b_0}, \alpha, e^{b_1})   \\
&=&  \bullet - m_1^{0,b_1} ( m(\bullet,e^{b_1},x,e^{b_1}) ) - m(m_1^{0,b_1} (\bullet),e^{b_1},x,e^{b_1}) \\
&&-m_1^{0,b_1} ( m(\bullet, e^{b_1},  \beta,e^{b_0} , \alpha, e^{b_1}) )
- m(m_1^{0,b_1} (\bullet), e^{b_1},\beta, e^{b_0}, \alpha, e^{b_1}) 
\end{array}
\end{equation}
using $m(e^{b_1}, \beta, e^{b_0}, \alpha, e^{b_1}) =id$.  Therefore, the difference between $m_2^{\mathcal{D}_{A_\infty}}(N_0, N_0')$ and $id$ is precisely $m_1^{\mathcal{D}_{A_\infty}} (H)$.


\end{proof}
The same statement holds for the composition $N_0$ and $N_0'$ in the other direction. Therefore, \eqref{eqn:hptyfpobjfunc} is a well-defined object in $\mathcal{B}_{\AI} \times^h_{\mathcal{D}_{\AI}} \mathcal{C}_{\AI}$. 

\begin{remark}
In dg-setting, the category $\mathcal{B}_{\AI} \times^h_{\mathcal{D}_{\AI}} \mathcal{C}_{\AI}$ corresponds to $\mathcal{C}_{dg} \times^h_{\mathcal{D}_{dg}} \mathcal{B}_{dg}$. Notice that the positions of $\mathcal{B}$ and $\mathcal{C}$ are switched.
\end{remark}

For a tuple of composable morphisms 
$$(a_1, \cdots, a_k) \in \Hom_{\Fuk(X)} (L_1,L_2) \otimes \cdots \otimes \Hom_{\Fuk(X)} (L_k,L_{k+1}),$$
we have defined $\mathcal{F}_k (a_1, \cdots, a_k)$ to be 
\begin{equation*}\label{eqn:fcttohptypd}
\begin{array}{c}
(\mathcal{F}_k^{\bL_0} (\mathbf{a}),\mathcal{F}_k^{\bL_1} (\mathbf{a}), N_k  ( \mathbf{a}) ) = \\
\left( m (\mathbf{a}, \bullet, e^{b_0}), \quad m (\mathbf{a}, \bullet, e^{b_1}), \quad   (-1)^{|\mathbf{a}|'} (-1)^{\bullet} m (\mathbf{a},\bullet, e^{\phi(b_0)} , \beta, e^{b_0})|_{b_0 \in V_0} \right)
\end{array}
\end{equation*}
Note that $\bullet \mapsto m(\mathbf{a}, \bullet, e^{\phi(b_0)}, \beta,e^{b_0})$ gives a map
$$CF(L_{k+1}, (\bL_1,\phi(b_0) )) \to CF(L_1, (\bL_0 ,b_0)) $$
which is an element in $\Hom_{\MF_{A_\infty}} ( \mathcal{F}^{\bL_0} (L_1)|_{V_0},\mathcal{F}^{\bL_1} (L_{k+1})|_{V_1})$ again due to our convention. 

$\{N_k\}$ (and $\{N_k'\}$ defined similarly using $\alpha$) gives a natural transformation between two local functors $r_0 \circ \mathcal{F}^{\bL_0} = \mathcal{F}^{\bL_0} |_{V_0}$ and $r_1 \circ \mathcal{F}^{\bL_1}=\mathcal{F}^{\bL_1} |_{V_1}$. Furthermore, the composition of two natural transformations $\{N_k\}$ and $\{N_k'\}$ (in each of directions) is homotopic to the identity transformation. The proof is the same as that of Lemma \ref{lem:Ncohomtoid1} (with $L_0$ and $L_i$ replaced by $(\bL_0,b_0)$ and $(\bL_1,\phi(b_0))$), and we do not repeat. Hence we obtain (2) of Theorem \ref{thm:maingluealg}.

We are now ready to give a proof of (3) of Theorem \ref{thm:maingluealg}. 

\begin{prop}\label{prop:3rditemmain}
$\{\mathcal{F}_k\} : \Fuk (X) \to \mathcal{B}_{\AI} \times^h_{\mathcal{D}_{\AI}} \mathcal{C}_{\AI}$ defines an $A_\infty$-functor.
\end{prop}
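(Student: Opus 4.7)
The plan is to exploit the explicit description of morphisms, composition, and differential in the homotopy fiber product $\mathcal{B}_{\AI} \times^h_{\mathcal{D}_{\AI}} \mathcal{C}_{\AI}$ to split the $A_\infty$-functor equations for $\mathcal{F} = \{\mathcal{F}_k\}$ into three simultaneous pieces, and then verify each piece in turn. Concretely, unpacking the $A_\infty$-analogue of the dg homotopy fiber product structure, an $A_\infty$-functor into $\mathcal{B}_{\AI} \times^h_{\mathcal{D}_{\AI}} \mathcal{C}_{\AI}$ is precisely the data of (i) an $A_\infty$-functor $\mathcal{F}^{\bL_0}: \Fuk(X) \to \mathcal{B}_{\AI}$, (ii) an $A_\infty$-functor $\mathcal{F}^{\bL_1}: \Fuk(X) \to \mathcal{C}_{\AI}$, and (iii) a pre-natural transformation $N: r_0\circ \mathcal{F}^{\bL_0} \Rightarrow r_1\circ \mathcal{F}^{\bL_1}$ satisfying $M_1(N) = 0$. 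So I would rewrite the $A_\infty$-functor equation for $\mathcal{F}$ as a triple of equations on the three components $(\mathcal{F}_k^{\bL_0}, \mathcal{F}_k^{\bL_1}, N_k)$ and verify them separately.

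The components (i) and (ii) are already established: they are literally the identities proved in Theorem~\ref{lem:familyyoneda} applied to the two reference objects $\bL_0$ and $\bL_1$. So the entire content of the proposition reduces to proving that $\{N_k\}$, as defined by
$$N_k(\mathbf{a})(\bullet) = (-1)^{|\mathbf{a}|'}(-1)^{|\bullet|}\, m(\mathbf{a}, \bullet, e^{\phi(b_0)}, \beta, e^{b_0}),$$
defines a natural transformation over $V_0 \cong V_1$, i.e.\ $M_1(\{N_k\}) = 0$ as a pre-natural transformation from $r_0\circ \mathcal{F}^{\bL_0}$ to $r_1\circ \mathcal{F}^{\bL_1}$.

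To prove $M_1(N) = 0$, I would evaluate both sides on a test element $\bullet \in CF(L_{k+1}, (\bL_1,\phi(b_0)))$ and expand each summand of $M_1(N)$ explicitly as iterated $m$-operations inserting $\mathbf{a}$, $\bullet$, $e^{\phi(b_0)}$, $\beta$, $e^{b_0}$. After pulling out an overall sign, the resulting expression is precisely the full $A_\infty$-relation applied to the ordered input string $(\mathbf{a}, \bullet, e^{\phi(b_0)}, \beta, e^{b_0})$, with the single exception of those summands in which the inner $m$ lands entirely inside $e^{\phi(b_0)}$ or entirely inside $e^{b_0}$, or contains $\beta$ and nothing else. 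These exceptional terms collapse as follows: the inner operations inside $e^{\phi(b_0)}$ and $e^{b_0}$ produce $W_1(\phi(b_0))\cdot \one_{\bL_1}$ and $W_0(b_0)\cdot \one_{\bL_0}$ by weak Maurer--Cartan, whose contributions to the outer $m$ cancel by the unit axiom together with $W_0(b_0) = W_1(\phi(b_0))$ from part~(1) of the theorem; the term with $\beta$ alone vanishes since $m_1^{\phi(b_0),b_0}(\beta) = 0$ by the cocycle condition in Definition~\ref{defn:iss}. This is exactly the curved/deformed analogue of the calculation that produced Lemma~\ref{lem:NM1close}, upgraded from $\mathcal{C}$ to the $A_\infty$-dg target.

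The main obstacle I anticipate is not conceptual but combinatorial: carefully matching signs between the Hochschild differential $M_1$ on pre-natural transformations (with the $A_\infty$-dg sign convention fixed in Appendix~\ref{sec:dgsign} and used throughout this section), the composition rule in the homotopy fiber product, and the ``dg $\leftrightarrow$ $A_\infty$-dg'' sign conversion that already appeared in the computations for Lemma~\ref{lem:n0hptyeq} and in the proof of Lemma~\ref{lem:NM1close}. Once the signs are pinned down, the only genuinely new input needed beyond Theorem~\ref{lem:familyyoneda} is the cocycle property of $\beta$ and the equality of potentials, both of which have been established earlier; no further geometric argument is required.
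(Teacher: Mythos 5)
Your proposal is correct and follows essentially the same route as the paper: the first two components reduce to Theorem~\ref{lem:familyyoneda}, and the third component of the functor equation in the homotopy fiber product is exactly the naturality equation $M_1(N)=0$ for $N_k(\mathbf{a}) = \pm m(\mathbf{a},\bullet,e^{\phi(b_0)},\beta,e^{b_0})$, which the paper likewise verifies by matching it term-by-term (after an overall sign) with the $A_\infty$-relation on the input string $(\mathbf{a},\bullet,e^{b_1},\beta,e^{b_0})$, using $m_1(\beta)=0$, the weak Maurer--Cartan equations, and $W_0(b_0)=W_1(b_1)$. The only small point worth keeping in mind is that being an object of the fiber product also requires $[N_0(L)]$ to be invertible in cohomology, which the paper establishes separately via Lemma~\ref{lem:n0hptyeq} rather than inside the functor-equation check.
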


\begin{proof}
Put $\mathcal{E}:=\mathcal{B}_{\AI} \times^h_{\mathcal{D}_{\AI}} \mathcal{C}_{\AI}$, and recall from \ref{sec:dgsign} and \ref{sec:dghptyfp} that the $A_\infty$-structure on $\mathcal{E}$ is given by
$$m_1^\mathcal{E} (\mu,\nu, \gamma) = (m_1^{\mathcal{B}} (\mu), m_1^{\mathcal{C}} (\nu), - m_1^{\mathcal{D}} (\gamma) -  m_2^{\mathcal{D}} (\phi_2, G(\mu)) + (-1)^{|\nu|} m_2^{\mathcal{D}} (L(\nu), \phi_1),$$
$$m_2^\mathcal{E} ((\mu',\nu', \gamma'),(\mu,\nu, \gamma)) )=(   m_2^{\mathcal{B}}(\mu', \mu), m_2^{\mathcal{C}}(\nu', \nu), -m_2^{\mathcal{D}}(\gamma',G(\mu)) + (-1)^{|\nu'|} m_2^{\mathcal{D}} (L(\nu'),\gamma) ).$$
(Here, we suppressed the subscript $\AI$ for simplicity)

For given a tuple $\mathbf{a} :=(a_1, \cdots, a_k)$ of morphisms in $\Fuk(X)$, one needs to show that
\begin{equation}\label{eqn:functeqnhpty}
m_1^{\mathcal{E}} ( \mathcal{F} (\mathbf{a}) ) + m_2^{\mathcal{E}} (\mathcal{F} (\mathbf{a}^{(1)}), \mathcal{F}(\mathbf{a}^{(2)}) ) = \sum (-1)^{|\mathbf{a}^{(1)}|'} \mathcal{F} (\mathbf{a}^{(1)}, m (\mathbf{a}^{(2)}), \mathbf{a}^{(3)} )
\end{equation}
where $m$ on the right hand side is the $A_\infty$-operation on $\Fuk(X)$. Note that the first and the second components of \eqref{eqn:fcttohptypd} automatically satisfy the condition since they are simply $\mathcal{F}^{\bL_0}$ and $\mathcal{F}^{\bL_1}$ which has shown to be $A_\infty$-functors earlier. Thus we only need to check the last component. The following is the list of terms that appear in the third components of \eqref{eqn:functeqnhpty}. We write $b_1$ for $\phi (b_0)$ in the computation below for simplicity.
\begin{equation*}
\begin{array}{l}
-m_1^{\mathcal{D}} ( m(\mathbf{a}, \bullet, e^{b_1}, \beta, e^{b_0}) )\\
=-\left( - (-1)^{|\mathbf{a}|'} (-1)^{|\bullet|}   m_1^{0,b_0} ( m(\mathbf{a}, \bullet, e^{b_1}, \beta, e^{b_0})  ) - \overbrace{ (-1)^{|m(\mathbf{a}, \bullet, e^{b_1}, \beta, e^{b_0})|}}^{\textnormal{as a map in} \,\, \bullet}  (-1)^{|\mathbf{a}|'} (-1)^{|\bullet|+1}  m(\mathbf{a},-m_1^{0,b_1} (\bullet), e^{b_1},\beta, e^{b_0}) \right) \\
=   (-1)^{|\mathbf{a}|'} (-1)^{|\bullet|}  m_1 (   m(\mathbf{a}, \bullet,e^{b_1}, \beta, e^{b_0}), e^{b_0}) -  (-1)^{|\mathbf{a}|'} (-1)^{|\bullet|+1}  \overbrace{ (-1)^{|m(\mathbf{a}, \bullet, e^{b_1}, \beta, e^{b_0})|} }^{\textnormal{as a map in} \,\, \bullet} m( \mathbf{a}, m(\bullet, e^{b_1} ) , e^{b_1}, \beta, e^{b_0})  \\
=  (-1)^{|\mathbf{a}|'} (-1)^{|\bullet|}  m_1 (   m(\mathbf{a}, \bullet,e^{b_1}, \beta, e^{b_0}), e^{b_0}) + (-1)^{|\mathbf{a}|'} (-1)^{|\bullet| }  \overbrace{ (-1)^{| \mathbf{a} |'} }^{\textnormal{as a map in} \,\, \bullet} m( \mathbf{a}, m(\bullet, e^{b_1} ) , e^{b_1}, \beta, e^{b_0}) 
\end{array}
\end{equation*}

\begin{equation*}
\begin{array}{lcl}
-m_2^{\mathcal{D}} (\phi_2, G(\mu)) &=&-
m_2^{\mathcal{D}} ( (-1)^{|\star|} m(\star,e^{\phi(b_0)},\beta,e^{b_0}), \overbrace{m(\mathbf{a},\bullet,e^{b_1}) }^{\star} ) \\
&=& - (-1)^{|m(\mathbf{a},\bullet,e^{b_1})  |}  \overbrace{(-1)^{|m(\star,e^{b_1 },\beta,e^{b_0}) |}}^{\textnormal{as a map in} \,\, \star} m(m(\mathbf{a},\bullet,e^{b_1}),e^{b_1 },\beta,e^{b_0}) \\
&=&    (-1)^{| \mathbf{a}|'+ |\bullet|' +1}  m(m(\mathbf{a},\bullet,e^{b_1}),e^{b_1 },\beta,e^{b_0})  \\
&=&   (-1)^{| \mathbf{a}|'} (-1)^{ |\bullet| } m(m(\mathbf{a},\bullet,e^{b_1}),e^{b_1 },\beta,e^{b_0}) 
\end{array}
\end{equation*}

\begin{equation*}
\begin{array}{lcl}
  (-1)^{|\nu|} m_2^{\mathcal{D}} (L(\nu), \phi_1) &=&   (-1)^{|\nu|}  m_2^{\mathcal{D}} (m(\mathbf{a},\star, e^{b_0}),\overbrace{  (-1)^{|\bullet|}  m(\bullet,e^{\phi(b_0)},\beta,e^{b_0}) }^{\star} ) \\
&=&   \overbrace{(-1)^{|m(\mathbf{a}, \star, e^{b_0}) )|}}^{\textnormal{as a map in} \,\, \star}   (-1)^{|\bullet|} \overbrace{(-1)^{|m(\mathbf{a},\star, e^{b_0}) |}}^{\textnormal{as a map in}\,\, \star} m(\mathbf{a}, m(\bullet,e^{\phi(b_0)},\beta,e^{b_0}), e^{b_0})\\
&=&   (-1)^{|\bullet|}  m(\mathbf{a}, m(\bullet,e^{\phi(b_0)},\beta,e^{b_0}), e^{b_0}) \\
&=&    (-1)^{|\mathbf{a}|'} (-1)^{|\bullet|}  (-1)^{|\mathbf{a}|'} m(\mathbf{a}, m(\bullet,e^{\phi(b_0)},\beta,e^{b_0}), e^{b_0})
\end{array}
\end{equation*}

\begin{equation*}
\begin{array}{lcl}
 -m_2^{\mathcal{D}}(\gamma',G(\mu)) &=& -m_2^{\mathcal{D}} (   (-1)^{|\mathbf{a}^{(1)}|'} (-1)^{|\star|}   m(\mathbf{a}^{(1)},\star, e^{b_1}, \beta, e^{b_0}), \overbrace{m(\mathbf{a}^{(2)},\bullet, e^{b_1} )}^{\star})\\
 &=& - \overbrace{(-1)^{|m(\mathbf{a}^{(1)},\bullet, e^{b_1}, \beta, e^{b_0}) |}}^{\textnormal{as a map}}   (-1)^{|\mathbf{a}^{(1)}|'} (-1)^{|m(\mathbf{a}^{(2)},\bullet, e^{b_1} ) |}   m(\mathbf{a}^{(1)},m(\mathbf{a}^{(2)},\bullet, e^{b_1} ), e^{b_1}, \beta, e^{b_0}) \\
  &=& -  (-1)^{| \mathbf{a}^{(1)} |'}   (-1)^{|\mathbf{a}^{(1)}|'} (-1)^{|m(\mathbf{a}^{(2)},\bullet, e^{b_1} ) |}  m(\mathbf{a}^{(1)},m(\mathbf{a}^{(2)},\bullet, e^{b_1} ), e^{b_1}, \beta, e^{b_0})\\
    &=&  (-1)^{| \mathbf{a}^{(1)} |'}    (-1)^{|\mathbf{a}^{(1)}|'} (-1)^{|\mathbf{a}^{(2)}|'+|\bullet|'+1}   m(\mathbf{a}^{(1)},m(\mathbf{a}^{(2)},\bullet, e^{b_1} ), e^{b_1}, \beta, e^{b_0})
 \end{array}
\end{equation*}

\begin{equation*}
\begin{array}{lcl}
(-1)^{|\nu'|} m_2^{\mathcal{D}} (L(\nu'),\gamma) ) &=& (-1)^{|\nu'|} m_2^{\mathcal{D}}
(m(\mathbf{a}^{(1)}, \star, e^{b}), \overbrace{  (-1)^{|\mathbf{a}^{(2)}|'} (-1)^{|\bullet|} m(\mathbf{a}^{(2)}, \bullet, e^{b_1},\beta, e^{b_0})}^{\star} )\\
&=&  (-1)^{|m(\mathbf{a}^{(1)}, \bullet, e^{b_0})|}  (-1)^{|\mathbf{a}^{(2)}|'} (-1)^{|\bullet|} (-1)^{|m(\mathbf{a}^{(1)}, \bullet, e^{b_0})|} m(\mathbf{a}^{(1)}, m(\mathbf{a}^{(2)}, \bullet, e^{b_1},\beta, e^{b_0}), e^{b_0}) \\
&=&  (-1)^{|m(\mathbf{a}^{(1)}, \bullet, e^{b_0})|}   (-1)^{|\mathbf{a}^{(2)}|'} (-1)^{|\bullet|} (-1)^{|m(\mathbf{a}^{(1)}, \bullet, e^{b_0})|} m(\mathbf{a}^{(1)}, m(\mathbf{a}^{(2)}, \bullet, e^{b_1},\beta, e^{b_0}), e^{b_0}) \\
&=&    (-1)^{|\mathbf{a}^{(2)}|'} (-1)^{|\bullet|} m(\mathbf{a}^{(1)}, m(\mathbf{a}^{(2)}, \bullet, e^{b_1},\beta, e^{b_0}), e^{b_0}) 
\end{array}
\end{equation*}
Finally, the third component of $(-1)^{|\mathbf{a}^{(1)}|'} \mathcal{F} (\mathbf{a}^{(1)}, m (\mathbf{a}^{(2)}), \mathbf{a}^{(3)} )$ on the right hand side of \eqref{eqn:functeqnhpty} is given by
$$\sum (-1)^{|\mathbf{a}^{(1)}|'}   (-1)^{|\mathbf{a}|' +1} (-1)^{|\bullet|}  m(\mathbf{a}^{(1)}, m(\mathbf{a}^{(2)}), \mathbf{a}^{(3)}, \bullet, e^{b_1},\beta, e^{b_0} ) $$
$$= -  (-1)^{|\mathbf{a}|' } (-1)^{|\bullet|}  \sum (-1)^{|\mathbf{a}^{(1)}|'}  m(\mathbf{a}^{(1)}, m(\mathbf{a}^{(2)}), \mathbf{a}^{(3)}, \bullet, e^{b_1},\beta, e^{b_0} ) $$

We see that \eqref{eqn:functeqnhpty} is equivalent to the $A_\infty$-equation with inputs $\mathbf{a},\bullet, e^{b_1},\beta, e^{b_0}$ (after overall multiplication by $  (-1)^{|\mathbf{a}|' }$.)
 
\end{proof}

\appendix

\section{Algebraic conventions}
We recall  various algebrac notions, such as  filtered $\AI$-category, its deformation theory of Fukaya-Oh-Ohta-Ono \cite{FOOO}, We refer readers to the further mentioned references for more details.

\subsection{Signs for dg-categories}\label{sec:dgsign}
\begin{defn}\label{def:dgsign}
For a given dg-category $(\mathcal{C}, \circ, d)$, we define $A_\infty$-structure on $\mathcal{C}$ following Sheridan. First, we reverse the directions of all morphisms:
$$\Hom_{A_\infty} (E,F) = \Hom_{dg} (F,E).$$
Then we set
\begin{equation}\label{eqn:convadg}
\begin{array}{l}
m_1 = d \\
m_2(\phi,\psi) = (-1)^{|\phi|} \phi \circ \psi.
\end{array}
\end{equation}
\end{defn}
One can check that this gives an $A_\infty$-structure with respect to the usual Koszul sign rule.

\subsection{Homotopy fiber products of dg-categories is a dg-category}\label{sec:dghptyfp}
Let $B,C,D$ be dg-categories, along with dg-functors
\begin{equation*}
\xymatrix{ & B \ar[d]^G \\
C \ar[r]_{L} & D
}.
\end{equation*}

The (homotopy) fiber product $B \times^h_D C$ is defined to be the category with
\begin{equation*}
{\rm Obj} (B \times_D^h C) = \{ M \in B, N \in C, \phi\in D^0 (G(M),L(N)) \,\,\mbox{with invertible} \, \, [\phi] \,\, \mbox{in} \,\, H^0 (D) \},
\end{equation*}
Pictorially, we may write
$$(M\to)\,\,G(M) \stackrel{\phi}{\to} L(N) \,\, (\leftarrow N). $$
For two objects $(M_1,N_1, \phi_1), (M_2, N_2, \phi_2)$, morphisms are given as
\begin{equation*}
\begin{array}{l}
 (B \times^h_D C)^i ((M_1,N_1, \phi_1), (M_2, N_2, \phi_2)) \\
 = B^i (M_1,M_2) \oplus C^i (N_1,N_2) \oplus D^{i-1} (G(M_1),L(N_2))
\end{array}
\end{equation*}
where a morphism $(\mu,\nu,\gamma)$ fits in to the following diagram.
\begin{equation*}
\xymatrix{ G(M_1) \ar[dr]^{\gamma}\ar[r]^{\phi_1} \ar[d]_{G(\mu)}& L(N_1) \ar[d]^{L(\nu)}\\
G(M_2) \ar[r]^{\phi_2} & L(N_2)
}
\end{equation*}
We do not require that the diagram commutes, but instead define
$$d(\mu,\nu, \gamma)=(d\mu, d\nu, - d\gamma - \phi_2 G(\mu) + L(\nu) \phi_1) $$
so that the $d$-closedness imposes the commutativity up to homotopy $\gamma$
Finally, the composition is defined by
$$(\mu',\nu',\gamma') (\mu,\nu,\gamma) = (\mu' \mu, \nu' \nu, \gamma' G(\mu) +   (-1)^{i'} L(\nu') \gamma).$$

We check that the product is associative.
$$(\mu'',\nu'',\gamma'') ((\mu',\nu',\gamma') (\mu,\nu,\gamma)) = (\mu'',\nu'',\gamma'')  (\mu' \mu, \nu' \nu, \gamma' G(\mu) +  (-1)^{i'} L(\nu') \gamma).$$
\begin{eqnarray*}
3rd &=& \gamma'' G(\mu' \mu) + (-1)^{i''} L(\nu'') (\gamma' G(\mu) +  (-1)^{i'} L(\nu') \gamma)\\
&=& \gamma'' G(\mu' \mu) +(-1)^{i''} L(\nu'') \gamma' G(\mu) + (-1)^{i''+i'} L(\nu'' \nu') \gamma
\end{eqnarray*}

$$((\mu'',\nu'',\gamma'') (\mu',\nu',\gamma')) (\mu,\nu,\gamma) = (\mu'' \mu', \nu'' \nu', \gamma'' G(\mu') +   (-1)^{i''} L(\nu'') \gamma')(\mu,\nu,\gamma).$$
\begin{eqnarray*}
3rd &=& (\gamma'' G(\mu') +   (-1)^{i''} L(\nu'') \gamma') G(\mu) + (-1)^{i''+i'} L(\nu'' \nu') \gamma \\
&=& \gamma'' G(\mu' \mu) +(-1)^{i''} L(\nu'') \gamma' G(\mu) + (-1)^{i''+i'} L(\nu'' \nu') \gamma
\end{eqnarray*}

Other axioms for dg-category can be checked as follows.

 \label{subsec:anpmconvhpty}
\begin{lemma} $d^2 =0$ on $B\times^h_D C$.
\end{lemma}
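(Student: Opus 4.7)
The plan is to carry out the direct computation of $d^2(\mu,\nu,\gamma)$ and verify that each of the three components vanishes. For the first two components this is immediate since $d^2\mu = 0$ and $d^2\nu = 0$ by the dg-axioms in $B$ and $C$ (using that $d$ on the product is defined componentwise on the first two slots and that $G, L$ are dg-functors so $d(G(\mu)) = G(d\mu)$ and similarly for $L$). So the entire content of the lemma is concentrated in the third slot.

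For the third slot, I would first record the key ingredient coming from the object definition: for an object $(M, N, \phi)$ of $B \times^h_D C$, the class $[\phi]$ is invertible in $H^0(D)$, which forces $\phi$ itself to be a $d$-closed morphism of degree $0$; thus $d\phi_1 = d\phi_2 = 0$ and $|\phi_i|=0$. Then I would apply the definition twice: on a morphism of degree $i$, the third slot of $d^2(\mu,\nu,\gamma)$ reads
\[
-d\bigl(-d\gamma - \phi_2 G(\mu) + L(\nu)\phi_1\bigr) - \phi_2 G(d\mu) + L(d\nu)\phi_1,
\]
and I would expand using $d^2\gamma = 0$ together with the Leibniz rule
\[
d(\phi_2 G(\mu)) = (d\phi_2) G(\mu) + (-1)^{|\phi_2|}\phi_2\, d(G(\mu)) = \phi_2 G(d\mu),
\]
\[
d(L(\nu)\phi_1) = d(L(\nu))\,\phi_1 + (-1)^{|\nu|} L(\nu)\, d\phi_1 = L(d\nu)\phi_1.
\]
After substituting, the two pairs of terms cancel and the third component is $0$.

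There is essentially no obstacle: the proof is a short unravelling of the definition, and the only subtlety worth pointing out is the need to invoke the closedness of $\phi_1, \phi_2$ (which is part of the object data rather than a derived fact). For completeness, I would also verify the remaining dg-category axioms that were left unchecked in the excerpt — namely the graded Leibniz rule for the newly defined $d$ with respect to the composition $(\mu',\nu',\gamma')(\mu,\nu,\gamma) = (\mu'\mu, \nu'\nu, \gamma' G(\mu) + (-1)^{i'} L(\nu')\gamma)$. This is again a direct expansion, where the $(-1)^{i'}$ sign in the third slot of the composition is exactly what is needed to make the cross-terms involving $d\gamma'$ and $d\gamma$ assemble into the Leibniz identity; the auxiliary terms $-\phi_2 G(\mu) + L(\nu)\phi_1$ contribute in a way that matches the composition formula after applying the dg-functor properties of $G$ and $L$. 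Once these routine verifications are in place, $B \times^h_D C$ is a dg-category, which is the statement of the lemma.
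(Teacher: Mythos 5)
Your proof is correct and follows essentially the same route as the paper: the first two components vanish trivially, and the third is a direct expansion via the Leibniz rule. You are in fact slightly more careful than the paper's own argument, which only cites $|G|=|L|=|\phi_i|=0$, whereas the cancellation of the terms $(d\phi_2)G(\mu)$ and $(-1)^{|\nu|}L(\nu)d\phi_1$ genuinely requires the closedness $d\phi_i=0$ that you correctly extract from the requirement that $[\phi_i]$ defines a class in $H^0(D)$.
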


\begin{proof}
$$d^2 (\mu,\nu,\gamma) = d(d\mu, d\nu, - d\gamma - \phi_2 G(\mu) + L(\nu) \phi_1)$$
whose first and second components are obviously zero. The third component reads
$$-d ( - d \gamma -\phi_2 G(\mu) + L(\nu) \phi_1) - \phi_2 G(d \mu) + L(d \nu) \phi_1.$$
As $|G|=|L|=|\phi_i|=0$, the above equation vanishes as well.
\end{proof}

\begin{lemma} $d$ satisfies the Leibnitz rule.
\end{lemma}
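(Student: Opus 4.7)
The plan is to verify the graded Leibniz rule
$$d\bigl((\mu',\nu',\gamma')(\mu,\nu,\gamma)\bigr) = d(\mu',\nu',\gamma')\,(\mu,\nu,\gamma) + (-1)^{i'}(\mu',\nu',\gamma')\,d(\mu,\nu,\gamma)$$
componentwise, where $(\mu',\nu',\gamma')$ has degree $i'$ and $(\mu,\nu,\gamma)$ has degree $i$. Since the differential on the first two components is simply $d\mu$ and $d\nu$, and the composition there is the honest composition in $B$ and $C$, the identity for the first two slots reduces immediately to the graded Leibniz rules in the dg-categories $B$ and $C$, which are assumed. So the content is entirely in the third component.

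For the third slot, I would expand the left-hand side as
$$-d\bigl(\gamma' G(\mu) + (-1)^{i'} L(\nu')\gamma\bigr) \;-\; \phi_3\,G(\mu'\mu) \;+\; L(\nu'\nu)\,\phi_1,$$
using that $G,L$ are dg-functors so $G(\mu'\mu)=G(\mu')G(\mu)$, $L(\nu'\nu)=L(\nu')L(\nu)$, and that they intertwine the differentials. Applying the Leibniz rule of $D$ to $d(\gamma'G(\mu))$ and $d(L(\nu')\gamma)$, and recalling $|\gamma'|=i'-1$, this becomes
$$-d\gamma'\,G(\mu) + (-1)^{i'}\gamma'\,G(d\mu) - (-1)^{i'}L(d\nu')\gamma - L(\nu')d\gamma - \phi_3 G(\mu'\mu) + L(\nu'\nu)\phi_1.$$
On the other side, I expand $d(\mu',\nu',\gamma')\,(\mu,\nu,\gamma)$ and $(-1)^{i'}(\mu',\nu',\gamma')\,d(\mu,\nu,\gamma)$ using the definitions of $d$ and of the composition (which supplies the sign $(-1)^{i'+1}$ and $(-1)^{i'}$ respectively in front of the $L$-cross terms). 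The two terms $\pm L(\nu')\phi_2 G(\mu)$ arising from the two summands cancel, the products $\phi_3 G(\mu')G(\mu)$ and $L(\nu')L(\nu)\phi_1$ reassemble via functoriality of $G,L$, and the remaining six terms match the expansion of the left-hand side termwise.

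The only nontrivial point is sign bookkeeping; the main risk is mis-tracking the Koszul sign coming from the definition $(\mu',\nu',\gamma')(\mu,\nu,\gamma)$ where the sign $(-1)^{i'}$ appears in front of $L(\nu')\gamma$ (this is \emph{not} the shifted degree $i'-1$ of $\gamma'$). Once one fixes that convention and uses $|\gamma'|=i'-1$ consistently when applying Leibniz in $D$, the cancellation is automatic. After this Lemma, together with the associativity computation and the already-verified $d^2=0$, one concludes that $B\times_D^h C$ is a dg-category, which is what is needed for the formulation of Theorem \ref{thm:maingluealg}(3).
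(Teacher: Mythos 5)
Your proposal is correct and follows essentially the same route as the paper: a componentwise check in which the first two slots reduce to the Leibniz rules of $B$ and $C$, and the third slot is expanded using functoriality of $G,L$, the Leibniz rule in $D$ applied with $|\gamma'|=i'-1$, and the cancellation of the two $L(\nu')\phi_2 G(\mu)$ cross terms. The sign conventions you flag (the $(-1)^{i'}$ in the composition versus the shifted degree of $\gamma'$) are exactly the ones the paper's computation relies on.
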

\begin{proof}
Let 
$$A=(\mu,\nu,\gamma): (M_1,N_1,\phi_1) \to (M_2,N_2,\phi_2) ,$$ 
$$B=(\mu',\nu',\gamma'): (M_1,N_1,\phi_2) \to (M_2,N_2,\phi_3),$$ 
be two (composable) morphisms with $\deg A =i$ and $\deg B = i'$. We have to show that
$$ d (BA) = d(B) A + (-1)^{i'} B d(A).$$
The left hand side can be computed as
\begin{eqnarray*}
d(BA) &=& d(\mu' \mu, \nu' \nu, \gamma' G(\mu) +  (-1)^{i'} L(\nu') \gamma) \\
&=& (d(\mu' \mu), d(\nu' \nu), -d(\gamma' G(\mu) +  (-1)^{i'} L(\nu') \gamma) - \phi_3 G(\mu' \mu) + L(\nu' \nu) \phi_1),
\end{eqnarray*}
and hence, the third components reads
 
\begin{equation}\label{eqn:dBA}
\begin{array}{c}
-d \gamma' G(\mu) + (-1)^{i'} \gamma' G(d\mu) + (-1)^{i'+1} L(d \nu') \gamma- L(\nu') d\gamma \\
 - \phi_3 G(\mu' \mu) + L(\nu' \nu) \phi_1.
 \end{array}
\end{equation}

On the other hand,
\begin{eqnarray*}
d(B)A &=& (d\mu',d \nu', -d \gamma' - \phi_3 G( \mu') + L(\nu') \phi_2) (\mu,\nu,\gamma) \\
&=& (d(\mu') \mu, d(\nu') \nu, (-d \gamma' - \phi_3 G( \mu') + L(\nu') \phi_2) G(\mu) +   (-1)^{i'+1} L(d \nu') \gamma)
\end{eqnarray*}
whose third component is 
\begin{equation}\label{eqn:d(B)A}
 -d \gamma'  G(\mu)  + (-1)^{i'+1} L(d \nu') \gamma - \phi_3 G( \mu' \mu)   + L(\nu') \phi_2 G(\mu),
\end{equation}
and
\begin{eqnarray*}
(-1)^{i'}Bd(A) &=& (-1)^{i'} (\mu',\nu',\gamma') (d\mu,d \nu, -d \gamma - \phi_2 G( \mu) + L(\nu) \phi_1)  \\
&=& (-1)^{i'} (\mu'd(\mu) , \nu' d(\nu) , \gamma' G(d \mu) + (-1)^{i'} L(\nu') (-d \gamma - \phi_2 G( \mu) + L(\nu) \phi_1)) \\
&=& ((-1)^{i'} \mu'd(\mu) ,  (-1)^{i'}\nu' d(\nu) , (-1)^{i'} \gamma' G(d \mu) + L(\nu') (-d \gamma - \phi_2 G( \mu) + L(\nu) \phi_1)) 
\end{eqnarray*}
with the third component being
\begin{equation}\label{eqn:BdA}
(-1)^{i'} \gamma' G(d \mu) - L(\nu') d \gamma  + L( \nu' \nu) \phi_1   -L(\nu') \phi_2 G( \mu)
\end{equation}

Therefore, $\eqref{eqn:dBA}= \eqref{eqn:d(B)A} + \eqref{eqn:BdA}$.
\end{proof}

\subsection{Natural Transformations}\label{def:nattrans}
Given two $\AI$-functors $\mathcal{F}_1,\mathcal{F}_2:\mathcal{C} \to \mathcal{D}$, a pre-natural transformation $N$ is defined 
by the following data.
For each integer $k \geq 0$ and $k+1$ tuple of objects $C_0,\cdots, C_d$ of $\mathcal{C}$,
we have a multi-linear map 
$$N(C_0,C_1,\ldots,C_k):Hom(C_0,C_1)\times \cdots \times Hom(C_{k-1},C_k)\to Hom(\mathcal{F}_1(C_0),
\mathcal{F}_2(C_k)).$$
Denote by $Hom(\mathcal{F}_1,\mathcal{F}_2)$ the space of pre-natural transformations from $\mathcal{F}_1$ to $\mathcal{F}_2$.

The differential on  $Hom(\mathcal{F}_1, \mathcal{F}_2)$ is defined by
\begin{equation}\label{eqn:defM1}
\begin{array}{rl}
&M_1(N)(a_1,\ldots,a_n) \\
=&\sum (-1)^{\epsilon_1} m^{\mathcal{D}}_k \big(\mathcal{F}_1(\mathbf{a}^{(1)}),\cdots,\mathcal{F}_1(\mathbf{a}^{(i-1)}),N(\mathbf{a}^{(i)}),\mathcal{F}_2(\mathbf{a}^{(i+1)}), \cdots,\mathcal{F}_2(\mathbf{a}^{(k)})\big)\\
&- \sum (-1)^{\epsilon_2} N( \mathbf{a}^{(1)}, m^{\mathcal{C}}_*(\mathbf{a}^{(2)}),\mathbf{a}^{(3)})
\end{array}
\end{equation}
where $\epsilon_1 = ||N||' \cdot \left( \sum_{j=1}^{i-1} \left|\mathbf{a}^{(j)} \right|'\right)$ and $\epsilon_2 = \left| \mathbf{a}^{(1)} \right|' + ||N||'$. 
Here $||N||'=||N||-1$ is a degree of $N$ as a pre-natural transformation. ($||N|| = |N(C)|$ for any object $C$ of $\mathcal{C}$.) We remark that $||N||$ is the same as the degree of $N(\bullet)$ as a map between graded modules with respect to shifted degrees, i.e. 
$$ ||N||+|\mathbf{a}|' = |N(\mathbf{a}) (x) |' - |x|' = |N(\mathbf{a}) (x) | - |x|= |N(\mathbf{a})|$$
for any $x$.

If the target category of $\mathcal{F}_i$ is ($A_\infty$-)dg, then the above equation reduces to
\begin{equation*}
\begin{array}{rl}
&M_1(N)(a_1,\ldots,a_n) \\
=& m_1^{\mathcal{D}} ( N(\mathbf{a}) ) + (-1)^{||N||' \cdot |\mathbf{a}^{(1)}|'} m_2^{\mathcal{D}} (\mathcal{F}_1 (\mathbf{a}^{(1)}), N (\mathbf{a}^{(2)}) ) +  m_2^{\mathcal{D}} (N (\mathbf{a}^{(1)}), \mathcal{F}_2 (\mathbf{a}^{(2)}) )\\
&-\sum (-1)^{||N||' + |\mathbf{a}^{(1)}|'} N( \mathbf{a}^{(1)}, m^{\mathcal{C}}_*(\mathbf{a}^{(2)}),\mathbf{a}^{(3)}).
\end{array}
\end{equation*}

One can check that $M_1$ is a differential ( See [FukII].
We remark that our sign $M_1$ here differ from Fukaya's one by $-1$.)

A natural transformation is a $M_1$-closed pre-natural transformation.

When $\mathcal{F}_1= \mathcal{F}_2=\mathcal{F}$, identity natural transformation is just given by $N_{id} (C) = \one_{\mathcal{F}(C)}$, where the higher components are all zero. Its degree is given by $||N_{id}||=0$. One can check that $M_1(N_{id})=0$ using the property of the units in the $A_\infty$-category. In fact, the first and the last terms in $M_1(N_{id})$ simply vanishes, and we are only left with
$$ (-1)^{||N_{id}||' \cdot |\mathbf{a}|'}  m_2^{\mathcal{D}} (\mathcal{F} (\mathbf{a}),\one) + m_2^{\mathcal{D}} (\one, \mathcal{F} (\mathbf{a}))$$
$$=(-1)^{ |\mathbf{a}|'} (-1)^{|\mathbf{a} |} \mathcal{F} (\mathbf{a}) +\mathcal{F} (\mathbf{a}) =0. $$
We see that $||N_{id}||'=-1$ is essential to have cancellation.
We remark that this definition of $N_{id}$ differs from Fukaya's one by $-1$ and so does $M_2$ below. 

Given two pre-natural transformation $N_1:\mathcal{F}_1 \to \mathcal{F}_2$, $N_2:\mathcal{F}_2 \to \mathcal{F}_3$, 
its composition is a pre-natural transformation $M_2(N_1,N_2) :\mathcal{F}_1 \to \mathcal{F}_3$ defined as 

\begin{equation*}
\begin{array}{l}
M_2(N_1,N_2) (a_1 ,\cdots, a_n):= 
 \sum (-1)^* m^{\mathcal{D}}_* \left( \hat{\mathcal{F}_1} (\mathbf{a}^{(1)}), N_1(\mathbf{a}^{(2)}), \hat{\mathcal{F}_2} (\mathbf{a}^{(3)}), N_2 (\mathbf{a}^{(4)}), \hat{\mathcal{F}_3} (\mathbf{a}^{(5)})  \right)
\end{array}
\end{equation*}
where the sign is given by
$$ \ast = ||N_1||' \cdot |\mathbf{a}^{(1)}|' + ||N_2||' \cdot \left( |\mathbf{a}^{(1)}|'+ |\mathbf{a}^{(2)}|' + |\mathbf{a}^{(3)}|' \right).$$

Let us first check that $N_{id}$ gives the multiplicative identity with respect to $M_2$ (in $A_\infty$-setting). Let $N_{id} : \mathcal{F} \to \mathcal{F}$ and $N : \mathcal{F} \to \mathcal{G}$. Then
$$ M_2(N_{id}, N)(\mathbf{a})= (-1)^{0} m_\ast^{\mathcal{D}} (\overbrace{N_{id}}^{\rm 0th-comp.},N(\mathbf{a})  ) = m_2^{\mathcal{D}} (id, N (\mathbf{a})) = N (\mathbf{a}).$$
For a natural transformation $N' : \mathcal{G} \to \mathcal{F}$ in the other direction,
$$ M_2 (N', N_{id} ) (\mathbf{a}) = (-1)^{||N_{id}||' \cdot |\mathbf{a}|'} m_2^{\mathcal{D}} ( N' (\mathbf{a}), N_{id}) = (-1)^{|\mathbf{a}|'} m_2^{\mathcal{D}} (N' (\mathbf{a}), id ) = (-1)^{|\mathbf{a}|'} (-1)^{|N' (\mathbf{a}) |} N' (\mathbf{a})$$
since $||N_{id}||' = -1$. Now
$$(-1)^{|\mathbf{a}|' + |N'(a)|} = (-1)^{|N'(a) | - |\mathbf{a}|'  }  = (-1)^{||N' ||}$$

See [FukII] for compatibility between $M_1$ and $M_2$.

If we have two functors  $\mathcal{F}_1,\mathcal{F}_2 : \mathcal{C} \to \mathcal{D}$,
such that they are the same on the level of objects, then we can define the notion of homotopy between these
two functors. Namely, a natural transformation $H$ with one less degree satisfying
 $\mathcal{F}_1 - \mathcal{F}_2  = M_1(H)$. One can check that homotopy of $\AI$-functors is an equivalence relation.

Given homotopy $H_1,H_2$ Homotopies of $\AI$-functors can be composed
$H_2 \circ H_1 = H_1 + H_2 + M_2(H_1,H_2)$.

\bibliographystyle{amsalpha}
\bibliography{geometry}

\end{document}